\DeclareSymbolFontAlphabet{\mathbb}{AMSb}        
\DeclareSymbolFontAlphabet{\mathbbl}{bbold}      
\newtheorem{proposition}{Proposition}[section]
\newtheorem{theorem}[proposition]{Theorem}
\newtheorem{corollary}[proposition]{Corollary}
\newtheorem{lemma}[proposition]{Lemma}
\newtheorem{conjecture}{Conjecture}
\theoremstyle{definition}
\newtheorem{definition}[proposition]{Definition}
\newtheorem{remark}[proposition]{Remark}
\newtheorem{example}[proposition]{Example}
\newcommand{\cst}{\ensuremath{\mathrm{C}^*}}
\newcommand{\comp}{\circ}
\newcommand{\dd}{\,\mathrm{d}}
\newcommand{\ee}{\mathrm{e}}
\newcommand{\eps}{\varepsilon}
\newcommand{\ph}{\varphi}
\newcommand{\hh}[1]{\widehat{#1}}
\newcommand{\I}{\mathds{1}}
\newcommand{\id}{\mathrm{id}}
\newcommand{\ii}{\mathrm{i}}
\newcommand{\Int}{\int\limits}
\newcommand{\is}[2]{{\left\langle{#1}\,\vline\,#2\right\rangle}}
\newcommand{\ket}[1]{{\left|#1\right\rangle}}
\newcommand{\bra}[1]{{\left\langle#1\right|}}
\newcommand{\tens}{\otimes}
\newcommand{\vtens}{\overset{\rule[-1pt]{5pt}{0.5pt}}{\tens}}
\newcommand{\oon}{\operatorname}
\newcommand{\half}{\scriptscriptstyle{1\:\!\!/\;\!\!2}}
\newcommand{\ihalf}{\scriptscriptstyle{\ii\;\!\!/\;\!\!2}}
\newcommand{\mihalf}{\scriptscriptstyle{-\ii\;\!\!/\;\!\!2}}
\newcommand{\mhalf}{\scriptscriptstyle{-1\:\!\!/\;\!\!2}}
\newcommand{\threehalf}{\scriptscriptstyle{3\:\!\!/\;\!\!2}}
\newcommand{\bigthalf}{\scriptstyle{t\;\!\!/\;\!\!2}}
\newcommand{\CC}{\mathbb{C}}
\newcommand{\HH}{\mathbb{H}}
\newcommand{\GG}{\mathbb{G}}
\newcommand{\NN}{\mathbb{N}}
\newcommand{\QQ}{\mathbb{Q}}
\newcommand{\RR}{\mathbb{R}}
\newcommand{\TT}{\mathbb{T}}
\newcommand{\ZZ}{\mathbb{Z}}
\newcommand{\bbGamma}{\mathbbl{\Gamma}}
\newcommand{\cH}{\mathscr{H}}
\newcommand{\cK}{\mathscr{K}}
\newcommand{\cQ}{\mathcal{Q}}
\newcommand{\cV}{\mathcal{V}}
\newcommand{\bh}{{\boldsymbol{h}}}
\newcommand{\blambda}{{\boldsymbol{\lambda}}}
\newcommand{\bP}{{\boldsymbol{P}}}
\newcommand{\bQ}{{\boldsymbol{Q}}}
\newcommand{\bt}{{\boldsymbol{t}}}
\newcommand{\cc}{\text{\tiny{\rm{c}}}}
\newcommand{\LL}{\text{\tiny{\rm{L}}}}
\newcommand{\uu}{\text{\tiny{\rm{u}}}}
\newcommand{\la}{\langle}
\newcommand{\ra}{\rangle}
\newcommand{\ww}{\mathrm{W}}
\newcommand{\vv}{\mathrm{V}}
\DeclareMathOperator{\Ad}{Ad}
\DeclareMathOperator{\B}{B}
\DeclareMathOperator{\C}{C}
\DeclareMathOperator{\HS}{HS}
\DeclareMathOperator{\Irr}{Irr}
\DeclareMathOperator{\Lone}{\mathsf{L}^1}
\DeclareMathOperator{\Ltwo}{\mathsf{L}^2}
\DeclareMathOperator{\Linf}{\mathsf{L}^{\!\infty}}
\DeclareMathOperator{\Pol}{Pol}
\DeclareMathOperator{\qdim}{\dim_{\text{\tiny{\rm{q}}}}\!}
\DeclareMathOperator{\cZ}{\mathscr{Z}}
\DeclareMathOperator{\Tr}{Tr}
\DeclareMathOperator{\lwt}{lwt}
\DeclareMathOperator{\rwt}{rwt}
\DeclareMathOperator{\wt}{wt}
\newcommand{\modOp}[1][\hspace{0.87pt}]{\nabla_{\!#1}}
\newcommand{\modOphalf}[1][\hspace{0.87pt}]{\nabla^{\scriptscriptstyle{1\:\!\!/\;\!\!2}}_{\!#1}}
\newcommand{\modOpmhalf}[1][\hspace{0.87pt}]{\nabla^{\scriptscriptstyle{-1\:\!\!/\;\!\!2}}_{\!#1}}
\newcommand{\rM}{\mathrm{M}}
\DeclareMathOperator{\Sd}{\mathnormal{S}\!\!\;\mathnormal{d}}
\newcommand{\Upsi}[1]{\Upsilon_{\!#1}}
\DeclareMathOperator{\Ttau}{\mathnormal{T}^{\tau\!}}
\DeclareMathOperator{\TtauInn}{\mathnormal{T}^{\tau}_{\oon{Inn}\!}}
\DeclareMathOperator{\TtauAInn}{\mathnormal{T}^{\tau}_{\oon{\overline{Inn}}}}
\DeclareMathOperator{\Tsig}{\mathnormal{T}^{\sigma\!}}
\DeclareMathOperator{\TsigInn}{\mathnormal{T}^{\sigma}_{\oon{Inn}\!}}
\DeclareMathOperator{\TsigAInn}{\mathnormal{T}^{\sigma}_{\oon{\overline{Inn}}}}
\DeclareMathOperator{\Mod}{Mod}
\newcommand{\tp}{\text{\tiny$\stackMath\mathbin{\stackinset{c}{0ex}{c}{-0.17ex}{\top}{\bigcirc}}$}}
\numberwithin{equation}{section}
\title[Invariants of compact quantum groups]{On certain invariants of compact quantum groups}
\author{Jacek Krajczok}
\address{Vrije Universiteit Brussel}
\email{jacek.krajczok@vub.be}
\author{Piotr M.~So{\l}tan}
\address{Department of Mathematical Methods in Physics, Faculty of Physics, University of Warsaw}
\email{piotr.soltan@fuw.edu.pl}
\keywords{compact quantum group, von Neumann algebra, scaling group, invariant}
\subjclass[2020]{46L67, 20G42}
\begin{document}

\begin{abstract}
We introduce and study a number of invariants of locally compact quantum groups defined by their scaling and modular groups and the spectrum of their modular elements. Focusing mainly on compact quantum groups we consider the question whether triviality of one of the invariants is equivalent to the quantum group being of Kac type and show that it has a positive answer in many cases including duals of second countable type $\mathrm{I}$ discrete quantum groups. We perform a complete calculation of the invariants for all $q$-deformations of compact, simply connected, semisimple Lie groups as well as for some non-compact quantum groups and the compact quantum groups $\operatorname{U}_F^+$. Finally we introduce a family of conditions for discrete quantum groups which for classical discrete groups are all equivalent to the fact that the group is i.c.c. We show that the above mentioned question about characterization of Kac type quantum groups by one of our invariants has a positive answer for duals of discrete quantum groups satisfying such an i.c.c.-type condition and illustrate this with the example of $\operatorname{U}_F^+$.
\end{abstract}

\maketitle

\tableofcontents

\allowdisplaybreaks

\section{Introduction}

In this paper we investigate certain invariants of compact and locally compact quantum groups which grew out of our previous investigations into compact quantum groups $\GG$ with $\Linf(\GG)$ a factor (\cite{faktory}). In that paper we constructed various families of compact quantum groups and the techniques used to show that members of these families are pairwise non-isomorphic included looking at three invariants in the form of subgroups of $\RR$ assigned to a locally compact quantum group. They are all similar to the famous Connes invariant $T$ for von Neumann algebras, but the modular group is replaced by the scaling group of the quantum group.

In this paper we take a much more systematic approach to these invariants. First of all we extend the family of invariants to seven items (see detailed discussion in Section \ref{sect:defInv}) for $\GG$ and seven for its Pontriagin dual $\hh{\GG}$. The additional invariants are related to the modular group of the Haar measure (in fact one of our invariants is exactly $T(\Linf(\GG))$) and the spectrum of the modular element.

The leitmotif of the paper is the statement and study of a conjecture relating triviality of one of the invariants for a compact quantum group $\GG$ with the question whether $\GG$ is of Kac type (Conjecture \ref{conj:main}). Although at first sight the conjecture does not likely to hold in much generality, we have been able to prove it in several cases. This includes the case of second countable compact quantum groups with $2$-dimensional representation satisfying $\dim{U}<\qdim{U}$ (Section \ref{sect:UqU}) and duals of second countable discrete quantum groups of type $\mathrm{I}$ (Section \ref{sec:typeI}). Furthermore, we discuss a family of conditions which we call i.c.c.-type conditions which for duals of classical discrete groups are all equivalent to the group being i.c.c., and turn out to imply the validity of our main conjecture.

Sections \ref{sec:qDeformations} and \ref{sect:otherEx} are devoted to computation of the values of our invariants. In Section \ref{sec:qDeformations} we develop a rather extensive machinery in order to compute all the invariants for $q$-deformations $G_q$ of all compact, simply connected, semisimple Lie groups. Along the way we encounter an interesting phenomenon that there are compact quantum groups (e.g.~$\oon{SU}_q(3)$) with non-trivial inner scaling automorphisms which are not implemented by a group-like unitary. In fact the implementing unitaries do not even belong to $\C(G_q)$ (see Section \ref{sect:implemUnit}). Next, in Section \ref{sect:otherEx}, we include computations of the invariants for two families of non-compact quantum groups (the quantum deformations of the $\oon{E}(2)$ and the ``$az+b$'' groups) and for the compact quantum group $\oon{U}_F^+$. This compact quantum group also serves as an illustration of the results about i.c.c.~type conditions mentioned above (see Section \ref{sect:exUFpl}).

Throughout the paper we have adhered to the standard conventions of the theory of compact and locally compact quantum groups. In particular, given a locally compact quantum group $\GG$ the symbol $\Linf(\GG)$ denotes the associated von Neumann algebra acting on the GNS Hilbert space $\Ltwo(\GG)$ of the left Haar measure and $\Lone(\GG)$ stands for the predual of $\Linf(\GG)$. The reduced \cst-algebra of functions on $\GG$ is denoted by $\C_0(\GG)$ or by $\C(\GG)$ (if $\GG$ is compact) or $\mathrm{c}_0(\GG)$ (if $\GG$ is discrete). A locally compact quantum group $\GG$ is called \emph{second countable} if either of the following conditions holds:
\begin{itemize}
\item $\C_0(\GG)$ is separable,
\item $\Lone(\GG)$ is separable,
\item $\Ltwo(\GG)$ is separable
\end{itemize}
(\cite[Lemma 14.6]{KrajczokTypeI}). We say that a locally compact quantum group $\GG$ is \emph{type $\mathrm{I}$} if $\C_0^\uu(\hh{\GG})$ is a type $\mathrm{I}$ \cst-algebra (also see \cite{KrajczokTypeI}). For the general theory of locally compact quantum groups we refer the reader to \cite{KustermansVaes,KustermansVaesVNA,VanDaeleQGvN} as well as the books \cite{Timmermann,Tuset}, while for compact quantum groups we recommend the book \cite{NeshveyevTuset}.

The theory of locally compact quantum groups uses many standard notions from the theory of operator algebras and the Tomita-Takesaki theory, for which the reader is advised to consult \cite{DixmierC,DixmiervNA,Sakai,Takesaki2} and \cite{Connes} as well as any of the many other monographs on the subject. In particular, if $\theta$ is a normal semifinite faithful (n.s.f.) weight on a von Neumann algebra $\rM$ then $\mathfrak{N}_{\theta}=\bigl\{x\in\rM\,\bigr|\bigl.\,\theta(x^*x)<+\infty\bigr\}$ and $\mathfrak{M}_{\theta}=\oon{span}\mathfrak{N}_\theta^*\mathfrak{N}_\theta$. The GNS map for $\theta$ will be denoted by $\Lambda_\theta$.

We will also use some of the less standard notation, namely for a compact quantum group $\GG$ we will write $\Linf(\GG)^\sigma$ for the elements of the von Neumann algebra $\Linf(\GG)$ invariant under the modular group of the Haar measure of $\GG$, which itself will be denoted by $\bh_\GG$ (i.e.~$\Linf(\GG)^\sigma$ is the \emph{centralizer} of $\bh_\GG$). The canonical cyclic vector for the GNS representation of $\Linf(\GG)$ associated to $\bh_\GG$ will be denoted by $\Omega_\GG$.

In order to stay consistent with the notation of \cite{modular,KrajczokTypeI} and \cite{KustermansVaes}, a representation $U$ of a compact quantum group $\GG$ on a (in all cases finite dimensional) Hilbert space $\cH_U$ is a unitary element of $\C(\GG)\tens\B(\cH_U)$ satisfying $(\Delta_{\GG}\tens\id)(U)=U_{13}U_{23}$.

The dimension of $U$ will be denoted by $\dim{U}$ and the quantum dimension (\cite[Section 1.4]{NeshveyevTuset}) by $\qdim{U}$. Note that if $\alpha$ denotes the equivalence class of $U$ then the notions of dimension and quantum dimension of $\alpha$ make perfect sense and we will often use symbols like $\dim{\alpha}$ or $\qdim{\alpha}$.

Let $\GG$ be a compact quantum group and let $\Irr{\GG}$ denote the set of equivalence classes of all irreducible representations of $\GG$. We will often choose unitary representatives $U^\alpha\in\alpha$ (in fact we will only consider unitary representations) and then choose bases in the corresponding Hilbert spaces $\{\cH_\alpha\}_{\alpha\in\Irr{\GG}}$ in which the operators $\uprho_\alpha$ are diagonal with eigenvalues $\uprho_{\alpha,1},\dotsc,\uprho_{\alpha,\dim{\alpha}}$ (sometimes we will also ask that these eigenvalues are in increasing order). The corresponding matrix elements $U^\alpha_{i,j}$ of the representations $U^\alpha$ transform under the modular and scaling groups of $\GG$ as follows:
\begin{equation}\label{sigmatau}
\left\{\begin{aligned}
\sigma^\bh_t(U^\alpha_{i,j})&=\uprho_{\alpha,i}^{\ii{t}}U^\alpha_{i,j}\uprho_{\alpha,j}^{\ii{t}}\\
\tau^\GG_t(U^\alpha_{i,j})&=\uprho_{\alpha,i}^{\ii{t}}U^\alpha_{i,j}\uprho_{\alpha,j}^{-\ii{t}}
\end{aligned}\right.,\qquad{t}\in\RR,\:i,j\in\{1,\dotsc,\dim{\alpha}\}.
\end{equation}

Finally let us mention that in Sections \ref{sec:typeI} and \ref{sect:prel} we will use a very convenient notation for eigenspaces and eigenprojections of self-adjoint (and other) operators. Namely if $\cH$ is a Hilbert space and $T$ is a self-adjoint operator on $\cH$ (not necessarily bounded) then the subspace $\ker(\lambda\I-T)$ will be denoted by $\cH(T=\lambda)$, while the orthogonal projection onto $\cH(T=\lambda)$ will be written as $\chi(T=\lambda)$. Similarly, given an interval $[a,b]\subset\RR$ the projection $\chi_{[a,b]}(T)$ will be denoted by the symbol $\chi(a\leq{T}\leq{b})$ and its range by $\cH(a\leq{T}\leq{b})$. We will also use an obvious extension of this notation to any normal operator $N$ and any measurable subset $\Omega\subset\CC$ according to which $\chi_\Omega(N)$ is denoted $\chi(N\in\Omega)$ and the range of this projection is $\cH(N\in\Omega)$.

\subsection*{Acknowledgments}
The work of the first named author was partially supported by EPSRC grants EP/T03064X/1 and EP/T030992/1 as well as the FWO grant 1246624N. The second author was partially supported by NCN (National Science Centre, Poland) grant no.~2022/47/B/ST1/00582. The research was also partially supported by the University of Warsaw Thematic Research Program "Quantum Symmetries".

\section{The invariants and some of their properties}\label{sect:defInv}

Let $\GG$ be a locally compact quantum group. In \cite{faktory} we introduced the following invariants of $\GG$, related to the behavior of the scaling group:

\begin{equation}\label{eq18}
\begin{aligned}
\Ttau(\GG)&=\bigl\{t\in\RR\,\bigr|\bigl.\,\tau_t^\GG=\id\bigr\},\\
\TtauInn(\GG)&=\bigl\{t\in\RR\,\bigr|\bigl.\,\tau_t^\GG\in\oon{Inn}(\Linf(\GG))\bigr\},\\
\TtauAInn(\GG)&=\bigl\{t\in\RR\,\bigr|\bigl.\,\tau_t^\GG\in\oon{\overline{Inn}}(\Linf(\GG))\bigr\},
\end{aligned}
\end{equation}
where $\oon{Inn}(\Linf(\GG))$ and $\oon{\overline{Inn}}(\Linf(\GG))$ are respectively the group of inner automorphisms and the group of approximately inner automorphisms of the von Neumann algebra $\Linf(\GG)$.

Some basic properties and examples of calculation of these invariants were recorded in \cite[Section 5]{faktory}, in particular for any locally compact quantum group $\GG$ equality $\Ttau(\GG)=\Ttau(\hh{\GG})$ holds.

In addition to the invariants \eqref{eq18} we will study invariants related to modular automorphisms and the modular element.

\begin{definition}\label{def1}
Let $\GG$ be a locally compact quantum group with left Haar measure $\ph$ and modular element $\delta$. We define the following subsets of $\RR$:
\begin{align*}
\Tsig(\GG)&=\bigl\{t\in\RR\,\bigr|\bigl.\,\sigma^\ph_t=\id\bigr\},\\
\TsigInn(\GG)&=\bigl\{t\in\RR\,\bigr|\bigl.\,\sigma^\ph_t\in\oon{Inn}(\Linf(\GG))\bigr\},\\
\TsigAInn(\GG)&=\bigl\{t\in\RR\,\bigr|\bigl.\,\sigma_t^\ph\in\oon{\overline{Inn}}(\Linf(\GG))\bigr\},\\
\Mod(\GG)&=\bigl\{t\in\RR\,\bigr|\bigl.\,\delta^{\ii{t}}=\I\bigr\}.
\end{align*}
\end{definition}

It is sometimes convenient to use the notation $T^\circ_{\bullet\!}(\GG)$ allowing $\circ$ and $\bullet$ to stand for $\tau$ or $\sigma$, and $\oon{Inn}$, $\oon{\overline{Inn}}$ or nothing respectively. We will indicate it by considering $T^\circ_\bullet$ for $\circ\in\{\tau,\sigma\}$ and, rather unconventionally, $\bullet\in\bigl\{\ \,,\oon{Inn},\oon{\overline{Inn}}\bigr\}$.

\begin{remark}\label{rem:przedProp5}\hspace*{\fill}
\begin{enumerate}
\item The sets introduced in the Definition \ref{def1} are subgroups of $\RR$. Furthermore, $\Tsig(\GG)$, $\TsigAInn(\GG)$, and $\Mod(\GG)$ are closed. Indeed, for $\Tsig(\GG)$ and $\TsigAInn(\GG)$ the argument is analogous to \cite[Proposition 5.3]{faktory}, whereas closedness of $\Mod(\GG)$ follows from the spectral theorem for self-adjoint operators (cf.~e.g.~\cite[Theorem 10.5]{primer}).
\item\label{rem:przedProp5-2} We would obtain the same groups $\Tsig(\GG)$, $\TsigInn(\GG)$, and $\TsigAInn(\GG)$ if we chose the right Haar measure instead of the left one -- this is a consequence of the equalities
\[
\modOp[\psi]^{\ii{t}}=J_{\hh{\ph}}\modOp[\ph]^{-\ii{t}}J_{\hh{\ph}},\quad
\sigma_t^\psi(x)=\delta^{\ii{t}}\sigma_t^\ph(x)\delta^{-\ii{t}}
\]
valid for all $t\in\RR$ and any $x\in\Linf(\GG)$.
\item $\TsigInn(\GG)$ is equal to the Connes' invariant $T(\Linf(\GG))$. Consequently, $\TsigInn(\GG)$ depends only on the von Neumann algebra $\Linf(\GG)$. It is also the case for $\TsigAInn(\GG)$ (see \cite[Theorem VIII.3.3]{Takesaki2}).
\item If $\GG$ and $\HH$ are isomorphic locally compact quantum groups, then $T^\circ_{\bullet\!}(\GG)=T^\circ_{\bullet\!}(\HH)$ for all values of $\circ$ and $\bullet$ considered above and similarly $\Mod(\GG)=\Mod(\HH)$.
\end{enumerate}
\end{remark}

We will now gather some general properties of the above invariants.

\begin{proposition}\label{prop5}
For any locally compact quantum group $\GG$ we have
\begin{subequations}
\begin{align}
\Tsig(\GG)&=\Ttau(\GG)\cap\Mod(\hh{\GG}),\label{eq20}\\
\TsigInn(\GG)\cap\Mod(\hh{\GG})&=\TtauInn(\GG)\cap\Mod(\hh{\GG}),\label{eq21}\\
\TsigAInn(\GG)\cap\Mod(\hh{\GG})&=\TtauAInn(\GG)\cap\Mod(\hh{\GG}),\label{eq22}\\
\Mod(\GG)\cap\Mod(\hh{\GG})&\subset\tfrac{1}{2}\Ttau(\GG).\label{eq23}
\end{align}
\end{subequations}
\end{proposition}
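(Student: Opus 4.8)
The plan is to deduce all four statements from the fundamental commutation relations between the modular and scaling groups and the two modular elements, as recorded in \cite{KustermansVaes}. The engine is the identity
\[
\sigma^\ph_t=\Ad\bigl(\hh{\delta}^{\ii t}\bigr)\comp\tau^\GG_t,\qquad t\in\RR,
\]
where $\hh{\delta}$ is the modular element of $\hh{\GG}$; this is precisely the content of \eqref{sigmatau} in the compact case, where $\Ad(\hh{\delta}^{\ii t})$ acts on a matrix coefficient $U^\alpha_{i,j}$ by the scalar $\uprho_{\alpha,j}^{2\ii t}$. Alongside it I would use the relation $\sigma^\psi_t(x)=\delta^{\ii t}\sigma^\ph_t(x)\delta^{-\ii t}$ from Remark~\ref{rem:przedProp5}, the relation $\sigma^\psi_t=R\comp\sigma^\ph_{-t}\comp R$ together with $R\comp\tau^\GG_t=\tau^\GG_t\comp R$ ($R$ the unitary antipode, $R^2=\id$), and the comultiplication identity $\Delta_\GG\comp\sigma^\ph_t=(\tau^\GG_t\tens\sigma^\ph_t)\comp\Delta_\GG$.

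Equations \eqref{eq21} and \eqref{eq22} are then almost immediate: for $t\in\Mod(\hh{\GG})$ we have $\hh{\delta}^{\ii t}=\I$, so the engine collapses to $\sigma^\ph_t=\tau^\GG_t$. Thus on the subgroup $\Mod(\hh{\GG})$ the two automorphisms coincide, so one is inner (resp.\ approximately inner) exactly when the other is; intersecting with $\Mod(\hh{\GG})$ gives \eqref{eq21} and \eqref{eq22}. The same collapse yields the inclusion $\supseteq$ in \eqref{eq20}: if $t\in\Ttau(\GG)\cap\Mod(\hh{\GG})$ then $\tau^\GG_t=\id$ and $\hh{\delta}^{\ii t}=\I$, whence $\sigma^\ph_t=\id$.

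For the reverse inclusion in \eqref{eq20} I would argue in two steps, and this is where the real work lies. Assuming $\sigma^\ph_t=\id$, the comultiplication identity becomes $(\tau^\GG_t\tens\id)\comp\Delta_\GG=\Delta_\GG$; slicing the second leg against an arbitrary $\omega\in\Lone(\GG)$ shows $\tau^\GG_t$ fixes every element $(\id\tens\omega)\Delta_\GG(x)$, and since these span a $\sigma$-weakly dense subspace of $\Linf(\GG)$ and $\tau^\GG_t$ is normal, $\tau^\GG_t=\id$, i.e.\ $t\in\Ttau(\GG)$. Feeding $\tau^\GG_t=\id$ back into the engine gives $\Ad(\hh{\delta}^{\ii t})=\id$ on $\Linf(\GG)$, that is $\hh{\delta}^{\ii t}\in\Linf(\GG)'$. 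To upgrade this to $\hh{\delta}^{\ii t}=\I$ (equivalently $t\in\Mod(\hh{\GG})$) I would use that $\hh{\delta}^{\ii t}$ is group-like, $\hh{\Delta}(\hh{\delta}^{\ii t})=\hh{\delta}^{\ii t}\tens\hh{\delta}^{\ii t}$: since $\hh{\Delta}$ is implemented by conjugation by a multiplicative unitary in $\Linf(\hh{\GG})\tens\Linf(\GG)$ and $\hh{\delta}^{\ii t}$ commutes with $\Linf(\GG)$, the unitary commutes with $\I\tens\hh{\delta}^{\ii t}$, forcing $\hh{\Delta}(\hh{\delta}^{\ii t})=\I\tens\hh{\delta}^{\ii t}$; comparing with group-likeness gives $\hh{\delta}^{\ii t}\tens\hh{\delta}^{\ii t}=\I\tens\hh{\delta}^{\ii t}$, whence $\hh{\delta}^{\ii t}=\I$.

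Finally, \eqref{eq23} follows by a short symmetry computation. Assume $\delta^{\ii t}=\I$ and $\hh{\delta}^{\ii t}=\I$. The first equality and Remark~\ref{rem:przedProp5} give $\sigma^\psi_t=\sigma^\ph_t$, while the second (using also $\hh{\delta}^{-\ii t}=\I$) and the engine give $\sigma^\ph_{\pm t}=\tau^\GG_{\pm t}$; combining these with $\sigma^\psi_t=R\comp\sigma^\ph_{-t}\comp R$ and $R\comp\tau^\GG_{-t}=\tau^\GG_{-t}\comp R$ yields
\[
\tau^\GG_t=\sigma^\ph_t=\sigma^\psi_t=R\comp\sigma^\ph_{-t}\comp R=R\comp\tau^\GG_{-t}\comp R=\tau^\GG_{-t},
\]
so that $\tau^\GG_{2t}=\tau^\GG_t\comp\tau^\GG_{-t}=\id$, i.e.\ $t\in\tfrac12\Ttau(\GG)$. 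I expect the main obstacle to be the second step of \eqref{eq20}: passing from ``$\hh{\delta}^{\ii t}$ commutes with $\Linf(\GG)$'' to ``$\hh{\delta}^{\ii t}=\I$'' genuinely requires the group-like nature of the modular element rather than the commutation relations alone, whereas everything else is bookkeeping with the standard structure maps.
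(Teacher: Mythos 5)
Your overall strategy is viable, and two of its components are genuinely different from the paper's proof: you obtain $t\in\Ttau(\GG)$ in the hard inclusion of \eqref{eq20} from the identity $\Delta_\GG\comp\sigma^\ph_t=(\tau^\GG_t\tens\sigma^\ph_t)\comp\Delta_\GG$ plus density of slices (the paper instead shows $P^{\ii{t}}\in\cZ(\Linf(\hh{\GG}))$, deduces $\tau^{\hh\GG}_t=\id$, and then forces $P^{\ii{t}}=\I$ by an argument with the scaling constant), and you prove \eqref{eq23} via the antipode symmetry $\sigma^\psi_t=R\comp\sigma^\ph_{-t}\comp R$ (the paper invokes van Daele's formula for $P^{-2\ii{t}}$). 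However, the ``engine'' on which everything rests is false as stated, and this is the one genuine gap. In the conventions of the paper the correct relations are $\modOp[\psi]^{\ii{t}}=\hh{\delta}^{-\ii{t}}P^{-\ii{t}}$, i.e.\ $\sigma^\psi_t=\Ad(\hh{\delta}^{-\ii{t}})\comp\tau^\GG_{-t}$ for the \emph{right} Haar weight, and correspondingly $\sigma^\ph_t=\Ad\bigl(J_{\hh{\ph}}\hh{\delta}^{\ii{t}}J_{\hh{\ph}}\bigr)\comp\tau^\GG_t$ for the left one (combine the previous relation with $\modOp[\psi]^{\ii{t}}=J_{\hh{\ph}}\modOp[\ph]^{-\ii{t}}J_{\hh{\ph}}$ and $PJ_{\hh{\ph}}=J_{\hh{\ph}}P$). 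The unitary that multiplies $U^\alpha_{i,j}$ by $\uprho_{\alpha,j}^{2\ii{t}}$ is $J_{\hh{\ph}}\hh{\delta}^{\ii{t}}J_{\hh{\ph}}$, which lies in $\Linf(\hh{\GG})'$, not $\hh{\delta}^{\ii{t}}$; conjugation by $\hh{\delta}^{\ii{t}}$ itself multiplies $U^\alpha_{i,j}$ by $\uprho_{\alpha,i}^{-2\ii{t}}$, because $\Ad(\hh{\delta}^{-\ii{t}})|_{\Linf(\GG)}=\sigma^\psi_t\comp\tau^\GG_t$. No choice of sign or of $\hh{\delta}$ versus $\hh{\delta}^{-1}$ repairs this: if your identity $\sigma^\ph_t=\Ad(\hh{\delta}^{\ii{t}})\comp\tau^\GG_t$ held, combining it with the correct one would give $\sigma^\ph_t=\sigma^\psi_{-t}$ for all $t$, which for compact $\GG$ says $\sigma^{\bh_\GG}_{2t}=\id$ for all $t$, i.e.\ that $\GG$ is of Kac type.

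Fortunately the damage is localized. In \eqref{eq21}, \eqref{eq22}, the inclusion $\supseteq$ of \eqref{eq20}, and in \eqref{eq23}, the error is harmless: there you only use the engine at points $t$ where $\hh{\delta}^{\ii{t}}=\I$, and $\hh{\delta}^{\ii{t}}=\I$ if and only if $J_{\hh{\ph}}\hh{\delta}^{\ii{t}}J_{\hh{\ph}}=\I$, so the corrected engine collapses to $\sigma^\ph_t=\tau^\GG_t$ exactly as you want, and those arguments go through verbatim. The genuine gap is in the second step of the hard inclusion of \eqref{eq20}: from $\sigma^\ph_t=\id$ and $\tau^\GG_t=\id$ the corrected engine yields $J_{\hh{\ph}}\hh{\delta}^{\ii{t}}J_{\hh{\ph}}\in\Linf(\GG)'$, whereas your group-likeness argument (which is correct, and which, as you anticipate, genuinely needs group-likeness of the modular element) requires $\hh{\delta}^{\ii{t}}\in\Linf(\GG)'$. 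The repair is one line: since the unitary antipode of $\GG$ is implemented as $R_\GG=J_{\hh{\ph}}(\cdot)^*J_{\hh{\ph}}$, we have $J_{\hh{\ph}}\Linf(\GG)J_{\hh{\ph}}=\Linf(\GG)$, hence $J_{\hh{\ph}}\Linf(\GG)'J_{\hh{\ph}}=\Linf(\GG)'$, and therefore $\hh{\delta}^{\ii{t}}\in\Linf(\GG)'$ after all. With the engine corrected in this way (or, as the paper does, by working with the right Haar weight and transferring via Remark \ref{rem:przedProp5}), your proof becomes complete.
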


Equation \eqref{eq20} can be seen as a ``local version'' of \cite[Lemma 6.2]{modular}.

\begin{proof}[Proof of Proposition \ref{prop5}]
Recall that for any $t\in\RR$ we have $\modOp[\psi]^{\ii{t}}=\hh{\delta}^{-\ii{t}}P^{-\ii{t}}$ (where $P$ is defined in \cite[Definition 6.9]{KustermansVaes}), thus if $t\in\Mod(\hh{\GG})$ then $\modOp[\psi]^{\ii{t}}=P^{-\ii{t}}$ and consequently $\sigma^\psi_t=\tau^\GG_{-t}$. From this we deduce \eqref{eq21}, \eqref{eq22} and containment $\supseteq$ in \eqref{eq20}.

Inclusion $\Tsig(\GG)\subset\Ttau(\GG)\cap\Mod(\hh{\GG})$ can be shown in the same way as in \cite[Lemma 6.2]{modular}, let us recall this reasoning. Take $t\in\Tsig(\GG)$. Then we have $P^{\ii{t}}={\hh{\delta}^{-\ii{t}}}\in\Linf(\hh{\GG})$. Since $P^{\ii{t}}$ commutes with $J_{\hh{\ph}}$, we have $P^{\ii{t}}\in\cZ(\Linf(\hh{\GG}))$ and consequently $\tau^{\hh{\GG}}_t=\id$, i.e.~$t\in\Ttau(\hh{\GG})$. As $P^{\ii{t}}$ implements $\tau^{\hh{\GG}}_t$ in the sense that $P^{\ii{t}}\Lambda_{\hh{\ph}}(x)=\hh{\nu}^{\bigthalf}\Lambda_{\hh{\ph}}(\tau^{\hh{\GG}}_t(x))=\hh{\nu}^{\bigthalf}\Lambda_{\hh{\ph}}(x)$ for $x$ square integrable for $\hh{\ph}$, we conclude $P^{\ii{t}}=\hh{\nu}^{\bigthalf}\I$ and taking norm of both sides gives $1=\hh{\nu}^{\bigthalf}$. Consequently $\hh{\delta}^{-\ii{t}}=\I$ and thus $t\in\Mod(\hh{\GG})$.

Inclusion \eqref{eq23} follows from the fact that $P^{-2\ii{t}}=\delta^{\ii{t}}(J_\ph\delta^{\ii{t}}J_\ph)\hh{\delta}^{\ii{t}}(J_{\hh{\ph}}\hh{\delta}^{\ii{t}}J_{\hh{\ph}})$ for all $t\in\RR$ (\cite[Theorem 5.20]{VanDaeleQGvN}).
\end{proof}

We obtain more information about the invariants if von Neumann algebra $\Linf(\GG)$ has particularly simple form.

\begin{proposition}\label{prop6}
If $\Linf(\GG)=\prod\limits_{i\in{I}}\B(\cH_i)$ for a family of Hilbert spaces $\{\cH_i\}_{i\in{I}}$, then $T^\tau_{\bullet\!}(\GG)=T^\sigma_{\bullet\!}(\GG)=\RR$ for $\bullet\in\bigl\{\oon{Inn},\oon{\overline{Inn}}\bigr\}$.
\end{proposition}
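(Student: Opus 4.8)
The plan is to show that every scaling and modular automorphism is inner (in fact already $\sigma^\ph_t$ and $\tau^\GG_t$ are inner for each fixed $t$), which immediately gives membership in both $\oon{Inn}(\Linf(\GG))$ and the larger group $\oon{\overline{Inn}}(\Linf(\GG))$, yielding $T^\tau_{\bullet}(\GG)=T^\sigma_{\bullet}(\GG)=\RR$. The point is that when $\Linf(\GG)=\prod_{i\in I}\B(\cH_i)$ is a direct product of type $\mathrm{I}$ factors, every automorphism that fixes the centre setwise and preserves each central summand is spatially implemented on each block, and a spatially implemented automorphism of $\B(\cH_i)$ is inner by construction.

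First I would recall that for a compact quantum group the formulas \eqref{sigmatau} describe exactly how $\sigma^\bh_t$ and $\tau^\GG_t$ act on matrix elements via the positive operators $\uprho_\alpha$; in the von Neumann algebraic picture $\Linf(\GG)=\prod_\alpha\B(\cH_\alpha)$ with the central summands indexed by $\Irr\GG$, and on each block $\B(\cH_\alpha)$ these automorphisms act as $x\mapsto \uprho_\alpha^{\ii t}\,x\,\uprho_\alpha^{-\ii t}$ (for $\tau$) and $x\mapsto \uprho_\alpha^{\ii t}\,x\,\uprho_\alpha^{\ii t}$ (for $\sigma$, up to the usual identification of the modular data). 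More generally, for an arbitrary locally compact quantum group with $\Linf(\GG)=\prod_{i\in I}\B(\cH_i)$, I would argue that both the scaling group and the modular group preserve each factor $\B(\cH_i)$ (they preserve the centre $\cZ(\Linf(\GG))=\prod_i\CC\I_{\cH_i}$ pointwise, since the minimal central projections are fixed), and hence restrict to automorphisms $\theta_{t,i}$ of each $\B(\cH_i)$.

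Next I would invoke the elementary fact that every normal $*$-automorphism of $\B(\cH_i)$ is spatial, i.e.\ of the form $\Ad(u_i)$ for some unitary $u_i\in\B(\cH_i)$; this is standard for type $\mathrm{I}$ factors. Assembling these, the unitary $u=\prod_i u_i\in\prod_i\B(\cH_i)=\Linf(\GG)$ implements $\sigma^\ph_t$ (respectively $\tau^\GG_t$) globally, so $\sigma^\ph_t=\Ad(u)\in\oon{Inn}(\Linf(\GG))$. Thus $t\in\TsigInn(\GG)$ for every $t\in\RR$, and likewise $t\in\TtauInn(\GG)$. Since $\oon{Inn}(\Linf(\GG))\subseteq\oon{\overline{Inn}}(\Linf(\GG))$, the same $t$ lies in $\TsigAInn(\GG)$ and $\TtauAInn(\GG)$, which gives all four equalities to $\RR$.

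The main point requiring care — more a matter of correct bookkeeping than a genuine obstacle — is verifying that the scaling and modular automorphisms really do fix each minimal central projection and therefore restrict to each factor $\B(\cH_i)$, so that the blockwise implementers can be assembled into a single unitary in the von Neumann algebraic product $\prod_i\B(\cH_i)$ (rather than merely in $\bigoplus_i\B(\cH_i)$). For the scaling and modular groups of a compact quantum group this is transparent from \eqref{sigmatau}, since these automorphisms map each $\B(\cH_\alpha)$ into itself; the assembled implementer is the bounded net of block unitaries $(\uprho_\alpha^{\ii t})_\alpha$, which is an element of the product precisely because it is uniformly bounded. In the general locally compact case one uses that any automorphism in the scaling or modular group, being normal, permutes the minimal central projections and is weakly continuous in $t$, forcing it to fix each such projection.
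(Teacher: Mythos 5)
Your proof is correct and follows essentially the same route as the paper: show that the scaling (resp.\ modular) flow fixes each minimal central projection by normality plus continuity in $t$, restrict to the blocks $\B(\cH_i)$, invoke the fact that every automorphism of $\B(\cH_i)$ is inner, and assemble the block unitaries into a single unitary of the product, whence innerness and a fortiori approximate innerness for every $t$. One non-load-bearing caveat: your motivational remark identifying $\Linf(\GG)$ with $\prod_{\alpha\in\Irr{\GG}}\B(\cH_\alpha)$ for a compact quantum group conflates $\Linf(\GG)$ with $\ell^\infty(\hh{\GG})$ (the formulas \eqref{sigmatau} describe the action on matrix elements of $\Linf(\GG)$, which is typically not a product of matrix algebras, and the block action $x\mapsto\uprho_\alpha^{\ii{t}}x\uprho_\alpha^{-\ii{t}}$ is that of the \emph{dual discrete} quantum group on $\ell^\infty(\hh{\GG})$); your general locally compact argument is what actually carries the proof, and it is sound.
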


The above result applies in particular to discrete quantum groups.

\begin{proof}[Proof of Proposition \ref{prop6}]
Let $\tilde{\tau}_t$ be the scaling group of $\GG$ restricted to the center of $\prod\limits_{i\in{I}}\B(\cH_i)$ which is isomorphic to $\ell^{\infty}(I)$. Each $\tilde{\tau}_t$ is an automorphism, hence it corresponds to a bijection $I\to{I}$. Since $\{\tilde{\tau}_t\}_{t\in\RR}$ is point ultraweakly continuous, we conclude that $\tilde{\tau}_t=\id$. Let $p_i$ be the central projection corresponding to $\B(\cH_i)\subset\prod\limits_{j\in{I}}\B(\cH_j)$. For any $x\in\B(\cH_i)$ we have
\[
\tau^\GG_t(x)=\tau^\GG_t(p_ix)=\tilde{\tau}_t(p_i)\tau^\GG_t(x)=p_i\tau^\GG_t(x),
\]
and hence $\tau^\GG_t$ restricts to an automorphism of $\B(\cH_i)$. As all automorphisms of $\B(\cH_i)$ are inner (\cite[Corollary 2.9.32]{Sakai}) we can choose unitary $u_{i,t}\in\B(\cH_i)$ such that $\bigl.\tau^\GG_t\bigr|_{\B(\cH_i)}=\Ad(u_{i,t})$. Since the series $\sum\limits_{i\in{I}}u_{i,t}$ convergences in the ultraweak topology, we conclude that $\tau^\GG_t=\Ad\Bigl(\sum\limits_{i\in{I}}u_{i,t}\Bigr)$ is inner and $\TtauInn(\GG)=\RR$. The argument for $\TsigInn(\GG)$ is analogous (alternatively, observe that $\prod\limits_{i\in{I}}\B(\cH_i)$ is semifinite).
\end{proof}

\begin{remark}\label{remark1}
Given a locally compact quantum group $\GG$ we have fourteen associated invariants, namely $T^\circ_{\bullet\!}(\GG)$ with $\circ\in\{\tau,\sigma\}$ and $\bullet\in\bigl\{\, \,,\oon{Inn},\oon{\overline{Inn}}\bigr\}$, $\Mod(\GG)$ and similar invariants for $\hh{\GG}$. Equality \eqref{eq20} and $\Ttau(\GG)=\Ttau(\hh{\GG})$ reduces this number to eleven. If $\GG$ is compact, then propositions \ref{prop5}, \ref{prop6} further reduce this list and all the information is contained in six invariants:
\[
\Ttau(\GG),\quad
\TtauInn(\GG),\quad
\TtauAInn(\GG),\quad
\TsigInn(\GG),\quad
\TsigAInn(\GG),\quad
\Mod(\hh{\GG}).
\]
If $\GG$ is compact and $\Linf(\GG)$ is semifinite (e.g.~$\GG=G_q$ or more generally $\hh{\GG}$ is type $\mathrm{I}$, see Sections \ref{sec:typeI}, \ref{sec:qDeformations}) then $\TsigInn(\GG)=\TsigAInn(\GG)=\RR$ and the list reduces to four invariants.
\end{remark}

\section{Does \texorpdfstring{$\TtauInn(\GG)=\RR$}{T(G)=R} characterize Kac type?}

At the end of \cite[Section 5]{faktory} we posed two questions, one of which concerns the invariant $\TtauInn$. More precisely for a compact quantum group $\GG$ we asked whether $\GG$ is of Kac type whenever $\TtauInn(\GG)=\RR$. We also noted that this question will certainly have the negative answer without an additional assumption like that $\GG$ is second countable (see below). One of the aims of this paper is to show that this question has a positive answer for a broad class of compact quantum groups. Let us formulate the following conjecture:

\begin{conjecture}\label{conj:main}
Let $\GG$ be a second countable compact quantum group. If $\TtauInn(\GG)=\RR$ then $\GG$ is of Kac type.
\end{conjecture}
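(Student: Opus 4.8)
The plan is to recast the conclusion ``$\GG$ is of Kac type'' as the single statement $\tau^\GG=\id$ (equivalently $\Ttau(\GG)=\RR$, equivalently $\uprho_\alpha=\I$ for every $\alpha\in\Irr{\GG}$, which is immediate from \eqref{sigmatau} together with $\Tr\uprho_\alpha=\Tr\uprho_\alpha^{-1}$), so that the conjecture becomes the implication $\TtauInn(\GG)=\RR\Rightarrow\Ttau(\GG)=\RR$. In other words, one must rule out a scaling group that is non-trivial yet inner at every real time. I would first fix the spatial picture: on $\Ltwo(\GG)$ the operator $P^{\ii t}$ implements $\tau^\GG_t$, fixes $\Omega_\GG$, and acts on the Peter--Weyl vectors by $P^{\ii t}(U^\alpha_{i,j}\Omega_\GG)=\uprho_{\alpha,i}^{\ii t}\uprho_{\alpha,j}^{-\ii t}\,U^\alpha_{i,j}\Omega_\GG$. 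The ``obvious'' two-sided implementer of $\tau^\GG_t$, built from the operators $\uprho_\alpha^{\ii t}$, is bounded but belongs to the dual algebra $\Linf(\hh{\GG})$ rather than to $\Linf(\GG)$; it is precisely the demand that an implementer sit \emph{inside} $\Linf(\GG)$ simultaneously for all $t$ that must be exploited.

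The second step is a comparison of spatial implementations. Since $P^{\ii t}$ commutes with $J_\ph$ and with $\modOp[\ph]$ and fixes $\Omega_\GG$, it preserves the natural positive cone and is therefore the canonical standard-form implementation of $\tau^\GG_t$. Assuming $\TtauInn(\GG)=\RR$, choose for each $t$ a unitary $u_t\in\Linf(\GG)$ with $\tau^\GG_t=\Ad(u_t)$; by uniqueness of the canonical implementation this yields $P^{\ii t}=u_tJ_\ph u_tJ_\ph$. Applying both sides to $\Omega_\GG$ and using $J_\ph\Omega_\GG=\Omega_\GG$ gives $J_\ph u_t\Omega_\GG=u_t^*\Omega_\GG$, whence $\modOphalf[\ph]u_t\Omega_\GG=u_t\Omega_\GG$, so that every $u_t$ lies in the centralizer $\Linf(\GG)^\sigma$. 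This already couples the problem to Connes' invariant $T(\Linf(\GG))=\TsigInn(\GG)$ (Remark \ref{rem:przedProp5}, item (3)) through \eqref{eq20} and \eqref{eq21}.

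The third step is to control the family $(u_t)_t$ globally. Second countability gives $\Linf(\GG)$ separable predual and lets me choose $t\mapsto u_t$ Borel. Because the canonical implementation is multiplicative, $u_{s+t}J_\ph u_{s+t}J_\ph=(u_su_t)J_\ph(u_su_t)J_\ph$, forcing $u_{s+t}=z(s,t)\,u_su_t$ with $z(s,t)$ a unitary in $\cZ(\Linf(\GG))$ (a scalar when $\Linf(\GG)$ is a factor). When $\Linf(\GG)$ is a factor this exhibits $(u_t)$ as a measurable projective one-parameter representation, and the vanishing of $H^2(\RR,\TT)$ lets me rescale the phases to obtain a genuine, hence strongly continuous, one-parameter unitary group $v_t\in\Linf(\GG)^\sigma$ with $\tau^\GG_t=\Ad(v_t)$.

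I expect the remaining step to be the main obstacle, and it is exactly what keeps the statement at the level of a conjecture. With a group $v_t$ as above, $w_t:=v_t^*P^{\ii t}$ lies in $\Linf(\GG)'$, so $P^{\ii t}$ factorizes as $v_t\,w_t$ with $v_t\in\Linf(\GG)$ and $w_t\in\Linf(\GG)'$. Confronting this factorization with the explicit two-sided action $U^\alpha_{i,j}\Omega_\GG\mapsto\uprho_{\alpha,i}^{\ii t}\uprho_{\alpha,j}^{-\ii t}U^\alpha_{i,j}\Omega_\GG$ — whose left and right $\uprho$-parts naturally belong to $\Linf(\hh{\GG})$ rather than to $\Linf(\GG)$ — is where one hopes to force each $\uprho_\alpha$ to be a scalar, and then $\I$ since a positive scalar matrix with $\Tr\uprho_\alpha=\Tr\uprho_\alpha^{-1}$ is the identity, giving $\tau^\GG=\id$. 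When $\Linf(\GG)$ is semifinite (for instance when $\hh{\GG}$ is type $\mathrm I$) the cocycle splits automatically and, in combination with Proposition \ref{prop6}, the argument can be pushed through, which is the content of Section \ref{sec:typeI}. In general, however, $\Linf(\GG)$ may be a type $\mathrm{III}$ factor: there the interplay with $\TsigInn(\GG)$ becomes essential, the above factorization no longer pins down $\uprho$, and controlling the resulting approximately inner behaviour is exactly the step that resists a general argument and forces the extra hypotheses used in Sections \ref{sect:UqU} and \ref{sec:typeI}.
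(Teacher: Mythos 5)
The statement you were asked to prove is labelled a \emph{conjecture} in the paper, and the paper itself does not prove it: the authors state explicitly that they cannot establish it in full generality and only verify it for special classes (quantum groups admitting a representation with $2=\dim U<\qdim U$, duals of second countable type $\mathrm{I}$ discrete quantum groups, $\oon{U}_F^+$, and duals of discrete quantum groups satisfying an i.c.c.-type condition). Your proposal, by your own admission in the final paragraph, also stops short of a proof: the factorization $P^{\ii t}=v_t w_t$ with $v_t\in\Linf(\GG)$, $w_t\in\Linf(\GG)'$ is not by itself enough to force $\uprho_\alpha=\I$, and you do not supply the missing argument. So there is no complete proof on either side to compare; what can be compared is your preparatory strategy against the ingredients the paper actually uses in its partial results.

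On that score your first three steps are essentially sound and close to the paper's toolkit, with two remarks. First, your conclusion that any unitary $u_t$ implementing $\tau^\GG_t$ lies in the centralizer $\Linf(\GG)^\sigma$ is exactly Lemma \ref{lem:centrv}; the paper proves it via \cite[Lemma 5.4]{VaesRN} (a Radon--Nikodym-type argument comparing $\ph(vx)$ with $\ph(xv)$), whereas you derive it from uniqueness of the standard-form implementation, which works but requires you to actually verify that $P^{\ii t}$ preserves the positive cone. Second, your step three is weaker than necessary: you invoke factoriality of $\Linf(\GG)$ and vanishing of $H^2(\RR,\TT)$ to upgrade the measurable family $(u_t)$ to a strongly continuous one-parameter group, but Kallman's theorem (\cite[Theorem 0.1]{Kallman}), which the paper uses directly in the proof of Theorem \ref{thm:Un}, gives this for any von Neumann algebra with separable predual, no factoriality needed. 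Where your sketch diverges from the paper is in what happens next: the paper's partial results do not proceed through your commutant factorization at all. Theorem \ref{thm:Un} runs a quantitative $\|\cdot\|_2$-estimate against a sequence of irreducibles with $\Gamma(U^n)\to\infty$, and Section \ref{sec:typeI} uses the Plancherel decomposition and spectral analysis of the operators $D_\pi$ to show that the numbers $\Gamma(\beta)^{2\ii t}$ form a finite set, hence are roots of unity. In particular your parenthetical claim that the type $\mathrm{I}$ case follows because ``the cocycle splits automatically'' in the semifinite situation, combined with Proposition \ref{prop6}, misdescribes the paper: Proposition \ref{prop6} concerns algebras of the form $\prod_i\B(\cH_i)$ (i.e.\ discrete quantum groups, not their duals), and semifiniteness of $\Linf(\GG)$ only yields $\TsigInn(\GG)=\RR$, which says nothing about the \emph{scaling} group being inner; the real work in Section \ref{sec:typeI} is the eigenspace-shifting argument for the operators $U^\alpha_{i,j;\pi}$ relative to $D_\pi$ and $v_{t;\pi}$.
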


Conjecture \ref{conj:main} could be compared with the following famous result in theory of von Neumann algebras. Let $\rM$ be a von Neumann algebra with separable predual. Then Connes' invariant $T(\rM)$ equals $\RR$ if and only if $\rM$ is semifinite, i.e.~admits a tracial n.s.f.~weight (\cite[Theorem VIII.3.14]{Takesaki2}). So far we cannot prove Conjecture \ref{conj:main} in full generality, however, as mentioned above, in subsequent sections we will show its validity for a large class of compact quantum groups.

\begin{remark}
As noted in \cite[Question 5.32(1)]{faktory}, one can easily construct a non-Kac type compact quantum group $\GG$ with $\TtauInn(\GG)=\RR$. For example, fix $0<q<1$ and let $\RR_{\text{\tiny{disc}}}$ be the group of real numbers with discrete topology, acting on $\Linf(\oon{SU}_q(2))$ via the scaling automorphisms. Then the bicrossed product $\RR_{\text{\tiny{disc}}}\bowtie\oon{SU}_q(2)$ is a non-Kac type compact quantum group with all scaling automorphisms inner -- it is however not second countable.
\end{remark}

\subsection{Quantum groups with a representation \texorpdfstring{$U$}{U} such that \texorpdfstring{$2=\dim{U}<\qdim{U}$}{2=dimU<dimqU}}\label{sect:UqU}\hspace*{\fill}

Consider a compact quantum group $\GG$ with an irreducible representation $U$ such that $\qdim{U}>\dim{U}$. Then clearly $\GG$ is not of Kac type and hence $\Ttau(\GG)\neq\RR$. For such compact quantum groups the validity of Conjecture \ref{conj:main} boils down to whether $\TtauInn(\GG)=\RR$ or not. In this section we will demonstrate that the conjecture holds in the case of compact quantum groups which posses a sequence of irreducible representations satisfying certain conditions (listed in Theorem \ref{thm:Un} below). Later we will show that such a sequence of irreducible representations can be constructed for compact quantum groups with a two dimensional representation whose quantum dimension is strictly greater than two (such a representation must be irreducible).

We begin with a simple lemma which we can prove in the general setting of locally compact quantum groups.

\begin{lemma}\label{lem:centrv}
Let $\HH$ be a locally compact quantum group and let $v\in\Linf(\HH)$ be a unitary such that $\tau_t^\HH=\Ad(v)=v\cdot{v^*}$ for some $t\in\RR$. Then $\sigma^{\varphi}_s(v)=\sigma^{\psi}_s(v)=\nu_{\HH}^{-ist}v$ for all $s\in\RR$, where $\nu_{\HH}$ is the scaling constant of $\HH$. In particular, if $\HH$ is compact or discrete then $v\in\Linf(\HH)^\sigma$.
\end{lemma}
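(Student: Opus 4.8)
The plan is to identify $v$ as an eigenvector of the modular groups by computing a Connes--Radon--Nikodym cocycle in two different ways. The starting observation is that the hypothesis $\tau^\HH_t=\Ad(v)$ turns the scaling behaviour of the Haar weight into a statement about an inner perturbation of $\varphi$. Since $\Ad(v)$ is an automorphism of $\Linf(\HH)$, the composition $\varphi\circ\Ad(v)$ is again an n.s.f.\ weight, and by the standard scaling relation $\varphi\circ\tau^\HH_t=\nu_\HH^{-t}\varphi$ we have
\[
\varphi\circ\Ad(v)=\varphi\circ\tau^\HH_t=\nu_\HH^{-t}\varphi .
\]
First I would compute the cocycle $\bigl[D(\varphi\circ\Ad v):D\varphi\bigr]_s$. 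On the one hand, the standard formula for the Radon--Nikodym cocycle of a weight perturbed by an inner automorphism gives $\bigl[D(\varphi\circ\Ad v):D\varphi\bigr]_s=v^*\sigma^\varphi_s(v)$; the quickest way to see the shape of this expression is to note that $\sigma^{\varphi\circ\Ad v}_s=\Ad(v^*)\circ\sigma^\varphi_s\circ\Ad(v)=\Ad\bigl(v^*\sigma^\varphi_s(v)\bigr)\circ\sigma^\varphi_s$ and that $s\mapsto v^*\sigma^\varphi_s(v)$ satisfies the cocycle identity. On the other hand, since $\varphi\circ\Ad(v)=\nu_\HH^{-t}\varphi$ is a positive scalar multiple of $\varphi$, its cocycle with respect to $\varphi$ is the scalar $\bigl[D(\nu_\HH^{-t}\varphi):D\varphi\bigr]_s=\nu_\HH^{-ist}$.

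Comparing the two expressions and using that the cocycle is uniquely determined by the ordered pair of weights yields
\[
v^*\sigma^\varphi_s(v)=\nu_\HH^{-ist}\I,\qquad\text{equivalently}\qquad\sigma^\varphi_s(v)=\nu_\HH^{-ist}v,\quad s\in\RR.
\]
The argument for $\sigma^\psi$ is verbatim the same, with the right Haar weight $\psi$ in place of $\varphi$ and the relation $\psi\circ\tau^\HH_t=\nu_\HH^{-t}\psi$ (which follows from $\varphi\circ\tau^\HH_t=\nu_\HH^{-t}\varphi$ together with $\psi=\varphi\circ R$ and $R\circ\tau^\HH_t=\tau^\HH_t\circ R$); this gives $\sigma^\psi_s(v)=\nu_\HH^{-ist}v$ as well. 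One could instead try to pass from $\sigma^\varphi$ to $\sigma^\psi$ through $\sigma^\psi_s(x)=\delta^{is}\sigma^\varphi_s(x)\delta^{-is}$, but this would require knowing that $v$ commutes with $\delta^{is}$, so running the cocycle computation directly for $\psi$ is cleaner.

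Finally, for the ``in particular'' clause I would recall that the scaling constant of a compact (or discrete) quantum group equals $1$: for compact $\HH$ the Haar state is $\tau^\HH$-invariant, so $\nu_\HH^{-t}=1$ for all $t$, and the discrete case follows likewise. Hence $\nu_\HH^{-ist}=1$ and the two formulas above collapse to $\sigma^\varphi_s(v)=\sigma^\psi_s(v)=v$ for every $s$, which is exactly the statement $v\in\Linf(\HH)^\sigma$.

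I expect the only delicate point to be the correct bookkeeping of the cocycle. Merely equating the two modular groups $\sigma^{\varphi\circ\Ad v}_s$ and $\sigma^{\nu_\HH^{-t}\varphi}_s$ shows only that $v^*\sigma^\varphi_s(v)$ is a \emph{central} unitary; what upgrades this to the precise scalar $\nu_\HH^{-ist}$ is the uniqueness of the Radon--Nikodym cocycle attached to an ordered pair of n.s.f.\ weights. Care is therefore needed to invoke (and to match the conventions of) the standard inner-perturbation formula $\bigl[D(\varphi\circ\Ad v):D\varphi\bigr]_s=v^*\sigma^\varphi_s(v)$ and the normalisation $\bigl[D(\lambda\varphi):D\varphi\bigr]_s=\lambda^{is}$, so that the sign in the exponent comes out as claimed.
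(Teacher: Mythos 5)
Your proof is correct, and the conclusion (including the sign in $\nu_\HH^{-ist}$, given the Kustermans--Vaes convention $\varphi\comp\tau^\HH_t=\nu_\HH^{-t}\varphi$) matches the lemma; but your route is genuinely different from the paper's. Both arguments start from the same relative invariance of the Haar weights under the scaling group, yet the paper never passes to Connes cocycles: it first checks the domain conditions ($yv,yv^*\in\mathfrak{N}_\varphi$ for $y\in\mathfrak{N}_\varphi$, whence $vx,xv\in\mathfrak{M}_\varphi$ for $x\in\mathfrak{M}_\varphi$), derives from $\tau^\HH_t=\Ad(v)$ the twisted commutation relation $\varphi(vx)=\nu_\HH^{-t}\varphi(xv)$ for all $x\in\mathfrak{M}_\varphi$, and then quotes \cite[Lemma 5.4]{VaesRN}, which converts exactly this relation into $\sigma^\varphi_s(v)=\nu_\HH^{-ist}v$; the weight $\psi$ is handled by rerunning the same argument, as you also do. Your argument instead records the equality of weights $\varphi\comp\Ad(v)=\varphi\comp\tau^\HH_t=\nu_\HH^{-t}\varphi$ and compares the two standard cocycle evaluations $\bigl[D(\varphi\comp\Ad{v}):D\varphi\bigr]_s=v^*\sigma^\varphi_s(v)$ and $\bigl[D(\nu_\HH^{-t}\varphi):D\varphi\bigr]_s=\nu_\HH^{-ist}$, using that the Connes cocycle depends only on the ordered pair of n.s.f.\ weights. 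What your approach buys: no integrability bookkeeping at all, since $\Ad(v)$ is an automorphism and $\varphi\comp\Ad(v)$ is automatically n.s.f., so everything reduces to textbook Tomita--Takesaki facts; what the paper's approach buys: it avoids the inner-perturbation formula (whose proof via balanced weights is the nontrivial input you are implicitly relying on, as you correctly flag in your closing paragraph) and rests instead on a single lemma of Vaes tailored to such twisted commutation relations. Your derivation of $\psi\comp\tau^\HH_t=\nu_\HH^{-t}\psi$ from $\psi=\varphi\comp R_\HH$ and $R_\HH\comp\tau^\HH_t=\tau^\HH_t\comp R_\HH$, and your handling of the final clause via triviality of the scaling constant in the compact and discrete cases, agree with the paper.
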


\begin{proof}
Take $y\in\mathfrak{N}_{\varphi}$. Using $\varphi\comp\tau_{-t}^{\HH}=\nu_{\HH}^{t}\varphi$ (see \cite{KustermansVaesVNA}) we have
\[
\varphi\bigl((yv)^*(yv)\bigr)=\varphi(v^*y^*yv)=\bigl(\varphi\comp\tau_{-t}^{\HH}\bigr)(y^*y)=\nu_{\HH}^{t}\varphi(y^*y)<+\infty,
\]
and hence $yv\in\mathfrak{N}_{\varphi}$. Similarly we check $yv^*\in\mathfrak{N}_{\varphi}$. Take $x\in\mathfrak{M}_{\varphi}$. The above properties imply $xv,vx\in\mathfrak{M}_{\varphi}$. Furthermore
\[
\varphi(vx)=\varphi(vxvv^*)=\bigl(\varphi\comp\tau_t^{\HH}\bigr)(xv)=\nu_{\HH}^{-t}\varphi(xv),
\]
hence \cite[Lemma 5.4]{VaesRN} gives $\sigma^{\varphi}_s(v)=\nu_{\HH}^{-its}v$ for $s\in\RR$. The second claim can be proved in an analogous way using $\psi\comp\tau^{\HH}_{-t}=\nu_{\HH}^t\psi$. The last statement is implied by the fact that compact and discrete quantum groups have trivial scaling constant.
\end{proof}

In what follows we will use the following notation: if $U$ is a representation of a compact quantum group $\GG$ and $\uprho_U$ the associated positive operator describing the modular properties of the Haar measure (\cite[Section 1.4]{NeshveyevTuset}) then we write
\begin{itemize}
\item $\Gamma(U)=\max\oon{Sp}\uprho_U=\|\uprho_U\|$,
\item $\gamma(U)=\min\oon{Sp}\uprho_U=\|\uprho_U^{-1}\|^{-1}$.
\end{itemize}
Note that $\Gamma(\overline{U})=\gamma(U)^{-1}$. Furthermore $\Gamma(U)$ and $\gamma(U)$ only depend on the (unitary) equivalence class of $U$ and hence we may use notation like $\Gamma(\alpha)$ and $\gamma(\alpha)$ for an equivalence class $\alpha$ of representations of $\GG$.

\begin{theorem}\label{thm:Un}
Let $\GG$ be a second countable compact quantum group and let $\{U^n\}_{n\in\NN}$ be a family of irreducible representations of $\GG$ such that
\begin{enumerate}
\item $\Gamma(U^n)\xrightarrow[n\to\infty]{}+\infty$,
\item $\inf\limits_{n\in\NN}\Bigl(\tfrac{1}{\gamma(U^n)\qdim{U^n}}\,\tfrac{\Gamma(U^n)}{\qdim{U^n}}\Bigr)>0$.
\end{enumerate}
Then $\TtauInn(\GG)\neq\RR$.
\end{theorem}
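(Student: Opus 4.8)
The plan is to argue by contradiction: assume $\TtauInn(\GG)=\RR$, so that \emph{every} $\tau^\GG_t$ is inner, and derive a contradiction from hypotheses (1)--(2). The first step is an \emph{untwisting}. Since $\GG$ is second countable, $\Linf(\GG)$ has separable predual, and a standard cohomological argument in the spirit of Connes' proof that $T(\rM)=\RR$ forces $\rM$ to be semifinite lets one choose the implementing unitaries so that they form a $\sigma$-strongly continuous one-parameter group $(v_t)_{t\in\RR}$ in $\Linf(\GG)$ with $\tau^\GG_t=\Ad(v_t)$. By Lemma \ref{lem:centrv} each $v_t$ lies in the centralizer $\Linf(\GG)^\sigma$, so by Stone's theorem $v_t=\ee^{\ii tH}$ for a self-adjoint $H$ affiliated with $\Linf(\GG)^\sigma$. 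The structural point is that $\Linf(\GG)^\sigma$ is \emph{finite}: the Haar state restricts to a faithful normal trace $\tau_{\mathrm{tr}}=\bh_\GG|_{\Linf(\GG)^\sigma}$, so $\mu:=\tau_{\mathrm{tr}}\comp E_H$ (with $E_H$ the spectral measure of $H$) is a Borel probability measure on $\RR$.

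Next I introduce corner elements. For each $n$ fix a representative $U^n$ with $\uprho_{U^n}$ diagonal and eigenvalues in increasing order, put $d_n=\dim U^n$, and set $a_n:=U^n_{d_n,1}$, the matrix coefficient whose row carries the largest eigenvalue $\Gamma(U^n)$ and whose column carries the smallest eigenvalue $\gamma(U^n)$. By \eqref{sigmatau}, $\tau^\GG_t(a_n)=\bigl(\Gamma(U^n)/\gamma(U^n)\bigr)^{\ii t}a_n$, i.e.\ $\ee^{\ii tH}a_n\ee^{-\ii tH}=\ee^{\ii ts_n}a_n$ with $s_n:=\log\bigl(\Gamma(U^n)/\gamma(U^n)\bigr)$; equivalently $a_nf(H)=f(H-s_n)a_n$ for every bounded Borel $f$, so $a_n$ shifts the spectrum of $H$ upward by $s_n$. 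Since (2) gives $\Gamma(U^n)/\qdim U^n\ge c$ and $1/(\gamma(U^n)\qdim U^n)\ge c$ for some $c>0$ (each factor being $\le1$), we get $\Gamma(U^n)/\gamma(U^n)\ge c^2\qdim(U^n)^2\ge c^2\Gamma(U^n)^2\to+\infty$ by (1), so $s_n\to+\infty$. Moreover $a_n^*a_n$ and $a_na_n^*$ are $\sigma^\bh$-invariant, hence lie in $\Linf(\GG)^\sigma$ and commute with $H$; let $q_n,p_n\in\Linf(\GG)^\sigma$ be their support projections. The orthogonality relations give $\bh_\GG(a_n^*a_n)=\tfrac{1}{\gamma(U^n)\qdim U^n}$ and $\bh_\GG(a_na_n^*)=\tfrac{\Gamma(U^n)}{\qdim U^n}$, whose \emph{product} is exactly the expression in (2); hence $\tau_{\mathrm{tr}}(q_n)\ge\bh_\GG(a_n^*a_n)\ge c$ and $\tau_{\mathrm{tr}}(p_n)\ge\bh_\GG(a_na_n^*)\ge c$.

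Now I transport spectral mass. Writing $a_n=u_n\lvert a_n\rvert$ for the polar decomposition, $u_n$ is a partial isometry with $u_n^*u_n=q_n$, $u_nu_n^*=p_n$, $\sigma^\bh_t(u_n)=(\Gamma(U^n)\gamma(U^n))^{\ii t}u_n$, and the same upward shift by $s_n$. For a Borel set $B$ the partial isometry $w:=u_nE_H(B)$ satisfies $w^*w=q_nE_H(B)$ and $ww^*=p_nE_H(B+s_n)$, and the KMS property of $\bh_\GG$ yields $\bh_\GG(ww^*)=\Gamma(U^n)\gamma(U^n)\,\bh_\GG(w^*w)$. Writing $\mu_{q_n}(B)=\tau_{\mathrm{tr}}(q_nE_H(B))\le\mu(B)$, and similarly $\mu_{p_n}$, this reads
\[
\mu_{p_n}(B+s_n)=\Gamma(U^n)\gamma(U^n)\,\mu_{q_n}(B).
\]
Fix $R$ with $\mu([-R,R])\ge1-\tfrac{c}{2}$. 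From $\tau_{\mathrm{tr}}(p_n)\ge c$ we get $\mu_{p_n}([-R,R])\ge\tfrac{c}{2}$; taking $B=[-R-s_n,R-s_n]$ above gives $\mu_{q_n}([-R-s_n,R-s_n])=(\Gamma(U^n)\gamma(U^n))^{-1}\mu_{p_n}([-R,R])\ge(\Gamma(U^n)\gamma(U^n))^{-1}\tfrac{c}{2}$, and $\mu\le1$ forces $\Gamma(U^n)\gamma(U^n)\ge\tfrac{c}{2}$ for all $n$. Feeding this back, from $\tau_{\mathrm{tr}}(q_n)\ge c$ we get $\mu_{q_n}([-R,R])\ge\tfrac{c}{2}$, whence
\[
\mu\bigl([-R+s_n,R+s_n]\bigr)\ge\mu_{p_n}\bigl([-R+s_n,R+s_n]\bigr)=\Gamma(U^n)\gamma(U^n)\,\mu_{q_n}([-R,R])\ge\tfrac{c^2}{4}.
\]
Passing to a subsequence with $s_{n_{k+1}}-s_{n_k}>2R$, the intervals $[-R+s_{n_k},R+s_{n_k}]$ are pairwise disjoint and each carries $\mu$-mass $\ge c^2/4$, contradicting $\mu(\RR)=1$. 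This proves $\TtauInn(\GG)\neq\RR$.

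The hard part is the untwisting step: passing from ``each $\tau^\GG_t$ is inner'' to a genuinely \emph{continuous one-parameter group} of implementers in $\Linf(\GG)$, i.e.\ to a single self-adjoint generator $H$. This is exactly where second countability enters, and it requires the Connes-type cohomological vanishing argument. Its necessity is conceptual rather than technical: a single implementing unitary would only yield an operator with spectrum in the circle, around which the shifts $s_n$ merely wind and the measure-theoretic contradiction collapses, so it is essential that $\oon{Sp}H\subset\RR$ is unbounded. The remainder is comparatively robust: hypothesis (2) is designed to be precisely the product $\bh_\GG(a_n^*a_n)\,\bh_\GG(a_na_n^*)$, securing uniform lower bounds on $\tau_{\mathrm{tr}}(q_n)$ and $\tau_{\mathrm{tr}}(p_n)$, while hypothesis (1) guarantees $s_n\to+\infty$ so that the transported spectral mass escapes to infinity.
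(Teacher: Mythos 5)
Your proposal shares only its first step with the paper's proof: both convert the assumption $\TtauInn(\GG)=\RR$ into a strongly continuous one-parameter group $(v_t)_{t\in\RR}$ of unitaries in $\Linf(\GG)$ with $\tau^\GG_t=\Ad(v_t)$. The paper obtains this by citing \cite[Theorem 0.1]{Kallman}; that result is exactly the ``untwisting'' you describe, so you should simply cite it rather than gesture at a Connes-type cohomological argument. After that the two proofs genuinely diverge. The paper never passes to a generator: it doubles the group to $\HH=\GG\times\GG$, works with the $\sigma^{\bh_\HH}$-invariant elements $X_n=U^n_{1,d_n}\tens(\overline{U^n})_{d_n,1}$, whose squared $\Ltwo$-norm is exactly the quantity in hypothesis (2), and combines $J$-conjugation estimates with strong continuity of $t\mapsto v_t$ at $t=0$ to reach a contradiction from $|(\gamma(U^n)/\Gamma(U^n))^{2\ii{t}}-1|\,c\leq 4\|v_t-\I\|_2$. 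You instead take the Stone generator $H$ of $(v_t)$, place it in the centralizer via Lemma \ref{lem:centrv}, use that $\bh_\GG$ restricts to a trace there to define the probability measure $\mu=\bh_\GG\comp{E_H}$, and let corner matrix elements shift spectral mass of $H$ by $s_n\to\infty$; the contradiction is measure-theoretic (infinitely many disjoint intervals each of mass at least $c^2/4$). Your route avoids the doubling trick and the $J$-estimates, replacing them by the KMS identity $\bh_\GG(ww^*)=\Gamma(U^n)\gamma(U^n)\,\bh_\GG(w^*w)$, and even produces the by-product $\Gamma(U^n)\gamma(U^n)\geq c/2$; the paper's tensor trick buys exact $\sigma$-invariance and much shorter bookkeeping. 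Both proofs use hypotheses (1) and (2) for the same purposes.

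There is, however, one concrete error you must repair: you attach the orthogonality values to the wrong corner. Under the paper's conventions \eqref{sigmatau} --- which you invoke for the action of $\tau^\GG$ and $\sigma^{\bh}$ --- the orthogonality relations read $\bh_\GG\bigl({U^n_{i,j}}^*U^n_{i,j}\bigr)=(\uprho_{U^n}^{-1})_{i,i}/\qdim{U^n}$: the \emph{row} index governs. For your $a_n=U^n_{d_n,1}$ this gives $\bh_\GG(a_n^*a_n)=\tfrac{1}{\Gamma(U^n)\qdim{U^n}}$ and, by KMS, $\bh_\GG(a_na_n^*)=\tfrac{\gamma(U^n)}{\qdim{U^n}}$; their product is $\tfrac{\gamma(U^n)}{\Gamma(U^n)(\qdim{U^n})^2}$, which is not the quantity in hypothesis (2) and tends to $0$, so the crucial lower bounds $\bh_\GG(q_n)\geq c$ and $\bh_\GG(p_n)\geq c$ fail as written. (The column-indexed rule you implicitly used is not a possible convention: it would contradict $\sum_i\bh_\GG\bigl({U^n_{i,j}}^*U^n_{i,j}\bigr)=\bh_\GG\bigl(({U^n}^*U^n)_{j,j}\bigr)=1$.) The fix is a one-line swap: take $a_n=U^n_{1,d_n}$, the very element the paper uses. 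Then $\bh_\GG(a_n^*a_n)=\tfrac{1}{\gamma(U^n)\qdim{U^n}}$ and $\bh_\GG(a_na_n^*)=\tfrac{\Gamma(U^n)}{\qdim{U^n}}$ are exactly your claimed values, while $\tau^\GG_t(a_n)=(\gamma(U^n)/\Gamma(U^n))^{\ii{t}}a_n$, so the spectral shift is by $-s_n$ rather than $+s_n$ and the mass escapes to $-\infty$ instead of $+\infty$; every other step of your argument (the $\sigma$-behaviour, the KMS factor $\Gamma(U^n)\gamma(U^n)$, and the transport estimates) goes through verbatim. With this correction your proof is complete and correct.
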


\begin{proof}
We argue by contradiction: assume that $\TtauInn(\GG)=\RR$. Then by \cite[Theorem 0.1]{Kallman} there exists a strongly continuous one-parameter group $\{v_t\}_{t\in\RR}$ of unitary operators in $\Linf(\GG)$ such that $\tau_t^\GG=\Ad(v_t)$ for all $t$. Set $\eps_t=\|v_t-\I\|_2$ and note that
\begin{equation}\label{eq:2epst}
\|v_t\tens{v_t}-\I\tens\I\|_2=\bigl\|(v_t-\I)\tens{v_t}+\I\tens(v_t-\I)\bigr\|_2
\leq{2}\|v_t-\I\|_2=2\eps_t.
\end{equation}

For each $n$ let us fix an orthonormal basis of the carrier Hilbert space of $U^n$ in which $\uprho_{U^n}$ is diagonal with eigenvalues $\uprho_{n,1}\leq\dotsm\leq\uprho_{n,d_n}$, where $d_n=\dim{U^n}$. Then the action of the modular group of $\bh_\GG$ and of the scaling group on the matrix elements of $U^n$ with respect to this basis is
\begin{align*}
\left\{\begin{aligned}
\sigma_t^{\bh_\GG}(U^n_{k,l})&=\uprho_{n,k}^{\ii{t}}U^n_{k,l}\uprho_{n,l}^{\ii{t}}\\
\tau_t^{\GG}(U^n_{k,l})&=\uprho_{n,k}^{\ii{t}}U^n_{k,l}\uprho_{n,l}^{-\ii{t}}
\end{aligned}\right.,\qquad{t}\in\RR.
\end{align*}
(cf.~\eqref{sigmatau}).

Next, consider the quantum group $\HH=\GG\times\GG$ and let $X_n=U^n_{1,d_n}\tens\overline{U^n}_{\!\!\!\!\!\!\rule{0pt}{1.5Ex}d_n,1}\in\Pol(\HH)$, where the matrix elements of $\overline{U^n}$ are with respect to the basis dual to the one for the carrier Hilbert space of $U^n$. Using the fact that the lowest eigenvalue of $\uprho_{U^n}$ is $\uprho_{n,1}=\gamma(U^n)$ and the highest one is $\uprho_{n,d_n}=\Gamma(U^n)$ and that $\uprho_{\overline{U^n}}=(\uprho_{U^n}^{-1})^{\top}=\oon{diag}\bigl(\uprho_{n,1}^{-1},\dotsc,\uprho_{n,d_n}^{-1}\bigr)$ we find that
\begin{equation}\label{eq:sigmaXn}
\left\{\begin{aligned}
\sigma^{\bh_\HH}_t(X_n)&=X_n\\
\tau^{\HH}_t(X_n)&=\bigl(\tfrac{\gamma(U^n)}{\Gamma(U^n)}\bigr)^{2\ii{t}}X_n
\end{aligned}\right.,\qquad{t}\in\RR
\end{equation}
and using orthogonality relations we obtain
\begin{align*}
\|X_n\|_2^2&=\|X_n\Omega_\HH\|^2=\bh_\HH(X_n^*X_n)\\
&=\bh_\GG\bigl({U^n_{1,d_n}}^*U^n_{1,d_n}\bigr)
\bh_\GG\bigl({\overline{U^n}_{\!\!\!\!\!\!\rule{0pt}{1.5Ex}d_n,1}}^*\overline{U^n}_{\!\!\!\!\!\!\rule{0pt}{1.5Ex}d_n,1}\bigr)\\
&=\tfrac{\left(\uprho_{U^n}^{-1}\right)_{1,1}}{\qdim{U^n}}\tfrac{\left(\uprho_{\overline{U^n}}^{-1}\right)_{d_n,d_n}}{\qdim{\overline{U^n}}}
=\tfrac{\uprho_{n,1}^{-1}}{\qdim{U^n}}
\tfrac{\uprho_{n,d_n}}{\qdim{U^n}}
=\tfrac{1}{\gamma(U^n)\qdim{U^n}}
\tfrac{\Gamma(U^n)}{\qdim{U^n}}.
\end{align*}
Consequently putting $c^2=\inf\limits_{n\in\NN}\Bigl(\tfrac{1}{\gamma(U^n)\qdim{U^n}}\,\tfrac{\Gamma(U^n)}{\qdim{U^n}}\Bigr)$ (recall that by assumption $c>0$) we obtain $\|X_n\|_2\geq{c}$.

On the other hand
\begin{equation}\label{eq:nier1}
\begin{aligned}
\bigl|\bigl(\tfrac{\gamma(U^n)}{\Gamma(U^n)}\bigr)^{2\ii{t}}-1\bigr|c&
\leq\bigl|\bigl(\tfrac{\gamma(U^n)}{\Gamma(U^n)}\bigr)^{2\ii{t}}-1\bigr|\|X_n\|_2=
\bigl\|\bigl(\tfrac{\gamma(U^n)}{\Gamma(U^n)}\bigr)^{2\ii{t}}X_n\Omega_\HH-X_n\Omega_\HH\bigr\|\\
&=\bigl\|\tau_t^\HH(X_n)\Omega_\HH-X_n\Omega_\HH\bigr\|=\|(v_t\tens{v_t})X_n(v_t^*\tens{v_t^*})\Omega_\HH-X_n\Omega_\HH\|\\
&\leq\bigl\|(v_t\tens{v_t})X_n(v_t^*\tens{v_t^*})\Omega_\HH-(v_t\tens{v_t})X_n\Omega_\HH\bigr\|+\bigl\|(v_t\tens{v_t})X_n\Omega_\HH-X_n\Omega_\HH\bigr\|\\
&\leq\bigl\|(v_t\tens{v_t})X_n\bigr\|\bigl\|(v_t^*\tens{v_t^*})\Omega_\HH-\Omega_\HH\bigr\|+\bigl\|(v_t\tens{v_t})X_n\Omega_\HH-X_n\Omega_\HH\bigr\|.
\end{aligned}
\end{equation}
Now, since $\sigma_t^{\bh_\HH}(v_t\tens{v_t})=\sigma_t^{\bh_\GG}(v_t)\tens\sigma_t^{\bh_\GG}(v_t)=v_t\tens{v_t}$, we have $\modOphalf[\bh_\HH](v_t\tens{v_t})\Omega_\HH=(v_t\tens{v_t})\Omega_\HH$, so
\begin{align*}
\bigl\|(v_t^*\tens{v_t^*})\Omega_\HH-\Omega_\HH\bigr\|
&=\bigl\|J_{\bh_\HH}\modOphalf[\bh_\HH](v_t\tens{v_t})\Omega_\HH-\Omega_\HH\bigr\|=\bigl\|J_{\bh_\HH}(v_t\tens{v_t})\Omega_\HH-\Omega_\HH\bigr\|\\
&=\bigl\|J_{\bh_\HH}(v_t\tens{v_t})\Omega_\HH-J_{\bh_\HH}\Omega_\HH\bigr\|
=\bigl\|(v_t\tens{v_t})\Omega_\HH-\Omega_\HH\bigr\|.
\end{align*}
In view of this, the fact that $\|X_n\|\leq{1}$, and \eqref{eq:2epst} the estimate \eqref{eq:nier1} can be continued as
\begin{equation}\label{eq:nier2}
\begin{aligned}
\bigl|\bigl(\tfrac{\gamma(U^n)}{\Gamma(U^n)}\bigr)^{2\ii{t}}-1\bigr|c&\leq\bigl\|(v_t\tens{v_t})X_n\bigr\|\bigl\|(v_t\tens{v_t})\Omega_\HH-\Omega_\HH\bigr\|+\bigl\|(v_t\tens{v_t})X_n\Omega_\HH-X_n\Omega_\HH\bigr\|\\
&\leq2\eps_t+\bigl\|(v_t\tens{v_t})X_n\Omega_\HH-X_n\Omega_\HH\bigr\|.
\end{aligned}
\end{equation}
Now by \eqref{eq:sigmaXn} we have $X_n\in\Linf(\HH)^\sigma$, so
\begin{align*}
(v_t\tens{v_t})X_n\Omega_\HH
&=J_{\bh_\HH}\modOphalf[\bh_\HH]X_n^*J_{\bh_\HH}\modOphalf[\bh_\HH](v_t\tens{v_t})\Omega_\HH\\
&=J_{\bh_\HH}\modOphalf[\bh_\HH]X_n^*\modOpmhalf[\bh_\HH]J_{\bh_\HH}(v_t\tens{v_t})\Omega_\HH
=J_{\bh_\HH}X_n^*J_{\bh_\HH}(v_t\tens{v_t})\Omega_\HH,
\end{align*}
similarly $X_n\Omega_{\HH}=J_{\bh_\HH}X_n^*J_{\bh_\HH}\Omega_{\HH}$ and we can continue \eqref{eq:nier2} as
\begin{equation}\label{eq:nier3}
\begin{aligned}
\bigl|\bigl(\tfrac{\gamma(U^n)}{\Gamma(U^n)}\bigr)^{2\ii{t}}-1\bigr|c&\leq{2}\eps_t+\bigl\|J_{\bh_\HH}X_n^*J_{\bh_\HH}(v_t\tens{v_t})\Omega_\HH-J_{\bh_\HH}X_n^*J_{\bh_\HH}\Omega_\HH\bigr\|\\
&\leq2\eps_t+\|J_{\bh_\HH}X_n^*J_{\bh_\HH}\|\bigl\|(v_t\tens{v_t})\Omega_\HH-\Omega_\HH\bigr\|\leq4\eps_t.
\end{aligned}
\end{equation}

Since $t\mapsto{v_t}$ is strongly continuous, there exits $t_1>0$ such that $\eps_t<\tfrac{c}{4}$ for $t\in\left]0,t_1\right]$. Furthermore, as $\Gamma(U^n)\xrightarrow[n\to\infty]{}+\infty$ and $\gamma(U^n)\leq{1}$, there exists $m$ such that $\log\bigl(\tfrac{\Gamma(U^m)}{\gamma(U^m)}\bigr)\geq\tfrac{\pi}{2t_1}$ and thus
\[
t_2=\tfrac{\pi}{2}\log\bigl(\tfrac{\Gamma(U^m)}{\gamma(U^m)}\bigr)^{-1}
\]
satisfies $t_2\in\left]0,t_1\right]$ and we have $\bigl|\bigl(\tfrac{\Gamma(U^m)}{\gamma(U^m)}\bigr)^{2\ii{t_2}}-1\bigr|=2$. Combining this with \eqref{eq:nier3} yields the contradiction
\[
2c=\bigl|\bigl(\tfrac{\Gamma(U^m)}{\gamma(U^m)}\bigr)^{2\ii{t_2}}-1\bigr|c\leq4\eps_{t_2}<c.\qedhere
\]
\end{proof}

As already mentioned at the beginning of this section, a sequence of irreducible representations satisfying the conditions of Theorem \ref{thm:Un} can be constructed from a two dimensional representation $U$ such that $\qdim{U}>2$ (which is automatically irreducible).

\begin{proposition}\label{prop:Un}
Let $\GG$ be a compact quantum group and let $U$ be a representation of $\GG$ such that $\dim{U}=2<\qdim{U}$. Then there exists a sequence $(U^n)_{n\in\NN}$ of irreducible representations such that
\begin{enumerate}
\item\label{item:Un1} $U^1=U$ and $U^n$ is a subrepresentation of a tensor product of copies of $U$ and $\overline{U}$,
\item $\gamma(U^n)=\gamma(U)^{2n}$, $\Gamma(U^n)=\Gamma(U)^{2n}$ and $\gamma(U^n)=\Gamma(U^n)^{-1}$ for all $n\geq{2}$,\footnote{Let us also mention that the condition $\gamma(V)=\Gamma(V)^{-1}$ for any representation $V$ is automatic for many compact quantum groups (\cite{symmetry}).}
\item\label{item:Un3} $\inf\limits_{n\in\NN}\tfrac{\Gamma(U^n)}{\qdim{U^n}}>0$.
\end{enumerate}
\end{proposition}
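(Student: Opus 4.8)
The plan is to reduce everything to a single three‑dimensional representation sitting inside $U\tens\overline U$ and to the Temperley--Lieb rigidity forced by $\dim U=2$. First I would fix normalisations. As $U$ is irreducible (if it were not, $\dim U=2$ would give $\qdim U=2$), the identity $\qdim{U}=\Tr\uprho_U=\Tr(\uprho_U^{-1})$ forces the two eigenvalues of $\uprho_U$ to be mutually inverse; writing $\lambda=\Gamma(U)=\|\uprho_U\|$ we get $\gamma(U)=\lambda^{-1}$ and $\qdim{U}=\lambda+\lambda^{-1}>2$, so $\lambda>1$. Irreducibility gives $U\tens\overline U=\I\oplus W$ with $\I$ the trivial representation, and I would check that $W$ is irreducible and three‑dimensional with $\oon{Sp}\uprho_W=\{\lambda^{-2},1,\lambda^{2}\}$ (the extreme eigenvalues are the top and bottom of $\uprho_{U\tens\overline U}$, the middle one is forced to be $1$ by the trace condition, and a second $\I$‑summand is excluded by irreducibility). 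Finally $\overline W\cong W$, since $\overline{U\tens\overline U}\cong U\tens\overline U$ and $W$ is its unique non‑trivial component; in particular $\Gamma(W)=\lambda^{2}$, $\gamma(W)=\lambda^{-2}$.

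For $n\ge1$ let $Z_n$ be the unique irreducible component of $W^{\tens n}$ carrying the top eigenvector of $\uprho_{W}^{\tens n}$, whose eigenvalue $\lambda^{2n}$ is simple (the top eigenvalue of $\uprho_W$ being simple); set $Z_0=\I$, $U^1=U$ and $U^n=Z_n$ for $n\ge2$. Then (1) is immediate from $W\subset U\tens\overline U$, and $\Gamma(U^n)=\lambda^{2n}=\Gamma(U)^{2n}$ by multiplicativity of the top eigenvalue. The remaining content of (2), namely $\gamma(U^n)=\lambda^{-2n}=\Gamma(U^n)^{-1}$, is equivalent to $Z_n$ being self‑conjugate, i.e.\ to the component carrying the top eigenvector of $W^{\tens n}$ also carrying its (simple) bottom eigenvector $\lambda^{-2n}$.

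I would extract (2) and (3) simultaneously from the Temperley--Lieb fusion rule
\[
W\tens Z_n\;\cong\;Z_{n-1}\oplus Z_n\oplus Z_{n+1}\qquad(n\ge1),
\]
proved by induction on $n$. Indeed, by an elementary convolution of spectra this rule forces $\oon{Sp}\uprho_{Z_n}=\{\lambda^{2k}:-n\le k\le n\}$ with all multiplicities one, whence $Z_n$ is self‑conjugate (this is (2)) and
\[
\qdim{U^n}=\sum_{k=-n}^{n}\lambda^{2k}=\frac{\lambda^{2n+2}-\lambda^{-2n}}{\lambda^{2}-1},\qquad
\frac{\Gamma(U^n)}{\qdim{U^n}}=\frac{\lambda^{2}-1}{\lambda^{2}-\lambda^{-4n}}\ge1-\lambda^{-2}>0,
\]
which together with the value $\lambda/(\lambda+\lambda^{-1})$ at $n=1$ yields (3). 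Two of the three summands are cheap: the top eigenvector of $W^{\tens(n+1)}$ lies in $W\tens Z_n$, so $Z_{n+1}$ is the leading component of $W\tens Z_n$; and Frobenius reciprocity together with $\overline W\cong W$ turns $Z_{n+1}\subset W\tens Z_n$ into $Z_n\subset W\tens Z_{n+1}$, i.e.\ $Z_{n-1}\subset W\tens Z_n$.

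The hard part is the rest of the fusion rule: the middle summand $Z_n$ (equivalently $W\subset Z_n\tens\overline{Z_n}$) and the absence of any further component. This cannot follow from the formal properties of the self‑conjugate object $W$ alone — an ``abelian'' $W$ with $W\tens W\cong\I\oplus Z_2$ would respect all of them yet violate the rule — so it is exactly here that two‑dimensionality of $U$ must enter. The plan is to invoke Schur--Weyl/Temperley--Lieb rigidity for a two‑dimensional fibre: the evaluation and coevaluation of the dual pair $(U,\overline U)$ satisfy the single Temperley--Lieb relation with loop value $\qdim{U}$, and for $\dim U=2$ these already generate all intertwiner spaces (which are therefore Catalan‑dimensional). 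Consequently the tensor subcategory generated by $W$ has the fusion ring of $\mathrm{SU}_q(2)$ on integer spins, with $Z_n$ the spin‑$n$ object, and the displayed rule follows. Establishing this rigidity — and thereby bounding the multiplicities of $\uprho_{Z_n}$ uniformly in $n$ — is the main obstacle, for the only soft estimate $\qdim{U^n}\le(\qdim W)^{n}$ is useless: it gives merely $\Gamma(U^n)/\qdim{U^n}\gtrsim\bigl(\lambda^{2}/(\lambda^{2}+1+\lambda^{-2})\bigr)^{n}\to0$.
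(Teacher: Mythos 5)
Your whole argument funnels through the fusion rule $W\tens Z_n\cong Z_{n-1}\oplus Z_n\oplus Z_{n+1}$, and the mechanism you propose for proving it --- that for $\dim U=2$ the evaluation and coevaluation maps of the pair $(U,\overline{U})$ generate \emph{all} intertwiner spaces of $\GG$ --- is false: it reverses the direction in which universality works. The surjection $\C^\uu(\oon{U}_F^+)\to\C^\uu(\GG)$ only yields inclusions of morphism spaces $\oon{Mor}_{\oon{U}_F^+}(x,y)\subset\oon{Mor}_{\GG}\bigl((\pi\tens\id)u^x,(\pi\tens\id)u^y\bigr)$; the two-colored Temperley--Lieb category is the \emph{smallest} category compatible with a $2$-dimensional generator, and quotients of $\oon{U}_F^+$ can have strictly more intertwiners. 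Concretely, $\GG=\oon{SU}_q(2)$ satisfies all hypotheses of Proposition \ref{prop:Un} ($U$ irreducible, $\qdim{U}=q+q^{-1}>2$), yet there $U\cong\overline{U}$, so $\oon{Mor}(\I,U\tens U)\neq 0$, whereas the fusion rules \eqref{eq41} give $u^\alpha\tp u^\alpha=u^{\alpha\alpha}$ in $\oon{U}_F^+$, i.e.\ $\oon{Mor}(\I,u^\alpha\tens u^\alpha)=0$: the canonical functor is not full and the hom-spaces of $\GG$ are not Catalan-dimensional. Precisely such extra intertwiners can refine decompositions and raise multiplicities, and your induction requires multiplicity-one spectra; so neither the spectral symmetry $\gamma(Z_n)=\Gamma(Z_n)^{-1}$ in (2) nor the dimension control in (3) is established, and, as you observe yourself, the soft bound $\qdim{Z_n}\leq(\qdim{W})^n$ cannot replace them. (In $\oon{SU}_q(2)$ the rule happens to hold, the even part being $\oon{SO}_q(3)$, but nothing in your proposal proves it for general $\GG$, and the rigidity claim --- the only argument offered --- is untrue.)

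It is instructive to compare with how the paper sidesteps exactly this issue: fullness is never needed, because the \emph{free} fusion rules are only pushed forward along $\pi\tens\id$, where they survive as direct-sum decompositions of the possibly reducible images $W^n=(\pi\tens\id)u^{w^n_\alpha}$ (equation \eqref{eq1}), no matter how those images decompose further. Induction on these decompositions gives the exact values $\gamma(W^n)=\gamma(U)^n$ and $\qdim{W^n}=[n+1]_q$ (using \cite[Lemma 4.13]{KrajczokWasilewski}), which is all that (3) requires. The fact you try to extract from multiplicity-one spectra --- $\Gamma=\gamma^{-1}$ for the chosen irreducible components --- is obtained instead from the growth criterion of \cite[Theorem 3.4]{symmetry}: the components sit inside images of alternating words, whose dimensions grow linearly, so the relevant growth function is subexponential and the symmetry of the spectrum of $\uprho$ follows. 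To rescue your scheme you would have to either prove your fusion rule for arbitrary compact quantum groups possessing a $2$-dimensional irreducible $U$ with $\qdim{U}>2$ (not merely for $\oon{U}_F^+$), or replace the multiplicity-one argument by a symmetry result of this kind; as written, the central step is missing.
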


We immediately obtain

\begin{corollary}
Let $\GG$ be a second countable compact quantum group which admits a unitary representation $U$ with $\dim{U}=2<\qdim{U}$. Then $\TtauInn(\GG)\neq\RR$.
\end{corollary}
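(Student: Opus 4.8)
The plan is to realise each $U^n$ as a ``highest weight'' subrepresentation inside a tensor power of $U$ and $\overline U$, exploiting that $\uprho$ is multiplicative on tensor products ($\uprho_{V\tens W}=\uprho_V\tens\uprho_W$) and restricts to subrepresentations, so that the extreme eigenvalues are easy to locate.

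First I would dispose of the elementary points. Since $\dim U=2<\qdim U$, $U$ has no one-dimensional subrepresentation and is therefore irreducible; writing $\Gamma=\Gamma(U)$ and combining $\qdim U=\Tr\uprho_U=\Tr\uprho_U^{-1}$ with $\dim U=2$ forces $\gamma(U)\Gamma(U)=1$ and $\Gamma>1$, and shows that $\uprho_{\overline U}$ has the same spectrum $\{\Gamma,\Gamma^{-1}\}$. Put $U^1=U$. For $n\ge2$ the operator $\uprho_{(U\tens\overline U)^{\tens n}}=(\uprho_U\tens\uprho_{\overline U})^{\tens n}$ has largest eigenvalue $\Gamma^{2n}$, attained only on the tensor product of the top eigenvectors, hence with multiplicity one; letting $\xi_+$ be this eigenvector I define $U^n$ to be the subrepresentation of $(U\tens\overline U)^{\tens n}$ generated by $\xi_+$. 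Simplicity of $\Gamma^{2n}$ guarantees that $\xi_+$ sits inside a single irreducible component, that this component occurs with multiplicity one, and that $\Gamma(U^n)=\Gamma^{2n}=\Gamma(U)^{2n}$; this gives property \ref{item:Un1} and half of the second condition. The same count produces a simple smallest eigenvalue $\Gamma^{-2n}$, with eigenvector $\xi_-$, and since $U^n\subseteq(U\tens\overline U)^{\tens n}$ one gets the easy inequality $\gamma(U^n)\ge\Gamma^{-2n}$ for free.

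The hard part will be the matching inequality $\gamma(U^n)\le\Gamma^{-2n}$, equivalently that $\xi_-$ also lies in $U^n$; this is precisely the statement that $U^n$ is self-conjugate, and it simultaneously yields $\gamma(U^n)=\gamma(U)^{2n}$ and $\gamma(U^n)=\Gamma(U^n)^{-1}$. The difficulty is that purely categorical input — multiplicity one of $\xi_\pm$, self-conjugacy of the ambient $(U\tens\overline U)^{\tens n}$, and the conjugation antiunitary interchanging $\xi_+$ and $\xi_-$ — only shows that $\xi_-$ lies in $\overline{U^n}$, which is not enough. To bridge this I would first isolate the self-conjugate object $Z\subseteq U\tens\overline U$ carrying the eigenvalue $\Gamma^2$: as $\uprho_{U\tens\overline U}$ has spectrum $\{\Gamma^2,1,1,\Gamma^{-2}\}$, the balance $\Tr\uprho_Z=\Tr\uprho_Z^{-1}$ forces the irreducible containing $\Gamma^2$ to contain $\Gamma^{-2}$ as well, so $Z$ is self-conjugate with simple extreme eigenvalues $\Gamma^{\pm2}$ and $\qdim Z\ge\Gamma^2+\Gamma^{-2}>2$. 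One checks that $U^n$ is the top component of $Z^{\tens n}$, and then argues by induction that the tensor subcategory generated by $Z$ is of $\mathrm{SU}(2)$-type — Temperley–Lieb when $\dim Z=2$ and its even ($\mathrm{SO}(3)$) part when $Z$ is the adjoint-type object — the genericity $\qdim Z>2$ excluding the degenerate root-of-unity collapse. In any such category every irreducible is self-conjugate, so $U^n\cong\overline{U^n}$ and the spectrum of $\uprho_{U^n}$ is symmetric, as required. Proving this fusion dichotomy uniformly (or, alternatively, exhibiting an explicit lowering operator carrying $\xi_+$ to $\xi_-$ within $U^n$) is the crux.

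Finally, for property \ref{item:Un3} I would invoke the universality of the free unitary quantum groups. Replacing $\GG$ by the closed quantum subgroup generated by $U$, there is a surjection $\Pol(\oon{U}_F^+)\twoheadrightarrow\Pol(\GG)$ with $F$ chosen so that $\uprho_U$ is the prescribed operator, and hence a faithful restriction functor $\oon{Rep}\oon{U}_F^+\to\oon{Rep}\GG$ that preserves the operators $\uprho$ and therefore all quantum dimensions. In $\oon{Rep}\oon{U}_F^+$ the vector $\xi_+$ generates an irreducible $Q_n$ (the ``$(ab)^n$'' corepresentation) whose restriction contains $U^n$, so $\qdim U^n\le\qdim Q_n$; and the explicit free fusion rules give $Q_1\tens Q_n\cong Q_{n+1}\oplus Q_n\oplus Q_{n-1}$, whence $\qdim Q_{n+1}=(\Gamma^2+\Gamma^{-2})\qdim Q_n-\qdim Q_{n-1}$, a linear recursion with characteristic roots $\Gamma^{\pm2}$ and therefore $\qdim Q_n\sim C\,\Gamma^{2n}$. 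Together with $\Gamma(U^n)=\Gamma^{2n}$ this yields $\inf_{n}\Gamma(U^n)/\qdim U^n>0$, which is property \ref{item:Un3}.
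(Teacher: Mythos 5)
Your construction of the candidate sequence is essentially the paper's: irreducibility of $U$, the identity $\gamma(U)=\Gamma(U)^{-1}$, the multiplicity-one top eigenvector of $\uprho_{(U\tens\overline{U})^{\tens n}}$ producing an irreducible $U^n$ with $\Gamma(U^n)=\Gamma(U)^{2n}$, and the bound $\qdim{U^n}\leq\qdim{Q_n}$ with $\qdim{Q_n}$ controlled by the free (SO(3)-type) fusion rules of $\oon{U}_F^+$ are all correct, and together with Theorem \ref{thm:Un} they would yield the corollary. One inaccuracy along the way: the restriction functor $\oon{Rep}\oon{U}_F^+\to\oon{Rep}\GG$ does \emph{not} ``preserve the operators $\uprho$ and therefore all quantum dimensions''; it preserves $\uprho$ only when the image remains irreducible (Proposition \ref{prop1}\eqref{prop1:2}, and see the Remark following it, where the counit gives a counterexample). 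The inequality you actually use, $\qdim{U^n}\leq\qdim{Q_n}$, is nevertheless valid because the functor does not increase quantum dimension and subobjects have smaller dimension.

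The genuine gap is exactly where you flag it, and it is not a removable footnote: without $\gamma(U^n)\leq\Gamma(U)^{-2n}$ condition (2) of Theorem \ref{thm:Un} collapses (if, say, $\gamma(U^n)\geq{c}>0$ for all $n$, then $\tfrac{1}{\gamma(U^n)\qdim{U^n}}\leq\tfrac{1}{c\,\Gamma(U)^{2n}}\to0$, so the infimum is zero), and your route to it --- that the tensor category generated by $Z$ is Temperley--Lieb or its even part, hence has only self-conjugate irreducibles --- is left unproved. That dichotomy is a Kazhdan--Wenzl-type classification statement, harder than the corollary itself: the restriction functor is faithful but not full, so although each $u^{(\alpha\beta)^n}$ is self-conjugate in $\oon{Rep}\oon{U}_F^+$, its image in $\oon{Rep}\GG$ may split, and the irreducible components of a self-conjugate object need only come in conjugate pairs; your trace-balance argument pins down the spectrum only for $n=1$, since for $n\geq2$ the identity $\Tr\uprho=\Tr\uprho^{-1}$ can be satisfied by middle eigenvalues without $\Gamma(U)^{-2n}$ occurring (e.g.\ a spectrum $\{\Gamma^{2n}\}\cup\{\Gamma^{-2}\text{ with multiplicity }[n]_{\Gamma^2}\}$ is balanced). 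The paper closes precisely this hole with a softer tool: self-conjugacy is not needed, only symmetry of $\oon{Sp}(\uprho_{U^n})$, and this follows from \cite[Theorem 3.4]{symmetry} once one observes that every irreducible component of any tensor power of $U^n$ is a component of the image of some alternating-word representation $u^{w^k_\alpha}$, whose dimensions grow linearly in $k$ (\cite[Lemma 4.13]{KrajczokWasilewski}); subexponential growth of the function $P_{U^n}$ then forces $\gamma(U^n)=\Gamma(U^n)^{-1}$. Either import that theorem (which makes your argument a repackaging of the paper's) or supply a proof of your fusion dichotomy; as written, the proposal does not close.
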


We start with an auxiliary result of independent interest. The proof of Proposition \ref{prop:Un} will only use statement \eqref{prop1:2}.

\begin{proposition}\label{prop1}
Let $\GG$ and $\HH$ be compact quantum groups and let $\pi\colon\C^\uu(\GG)\to\C^\uu(\HH)$ be a unital $*$-homomorphism respecting coproducts. Furthermore let $U\in\C^\uu(\GG)\tens\B(\cH_U)$ be a finite dimensional unitary representation of $\GG$. Then $V=(\pi\tens\id)U$ is a unitary representation of $\HH$. Furthermore
\begin{enumerate}
\item if $V=\bigoplus\limits_{a=1}^{A}V^a$ is the decomposition of $V$ into irreducible representations, then
\[
\bigcup\limits_{a=1}^{A}\oon{Sp}\bigl(\uprho_{V^a}\tens\uprho_{V^a}^{-1}\bigr)\subset\oon{Sp}\bigl(\uprho_U\tens\uprho_U^{-1}\bigr),
\]
\item\label{prop1:2} if $V$ is irreducible, then $\uprho_V=\uprho_U$.
\end{enumerate}
\end{proposition}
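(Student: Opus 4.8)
The plan is to show that $\pi$ intertwines the squared antipodes; then the canonical operator $\uprho_U$, which governs the $S^2$-action on the matrix elements of $U$, must still govern it after transport to $\HH$ by $\pi$. Concretely, I first check that $\pi$ restricts to a morphism of Hopf $*$-algebras on the relevant polynomial parts. Let $\mathcal{A}\subseteq\Pol(\GG)$ be the (automatically $*$-closed and $S_\GG$-invariant) Hopf $*$-subalgebra generated by the $U_{ij}$. Since the entries $V_{ij}=\pi(U_{ij})$ are matrix elements of the finite dimensional representation $V$ of $\HH$, we have $\pi(\mathcal{A})\subseteq\Pol(\HH)$, and on $\mathcal{A}$ the map $\pi$ respects the coproduct by hypothesis and the counit because $\eps(U_{ij})=\delta_{ij}=\eps(V_{ij})$ while both counits are characters. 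A $*$-bialgebra map between Hopf algebras automatically commutes with the antipode — indeed $S_\HH\comp\pi$ and $\pi\comp S_\GG$ are a left, resp.\ two-sided, convolution inverse of $\pi$ in $\oon{Hom}(\mathcal{A},\Pol(\HH))$ and hence coincide — so $\pi\comp S_\GG^2=S_\HH^2\comp\pi$ on $\mathcal{A}$.

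Next I would use the defining property of the canonical operators (\cite[Section 1.4]{NeshveyevTuset}): consistently with \eqref{sigmatau} (i.e.\ $S^2=\tau_{-\ii}$ evaluated on matrix elements) one has $(S_\GG^2\tens\id)(U)=(\I\tens\uprho_U)U(\I\tens\uprho_U^{-1})$ and likewise $(S_\HH^2\tens\id)(V)=(\I\tens\uprho_V)V(\I\tens\uprho_V^{-1})$. Applying $\pi\tens\id$ to the first identity and invoking $\pi\comp S_\GG^2=S_\HH^2\comp\pi$ yields $(S_\HH^2\tens\id)(V)=(\I\tens\uprho_U)V(\I\tens\uprho_U^{-1})$. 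Thus $\uprho_U$ and $\uprho_V$ implement the same automorphism of $V$, which forces $\uprho_V^{-1}\uprho_U$ to commute with $V$, i.e.\ to lie in $\oon{End}(V)$.

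Both claims then follow by bookkeeping with the isotypic decomposition $\cH_V=\bigoplus_\beta\cH_\beta\tens M_\beta$, with $\beta$ running over the classes occurring in $V=\bigoplus_a V^a$. Here $\uprho_V=\bigoplus_\beta\uprho_{V_\beta}\tens\I_{M_\beta}$ and $\oon{End}(V)=\bigoplus_\beta\I_{\cH_\beta}\tens\B(M_\beta)$, so $\uprho_U=\uprho_V(\uprho_V^{-1}\uprho_U)=\bigoplus_\beta\uprho_{V_\beta}\tens T_\beta$ with each $T_\beta\in\B(M_\beta)$ positive and invertible (positivity and invertibility of $T_\beta$ being forced by those of $\uprho_U$ and $\uprho_{V_\beta}$). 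Diagonalising $T_\beta$, any ratio $\uprho_{V_\beta,k}/\uprho_{V_\beta,l}\in\oon{Sp}(\uprho_{V^a}\tens\uprho_{V^a}^{-1})$ (for $V^a$ of class $\beta$) equals $(\uprho_{V_\beta,k}\,x)/(\uprho_{V_\beta,l}\,x)$ for any eigenvalue $x>0$ of $T_\beta$, and both numerator and denominator are eigenvalues of $\uprho_U$; hence the ratio lies in $\oon{Sp}(\uprho_U\tens\uprho_U^{-1})$, which is the first statement. When $V$ is irreducible there is a single block with $\dim M_\beta=1$, so $\uprho_U=c\,\uprho_V$ for some $c>0$; the normalisations $\Tr\uprho_U=\Tr\uprho_U^{-1}$ and $\Tr\uprho_V=\Tr\uprho_V^{-1}$ then force $c=1$, giving $\uprho_V=\uprho_U$ as in the second statement.

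The only genuinely non-formal point is the passage from the hypothesis that $\pi$ merely respects coproducts at the \cst-level to the algebraic fact that it intertwines $S^2$; after that, everything reduces to Schur's lemma applied to $\uprho_V^{-1}\uprho_U$ together with the elementary observation that rescaling an operator leaves all ratios of its eigenvalues unchanged. I would pay particular attention, in the first statement, to the fact that the ratios in $\oon{Sp}(\uprho_{V^a}\tens\uprho_{V^a}^{-1})$ must be matched even for those $(k,l)$ with $V^a_{kl}=0$; this is precisely why I route the argument through the multiplicity-space decomposition rather than comparing individual matrix coefficients, which would only see the ratios carried by non-vanishing entries.
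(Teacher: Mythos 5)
Your proof is correct, but it reaches the conclusion by a genuinely different route than the paper, most visibly in part (1). For part (2) the two arguments essentially coincide: the paper also transports $\uprho_U$ through $\pi$ using the fact that $\pi$ intertwines the squared antipodes --- citing \cite[Proposition 1.3.17]{Timmermann} for this rather than re-deriving it by your convolution-inverse argument --- and then concludes $\uprho_U\in\oon{Mor}(V,V^{\text{\rm{\tiny{cc}}}})$, invoking the uniqueness statement \cite[Proposition 1.4.4]{NeshveyevTuset}, whose proof is exactly your Schur-plus-trace-normalisation step. For part (1), however, the paper never looks at $\oon{End}(V)$: it applies $\tau_t^{\HH}$ to the identity $(\pi\tens\id)U=\bigoplus_a V^a$ and averages against an invariant mean on $\RR$ to isolate, for each ratio $\tfrac{\lambda}{\lambda'}\in\oon{Sp}\bigl(\uprho_{V^{a'}}\tens\uprho_{V^{a'}}^{-1}\bigr)$, the corresponding spectral component on both sides; non-vanishing of the matrix elements of the irreducible $V^{a'}$ then forces the same ratio to occur in $\oon{Sp}\bigl(\uprho_U\tens\uprho_U^{-1}\bigr)$. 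You instead derive both parts from the single relation $\uprho_V^{-1}\uprho_U\in\oon{End}(V)$ together with the isotypic decomposition. What your route buys: a unified treatment of (1) and (2), no invariant means, and a sharper structural conclusion --- $\uprho_U$ is block-diagonal along the isotypic decomposition, of the form $\bigoplus_\beta\uprho_{V_\beta}\tens T_\beta$ --- of which the spectral inclusion in (1) is a corollary. What the paper's route buys: part (1) is proved directly at the level of matrix coefficients, without needing naturality of $\uprho$ under unitary equivalence, its additivity under direct sums, or the multiplicity-space formalism (standard facts, but ones your argument must quietly invoke).

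One small correction to your closing remark: for a compact quantum group no matrix element of an irreducible unitary representation vanishes (this follows from the orthogonality relations, and is precisely what the paper's proof uses when it asserts that the left-hand side of its spectral identity is non-zero). So the concern about pairs $(k,l)$ with $V^a_{k,l}=0$ is moot; routing through the multiplicity spaces is harmless, but your proof does not actually need that precaution.
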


\begin{remark}
The example of $\GG=\oon{U}_F^+$ with fundamental representation $U$ satisfying $1\not\in\oon{Sp}(\uprho_U)$, $\HH=\{e\}$, and $\pi$ equal to the counit shows that $\oon{Sp}(\uprho_V)\subset\oon{Sp}(\uprho_U)$ can fail.
\end{remark}

\begin{proof}[Proof of Proposition \ref{prop1}]
It is clear that $V$ is a unitary representation of $\HH$. Let us decompose $V$ into irreducible representations $V=\bigoplus\limits_{a=1}^{A}V^a$, and let $\cH_U=\cH_V=\bigoplus\limits_{a=1}^{A}\cH_{V^a}$ be the corresponding decomposition of Hilbert spaces. Fix orthonormal bases in the carrier Hilbert spaces $\cH_U$ and $\cH_{V^1},\dotsc\cH_{V^A}$ in which the corresponding operators $\uprho_U,\uprho_{V^1},\dotsc,\uprho_{V^A}$ are diagonal with eigenvalues $\uprho_{U,i}$ and $\uprho_{V^a,j}$ ($1\leq{i}\leq\dim{U}$, $1\leq{a}\leq{A}$, $1\leq{j}\leq\dim{V^a}$) and let $\bigl\{e^a_{k,l}\,\bigr|\bigl.\,1\leq{k,l}\leq\dim{V^a}\bigr\}$ and $\bigl\{e^U_{i,j}\,\bigr|\bigl.\,1\leq{i,j}\leq\dim{U}\bigr\}$ be the corresponding matrix units. We have
\[
\sum_{a=1}^{A}\sum_{k,l=1}^{\dim{V^a}}V^a_{k,l}\tens{e^a_{k,l}}=\bigoplus_{a=1}^{A}V^a=V=(\pi\tens\id)U=\sum_{i,j=1}^{\dim{U}}\pi(U_{i,j})\tens{e^U_{i,j}}.
\]
Applying $\tau_t^{\HH}$ gives 
\begin{align*}
\sum_{a=1}^{A}\sum_{k,l=1}^{\dim{V^a}}\bigl(\tfrac{\uprho_{V^a,k}}{\uprho_{V^a,l}}\bigr)^{\ii{t}}V^a_{k,l}\tens{e^a_{k,l}}
&=(\tau^{\HH}_t\tens\id)V=\bigl((\tau^\HH_t\comp\pi)\tens\id\bigr)U\\
&=\bigl((\pi\comp\tau^{\GG}_t)\tens\id\bigr)U=\sum_{i,j=1}^{\dim{U}}\bigl(\tfrac{\uprho_{U,i}}{\uprho_{U,j}}\bigr)^{\ii{t}}\pi(U_{i,j})\tens{e^U_{i,j}}.
\end{align*}
Take $\tfrac{\lambda}{\lambda'}$ for some $1\leq{a'}\leq{A}$, $\lambda,\lambda'\in\oon{Sp}(\uprho_{V^{a'}})$ and let $m$ be an invariant mean on $\RR$. The above equality implies that for arbitrary $\xi,\xi'\in\Ltwo(\GG)\tens\cH_U$ we have
\begin{align*}
\sum_{\substack{{a,k,l\text{ such that}}\\{\uprho_{V^a,k}/\uprho_{V^a,l}=\lambda/\lambda'}}}
\is{\xi}{(V^a_{k,l}\tens{e^a_{k,l}})\xi'}
&=
m\biggl(t\mapsto
\sum_{a=1}^{A}\sum_{k,l=1}^{\dim{V^a}}\bigl(\tfrac{\lambda'}{\lambda}\bigr)^{\ii{t}}\bigl(\tfrac{\uprho_{V^a,k}}{\uprho_{V^a,l}}\bigr)^{\ii{t}}\is{\xi}{(V^a_{k,l}\tens{e^a_{k,l}})\xi'}\biggr)\\
&=m\biggl(t\mapsto
\sum_{i,j=1}^{\dim{U}}\bigl(\tfrac{\lambda'}{\lambda}\bigr)^{\ii{t}}\bigl(\tfrac{\uprho_{U,i}}{\uprho_{U,j}}\bigr)^{\ii{t}}\is{\xi}{\bigl(\pi(U_{i,j})\tens{e^U_{i,j}}\bigr)\xi'}\biggr)\\
&=
\sum_{\substack{{i,j\text{ such that}}\\{\uprho_{U,i}/\uprho_{U,j}=\lambda/\lambda'}}}
\is{\xi}{\bigl(\pi(U_{i,j})\tens{e^U_{i,j}}\bigr)\xi'}
\end{align*}
and hence we can conclude that
\[
\sum_{\substack{{a,k,l\text{ such that}}\\{\uprho_{V^a,k}/\uprho_{V^a,l}=\lambda/\lambda'}}}
V^a_{k,l}\tens{e^a_{k,l}}=\sum_{\substack{{i,j\text{ such that}}\\{\uprho_{U,i}/\uprho_{U,j}=\lambda/\lambda'}}}\pi(U_{i,j})\tens{e^U_{i,j}}.
\]
The left hand side is by assumption non-zero, hence so is the right hand side and $\tfrac{\lambda}{\lambda'}\in\oon{Sp}(\uprho_U)$.

Ad \eqref{prop1:2}. Assume now that $V$ is irreducible. To conclude that $\uprho_V=\uprho_U$, it is enough to show that $\uprho_U\in\oon{Mor}(V,V^{\text{\rm{\tiny{cc}}}})$ and $\Tr(\uprho_U)=\Tr(\uprho_U^{-1})$ (\cite[Proposition 1.4.4]{NeshveyevTuset}). Clearly the second property holds, for the first one recall that $\pi$ preserves the antipode (\cite[Proposition 1.3.17]{Timmermann}) which allows us to calculate as follows
\begin{align*}
(\I\tens\uprho_U)V&=(\pi\tens\id)\bigl((\I\tens\uprho_U)U\bigr)\\
&=(\pi\tens\id)\bigl(U^{\text{\rm{\tiny{cc}}}}(\I\tens\uprho_U)\bigr)\\
&=\bigl((\pi\comp{S^2_{\GG}})\tens\id)\bigl(U(\I\tens\uprho_U)\bigr)\\
&=\bigl((S^2_{\HH}\comp\pi)\tens\id\bigr)\bigl(U(\I\tens\uprho_U)\bigr)
=(S^2_{\HH}\tens\id)\bigl(V(\I\tens\uprho_U)\bigr)=V^{\text{\rm{\tiny{cc}}}}(\I\tens\uprho_U).
\end{align*}
This shows that $\uprho_V=\uprho_U$.
\end{proof}

In the proof of Proposition \ref{prop:Un}, $\oon{U}_F^+$ will denote the universal unitary quantum group corresponding to an invertible matrix $F$ constructed in \cite{WangVanDaele} (denoted by $A_u(Q)$ in that paper). Let us recall that irreducible representations of $\oon{U}_F^+$ can be labeled by free product of monoids $\ZZ_+\star\ZZ_+=\langle\alpha,\beta\rangle$ in such a way that $\alpha$ corresponds to the canonical fundamental representation and passing to the conjugate representation corresponds to the unique anti-multiplicative map $x\mapsto\overline{x}$ on $\ZZ_+\star\ZZ_+$ satisfying $\overline{\alpha}=\beta$ and $\overline{\beta}=\alpha$. Moreover, denoting by $u^x$ a unitary representation of $\oon{U}_F^+$ in the class $x\in\ZZ_+\star\ZZ_+$ the tensor product of representations from the classes $x,y\in\ZZ_+\star\ZZ_+$ decomposes into irreducible as
\begin{equation}\label{eq41}
u^x\tp{u^y}
=\bigoplus_{\substack{{a,b,c\,\in\,\ZZ_+\star\ZZ_+}\\{x=ac,\:y=\overline{c}b}}}u^{ab}
\end{equation}
up to equivalence (cf.~\cite[Th\'eor\`eme 1]{banica}).

For $n\in\NN$ and $\delta\in\{\alpha,\beta\}$ let $w^n_\delta$ be the word $\delta\overline{\delta}\delta\dotsm\in\ZZ_+\star\ZZ_+$ ($n$ letters). It will be convenient to denote the (class of) the trivial representation by $w^0_\delta$. The dimension and quantum dimension of $w^n_\delta$ are known \cite[Lemma 4.13]{KrajczokWasilewski}.

\begin{lemma}
For $\delta\in\{\alpha,\beta\}$ and $n\in\ZZ_+$ we have
\begin{equation}\label{dimdeltan}
\begin{aligned}
\gamma(w^{2n}_\delta)&=\gamma(\delta)^n\gamma(\overline{\delta})^n,
&\Gamma(w^{2n}_\delta)&=\Gamma(\delta)^n\Gamma(\overline{\delta})^n,\\
\gamma(w^{2n+1}_\delta)&=\gamma(\delta)^{n+1}\gamma(\overline{\delta})^n,
&\Gamma(w^{2n+1}_\delta)&=\Gamma(\delta)^{n+1}\Gamma(\overline{\delta})^n.
\end{aligned}
\end{equation}
\end{lemma}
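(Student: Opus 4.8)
The plan is to prove all four formulas in \eqref{dimdeltan} simultaneously by strong induction on $n$, the engine being an $\oon{SU}(2)$-type two-term recursion extracted from the fusion rules \eqref{eq41}. Write $\delta_k=\delta$ when $k$ is even and $\delta_k=\overline{\delta}$ when $k$ is odd, so that $w^{k+1}_\delta$ is obtained from $w^k_\delta$ by appending the single letter $\delta_k$. First I would check, directly from \eqref{eq41} with $x=w^k_\delta$ and $y=\delta_k$, that for every $k\geq1$
\[
u^{w^k_\delta}\tp u^{\delta_k}=u^{w^{k+1}_\delta}\oplus u^{w^{k-1}_\delta}.
\]
Since $\delta_k$ has length one and $\ZZ_+\star\ZZ_+$ is free, the only factorizations $\delta_k=\overline{c}b$ are $c=e,\ b=\delta_k$ (forcing $a=w^k_\delta$, contribution $u^{w^{k+1}_\delta}$) and $c=\overline{\delta_k},\ b=e$; in the latter one checks that the last letter of $w^k_\delta$ is exactly $\overline{\delta_k}$, so $a=w^{k-1}_\delta$ and the contribution is $u^{w^{k-1}_\delta}$. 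No other triples occur, which yields the displayed decomposition (multiplicity-free, since $k+1\neq k-1$).

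Next I would pass to the operators $\uprho$. Using $\uprho_{U\tens V}=\uprho_U\tens\uprho_V$ and $\uprho_{U\oplus V}=\uprho_U\oplus\uprho_V$ (\cite[Section 1.4]{NeshveyevTuset}), the decomposition above gives, for the sets of pairwise products,
\[
\oon{Sp}\bigl(\uprho_{w^k_\delta}\bigr)\cdot\oon{Sp}\bigl(\uprho_{\delta_k}\bigr)=\oon{Sp}\bigl(\uprho_{w^{k+1}_\delta}\bigr)\cup\oon{Sp}\bigl(\uprho_{w^{k-1}_\delta}\bigr),
\]
whence, on taking the largest and smallest elements,
\[
\Gamma(w^k_\delta)\,\Gamma(\delta_k)=\max\bigl(\Gamma(w^{k+1}_\delta),\Gamma(w^{k-1}_\delta)\bigr),\qquad
\gamma(w^k_\delta)\,\gamma(\delta_k)=\min\bigl(\gamma(w^{k+1}_\delta),\gamma(w^{k-1}_\delta)\bigr).
\]
Denote by $G(k)$ and $g(k)$ the right-hand sides of the asserted formulas \eqref{dimdeltan}. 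A short computation shows that these satisfy the companion identities $G(k+1)=G(k)\,\Gamma(\delta_k)$ and $g(k+1)=g(k)\,\gamma(\delta_k)$, together with $G(k+1)/G(k-1)=\Gamma(\delta)\Gamma(\overline{\delta})$ and $g(k+1)/g(k-1)=\gamma(\delta)\gamma(\overline{\delta})$.

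Finally I would close the induction. The relation $\Tr\uprho_\delta=\Tr\uprho_\delta^{-1}$ forces $\Gamma(\delta),\Gamma(\overline{\delta})\geq1\geq\gamma(\delta),\gamma(\overline{\delta})$, with all four equal to $1$ exactly when $\uprho_\delta=\I$; in that Kac case every entry of \eqref{dimdeltan} is $1$ and there is nothing to prove. Otherwise $\Gamma(\delta)\Gamma(\overline{\delta})>1>\gamma(\delta)\gamma(\overline{\delta})$ (using $\Gamma(\overline{\delta})=\gamma(\delta)^{-1}$), so $G(k+1)>G(k-1)$ and $g(k+1)<g(k-1)$. Taking $w^0_\delta$ (trivial, with $\gamma=\Gamma=1$) and $w^1_\delta=\delta$ as base cases and assuming $\Gamma(w^j_\delta)=G(j)$, $\gamma(w^j_\delta)=g(j)$ for all $j\leq k$, the max-relation reads $\max\bigl(\Gamma(w^{k+1}_\delta),G(k-1)\bigr)=G(k+1)$; since $G(k-1)<G(k+1)$, this is possible only if $\Gamma(w^{k+1}_\delta)=G(k+1)$, and symmetrically $\gamma(w^{k+1}_\delta)=g(k+1)$. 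The only genuine work is the combinatorial verification of the two-term fusion rule, and the point that the extreme eigenvalues of the tensor product are carried by the ``leading'' summand $u^{w^{k+1}_\delta}$ rather than by $u^{w^{k-1}_\delta}$; the latter, which is the main obstacle if attempted by an eigenvector argument, is here supplied for free by the strict monotonicity $G(k+1)>G(k-1)$ (and $g(k+1)<g(k-1)$) in the non-Kac case.
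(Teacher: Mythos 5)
Your proposal is correct and is essentially the paper's own argument: both rest on the two-term fusion rule $u^{w^k_\delta}\tp u^{\delta_k}=u^{w^{k+1}_\delta}\oplus u^{w^{k-1}_\delta}$, multiplicativity of the extreme eigenvalues of $\uprho$ under $\tp$ together with $\min$/$\max$ under $\oplus$, reduction to the non-Kac case, and the strict inequalities $\gamma(\delta)\gamma(\overline{\delta})<1<\Gamma(\delta)\Gamma(\overline{\delta})$ forcing the extremum onto the longer word. The only differences are organizational (your unified $\delta_k$ notation and strong induction on word length versus the paper's induction in steps of two, and your explicit treatment of $\Gamma$ which the paper dismisses as analogous).
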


\begin{proof}
It is enough to consider the case $\gamma(\alpha)<1$, otherwise $\oon{U}_F^+$ is of Kac type and the result holds trivially.

We prove the claim by induction. The equalities \eqref{dimdeltan} hold for $n=0$, so let us assume their validity for some $n\in\ZZ_+$. We have $w_\delta^{2n+1}=w_\delta^{2n}\delta$, hence $w_\delta^{2n+1}\tp\overline{\delta}=w_\delta^{2n+2}\oplus{w_\delta^{2n}}$ and
\begin{align*}
\gamma(\delta)^{n+1}\gamma(\overline{\delta})^{n+1}
&=\gamma(w_\delta^{2n+1})\gamma(\overline{\delta})=\gamma(w_{\delta}^{2n+1}\tp\overline{\delta})=
\gamma(w_\delta^{2n+2}\oplus{w_\delta^{2n}})\\
&=\min\bigl\{\gamma(w_\delta^{2n+2}),\gamma(w_\delta^{2n})\bigr\}=
\min\bigl\{\gamma(w_\delta^{2n+2}),\gamma(\delta)^n\gamma(\overline{\delta})^n\bigr\}=\gamma(w_\delta^{2n+2}).
\end{align*}
This shows $\gamma(w_\delta^{2n+2})=\gamma(\delta)^{n+1}\gamma(\overline{\delta})^{n+1}$. Similarly $w_\delta^{2n+2}\tp\delta=w_\delta^{2n+3}\oplus{w_\delta^{2n+1}}$ and, as above, we deduce that $\gamma(w_\delta^{2n+3})=\gamma(\delta)^{n+2}\gamma(\overline{\delta})^{n+1}$. The argument for $\Gamma$ is analogous.
\end{proof}

\begin{proof}[Proof of Proposition \ref{prop:Un}]
By \cite[Proposition 6.4.5]{Timmermann} (cf.~\cite[Theorem 1.3(2)]{WangVanDaele}) there exists $F\in\oon{GL}(2,\CC)$ and a unital $*$-homomorphism $\pi\colon\C^\uu(\oon{U}_F^+)\to\C^\uu(\GG)$ respecting coproducts and mapping the matrix elements of the fundamental representation $u^\alpha$ of $\oon{U}_F^+$ to the corresponding matrix elements of $U$. Since $\dim{u^\alpha}=2$, we have $\gamma(u^\alpha)=\Gamma(u^\alpha)^{-1}=\gamma(\overline{u^\alpha})$.

Let us introduce a family $\{W^n\}_{n\in\ZZ_+}$ of (not necessarily irreducible) representations of $\GG$ via $W^n=(\pi\tens\id)u^{w^n_\alpha}$ for $n\in\ZZ_+$. In particular $W^0$ is the trivial representation and $W^1=(\pi\tens\id)u^\alpha=U$. It is irreducible, hence by Proposition \ref{prop1}\eqref{prop1:2} $\uprho_{W^1}=\uprho_U=\uprho_{u^\alpha}$ and consequently
\[
\gamma(U)=\gamma(u^\alpha),\quad\Gamma(U)=\Gamma(u^\alpha).
\]
For each $n\in\NN$, choose an irreducible subrepresentation $\mathcal{U}^n$ of $W^n$ such that $\gamma(\mathcal{U}^n)=\gamma(W^n)$. Next, set $U^1=\mathcal{U}^1=U$ and $U^n=\mathcal{U}^{2n}$ for $n\geq{2}$. We claim that the family $\{U^n\}_{n\in\NN}$ meets the criteria of Proposition \ref{prop:Un}.

First, as $u^{w^n_\alpha}\subset{u^{\alpha}}\tp\overline{u^{\alpha}}\tp\dotsm$ ($n$ times), we have $\mathcal{U}^n\subset{W^n}=(\pi\tens\id)u^{w^n_\alpha}\subset{U}\tp\overline{U}\tp\dotsm$, which shows \eqref{item:Un1}. Next, for $n\geq{1}$ and $\delta=\alpha$ if $n$ is even, $\delta=\beta$ if $n$ is odd, we calculate:
\[
u^{w^n_\alpha}\tp{u^\delta}=u^{w^{n+1}_\alpha}\oplus{u^{w^{n-1}_\alpha}}
\]
hence
\begin{equation}\label{eq1}
\bigl((\pi\tens\id)u^{w^n_\alpha}\bigr)\tp\bigl((\pi\tens\id)u^\delta\bigr)
=\bigl((\pi\tens\id)u^{w^{n+1}_\alpha}\bigr)\oplus\bigl((\pi\tens\id)u^{w^{n-1}_\alpha}\bigr)
\end{equation}
and
\begin{align*}
\gamma(W^{n})\gamma(U)
&=\gamma\bigl((\pi\tens\id)u^{w^n_\alpha}\bigr)\gamma(u^\delta)
=\gamma\Bigl(\bigl((\pi\tens\id)(u^{w^n_\alpha})\bigr)\tp\bigl((\pi\tens\id)u^\delta\bigr)\Bigr)\\
&=\gamma\Bigl(\bigl((\pi\tens\id)u^{w^{n+1}_\alpha}\bigr)\oplus\bigl((\pi\tens\id)u^{w^{n-1}_\alpha}\bigr)\Bigr)
=\min\bigl\{\gamma(W^{n+1}),\gamma(W^{n-1})\bigr\}.
\end{align*}
As $\gamma(W^0)=1$ and $\gamma(U)<1$, we have by induction $\gamma(W^{n+1})<\gamma(W^n)$ and hence
\[
\gamma(W^n)\gamma(U)=\gamma(W^{n+1}).
\]
From this we can conclude that
\begin{subequations}\label{eq2}
\begin{equation}
\gamma(\mathcal{U}^n)=\gamma(W^n)=\gamma(U)^n=\gamma(u^{w^n_\alpha}),\qquad{n}\geq{1},
\end{equation}
in particular
\begin{equation}
\gamma(U^n)=\gamma(\mathcal{U}^{2n})=\gamma(U)^{2n},\qquad{n}\geq{2}
\end{equation}
and similarly
\begin{equation}
\Gamma(W^n)=\Gamma(U)^n=\Gamma(u^{w^n_\alpha}),\qquad{n}\ge{1}.
\end{equation}
\end{subequations}

A similar inductive reasoning in conjunction with $\qdim{W^1}=\qdim{u^\alpha}$ shows that \eqref{eq1} implies also
\begin{equation}\label{eq3}
\qdim{W^n}=\qdim{u^{w^n_\alpha}}\qquad{n}\in\NN.
\end{equation}
The next step is to show that $\Gamma(U^n)=\Gamma(\mathcal{U}^{2n})=\Gamma(W^{2n})$ for $n\geq{2}$. For this, we will use \cite[Theorem 3.4]{symmetry}. To that end, for any finite dimensional unitary representation $Z_0$ of a compact quantum group let us introduce function
\[
P_{Z_0}\colon\NN\ni{m}\longmapsto\max_{Z\subset{Z_0^{\tp{m}}}}\dim{Z}\in\NN,
\]
where the maximum runs over all representations $Z$ appearing in the decomposition of $Z_0^{\tp{m}}$ into irreducible representations. Since $\mathcal{U}^{2n}\subset{W^{2n}}$, we have $P_{\mathcal{U}^{2n}}\leq{P_{W^{2n}}}$. Next, because $W^{2n}$ is the image of $u^{w^{2n}_\alpha}$ under $\pi\tens\id$, we also have $P_{W^{2n}}\leq{P_{u^{w^{2n}_\alpha}}}$. The fusion rules in $\oon{U}_F^+$ show that any representation $Z$ which appears in the decomposition of $(u^{w^{2n}_\alpha})^{\tp{m}}$ is of the form $u^{w^{k}_\alpha}$ for $1\leq{k}\leq{2nm}$. Thus, using \cite[Lemma 4.13]{KrajczokWasilewski} we obtain
\[
P_{\mathcal{U}^{2n}}(m)\leq{P_{u^{w^{2n}_\alpha}}(m)}\leq\max_{1\leq{k}\leq{2nm}}\dim{u^{w^k_\alpha}}=2nm+1
\]
(remember that we assume $\dim{U}=\dim{u^\alpha}=2$). Since the function $P_{\mathcal{U}^{2n}}$ is of subexponential growth and by definition $U^n=\mathcal{U}^{2n}$, \cite[Theorem 3.4]{symmetry} and equalities \eqref{eq2} imply
\[
\Gamma(U^n)=
\Gamma(\mathcal{U}^{2n})=\gamma(\mathcal{U}^{2n})^{-1}=
\gamma(U)^{-2n}=\Gamma(U)^{2n}
=\gamma(U^n)^{-1},\qquad{n}\in\NN.
\]
It is left to check condition \eqref{item:Un3}. Set $q=\gamma(U)$ and note immediately that $0<q<1$. Since $\dim{U}=2$ we have $\qdim{U}=\qdim{u^\alpha}=q+q^{-1}$. Equation \eqref{eq3} and \cite[Lemma 4.13]{KrajczokWasilewski} give
\[
\qdim{\mathcal{U}^n}\leq\qdim{W^n}=\qdim{u^{w^n_\alpha}}=[n+1]_q,
\]
where $[n+1]_q=\tfrac{q^{-n-1}-q^{n+1}}{q^{-1}-q}$ ($n\in\ZZ_+$) are the $q$-numbers (see e.g.~\cite[Section 2.1.1]{KlimykSchmudgen} or \cite[Section 2.4]{NeshveyevTuset}). To conclude the proof we calculate for $n\geq{2}$
\begin{align*}
\tfrac{\Gamma(U^n)}{\qdim{U^n}}=
\tfrac{\Gamma(\mathcal{U}^{2n})}{\qdim{\mathcal{U}^{2n}}}=
\tfrac{1}{\gamma(U)^{2n}\qdim{\mathcal{U}^{2n}}}\geq\tfrac{q^{-1}-q}{q^{2n}(q^{-2n-1}-q^{2n+1})}
\xrightarrow[n\to\infty]{}\tfrac{q^{-1}-q}{q^{-1}}=1-q^2>0.
\end{align*}
\end{proof}

\subsection{Duals of type \texorpdfstring{$\mathrm{I}$}{I} discrete quantum groups}\label{sec:typeI}\hspace*{\fill}

In this section we prove validity of Conjecture \ref{conj:main} for duals of second countable discrete quantum groups of type $\mathrm{I}$. This class includes all $q$-deformations of compact, simply connected, semisimple Lie groups such as $\oon{SU}_q(n)$ (see Section \ref{sec:qDeformations}). Additionally the techniques used in the proof of the main technical result (Theorem \ref{thm:Lambdat}) owe much to \cite{qdisk,typeI} and \cite[Section 6]{faktory}.

For a thorough account of the theory of locally compact quantum groups of type $\mathrm{I}$ we refer the reader to \cite{modular,KrajczokTypeI,CaspersKoelink}, while here we will only list the important objects associated with a second countable discrete quantum group $\bbGamma$ of type $\mathrm{I}$, which we will need in what follows.

Let $\bbGamma$ be a second countable discrete quantum group of type $\mathrm{I}$. Then $\hh{\bbGamma}$ is coamenable (see e.g.~\cite[Theorem 2.8]{typeI}) and $\Irr{\bbGamma}$, the spectrum of $\C(\hh{\bbGamma})$, is a standard Borel space. There exists a unitary operator
\[
\cQ_\LL\colon\ell^2(\bbGamma)\longrightarrow{\Int_{\Irr{\bbGamma}}}^{\!\!\oplus}\oon{HS}(\cH_\pi)\dd{\mu}(\pi)
\]
(with $\oon{HS}(\cH_\pi)$ denoting the space of Hilbert-Schmidt operators on the carrier Hilbert space $\cH_\pi$ of the representation $\pi$). The unitary $\cQ_\LL$ implements the isomorphism
\[
\Linf(\hh{\bbGamma})\ni{x}\longmapsto\cQ_\LL{x}\cQ_\LL^*\in{\Int_{\Irr{\bbGamma}}}^{\!\!\oplus}\bigl(\B(\cH_\pi)\tens\I_{\overline{\cH_\pi}}\bigr)\dd{\mu}(\pi)
\]
(here we use the isomorphism $\oon{HS}(\cH_\pi)=\cH_\pi\tens\overline{\cH_\pi}$ which yields $\B(\oon{HS}(\cH_\pi))=\B(\cH_\pi)\vtens\B(\overline{\cH_\pi})$) and writing $\cQ_\LL{x}\cQ_\LL^*$ as ${\Int_{\Irr{\bbGamma}}}^{\!\!\!\oplus}x_\pi\tens\I_{\overline{\cH_\pi}}\dd{\mu}(\pi)$ the value of the Haar measure $\bh_{\hh{\bbGamma}}$ on $x$ is expressed as follows
\[
\bh_{\hh{\bbGamma}}(x)=\Int_{\Irr{\bbGamma}}\Tr(D_\pi^{-2}x_\pi)\dd{\mu}(\pi)
\]
for a certain measurable field $\bigl(D_\pi\bigr)_{\pi\in\Irr{\bbGamma}}$
of non-singular positive self-adjoint operators.

\begin{theorem}\label{thm:Lambdat}
Let $\bbGamma$ be a second countable type $\mathrm{I}$ discrete quantum group. Then
\begin{enumerate}
\item\label{thm:Lambdat1} for any $\alpha\in\Irr{\hh{\bbGamma}}$ we have $\gamma(\alpha)=\Gamma(\alpha)^{-1}$,
\item\label{thm:Lambdat2} for any $t\in\TtauInn(\hh{\bbGamma})$ the set $\Lambda_t=\bigl\{\Gamma(\beta)^{2\ii{t}}\,\bigr|\bigl.\,\beta\in\Irr{\hh{\bbGamma}}\bigr\}\subset\TT$ is finite.
\end{enumerate}
\end{theorem}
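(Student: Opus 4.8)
The plan is to work inside the Plancherel direct integral $\Linf(\hh\bbGamma)\cong\int^\oplus_{\Irr\bbGamma}\B(\cH_\pi)\,\dd\mu(\pi)$ recalled above and to exploit the transformation laws \eqref{sigmatau} by pushing them through the field of irreducible $*$-representations $\pi$. The observation used throughout is that since $\bh_{\hh\bbGamma}(\I)=1$ we have $\int_{\Irr\bbGamma}\Tr(D_\pi^{-2})\dd\mu(\pi)=1$, so $D_\pi^{-2}$ is trace class; hence for $\mu$-almost every $\pi$ the operator $D_\pi$ has discrete spectrum tending to $+\infty$ and is bounded below by a strictly positive constant. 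Because $\hh\bbGamma$ is second countable, $\Irr{\hh\bbGamma}$ is countable, so the various almost-everywhere statements below may be intersected into a single conull set of $\pi$.

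For the first assertion I would fix $\alpha$ and apply $\pi$ to $\sigma^{\bh}_s(U^\alpha_{i,j})=\uprho_{\alpha,i}^{\ii s}U^\alpha_{i,j}\uprho_{\alpha,j}^{\ii s}$. Since the modular group acts fibrewise by $\Ad(D_\pi^{-2\ii s})$, the unitary $w=(\pi\tens\id)U^\alpha\in\B(\cH_\pi\tens\cH_\alpha)$ satisfies $w\,(D_\pi^{2\ii s}\tens\uprho_\alpha^{-\ii s})\,w^*=D_\pi^{2\ii s}\tens\uprho_\alpha^{\ii s}$ for all $s$. Thus the positive operators $D_\pi^{2}\tens\uprho_\alpha^{-1}$ and $D_\pi^{2}\tens\uprho_\alpha$ are unitarily equivalent and so share their spectrum; comparing the infima of these spectra, which are strictly positive because $\inf\oon{Sp}(D_\pi^2)>0$, gives $\gamma(\alpha)\inf\oon{Sp}(D_\pi^2)=\Gamma(\alpha)^{-1}\inf\oon{Sp}(D_\pi^2)$, whence $\gamma(\alpha)=\Gamma(\alpha)^{-1}$.

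For the second assertion part (1) is used first: it makes $\sigma^{\bh}_s(U^\beta_{1,d_\beta})=U^\beta_{1,d_\beta}$ and $\tau^{\hh\bbGamma}_t(U^\beta_{1,d_\beta})=\Gamma(\beta)^{-2\ii t}U^\beta_{1,d_\beta}$ (here $d_\beta=\dim\beta$), so $x_\beta:=U^\beta_{1,d_\beta}$ lies in the centralizer $\Linf(\hh\bbGamma)^\sigma$ and is a $\tau$-eigenvector with eigenvalue $\Gamma(\beta)^{-2\ii t}$. Since $\uprho_{\beta\tens\gamma}=\uprho_\beta\tens\uprho_\gamma$, the quantity $\Gamma$ is multiplicative over the fusion ring and $\beta\mapsto\Gamma(\beta)^{2\ii t}$ is a homomorphism of monoids; hence $\Lambda_t$ is a sub-monoid of $\TT$ whose closure is a closed subgroup of $\TT$, and therefore either finite or all of $\TT$. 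It thus suffices to exclude $\overline{\Lambda_t}=\TT$. To bring in $t\in\TtauInn(\hh\bbGamma)$, write $\tau^{\hh\bbGamma}_t=\Ad(v_t)$ with $v_t$ unitary; by Lemma \ref{lem:centrv} one has $v_t\in\Linf(\hh\bbGamma)^\sigma$. As inner automorphisms fix the centre, $\tau^{\hh\bbGamma}_t$ acts trivially on $\cZ(\Linf(\hh\bbGamma))=\Linf(\Irr\bbGamma,\mu)$, i.e.\ the induced flow on the Plancherel base is trivial and $\tau^{\hh\bbGamma}_t$ is decomposable, $\tau^{\hh\bbGamma}_t=\int^\oplus\Ad(V_\pi)\,\dd\mu(\pi)$ with each $V_\pi$ commuting with $D_\pi$. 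Feeding this, together with the modular law, into $\tau^{\hh\bbGamma}_t(U^\beta_{i,j})=\uprho_{\beta,i}^{\ii t}U^\beta_{i,j}\uprho_{\beta,j}^{-\ii t}$ and again conjugating by $w=(\pi\tens\id)U^\beta$, I obtain $w(K_\pi\tens\I)w^*=K_\pi\tens\uprho_\beta^{2\ii t}$ with $K_\pi:=D_\pi^{2\ii t}V_\pi^{-1}$. Consequently $\oon{Sp}(K_\pi)\subset\TT$ is, for almost every $\pi$, invariant under multiplication by every $\Gamma(\beta)^{2\ii t}$, hence under the closed subgroup $\overline{\langle\Lambda_t\rangle}$.

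The main obstacle is the last step: ruling out $\overline{\Lambda_t}=\TT$. If this held then $\oon{Sp}(K_\pi)=\TT$ for almost every $\pi$, and one must show that this is incompatible with $K_\pi=D_\pi^{2\ii t}V_\pi^{-1}$ coming from a genuine measurable field $(V_\pi)$ of implementers of $\tau^{\hh\bbGamma}_t$ assembling to an element $v_t\in\Linf(\hh\bbGamma)$. This is where I expect the real difficulty to lie, and where the type $\mathrm{I}$ and second countability hypotheses, together with the techniques of \cite{typeI,qdisk} and \cite[Section~6]{faktory}, should enter: one combines the discreteness of $\oon{Sp}(D_\pi)$ with finite multiplicities and the multiplicity data carried by the unitary equivalences above with the measurability and integrability of the field $(V_\pi)$ to force the eigenvalues $\Gamma(\beta)^{2\ii t}$ into a fixed finite subgroup of $\TT$, contradicting density and yielding finiteness of $\Lambda_t$.
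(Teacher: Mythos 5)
Your part (1) is correct, and it is a clean variant of the paper's argument: where the paper tracks how the fibre operators $U^\alpha_{i,j;\pi}$ shift spectral subspaces of $D_\pi$ and then uses that $\oon{Sp}(D_\pi)$ is bounded below, you observe that $w=(\pi\tens\id)U^\alpha$ implements a unitary equivalence of $D_\pi^2\tens\uprho_\alpha^{-1}$ with $D_\pi^2\tens\uprho_\alpha$ and compare the finite, strictly positive infima $\Gamma(\alpha)^{-1}\inf\oon{Sp}(D_\pi^2)$ and $\gamma(\alpha)\inf\oon{Sp}(D_\pi^2)$; modulo the routine step of making the a.e.\ intertwining relation hold for all $s$ simultaneously (rational $s$, then continuity), this works. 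Your setup for part (2) is also sound: $\Lambda_t$ is a submonoid of $\TT$ (since $\uprho_\beta\tens\uprho_\gamma$ decomposes as the direct sum of the $\uprho_{\delta_k}$ over the irreducible summands $\delta_k\subset\beta\tp\gamma$, so $\Gamma(\beta)\Gamma(\gamma)=\Gamma(\delta)$ for some irreducible $\delta$), its closure is a closed subgroup of $\TT$, and your identity $w(K_\pi\tens\I)w^*=K_\pi\tens\uprho_\beta^{2\ii t}$ with $K_\pi=D_\pi^{2\ii t}v_{t;\pi}^*$ does follow from the modular and scaling intertwining relations together with Lemma \ref{lem:centrv}.

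The gap is the step you yourself flag as ``the main obstacle'': it is not a technical residue but the entire content of statement (2), and the reduction you propose --- ruling out $\oon{Sp}(K_\pi)=\TT$ --- cannot succeed as stated. Writing $\cH_\pi=\bigoplus_n\cH_\pi(D_\pi=d_n)$ with finite-dimensional summands (as $D_\pi^{-1}$ is compact), one has $K_\pi=\bigoplus_n d_n^{2\ii t}\,v_{t;\pi}^*\bigl|_{\cH_\pi(D_\pi=d_n)}\bigr.$, and the closure of this countable union of finite block spectra can perfectly well equal $\TT$; no contradiction follows from $\oon{Sp}(K_\pi)=\TT$ alone, because collapsing the commuting pair $(D_\pi,v_{t;\pi})$ into the single operator $K_\pi$ discards exactly the joint spectral data that carries the finiteness. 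The missing idea is the localization the paper performs: normalize $\mu$ so that $\|D_\pi^{-1}\|=1$ a.e.; then $\cH_\pi(D_\pi=1)$ is non-zero (a compact positive operator attains its norm) and finite dimensional, and it is preserved by $v_{t;\pi}$. The shift relations force $U^\beta_{i,d_\beta;\pi}$ to annihilate $\cH_\pi(D_\pi=1)$ whenever $\uprho_{\beta,i}>\gamma(\beta)$, since the target $D_\pi$-eigenvalue $(\uprho_{\beta,i}\Gamma(\beta))^{\mhalf}$ would be $<1$; on the other hand the fibrewise column-unitarity relation $\sum_i{U^\beta_{i,d_\beta;\pi}}^*U^\beta_{i,d_\beta;\pi}=\I$ --- which your sketch never uses --- guarantees that for every $\beta$ some $U^\beta_{i,d_\beta;\pi}$ with $\uprho_{\beta,i}=\gamma(\beta)=\Gamma(\beta)^{-1}$ is non-zero on $\cH_\pi(D_\pi=1)$, maps that space into itself, and shifts $v_{t;\pi}$-eigenvalues by $\Gamma(\beta)^{-2\ii t}$. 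Since $v_{t;\pi}$ has only finitely many eigenvalues there, every $\Gamma(\beta)^{2\ii t}$ lies in the fixed finite set $\oon{Sp}\bigl(\chi(D_\pi=1)v_{t;\pi}\bigr)\cdot\oon{Sp}\bigl(\chi(D_\pi=1)v_{t;\pi}\bigr)^{-1}$, which is the assertion. Without this bottom-eigenspace argument your proof establishes only the dichotomy ``$\Lambda_t$ finite or dense'', not the theorem.
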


\begin{proof}
Since $\hh{\bbGamma}$ is compact, we have
\[
+\infty>\bh_{\hh{\bbGamma}}(\I)=\Int_{\Irr{\bbGamma}}\Tr(D_\pi^{-2})\dd{\mu}(\pi),
\]
so $D_\pi^{-1}$ is a Hilbert-Schmidt operator for $\pi$ in some full-measure subset $O_1\subset\Irr{\bbGamma}$. By rescaling $\mu$ we can assume that $\|D_\pi^{-1}\|=1$ for almost all $\pi\in\Irr{\bbGamma}$. Next we recall from \cite[Theorem 5.4]{modular} that
\begin{equation}\label{eq:NablaInt}
\cQ_\LL\modOp[\bh_{\hh{\bbGamma}}]^{\ii{s}}\cQ_\LL^*={\Int_{\Irr{\bbGamma}}}^{\!\!\oplus}D_\pi^{-2\ii{s}}\tens(D_\pi^{2\ii{s}})^\top\dd{\mu}(\pi),\qquad{s}\in\RR.
\end{equation}

Returning to the compact quantum group $\hh{\bbGamma}$ we use standard conventions to fix a unitary representative $U^\alpha$ of each class $\alpha\in\Irr{\hh{\bbGamma}}$ and we fix an orthonormal basis of the carrier Hilbert space in which $\uprho_\alpha$ (short-hand for $\uprho_{U^\alpha}$) is diagonal with diagonal elements $\uprho_{\alpha,1}\leq\dotsm\leq\uprho_{\alpha,d_\alpha}$ (similarly to what we did in Section \ref{sect:UqU}, we denote $\dim{U^\alpha}$ by $d_\alpha$). If $\uprho_\alpha=\I$, there is nothing to prove, so from now on we will only deal with $\alpha$ such that $\gamma(\alpha)<1<\Gamma(\alpha)$. For $i,j\in\{1,\dotsc,d_\alpha\}$ write
\[
\cQ_\LL{U_{i,j}^\alpha}\cQ_\LL^*={\Int_{\Irr{\bbGamma}}}^{\!\!\oplus}U_{i,j;\pi}^\alpha\tens\I_{\overline{\cH_\pi}}\dd{\mu}(\pi)
\]
for some measurable field of operators $\bigl(U_{i,j;\pi}^\alpha\bigr)_{\pi\in\Irr{\bbGamma}}$. Similarly, for $t\in\TtauInn(\hh{\bbGamma})$ we choose a unitary $v_t\in\Linf(\hh{\bbGamma})$ such that $\tau_t^{\hh{\bbGamma}}=\Ad(v_t)$ and write
\[
\cQ_\LL{v_t}\cQ_\LL^*={\Int_{\Irr{\bbGamma}}}^{\!\!\oplus}v_{t;\pi}\tens\I_{\overline{\cH_\pi}}\dd{\mu}(\pi)
\]
for some measurable field $\bigl(v_{t;\pi}\bigr)_{\pi\in\Irr{\bbGamma}}$ of unitary operators. Consequently
\begin{equation}\label{eq:tauInt}
\cQ_\LL\tau_t^{\hh{\bbGamma}}(U_{i,j}^\alpha)\cQ_\LL^*={\Int_{\Irr{\bbGamma}}}^{\!\!\oplus}v_{t;\pi}U_{i,j;\pi}^\alpha{v_{t;\pi}^*}\tens\I_{\overline{\cH_\pi}}\dd{\mu}(\pi)
\end{equation}
and in the same way, from \eqref{eq:NablaInt} we get
\begin{equation}\label{eq:NablaInt2}
\cQ_\LL\sigma_s^{\bh_{\hh{\bbGamma}}}(U_{i,j}^\alpha)\cQ_\LL^*={\Int_{\Irr{\bbGamma}}}^{\!\!\oplus}D_\pi^{-2\ii{s}}U_{i,j;\pi}^\alpha{D_\pi^{2\ii{s}}}\tens\I_{\overline{\cH_\pi}}\dd{\mu}(\pi),\qquad{s}\in\RR.
\end{equation}

Now from Lemma \ref{lem:centrv} we know that each $v_t$ is invariant under the modular group, so in view of \eqref{eq:NablaInt2} we obtain
\[
{\Int_{\Irr{\bbGamma}}}^{\!\!\oplus}D_\pi^{-2\ii{s}}v_{t;\pi}D_\pi^{2\ii{s}}\tens\I_{\overline{\cH_\pi}}\dd{\mu}(\pi)
={\Int_{\Irr{\bbGamma}}}^{\!\!\oplus}v_{t;\pi}^\alpha\tens\I_{\overline{\cH_\pi}}\dd{\mu}(\pi),\qquad{s}\in\RR.
\]
Hence for each $s$ there is a full-measure subset of $\Irr{\bbGamma}$ consisting only of $\pi$ such that $D_\pi^{-2\ii{s}}v_{t;\pi}D_\pi^{2\ii{s}}=v_{t;\pi}$. Taking the intersection of these subsets over rational $s$ and then intersection with $O_1$ we obtain a subset $O_2\subset{O_1}$ of full measure such that
\begin{equation}\label{eq:DvD}
D_\pi^{-2\ii{s}}v_{t;\pi}D_\pi^{2\ii{s}}=v_{t;\pi},\qquad\pi\in{O_2},\:s\in\RR
\end{equation}
(at first this holds only for $s\in\QQ$, but then extends to $\RR$ by continuity). It follows that for $\pi\in{O_2}$ the operator $v_{t;\pi}$ commutes with $f(D_\pi)$ for any bounded measurable function $f$ on $\oon{Sp}(D_\pi)$.

Take now $0<a<b$. We will show that for $\pi$ in a full-measure subset of $\Irr{\bbGamma}$ the operator $U_{i,j;\pi}^\alpha$ shifts the spectral subspace $\cH_\pi(a\leq{D_\pi}\leq{b})$ as follows:
\begin{equation}\label{eq:UDDU}
U_{i,j;\pi}^\alpha\bigl(\cH_\pi(a\leq{D_\pi}\leq{b})\bigr)\subset\cH_\pi\bigl((\uprho_{\alpha,i}\uprho_{\alpha,j})^{\mhalf}a\leq{D_\pi}\leq(\uprho_{\alpha,i}\uprho_{\alpha,j})^{\mhalf}b\bigr).
\end{equation}
Indeed, \eqref{sigmatau} and \eqref{eq:NablaInt2} imply
\[
D_\pi^{-2\ii{s}}U_{i,j;\pi}^\alpha{D_\pi^{2\ii{s}}}=(\uprho_{\alpha,i}\uprho_{\alpha,j})^{\ii{s}}U_{i,j;\pi}^\alpha,\qquad{s}\in\RR
\]
for $\pi$ in a full-measure subset $O_{3,i,j}\subset\Irr{\bbGamma}$. Setting $O_3=O_2\cap\bigcap\limits_{i,j=1}^{d_\alpha}O_{3,i,j}$ we obtain
\[
U_{i,j;\pi}^\alpha{D_\pi^{2\ii{s}}}=\bigl((\uprho_{\alpha,i}\uprho_{\alpha,j})^{\half}D_\pi\bigr)^{2\ii{s}}U_{i,j;\pi}^\alpha,\qquad{s}\in\RR,\:\pi\in{O_3}
\]
with $O_3$ of full measure. Thus for any Schwartz class function $g$
\begin{align*}
U_{i,j;\pi}^\alpha{g(\log{D_\pi})}
&=\tfrac{1}{\sqrt{2\pi}}\Int_{-\infty}^{+\infty}U_{i,j;\pi}^\alpha{D_\pi^{\ii{t}}}\hh{g}(t)\dd{t}\\
&=\tfrac{1}{\sqrt{2\pi}}\Int_{-\infty}^{+\infty}\bigl((\uprho_{\alpha,i}\uprho_{\alpha,j})^{\half}D_\pi\bigr)^{\ii{t}}U_{i,j;\pi}^\alpha\hh{g}(t)\dd{t}
=g\bigl(\log((\uprho_{\alpha,i}\uprho_{\alpha,j})^{\half}D_\pi)\bigr)U_{i,j;\pi}^\alpha.
\end{align*}
Approximating the characteristic function of $[\log{a},\log{b}]$ pointwise with $g$ as above yields
\begin{align*}
U_{i,j;\pi}^\alpha\chi(a\leq{D_\pi}\leq{b})
&=\chi\bigl(a\leq(\uprho_{\alpha,i}\uprho_{\alpha,j})^{\half}{D_\pi}\leq{b}\bigr)U_{i,j;\pi}^\alpha\\
&=\chi\bigl((\uprho_{\alpha,i}\uprho_{\alpha,j})^{\mhalf}a\leq{D_\pi}\leq(\uprho_{\alpha,i}\uprho_{\alpha,j})^{\mhalf}b\bigr)U_{i,j;\pi}^\alpha
\end{align*}
and \eqref{eq:UDDU} follows.

Similarly the operators $U_{i,j;\pi}^\alpha$ also shift eigenspaces of $v_{t;\pi}$.
More precisely if $V$ is a measurable subset of $\TT$ then then for almost all $\pi$
\begin{equation}\label{eq:UvvU}
U_{i,j;\pi}^\alpha\bigl(\cH_\pi(v_{t;\pi}\in{V})\bigr)\subset\cH(v_{t;\pi}\in\uprho_{\alpha,i}^{\ii{t}}\uprho_{\alpha,j}^{-\ii{t}}V)
\end{equation}
because \eqref{sigmatau} and \eqref{eq:tauInt} imply
\[
v_{t;\pi}U_{i,j;\pi}^\alpha{v_{t;\pi}^*}=
\uprho_{\alpha,i}^{\ii{t}}U_{i,j;\pi}^\alpha\uprho_{\alpha,j}^{-\ii{t}},\qquad{i,j}\in\{1,\dotsc,d_\alpha\},\:\pi\in{O_4}
\]
for a full-measure subset $O_4\subset{O_3}$.

Recall that for $\pi\in{O_4}$ the operator $D_\pi^{-1}$ is Hilbert-Schmidt and we have $\|D_\pi^{-1}\|=1$. Consequently there exists (finite or infinite, depending on the dimension of $\pi$) sequence $1=q_{1;\pi}>{q_{2;\pi}}>\dotsm$ of non-zero positive numbers such that each eigenspace $\cH_\pi(D_\pi^{-1}=q_{n;\pi})$ is finite dimensional and
\[
\cH_\pi=\bigoplus_{n}\cH_\pi(D_\pi^{-1}=q_{n;\pi}).
\]

Equation \eqref{eq:DvD} implies that $v_{t;\pi}$ preserves the decomposition of $\cH_\pi$ into eigenspaces of $D_{\pi}^{-1}$. In particular $v_{t;\pi}$ preserves the finite dimensional eigenspace $\cH_\pi(D_\pi^{-1}=1)=\cH_\pi(D_\pi=1)$ which consequently decomposes as
\[
\cH_\pi(D_\pi=1)=\bigoplus_{k=1}^{K}\cH_\pi(v_{t;\pi}=\mu_k)\cap\cH_\pi(D_\pi=1)
\]
for some $\mu_1,\dotsc,\mu_K\in\TT$. By \eqref{eq:UDDU} and \eqref{eq:UvvU}
\begin{equation}\label{eq:cont}
\begin{aligned}
U_{i,d_\alpha;\pi}^\alpha\bigl(\cH_\pi(v_{t;\pi}=\mu_k)&\cap\cH_\pi(D_\pi=1)\bigr)\\
&\subset\cH_\pi\bigl(v_{t;\pi}
=\uprho_{\alpha,i}^{\ii{t}}\Gamma(\alpha)^{-\ii{t}}\mu_k\bigr)\cap\cH_\pi\Bigl(D_\pi=\bigl(\uprho_{\alpha,i}\Gamma(\alpha)\bigr)^{\mhalf}\Bigr).
\end{aligned}
\end{equation}

Finally let us note that for each $\beta\in\Irr{\hh{\bbGamma}}$ we have $\sum\limits_{i=1}^{d_{\beta}}{U_{i,d_{\beta}}^\beta}^*U_{i,d_{\beta}}^\beta=\I$ which implies that
\begin{equation}\label{eq:UU1pi}
\sum_{i=1}^{\dim{\beta}}{U_{i,d_{\beta};\pi}^\beta}^*U_{i,d_{\beta};\pi}^\beta=\I
\end{equation}
for all $\pi$ in some full-measure subset $O_5\subset{O_4}$ and all $\beta\in\Irr{\hh{\bbGamma}}$ (recall that $\Irr{\hh{\bbGamma}}$ is countable).

We are now ready to prove the two claims of our theorem. If $\gamma(\alpha)>\Gamma(\alpha)^{-1}$ then for any $i\in\{1,\dotsc,d_\alpha\}$
\[
\uprho_{\alpha,i}\Gamma(\alpha)\geq\gamma(\alpha)\Gamma(\alpha)>1,
\]
so that $\bigl(\uprho_{\alpha,i}\Gamma(\alpha)\bigr)^{\mhalf}<1$, and consequently
\[
\cH_\pi\Bigl(D_\pi=\bigl(\uprho_{\alpha,i}\Gamma(\alpha)\bigr)^{\mhalf}\Bigr)=\{0\}
\]
because all eigenvalues of $D_\pi$ are greater or equal to $1$ (as we have $\|D_\pi^{-1}\|=1$). This, however, contradicts the combination of \eqref{eq:cont} and \eqref{eq:UU1pi} for $\beta=\alpha$. Similarly, if $\gamma(\alpha)<\Gamma(\alpha)^{-1}$ then the equality $\oon{Sp}(\uprho_{\overline{\alpha}})=\oon{Sp}(\uprho_\alpha)^{-1}$ leads to a contradiction with \eqref{eq:UU1pi} for $\beta=\overline{\alpha}$ which ends the proof of claim \eqref{thm:Lambdat1}.

Ad \eqref{thm:Lambdat2}. Fix $\pi\in{O_5}$ and take an arbitrary $\beta\in\Irr{\hh{\bbGamma}}$, $i\in\{1,\dotsc,d_{\beta}\}$, and let $\mu_1,\dotsc,\mu_K$ be eigenvalues of $v_{t;\pi}$ on $\cH_\pi(D_\pi=1)$. Furthermore let $\xi_k$ be a non-zero element of $\cH_\pi(v_{t;\pi}=\mu_k)\cap\cH_\pi(D_\pi=1)$.

If $\uprho_{\beta,i}>\gamma(\beta)$ then $\bigl(\uprho_{\beta,i}\Gamma(\beta)\bigr)^{\mhalf}<1$ and hence, by the same argument as above, we have
\[
U_{i,d_{\beta};\pi}^\beta\xi_k=0.
\]
It follows that
\[
0\neq\xi_k=\sum_{i=1}^{\dim{\beta}}{U_{i,d_{\beta};\pi}^\beta}^*U_{i,d_{\beta};\pi}^\beta\xi_k
=\sum_{\uprho_{\beta,i}=\gamma(\beta)}{U_{i,d_{\beta};\pi}^\beta}^*U_{i,d_{\beta};\pi}^\beta\xi_k
\]
and hence there exists $i$ such that $\uprho_{\beta,i}=\gamma(\beta)=\Gamma(\beta)^{-1}$ (by \eqref{thm:Lambdat1}) and
\[
0\neq{U_{i,d_{\beta};\pi}^\beta}\xi_k\in\cH_\pi\bigl(v_{t;\pi}=\Gamma(\beta)^{-2\ii{t}}\mu_k\bigr)\cap\cH_\pi(D_\pi=1).
\]
Thus $\Gamma(\beta)^{-2\ii{t}}\mu_k$ is an eigenvalue of $v_{t;\pi}$, so $\Gamma(\beta)^{2\ii{t}}=\mu_l\mu_k^{-1}$ for some $l$. Consequently it belongs to the finite set
\[
\oon{Sp}\bigl(\chi(D_\pi=1)v_{t;\pi}\bigr)\cdot\oon{Sp}\bigl(\chi(D_\pi=1)v_{t;\pi}\bigr)^{-1}.\qedhere
\]
\end{proof}

\begin{corollary}\label{cor:rootU}
Let $\bbGamma$ be a second countable type $\mathrm{I}$ discrete quantum group, $\alpha\in\Irr{\hh{\bbGamma}}$ and $t\in\TtauInn(\hh{\bbGamma})$. Then the number $\Gamma(\alpha)^{2\ii{t}}$ is a root of unity.
\end{corollary}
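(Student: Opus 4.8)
The plan is to read off the claim from the finiteness of the set $\Lambda_t$ established in Theorem~\ref{thm:Lambdat}\eqref{thm:Lambdat2}, using the multiplicative behaviour of $\Gamma$ under tensor products. Fix $\alpha\in\Irr{\hh{\bbGamma}}$ and $t\in\TtauInn(\hh{\bbGamma})$ and set $z=\Gamma(\alpha)^{2\ii{t}}\in\TT$; the goal is to show that $z$ is a root of unity.

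First I would recall the standard facts about the operators $\uprho$: they are multiplicative with respect to tensor products, $\uprho_{U\tens{V}}=\uprho_U\tens\uprho_V$, additive with respect to direct sums, and restrict correctly to subrepresentations, so that $\Gamma(U\tens{V})=\Gamma(U)\Gamma(V)$ and $\Gamma(W)\le\Gamma(U)$ whenever $W$ is a subrepresentation of $U$. Applying multiplicativity to an $n$-fold tensor power of a unitary representative $U^\alpha$ gives $\Gamma\bigl((U^\alpha)^{\tens{n}}\bigr)=\Gamma(\alpha)^n$. Decomposing $(U^\alpha)^{\tens{n}}$ into irreducibles and using that the top eigenvalue of $\uprho_{(U^\alpha)^{\tens{n}}}$ is attained on one irreducible block, I obtain a class $\beta_n\in\Irr{\hh{\bbGamma}}$ occurring in $(U^\alpha)^{\tens{n}}$ with $\Gamma(\beta_n)=\Gamma(\alpha)^n$.

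The conclusion is then immediate: for every $n\in\NN$ we have $z^n=\Gamma(\alpha)^{2n\ii{t}}=\Gamma(\beta_n)^{2\ii{t}}\in\Lambda_t$, and since $\Lambda_t$ is finite the powers $\{z^n\}_{n\in\NN}$ cannot all be distinct, so $z^{m}=z^{m'}$ for some $m<m'$ and hence $z^{m'-m}=1$. Because the genuinely hard analytic work is already packaged into Theorem~\ref{thm:Lambdat}, the only point I would take care over is the elementary representation-theoretic claim that the maximum of $\oon{Sp}(\uprho)$ over a tensor power is realised by an \emph{irreducible} constituent; this is a routine consequence of the compatibility of $\uprho$ with decomposition into irreducibles, and everything else is a formal manipulation.
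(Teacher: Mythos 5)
Your proposal is correct and follows essentially the same route as the paper: both arguments produce, for each $n$, an irreducible constituent of $(U^\alpha)^{\tp{n}}$ whose $\Gamma$-value is $\Gamma(\alpha)^n$ (using multiplicativity of $\uprho$ under tensor products), and then invoke finiteness of $\Lambda_t$ from Theorem \ref{thm:Lambdat}\eqref{thm:Lambdat2}. The only cosmetic difference is that the paper phrases the pigeonhole step as a contradiction (infinitely many distinct powers in a finite set), whereas you phrase it directly (two powers coincide, hence a root of unity).
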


\begin{proof}
Suppose $\Gamma(\alpha)^{2\ii{t}}$ is not a root of unity for some $\alpha$ and some $t\in\TtauInn(\hh{\bbGamma})$. Then setting $\alpha_n$ to be the class of the subrepresentation of $(U^\alpha)^{\tp{n}}$ such that $\Gamma(\alpha_n)=\Gamma(\alpha)^n$, we obtain an infinite sequence of different numbers $\bigl(\Gamma(\alpha_n)^{2\ii{t}}\bigr)_{n\in\NN}=\bigl(\Gamma(\alpha)^{2\ii{nt}}\bigr)_{n\in\NN}$ belonging to the finite set $\Lambda_t$ described in statement \eqref{thm:Lambdat2} of Theorem \ref{thm:Lambdat}.
\end{proof}

Corollary \ref{cor:rootU} provides a simple proof that Conjecture \ref{conj:main} holds for duals of second countable discrete quantum groups of type $\mathrm{I}$:

\begin{corollary}
Let $\bbGamma$ be a second countable type $\mathrm{I}$ discrete quantum group. If $\hh{\bbGamma}$ is not of Kac type then $\TtauInn(\hh{\bbGamma})$ is countable. In particular $\TtauInn(\hh{\bbGamma})=\RR$ implies that $\hh{\bbGamma}$ is of Kac type.
\end{corollary}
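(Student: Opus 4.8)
The plan is to read off the statement directly from Corollary \ref{cor:rootU}, which contains all the substance; the remaining work is purely elementary, turning ``root of unity'' into a countability statement. The guiding observation is that for a fixed base $r>1$ the set of real $t$ for which $r^{2\ii{t}}$ is a root of unity is countable, being contained in a rational scaling of $\pi/\log r$.

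First I would record that $\hh{\bbGamma}$ failing to be of Kac type means $\uprho_\alpha\neq\I$ for some $\alpha\in\Irr{\hh{\bbGamma}}$. Since $\uprho_\alpha$ is a positive operator satisfying $\Tr(\uprho_\alpha)=\Tr(\uprho_\alpha^{-1})=\qdim{\alpha}$, it cannot have all eigenvalues equal to $1$ without being the identity; hence at least one eigenvalue exceeds $1$, giving $\Gamma(\alpha)=\|\uprho_\alpha\|>1$. I fix one such $\alpha$ and set $r=\Gamma(\alpha)>1$.

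Next I invoke Corollary \ref{cor:rootU}: for every $t\in\TtauInn(\hh{\bbGamma})$ the number $r^{2\ii{t}}=\ee^{2\ii{t}\log{r}}$ is a root of unity. This is equivalent to the existence of a positive integer $n$ with $r^{2\ii{n}t}=1$, i.e.\ $2nt\log{r}\in2\pi\ZZ$, which (as $\log{r}\neq0$) forces $t\in\tfrac{\pi}{\log{r}}\QQ$. Taking the union over $n$ shows $\TtauInn(\hh{\bbGamma})\subseteq\tfrac{\pi}{\log{r}}\QQ$, a countable subset of $\RR$, proving the first assertion. The second assertion then follows by contraposition: if $\TtauInn(\hh{\bbGamma})=\RR$ it is uncountable, so the first part rules out the non-Kac case and $\hh{\bbGamma}$ must be of Kac type.

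I expect no genuine obstacle here, since the analytic difficulty has already been absorbed into Theorem \ref{thm:Lambdat} and Corollary \ref{cor:rootU}. The only points requiring a little care are the elementary claim that a non-Kac compact quantum group admits a representation with $\Gamma(\alpha)>1$, and the explicit translation of the root-of-unity condition into the membership $t\in\tfrac{\pi}{\log{r}}\QQ$; both are routine once one fixes a single witnessing representation.
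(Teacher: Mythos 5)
Your proof is correct and follows essentially the same route as the paper: both statements are read off from Corollary \ref{cor:rootU} applied to a single class $\alpha$ with $\Gamma(\alpha)>1$. The only difference is in the elementary finish — the paper argues by contradiction (an uncountable $\TtauInn(\hh{\bbGamma})$ would, by injectivity of $t\mapsto\Gamma(\alpha)^{2\ii{t}}$ on a suitable interval, produce uncountably many roots of unity), whereas you convert the root-of-unity condition directly into the explicit containment $\TtauInn(\hh{\bbGamma})\subseteq\tfrac{\pi}{\log\Gamma(\alpha)}\QQ$, which is slightly cleaner and exhibits an explicit countable superset.
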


\begin{proof}
Assume that $\TtauInn(\hh{\bbGamma})$ is uncountable and $\hh{\bbGamma}$ is not of Kac type. Next take $\alpha\in\Irr{\hh{\bbGamma}}$ such that $\Gamma(\alpha)>1$ and consider the set $S_\alpha=\bigl\{\Gamma(\alpha)^{2\ii{t}}\,\bigr|\bigl.\,t\in\TtauInn(\hh{\bbGamma})\bigr\}$. Since $\TtauInn(\hh{\bbGamma})$ is an uncountable subgroup of $\RR$ there exists $k\in\ZZ$ such that the set
\[
\TtauInn(\hh{\bbGamma})\cap\left[\tfrac{k\pi}{\log{\Gamma(\alpha)}},\tfrac{(k+1)\pi}{\log{\Gamma(\alpha)}}\right[
\]
is also uncountable. Since the map $t\mapsto\Gamma(\alpha)^{2\ii{t}}$ is injective on this set, we find that $S_\alpha$ is uncountable as well. This, however, contradicts the fact that, by Corollary \ref{cor:rootU}, all elements of $S_\alpha$ are roots of unity.
\end{proof}

\section{Invariants of \texorpdfstring{$q$}{q}-deformations of a compact, simply connected, semisimple Lie groups}\label{sec:qDeformations}

In this section we will compute the invariants introduced in Section \ref{sect:defInv} for $q$-deformations of compact, simply connected, semisimple Lie groups. The main tool will be a precise description of the action of the scaling group of the $q$-deformation $G_q$ in terms of the direct integral decomposition of $\Linf(G_q)$ over the maximal torus $T$ (Theorem \ref{thm1}). The necessary prerequisite for this is the description of Plancherel objects for the dual $\hh{G_q}$ of $G_q$ which is carried out in Section \ref{sect:generalRS} based on some general results from Section \ref{sect:prel}.

\subsection{Preliminaries}\label{sect:prel}\hspace*{\fill}

We begin by recalling the definition of $G_q$ and its basic properties. Let $\mathfrak{g}$ be the complexified Lie algebra of a compact, simply connected, semisimple Lie group $G$, $T$ a maximal torus of $G$ with complexified Lie algebra $\mathfrak{h}$, $\la\cdot|\cdot\ra$ a positive multiple of the form on $\mathfrak{h}^*$ obtained from the Killing form on $\mathfrak{g}$ and $W$ the Weyl group of $\mathfrak{g}$ with longest element $w_\circ$. We assume the normalization $\la\alpha|\alpha\ra=2$ for short roots $\alpha$. By $\Phi\subset\mathfrak{h}^*$ we denote the root system, $\Phi^+\subset\Phi$ the set of positive roots, and by $\{\alpha_1,\dotsc,\alpha_r\}\subset\Phi^+$ the set of simple roots (so that $r$ is the rank of $\mathfrak{g}$). Next, $\bQ=\oon{span}_{\ZZ}\Phi$ is the root lattice, $\hh{T}\cong\bP\subset\mathfrak{h}^*$ the weight lattice, $\bP^+\subset\bP$ the positive cone, and $\varpi_1,\dotsc,\varpi_r\in\bP^+$ are the fundamental weights, which by definition satisfy $2\tfrac{\la\varpi_i|\alpha_j\ra}{\la\alpha_j|\alpha_j\ra}=\delta_{i,j}$ for all $i,j\in\{1,\dotsc,r\}$. Finally,
\[
\rho=\tfrac{1}{2}\sum_{\alpha\in\Phi^+}\alpha=\sum_{i=1}^{r}\varpi_i\in\bP^+
\]
is the Weyl vector.

For details of the construction of $G_q$ we refer to \cite{NeshveyevTuset} -- we will however follow the conventions of \cite{DeCommer,ReshetikhinYakimov} (which differ from \cite{NeshveyevTuset}) as we want to use results from these papers. Here we will only outline some of the aspects of the construction. The construction depends on the deformation parameter $q\in\left]0,1\right[$. Having fixed $q$ one first constructs the Hopf $*$-algebra $\mathcal{U}_q(\mathfrak{g})$, which is a deformation of the universal enveloping algebra of $\mathfrak{g}$. As a unital $*$-algebra, $\mathcal{U}_q(\mathfrak{g})$ is generated by elements $E_i,F_i,K_i,K_i^{-1}$ ($1\leq{i}\leq{r}$) satisfying the following relations:
\[
\begin{array}{c}
\begin{array}{r@{\;=\;}lr@{\;=\;}l}
K_iK_i^{-1}&K_i^{-1}K_i=\I,&K_iK_j&K_jK_i,\\[3pt]
K_iE_j&q^{\la\alpha_i|\alpha_j\ra}E_jK_i,&K_iF_j&q^{-\la\alpha_i|\alpha_j\ra}F_jK_i,
\end{array}\\[12 pt]
E_iF_j-F_jE_i=\delta_{i,j}\tfrac{K_i^{-1}-K_i}{q_i^{-1}-q_i},\\[3pt]
\left.\begin{array}{l@{\;=\;}r}
\displaystyle\sum_{k=0}^{1-a_{i,j}}(-1)^k\begin{bmatrix}1-a_{i,j}\\k\end{bmatrix}_{q_i}E_i^kE_jE_i^{1-a_{i,j}-k}&0\\
\displaystyle\sum_{k=0}^{1-a_{i,j}}(-1)^k\begin{bmatrix}1-a_{i,j}\\k\end{bmatrix}_{q_i}F_i^kF_jF_i^{1-a_{i,j}-k}&0
\end{array}\right\}\qquad{i}\neq{j}\hspace{-5Em}
\end{array}
\]
with $q_i=q^{\la\alpha_i|\alpha_i\ra/2}$, $a_{i,j}=2\tfrac{\la\alpha_i|\alpha_j\ra}{\la\alpha_i|\alpha_i\ra}$ (these are the entries of the Cartan matrix), and
$\begin{bmatrix}m\\k\end{bmatrix}_q$ denoting the $q$-binomial coefficients $\tfrac{[m]_q!}{[m-k]_q![k]_q!}$, where $[m]_q!=[m]_q[m-1]_q\dotsm[1]_q$, $[m]_q=\tfrac{q^{-m}-q^m}{q^{-1}-q}$.

Next we define comultiplication and $*$-structure on $\mathcal{U}_q(\mathfrak{g})$ via
\[
\hh{\Delta}(K_i)=K_i\tens{K_i},\quad
\hh{\Delta}(E_i)=E_i\tens{K_i}+\I\tens{E_i},\quad
\hh{\Delta}(F_i)=F_i\tens\I+K_i^{-1}\tens{F_i}
\]
and
\[
K_i^*=K_i,\quad
E_i^*=F_iK_i,\quad
F_i^*=K_i^{-1}E_i.
\]
It is known that $\bigl(\mathcal{U}_q(\mathfrak{g}),\hh{\Delta}\bigr)$ is a Hopf $*$-algebra with antipode and counit
\begin{align*}
\hh{\eps}(K_i)&=1,&\hh{\eps}(E_i)&=0,&\hh{\eps}(F_i)&=0,\\
\hh{S}(K_i)&=K_i^{-1},&\hh{S}(E_i)&=-E_iK_i^{-1},&\hh{S}(F_i)&=-K_iF_i
\end{align*}
for $1\leq{i}\leq{r}$. For $\alpha=\sum\limits_{i=1}^{r}n_i\alpha_i\in\bQ$ we will use the shorthand notation $K_\alpha=K_1^{n_1}\dotsm{K_r^{n_r}}$.

One defines $\Pol(G_q)\subset\mathcal{U}_q(\mathfrak{g})^*$ as the linear span of matrix coefficients of all \emph{admissible}, finite dimensional $*$-representations of $\mathcal{U}_q(\mathfrak{g})$ (roughly speaking, these are the representations in which all $K_i$ are diagonalizable, see e.g.~\cite[Definition 2.4.3]{NeshveyevTuset}). By construction, $\mathcal{U}_q(\mathfrak{g})$ embeds into $\Pol(G_q)^*$.

As suggested by the notation, $\Pol(G_q)$ indeed turns out to be a C.Q.G.~Hopf algebra, i.e.~a Hopf $*$-algebra associated to a compact quantum group. The set $\Irr{G_q}$ of equivalence classes of irreducible representations of $G_q$ can be identified with $\bP^+$ and for any $\varpi\in\bP^+=\Irr{G_q}$ we will denote by $\pi_\varpi$ the associated $*$-representation of the $*$-algebra $\mathcal{U}_q(\mathfrak{g})$ on the Hilbert space $\cH_\varpi$ whose dimension will be denoted by $\dim{\varpi}$. These identifications are described by equality $\pi_\varpi(\omega)=(\omega\tens\id)U^{\varpi}$ valid for $\varpi\in\bP^+$ and $\omega\in\mathcal{U}_q(\mathfrak{g})$. One defines the \emph{weights of $\varpi$} as these $\mu\in\bP$ for which space
\[
\cH_\varpi(\mu)=\bigl\{\xi\in\cH_\varpi\,\bigr|\bigl.\,\pi_\varpi(K_i)\xi=q^{\la\mu|\alpha_i\ra}\xi,\:1\leq{i}\leq{r}\bigr\}
\]
is non-zero. For example, any $\varpi$ is a weight of $\varpi$ in this sense. Moreover we have the equality $\cH_\varpi=\bigoplus\limits_\mu\cH_\varpi(\mu)$ with $\mu$ running over the weights of $\varpi$.

Whenever $\varpi\in\bP^+$ and $\xi\in\cH_\varpi(\mu),\eta\in\cH_\varpi(\mu')$ are vectors of weights $\mu,\mu'\in\bP$, we define \emph{left} and \emph{right weights} of the matrix element $U^\varpi(\xi,\eta)=(\id\tens\omega_{\xi,\eta})U^\varpi\in\Pol(G_q)$ by
\[
\lwt\bigl(U^\varpi(\xi,\eta)\bigr)=\wt(\xi)=\mu,\quad
\rwt\bigl(U^\varpi(\xi,\eta)\bigr)=\wt(\eta)=\mu'.
\]
In formulas like these, we will always implicitly assume that $\xi,\eta$ have well defined weights. Next, we extend definition of $\lwt,\rwt$ in the obvious way, so that we obtain $\bP$-gradings of $\Pol(G_q)$. Let us also fix an orthonormal basis $\{\xi^\varpi_1,\dotsc,\xi^\varpi_{\dim{\varpi}}\}$ of $\cH_\varpi$ in which each vector $\xi^{\varpi}_i$ has a well defined weight.

One can choose unit vectors $\xi_\varpi\in\cH_\varpi(\varpi)$ and $\eta_{w_\circ\varpi}\in\cH_\varpi(w_\circ\varpi)$ of highest (resp.~lowest) weight so that the corresponding elements
\[
b_\varpi=U^\varpi(\xi_\varpi,\eta_{w_\circ\varpi})\in\Pol(G_q),\qquad\varpi\in\bP^+
\]
have convenient properties -- for details see \cite[Section 5]{DeCommer}. These elements (especially $b_\rho$) will play an important role in what follows. One can obtain detailed information about these elements -- see equation \eqref{eq33}. In particular, one can calculate that in the case of $G_q=\oon{SU}_q(2)$ we have $b_N=(-\gamma)^N$ for all $N\in\Irr(\oon{SU}_q(2))=\ZZ_+$ (where $\gamma$ is one of the standard generators of $\Pol(\oon{SU}_q(2))$, cf.~\cite[Section 1]{su2}).

Irreducible representations of the \cst-algebra $\C(G_q)$ were classified by Korogodski and Soibelman (\cite{KorogodskiSoibelman}) and the Haar measure on $G_q$ was described by Reshetikhin and Yakimov in \cite{ReshetikhinYakimov}. Let us now briefly describe some of these results in the conventions of \cite{DeCommer}.

Let $\theta_q\colon\C(\oon{SU}_q(2))\to\B(\ell^2(\ZZ_+))$ be the representation defined by
\[
\left.\begin{array}{c@{\;=\;}l}
\theta_q(\alpha)e_n&\sqrt{1-q^{2n}}e_{n-1}\\
\theta_q(\gamma)e_n&q^ne_n
\end{array}\right\}\qquad{n}\in\ZZ_+
\]
where $\{e_n\}_{n\in\ZZ_+}$ is the standard orthonormal basis of $\ell^2(\ZZ_+)$ and $e_{-1}=0$. For $1\leq{i}\leq{r}$ let
\[
\rho_i\colon\C(\oon{SU}_{q_i}(2))\longrightarrow\C(G_q)
\]
be the map corresponding to the embedding $\mathcal{U}_{q_i}(\mathfrak{sl}(2,\CC))\to\mathcal{U}_q(\mathfrak{g})$. By taking compositions of these maps we obtain representations of $\C(G_q)$ defined as
\[
\pi_{s_i}\colon\!\!\xymatrix{\C(G_q)\ar[r]^(.43){\rho_i}&\C\bigl(\oon{SU}_{q_i}(2)\bigr)\ar[r]^(.51){\theta_{q_i}}&\B\bigl(\ell^2(\ZZ_+)\bigr),}
\]
where $s_1,\dotsc,s_r$ denote the generators of the Weyl group $W$. For any $w\in{W}\setminus\{e\}$ choose a reduced presentation $w=s_1\dotsm{s_{\ell(w)}}$ and define
\[
\pi_w=\pi_{s_1}\star\dotsm\star\pi_{s_{\ell(w)}}=
(\pi_{s_1}\tens\dotsm\tens\pi_{s_{\ell(w)}})\comp\Delta_{G_q}^{(\ell(w)-1)}\colon\C(G_q)\longrightarrow\B\bigl(\ell^2(\ZZ_+)^{\tens\ell(w)}\bigr).
\]
(where $\Delta_{G_q}^{(n)}$ is defined inductively as $\Delta_{G_q}^{(1)}=\Delta_{G_q}$, $\Delta_{G_q}^{(k+1)}=(\id\tens\dotsm\tens\id\tens\Delta_{G_q})\comp\Delta_{G_q}^{(k)}$).
The representation $\pi_w$ is irreducible and up to equivalence it does not depend on expression of $w$ as a word in the generators.

Characters of $\C(G_q)$ are in bijection with the torus $T$. For $\lambda\in{T}$, the corresponding character is given by
\[
\pi_\lambda\colon\C(G_q)\ni{U^\varpi(\xi,\eta)}\longmapsto
\is{\xi}{\eta}\la\wt(\xi),\lambda\ra=\is{\xi}{\eta}\la\wt(\eta),\lambda\ra\in\CC.
\]
Korogodski and Soibelman showed that irreducible representations of $\C(G_q)$ are (up to equivalence) precisely\footnote{We use representations $\pi_{\lambda}\star\pi_w$ (rather than $\pi_{w}\star\pi_\lambda$) to be in agreement with \cite{DeCommer}. This choice gives an equivalent parametrization of the spectrum, see \cite[Proposition 6.2.6]{KorogodskiSoibelman}.}
\[
\bigl\{\pi_{\lambda,w}\,\bigr|\bigl.\,\lambda\in{T},\:w\in{W}\bigr\}
\]
where $\pi_{\lambda,e}=\pi_\lambda$ ($\lambda\in{T}$) and
\[
\pi_{\lambda,w}=\pi_\lambda\star\pi_w=(\pi_\lambda\tens\pi_w)\Delta_{G_q},\qquad\lambda\in{T},\:w\in{W}\setminus\{e\}.
\]
Furthermore, $\pi_{\lambda,w}$ and $\pi_{\lambda',w'}$ are non-equivalent for $(\lambda,w)\neq(\lambda',w')$. Consequently the spectrum of $\C(G_q)$ can be identified with $T\times{W}$ as a set. For $x=U^\varpi(\xi,\eta)$ where $\xi,\eta\in\cH_\varpi$ have well defined weights, and $(\lambda,w)\in{T}\times{W}$ we have the following useful formula:
\begin{equation}\label{eq17}
\begin{aligned}
\pi_{\lambda,w}(x)&=\sum_{i=1}^{\dim{\varpi}}
\pi_\lambda\bigl(U^{\varpi}(\xi,\xi^\varpi_i)\bigr)\pi_w\bigl(U^\varpi(\xi^\varpi_i,\eta)\bigr)\\
&=\sum_{i=1}^{\dim{\varpi}}\is{\xi}{\xi^\varpi_i}\la\wt(\xi),\lambda\ra\,\pi_w\bigl(U^{\varpi}(\xi^\varpi_i,\eta)\bigr)=\la\lwt(x),\lambda\ra\pi_{w}(x).
\end{aligned}
\end{equation}

It is well-known to the experts that $\C(G_q)$ is a type $\mathrm{I}$ \cst-algebra with the obvious measurable structure on its spectrum. However we could not precisely locate a complete proof in the literature, so we record it here for the reader's convenience. We also show that the field of representations $(\pi_{\lambda,w})_{(\lambda,w)\in{T}\times{W}}$ is measurable -- a result needed for concrete calculations. Note that it follows from the fact that $\C(G_q)$ is type $\mathrm{I}$ that $G_q$ is coamenable (\cite[Theorem 2.8]{typeI}, see also \cite[Theorem 2.7.14]{NeshveyevTuset}).

\begin{lemma}
\noindent
\begin{enumerate}
\item The \cst-algebra $\C(G_q)$ is separable, unital and type $\mathrm{I}$.
\item\label{lemma3.2} The Mackey-Borel structure on its spectrum is equal to the natural Borel $\sigma$-algebra on $T\times{W}$.
\item\label{lemma3.3} For $w\in{W}\setminus\{e\}$ fix a unitary operator $U_w\colon\ell^2(\ZZ_+)^{\tens\ell(w)}\rightarrow\cH_{\boldsymbol{n}}$, where $\cH_{\boldsymbol{n}}$ is the standard Hilbert space of dimension $\boldsymbol{n}=\dim{\pi_{w}}=\aleph_0$, set also $U_e=1$. Then the map $[\pi_{\lambda,w}]\mapsto{U_w\pi_{\lambda,w}U^*_w}$ between the spectrum of $\C(G_q)$ and the space of irreducible representations, is measurable.
\end{enumerate}
\end{lemma}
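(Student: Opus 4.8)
The plan is to treat the three assertions in turn, the type $\mathrm{I}$ property being the heart of the matter. Separability is immediate: $\Pol(G_q)$ is the linear span of the matrix coefficients of the countable family $\{U^\varpi\}_{\varpi\in\bP^+}$ of finite-dimensional representations, hence is countable-dimensional, and any countable $\QQ(\ii)$-basis of it is dense in $\C(G_q)$; unitality is part of $G_q$ being a compact quantum group. For type $\mathrm{I}$ I would invoke the classical criterion that a separable \cst-algebra is type $\mathrm{I}$ precisely when $\KK(\cH_\pi)\subseteq\pi(\C(G_q))$ for every irreducible representation $\pi$ (\cite{DixmierC}). Since the Korogodski–Soibelman list exhausts the irreducibles, it suffices to verify this for each $\pi_{\lambda,w}$. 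First I would dispose of the torus direction: writing $\Psi_\lambda=(\pi_\lambda\tens\id)\Delta_{G_q}$, which restricts to a bijection of $\Pol(G_q)$ and so extends to a $\ast$-automorphism of $\C(G_q)$, formula \eqref{eq17} gives $\pi_{\lambda,w}=\pi_w\comp\Psi_\lambda$; hence $\pi_{\lambda,w}(\C(G_q))=\pi_w(\C(G_q))$ and we may take $\lambda=1$. For $w=e$ the representation is one-dimensional and there is nothing to prove, so the problem reduces to showing $\KK\bigl(\ell^2(\ZZ_+)^{\tens\ell(w)}\bigr)\subseteq\pi_w(\C(G_q))$ for $w\neq e$.

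The key step, and the one I expect to be the main obstacle, is to exhibit a single nonzero compact operator in $\pi_w(\C(G_q))$; once this is achieved, irreducibility of $\pi_w$ (Korogodski–Soibelman) together with the standard fact that an irreducibly acting \cst-algebra meeting $\KK(\cH)$ nontrivially must contain all of $\KK(\cH)$ completes the argument. To build the compact I would use the elements $b_\varpi$ and their explicit description \eqref{eq33}. The model is $\oon{SU}_q(2)$, where $b_N=(-\gamma)^N$ and $\theta_q(b_N^*b_N)=\oon{diag}(q^{2nN})_{n\in\ZZ_+}$ converges in norm, as $N\to\infty$, to the rank-one projection $\ket{e_0}\!\bra{e_0}$ onto the vacuum. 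In general, using the factorisation $\pi_w=\pi_{s_1}\star\dotsm\star\pi_{s_{\ell(w)}}$, one checks that $\pi_w(b_{N\rho})$ is a product of weighted-shift-type contractions whose top eigenvector is the joint vacuum $\Omega_w=e_0\tens\dotsm\tens e_0$, so that $\pi_w(b_{N\rho}^*b_{N\rho})$ converges in norm to $\ket{\Omega_w}\!\bra{\Omega_w}$, a nonzero compact lying in the closed algebra $\pi_w(\C(G_q))$. Verifying this norm convergence from \eqref{eq33} is the technical core; a convenient preliminary reduction is that $\pi_{s_i}(\C(G_q))=\theta_{q_i}\bigl(\C(\oon{SU}_{q_i}(2))\bigr)$ already contains $\KK(\ell^2(\ZZ_+))$, since $\rho_i$ is the surjective restriction morphism onto the quantum subgroup $\oon{SU}_{q_i}(2)$ and $\theta_{q_i}$ realises the Toeplitz algebra.

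For the Mackey–Borel structure I would first invoke Glimm's theorem: now that $\C(G_q)$ is known to be separable and type $\mathrm{I}$, its spectrum is a standard Borel space for the Mackey–Borel structure, which for a separable \cst-algebra is generated by the norm functions $[\pi]\mapsto\|\pi(a)\|$, $a\in\C(G_q)$ (\cite{DixmierC}). The space $T\times{W}$ is compact metrizable, hence also standard Borel. By \eqref{eq17} the map $\lambda\mapsto\pi_{\lambda,w}(a)$ is norm-continuous for $a\in\Pol(G_q)$ (a finite sum with torus-character coefficients), and the estimate $\|\pi_{\lambda,w}(a)-\pi_{\lambda,w}(a')\|\leq\|a-a'\|$ propagates this, by uniform approximation, to all $a\in\C(G_q)$; consequently $(\lambda,w)\mapsto\|\pi_{\lambda,w}(a)\|$ is Borel on $T\times{W}$ for every $a$. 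Thus the bijection $T\times{W}\ni(\lambda,w)\mapsto[\pi_{\lambda,w}]$ is Borel, and being a Borel bijection between standard Borel spaces it is a Borel isomorphism by the Lusin–Souslin theorem. This proves \eqref{lemma3.2}.

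Finally, for \eqref{lemma3.3}, after identifying the spectrum with $T\times{W}$ via \eqref{lemma3.2}, measurability of the field $(\lambda,w)\mapsto{U_w\pi_{\lambda,w}U_w^*}$ amounts to Borel dependence of $(\lambda,w)\mapsto\is{U_w\pi_{\lambda,w}(a)U_w^*\,\xi}{\eta}$ for all $a\in\C(G_q)$ and $\xi,\eta\in\cH_{\boldsymbol{n}}$. Since $W$ is finite it suffices to fix $w$ and set $\zeta=U_w^*\xi$, $\zeta'=U_w^*\eta\in\ell^2(\ZZ_+)^{\tens\ell(w)}$; the norm-continuity of $\lambda\mapsto\pi_{\lambda,w}(a)$ established in the previous paragraph shows that $\lambda\mapsto\is{\pi_{\lambda,w}(a)\zeta}{\zeta'}$ is even continuous, hence Borel. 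This yields measurability of the field and finishes the proof. The only place where substantial work is hidden is the construction of the compact operator in the second paragraph; everything else is a matter of assembling standard facts about type $\mathrm{I}$ \cst-algebras together with the explicit formula \eqref{eq17}.
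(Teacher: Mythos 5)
The genuine gap is in part (1), exactly at the step you call the technical core, and it is not merely an unverified estimate: the construction you propose provably cannot work for general $w$. The elements $b_{N\rho}=U^{N\rho}(\xi_{N\rho},\eta_{-N\rho})$ are coefficients onto the \emph{lowest} weight vector, and $\pi_w$ annihilates all of them unless $w=w_\circ$ (this is part of the Korogodski--Soibelman stratification: $\pi_w$ kills $U^\varpi(\xi_\varpi,\eta_{w'\varpi})$ unless $w'\leq w$ in the Bruhat order). A concrete counterexample: take $G_q=\oon{SU}_q(3)$ and $w=s_1$, so that $\pi_{s_1}=\theta_{q}\comp\rho_1$. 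The $\mathcal{U}_{q}(\mathfrak{sl}(2,\CC))$-submodule of $\cH_{N\rho}$ generated by $\xi_{N\rho}$ is spanned by $F_1^k\xi_{N\rho}$, $0\leq{k}\leq{N}$, whose weights are $N\rho-k\alpha_1$; the weight $-N\rho$ is not among these (that would force $2N\rho\in\ZZ\alpha_1$), so $\eta_{-N\rho}$ lies in the orthogonal complement of this invariant summand, whence $\is{\xi_{N\rho}}{\pi_{N\rho}(u)\eta_{-N\rho}}=0$ for every $u$ in the subalgebra, i.e.\ $\rho_1(b_{N\rho})=0$ and $\pi_{\lambda,s_1}(b_{N\rho})=0$ for all $\lambda$ and $N$. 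Your candidate compact operators are therefore zero for every $w\notin\{e,w_\circ\}$; note also that formula \eqref{eq33}, on which the norm-convergence argument was to rest, is stated (and true) only for $w_\circ$. Thus your sketch establishes type $\mathrm{I}$ only for the representations $\pi_{\lambda,w_\circ}$ and $\pi_{\lambda,s_i}$ (the latter via surjectivity of $\rho_i$, which is correct), not for all irreducible representations, which is what the criterion requires.

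The repair is either to work with coefficients adapted to $w$, namely $U^\varpi(\xi_\varpi,\eta_{w\varpi})$ onto the extreme weight $w\varpi$ -- these do survive under $\pi_w$ and have a tensor-product weighted-shift form, but proving this amounts to redoing a chunk of \cite{KorogodskiSoibelman} -- or to do what the paper does: quote \cite[Proposition 4.3]{ReshetikhinYakimov} for the existence of a non-zero compact operator in the image of every $\pi_{\lambda,w}$, and combine it with \cite[Definition 4.6.3, Theorem 4.6.4]{Sakai}. (Your reduction to $\lambda=1$ via the translation automorphism $\Psi_\lambda=(\pi_\lambda\tens\id)\comp\Delta_{G_q}$ is correct, just unnecessary once the literature is invoked.) Granting (1), your treatment of (2) and (3) is correct and packaged differently from the paper's: you generate the Mackey--Borel structure by the norm functions $[\pi]\mapsto\|\pi(a)\|$ and finish with Lusin--Souslin, whereas the paper proves measurability of each torus $\bigl\{[\pi_{\lambda,w}]\,\bigr|\bigl.\,\lambda\in{T}\bigr\}$ via the ideals $I_w$ and then uses the Borel-bijection facts of \cite[Appendix B]{DixmierC}. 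One caveat: the identification of the Mackey--Borel structure with the $\sigma$-algebra generated by the norm functions itself requires separability \emph{and} type $\mathrm{I}$ (\cite[Proposition 4.6.1]{DixmierC}), so both (2) and (3) remain logically hostage to closing the gap in (1).
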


\begin{proof}
In this proof we will write $\oon{Sp}{\C(G_q)}$ for the spectrum of $\C(G_q)$ (i.e.~the set of equivalence classes of irreducible representations) equipped with its Mackey-Borel structure (\cite[Section 3.8]{DixmierC}) and $\oon{\mathfrak{Irr}}\C(G_q)$ for the set of irreducible representations acting on standard Hilbert spaces $\cH_{\boldsymbol{n}}$ of dimension $\boldsymbol{n}\leq\aleph_0$, equipped with Borel $\sigma$-algebra (see \cite[Section 3.8]{DixmierC}). We will also use measurable subspaces $\oon{\mathfrak{Irr}}_{\boldsymbol{n}\!\!}\C(G_q)$ of $\oon{\mathfrak{Irr}}\C(G_q)$ consisting of representations of dimension $\boldsymbol{n}$. Both $\oon{\mathfrak{Irr}}_{\boldsymbol{n}\!\!}\C(G_q)$ and $\oon{\mathfrak{Irr}}\C(G_q)$ are standard (\cite[Proposition 3.7.4]{DixmierC}). Throughout the proof, we will distinguish a representation $\pi$ from its class $[\pi]$. Recall that the Mackey-Borel structure is the quotient $\sigma$-algebra for the natural surjection $\oon{\mathfrak{Irr}}\C(G_q)\rightarrow\oon{Sp}{\C(G_q)}$.

By results of Korogodski and Soibelman, any irreducible representation of $\C(G_q)$ is equivalent to $\pi_{\lambda,w}$ for some $(\lambda,w)\in{T}\times{W}$. Furthermore, \cite[Proposition 4.3]{ReshetikhinYakimov} shows that the image of each representation $\pi_{\lambda,w}$ contains a non-zero compact operator, hence \cite[Definition 4.6.3, Theorem 4.6.4]{Sakai} implies that $\C(G_q)$ is a type $\mathrm{I}$ \cst-algebra. Consequently, $\oon{Sp}{\C(G_q)}$ is also standard (\cite[Proposition 4.6.1]{DixmierC}).

Fix $w\in{W}$. We first claim that the corresponding torus $\bigl\{[\pi_{\lambda,w}]\,\bigr|\bigl.\,\lambda\in{T}\bigr\}$ is measurable in $\oon{Sp}{\C(G_q)}$ or, equivalently, that $\bigl\{\rho\,\bigr|\bigl.\,\exists\:\lambda\in{T}\;[\rho]=[\pi_{\lambda,w}]\bigr\}$ is Borel in $\oon{\mathfrak{Irr}}\C(G_q)$. To proceed, we will use the family of ideals $I_w\subset\C(G_q)$ ($w\in{W}$) introduced in \cite[Equation (5.1.1)]{KorogodskiSoibelman}. By \cite[Theorem 5.3.3]{KorogodskiSoibelman}, for any $\lambda\in{T}$ and $w'\in{W}$ we have that $I_{w'}\subset\ker\pi_{\lambda,w}$ if and only if $w'=w$. Let $\boldsymbol{n}=\dim{\pi_{w}}$ and choose countable dense subsets $\{\xi_i\}_{i\in\NN}$ in $\cH_{\boldsymbol{n}}$ and $\{x_k\}_{k\in\NN}$ in $I_w$. We have that
\begin{align*}
\bigl\{\rho\in\oon{\mathfrak{Irr}}\C(G_q)\,\bigr|\bigl.\,\exists\:\lambda\in{T}\;[\rho]=[\pi_{\lambda,w}]\bigr\}
&=\bigl\{\rho\in\oon{\mathfrak{Irr}}\C(G_q)\,\bigr|\bigl.\,\forall\:{x}\in{I_w}\;\rho(x)=0\bigr\}\\
&=\bigcap_{i,j,k,m=1}^{\infty}\Bigl\{\rho\in\oon{\mathfrak{Irr}}_{\boldsymbol{n}\!\!}\C(G_q)\,\Bigr|\Bigl.\,\bigl|\is{\xi_i}{\rho(x_k)\xi_j}\bigr|\leq\tfrac{1}{m}\Bigr\}
\end{align*}
which by definition is measurable in $\oon{\mathfrak{Irr}}\C(G_q)$.

Next we show that for each fixed $w\in{W}$, the set $\bigl\{U_w\pi_{\lambda,w}U_w^*\,\bigr|\bigl.\,\lambda\in{T}\bigr\}$ is measurable in $\oon{\mathfrak{Irr}}\C(G_q)$ or, equivalently, in $\oon{\mathfrak{Irr}}_{\boldsymbol{n}\!\!}\C(G_q)$ where $\boldsymbol{n}=\dim{\pi_{w}}$. Consider the injective function $T\ni\lambda\mapsto{U_w\pi_{\lambda,w}U_w^*}\in\oon{\mathfrak{Irr}}_{\boldsymbol{n}\!\!}\C(G_q)$. Since both $T$ and $\oon{\mathfrak{Irr}}_{\boldsymbol{n}\!\!}\C(G_q)$ are standard, it is enough to argue that this map is measurable (\cite[Appendix B 21]{DixmierC}). To that end take $\zeta,\zeta'\in\cH_{\boldsymbol{n}}$ and $x\in\C(G_q)$ -- we need to show that $f\colon{T}\ni\lambda\mapsto\is{\zeta}{U_w\pi_{\lambda,w}(x)U_w^*\zeta'}\in\CC$ is measurable. As any pointwise limit of a sequence of measurable functions is measurable, it is enough to consider the case of $x=U^{\varpi}(\xi,\eta)$ for $\varpi\in\bP^+$ and $\xi,\eta\in\cH_\varpi$ with well defined weights. For $\lambda\in{T}$ we have $\pi_{\lambda,w}(x)=\pi_w(x)\la\lwt(x),\lambda\ra$ (equation \eqref{eq17}), which implies that $f$ is measurable. This shows that $\bigl\{U_w\pi_{\lambda,w}U_w^*\,\bigr|\bigl.\,\lambda\in{T}\bigr\}$ is measurable in $\oon{\mathfrak{Irr}}\C(G_q)$ and consequently is standard (\cite[Appendix B 20]{DixmierC}). Then the canonical map
\begin{equation}\label{canonical_map}
\bigl\{U_w\pi_{\lambda,w}U_w^*\,\bigr|\bigl.\,\lambda\in{T}\bigr\}\ni{U_w}\pi_{\lambda,w}U_w^*\longmapsto[\pi_{\lambda,w}]\in\bigl\{[\pi_{\lambda,w}]\,\bigr|\bigl.\,\lambda\in{T}\bigr\}
\end{equation}
is a measurable bijection between standard Borel spaces, and hence it is an isomorphism and its inverse $[\pi_{\lambda,w}]\mapsto{U_w^*}\pi_{\lambda,w}U_w$ is measurable (\cite[Appendix B 22]{DixmierC}). Upon taking the union over all $w$, we obtain \eqref{lemma3.3}.

Finally let us fix $w\in{W}$. The the composition
\[
\xymatrix{T\ar[r]&\bigl\{U_w\pi_{\lambda,w}U_w^*\,\bigr|\bigl.\,\lambda\in{T}\bigr\}\ar[r]&\bigl\{[\pi_{\lambda,w}]\,\bigr|\bigl.\,\lambda\in{T}\bigr\}}
\]
of the map taking $\lambda$ to $U_w\pi_{\lambda,w}U_w^*$ and then \eqref{canonical_map}
is a measurable isomorphism which proves \eqref{lemma3.2}.
\end{proof}

Reshetikhin and Yakimov (\cite{ReshetikhinYakimov}, see also \cite[Theorem 9.11]{DeCommer}) showed that the Haar measure on $G_q$ is given by
\begin{equation}\label{eq5}
\bh_{G_q}(x)=\bigl(\prod_{\alpha\in\Phi^+}\bigl(1-q^{2\la\rho|\alpha\ra}\bigr)\bigr)
\Int_T\Tr\bigl(\pi_{\lambda,w_\circ}(x|b_\rho|^2)\bigr)\dd{\lambda},\qquad{x}\in\C(G_q),
\end{equation}
where $\mathrm{d}\lambda$ is the normalized Lebesgue measure on $T$. From now on we equip the spectrum of $\C(G_q)$ with the normalized Lebesgue measure on $\bigl\{\pi_{\lambda,w_\circ}\,\bigr|\bigl.\,\lambda\in{T}\bigr\}\cong{T}$, so that $\oon{Sp}{\C(G_q)}=T$ as measure spaces. Equation \eqref{eq17} implies
\begin{equation}\label{eq32}
\pi_{\lambda,w_\circ}\bigl(|b_\rho|\bigr)=\pi_{w_\circ}\bigl(|b_\rho|\bigr),\qquad{\lambda}\in{T}.
\end{equation}
Consequently, since $\bh_{G_q}(\I)=1$ it follows from \eqref{eq5} that $\pi_{\lambda,w_\circ}(b_\rho)\neq{0}$ for all $\lambda\in{T}$.

Our next goal is to look at $\hh{G_q}$ as a type $\mathrm{I}$ discrete quantum group and describe its Plancherel objects (\cite[Section 3]{KrajczokTypeI}). The main ingredient is formula \eqref{eq5}, but there is a number of properties to be checked. It will be convenient to consider a more general situation: let $\bbGamma$ be a second countable, type $\mathrm{I}$ discrete quantum group. Then $\hh{\bbGamma}$ is coamenable (\cite[Theorem 2.8]{typeI}) and $\Irr{\bbGamma}=\oon{Sp}{\C(\hh{\bbGamma})}$ is a standard Borel space. Equip it with the standard measurable field of Hilbert spaces $(\cH_{\boldsymbol{x}})_{\boldsymbol{x}\in\Irr{\bbGamma}}$, i.e.~$\cH_{\boldsymbol{x}}=\CC^{\dim{\boldsymbol{x}}}$. Then $\bigl(\HS(\cH_{\boldsymbol{x}})\bigr)_{\boldsymbol{x}\in\Irr{\bbGamma}}$ is also a measurable field of Hilbert spaces (\cite[Part II, Chapter 1]{DixmiervNA}). We fix a measurable field of representations $(\pi_{\boldsymbol{x}})_{\boldsymbol{x}\in\Irr{\bbGamma}}$ such that the class of $\pi_{\boldsymbol{x}}$ is equal to $\boldsymbol{x}$ (\cite[Section 8.6.1]{DixmierC}) -- from now on we will abuse notation and identify $\pi_{\boldsymbol{x}}$ with $\boldsymbol{x}$, so that e.g.~a field of vectors will be denoted by $(\xi_\pi)_{\pi\in\Irr{\bbGamma}}$. Let us denote the Haar measure on $\hh{\bbGamma}$ by $\bh_{\hh{\bbGamma}}$. Assume that $\mu$ is a probability measure on $\Irr{\bbGamma}$, $(a_\pi)_{\pi\in\Irr{\bbGamma}}$ is a measurable field of positive, bounded operators on $\cH_\pi$ such that $a_\pi\neq{0}$ for almost all $\pi$, and
\begin{equation}\label{eq6}
\bh_{\hh{\bbGamma}}(x)=\Int_{\Irr{\bbGamma}}\Tr\bigl(\pi(x)a_\pi^2\bigr)\dd{\mu}(\pi),\qquad{x}\in\Pol(\hh{\bbGamma}).
\end{equation}
In this situation we have the following:

\begin{proposition}\label{prop2}
\noindent\begin{enumerate}
\item
For almost all $\pi$, the operator $a_\pi$ is Hilbert-Schmidt with trivial kernel,
\item
equation \eqref{eq6} holds for all $x\in\C(\hh{\bbGamma})$,
\item
there is a unitary operator $\cQ_\LL\colon\Ltwo(\hh{\bbGamma})\to{\Int_{\Irr{\bbGamma}}}^{\!\!\!\oplus}\HS(\cH_\pi)\dd{\mu}(\pi)$ satisfying
\begin{equation}\label{eq7}
\cQ_\LL\Lambda_{\bh_{\hh{\bbGamma}}}(x)={\Int_{\Irr{\bbGamma}}}^{\!\!\oplus}\pi(x)a_\pi\dd{\mu}(\pi),\qquad{x}\in\C(\hh{\bbGamma}),
\end{equation}
\item\label{prop2,4} $\bigl(\cQ_\LL,(a_\pi^{-1})_{\pi\in\Irr{\bbGamma}},\mu\bigr)$ are the left Plancherel objects for $\bbGamma$.
\end{enumerate}
\end{proposition}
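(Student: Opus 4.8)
The plan is to establish the four assertions in order, the genuine difficulty being concentrated in the surjectivity of $\cQ_\LL$, which I will show is equivalent to the triviality of $\ker a_\pi$. For the Hilbert--Schmidt property, note that since $\hh{\bbGamma}$ is compact we have $\I\in\Pol(\hh{\bbGamma})$ and $\bh_{\hh{\bbGamma}}(\I)=1$, so applying \eqref{eq6} to $\I$ gives $\Int_{\Irr{\bbGamma}}\Tr(a_\pi^2)\dd\mu(\pi)=1<+\infty$; hence $\Tr(a_\pi^2)<+\infty$ for almost all $\pi$, i.e.\ $a_\pi$ is Hilbert--Schmidt a.e. For part (2) I would observe that the right-hand side of \eqref{eq6} already defines a positive linear functional $\omega$ on $\C(\hh{\bbGamma})$: for $x\geq 0$ one has $\Tr(\pi(x)a_\pi^2)=\Tr(a_\pi\pi(x)a_\pi)\geq 0$ and $\Tr(\pi(x)a_\pi^2)\leq\|x\|\Tr(a_\pi^2)$, so after integration $\omega$ is bounded by $\|x\|$ and satisfies $\omega(\I)=1$. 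Thus $\omega$ is a state agreeing with $\bh_{\hh{\bbGamma}}$ on the norm-dense subalgebra $\Pol(\hh{\bbGamma})$, hence $\omega=\bh_{\hh{\bbGamma}}$ on all of $\C(\hh{\bbGamma})$.

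Part (2) then turns the defining formula into the key isometry identity: for $x\in\C(\hh{\bbGamma})$,
\[
\|\Lambda_{\bh_{\hh{\bbGamma}}}(x)\|^2=\bh_{\hh{\bbGamma}}(x^*x)=\Int_{\Irr{\bbGamma}}\|\pi(x)a_\pi\|_{\HS}^2\dd\mu(\pi)=\Bigl\|{\Int_{\Irr{\bbGamma}}}^{\!\!\oplus}\pi(x)a_\pi\dd\mu(\pi)\Bigr\|^2 .
\]
This shows at once that \eqref{eq7} is well defined (the right-hand side depends only on $\Lambda_{\bh_{\hh{\bbGamma}}}(x)$) and isometric; since $\Lambda_{\bh_{\hh{\bbGamma}}}(\Pol(\hh{\bbGamma}))$ is dense in $\Ltwo(\hh{\bbGamma})$, the prescription extends to an isometry $\cQ_\LL$ on all of $\Ltwo(\hh{\bbGamma})$. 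Using multiplicativity of each $\pi$ one checks $\cQ_\LL\Lambda_{\bh_{\hh{\bbGamma}}}(yx)={\Int}^{\!\oplus}\pi(y)\tens\I\dd\mu\cdot\cQ_\LL\Lambda_{\bh_{\hh{\bbGamma}}}(x)$, so $\cQ_\LL$ intertwines the left regular representation of $\Linf(\hh{\bbGamma})$ with the field $y\mapsto{\Int}^{\!\oplus}\pi(y)\tens\I\dd\mu$; consequently its range is invariant under all such decomposable operators and contains the cyclic vector ${\Int}^{\!\oplus}a_\pi\dd\mu$.

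The main obstacle is surjectivity of $\cQ_\LL$, and here the first step is the fiberwise reduction to triviality of $\ker a_\pi$. Since $\bbGamma$ is type $\mathrm I$, for each irreducible $\pi$ we have $\pi(\C(\hh{\bbGamma}))\supseteq\mathcal K(\cH_\pi)$, and for a rank-one operator $\ket{\xi}\bra{\eta}$ the self-adjointness of $a_\pi$ gives $\ket{\xi}\bra{\eta}a_\pi=\ket{\xi}\bra{a_\pi\eta}$. Hence $\overline{\pi(\C(\hh{\bbGamma}))a_\pi}=\cH_\pi\tens\overline{\oon{ran}a_\pi}$, which fills all of $\HS(\cH_\pi)$ precisely when $\overline{\oon{ran}a_\pi}=\cH_\pi$, i.e.\ when $\ker a_\pi=\{0\}$. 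A routine measurable-field argument (using separability of $\C(\hh{\bbGamma})$) then promotes this fiberwise density to density of $\oon{ran}\cQ_\LL$, so $\cQ_\LL$ is onto exactly when $a_\pi$ has trivial kernel almost everywhere.

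To close this remaining gap I would invoke the type $\mathrm I$ structure theorem, by which $\bigl\{{\Int}^{\!\oplus}\pi(y)\tens\I\dd\mu:y\in\Linf(\hh{\bbGamma})\bigr\}$ exhausts the decomposable operators on the left tensor factor, together with faithfulness of $\bh_{\hh{\bbGamma}}$: if $\ker a_\pi\neq\{0\}$ on a set of positive measure, a measurable field of rank-one operators $k_\pi=\ket{\xi_\pi}\bra{\eta_\pi}$ with $\eta_\pi\in\ker a_\pi$ yields a nonzero $y\in\Linf(\hh{\bbGamma})$ with $\pi(y)=k_\pi$ a.e., whence $\bh_{\hh{\bbGamma}}(y^*y)=\Int\|k_\pi a_\pi\|_{\HS}^2\dd\mu=0$, contradicting faithfulness. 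This forces $\ker a_\pi=\{0\}$ a.e., finishing (1) and making $\cQ_\LL$ unitary. Finally, for (4) I would feed $\cQ_\LL$ and the (now well-defined, densely defined) operators $a_\pi^{-1}$ into the definition of left Plancherel objects from \cite{KrajczokTypeI}: the intertwining of the regular representation is exactly \eqref{eq7}, while compatibility with the modular structure (the analogue of \eqref{eq:NablaInt}) follows by combining \eqref{eq7} with the explicit action \eqref{sigmatau} of $\sigma^{\bh_{\hh{\bbGamma}}}$, exhibiting $\bigl(\cQ_\LL,(a_\pi^{-1})_{\pi\in\Irr{\bbGamma}},\mu\bigr)$ as a (rescaled) instance of the essentially unique Plancherel data. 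I expect the delicate point to be justifying that $\cQ_\LL$-conjugation really realizes all decomposable operators without presupposing unitarity of $\cQ_\LL$; if one prefers to avoid this, the identification with the essentially unique Plancherel objects of \cite{KrajczokTypeI} (with the measure equivalence $\mu\sim\mu_0$ dictated by faithfulness) delivers both triviality of $\ker a_\pi$ and surjectivity simultaneously.
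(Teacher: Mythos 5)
Your overall skeleton for items (1)--(3) matches the paper's: the Hilbert--Schmidt property from plugging $\I$ into \eqref{eq6}, the extension of \eqref{eq6} to $\C(\hh{\bbGamma})$ by density, and the isometry of $\cQ_\LL$ from the extended formula. Moreover your surjectivity mechanism (the projection onto $\overline{\oon{ran}}\,\cQ_\LL$ commutes with the von Neumann algebra generated by the operators ${\Int_{\Irr{\bbGamma}}}^{\!\!\oplus}\pi(y)\tens\I\dd{\mu}(\pi)$, hence has the decomposable form ${\Int_{\Irr{\bbGamma}}}^{\!\!\oplus}\I\tens{p_\pi}\dd{\mu}(\pi)$, and fiberwise density --- using that $\pi(\C(\hh{\bbGamma}))$ contains the compact operators --- forces $p_\pi=\I$) is a workable alternative to the paper's argument with a vector orthogonal to the range, \emph{provided} one already knows $\ker a_\pi=\{0\}$ almost everywhere.

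It is exactly there that your proposal has a genuine gap: the argument for triviality of $\ker a_\pi$ is circular. You invoke a ``type $\mathrm{I}$ structure theorem'' saying that $\bigl\{{\Int_{\Irr{\bbGamma}}}^{\!\!\oplus}\pi(y)\tens\I\dd{\mu}(\pi)\,\bigr|\bigl.\,y\in\Linf(\hh{\bbGamma})\bigr\}$ exhausts the decomposable operators, produce $y\in\Linf(\hh{\bbGamma})$ with $\pi(y)=k_\pi$ a.e., and evaluate $\bh_{\hh{\bbGamma}}(y^*y)$ by the integral formula. But at this stage $\pi$ is only a representation of the \cst-algebra $\C(\hh{\bbGamma})$; the existence of a normal extension of ${\Int_{\Irr{\bbGamma}}}^{\!\!\oplus}\pi\dd{\mu}$ to $\Linf(\hh{\bbGamma})$ (equivalently, quasi-containment of the direct integral representation in the GNS representation of $\bh_{\hh{\bbGamma}}$), as well as the validity of \eqref{eq6} on $\Linf(\hh{\bbGamma})$ rather than $\C(\hh{\bbGamma})$, are consequences of Proposition \ref{prop2} itself, not tools available for its proof. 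What is actually available, namely \eqref{eq44} (\cite[Proposition 8.6.4]{DixmierC}), gives only \emph{weak density} of the image of the \cst-algebra in the decomposable operators, and approximation cannot substitute for exact realization: a bounded sequence $z_n\in\C(\hh{\bbGamma})$ with ${\Int_{\Irr{\bbGamma}}}^{\!\!\oplus}\pi(z_n)\dd{\mu}\to{\Int_{\Irr{\bbGamma}}}^{\!\!\oplus}k_\pi\dd{\mu}$ strongly only yields $\bh_{\hh{\bbGamma}}(z_n^*z_n)\to0$, which contradicts nothing, and in fact just re-expresses the possibility that the two representations differ on the kernels of the $a_\pi$. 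The paper closes this gap with a different idea that your proposal is missing: the KMS property $\bh_{\hh{\bbGamma}}(xy)=\bh_{\hh{\bbGamma}}\bigl(y\sigma^{\bh_{\hh{\bbGamma}}}_{-\ii}(x)\bigr)$, read via \eqref{eq44} as an equality of normal functionals on the direct integral, gives $a_\pi^2\pi(x)=\pi\bigl(\sigma^{\bh_{\hh{\bbGamma}}}_{-\ii}(x)\bigr)a_\pi^2$ a.e.; for $x$ with $\sigma^{\bh_{\hh{\bbGamma}}}_t(x)=\gamma^{2\ii{t}}x$ a polynomial functional calculus then yields $\chi(a_\pi=0)\pi(x)=\pi(x)\chi(a_\pi=0)$, so $\ker a_\pi$ is $\pi$-invariant, and irreducibility together with $a_\pi\neq0$ forces $\ker a_\pi=\{0\}$. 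Your closing fallback (identifying $\bigl(\cQ_\LL,(a_\pi^{-1})_\pi,\mu\bigr)$ with the essentially unique Plancherel data through uniqueness of the disintegration of $\bh_{\hh{\bbGamma}}$ over the spectrum) could likely be made rigorous, but you neither state the uniqueness theorem used nor carry out the identification; similarly, part \eqref{prop2,4} requires real verification --- the identification of $\cQ_\LL{J_{\bh_{\hh{\bbGamma}}}}\cQ_\LL^*$ with the fiberwise conjugations, the intertwining relation \eqref{eq14-2} obtained via the unitary antipode, and the diagonalizability of the image of the center --- none of which follows merely from ``combining \eqref{eq7} with \eqref{sigmatau}''.
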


Statement \eqref{prop2,4} of Proposition \ref{prop2} means that $\bigl(\cQ_\LL,(a_\pi^{-1})_{\pi\in\Irr{\bbGamma}},\mu\bigr)$ satisfy all the properties of \cite[Theorem 3.3]{KrajczokTypeI}. As an immediate corollary we obtain

\begin{corollary}\label{cor1}
For all $\lambda\in{T}$, the operator $\pi_{w_\circ,\lambda}(|b_\rho|)$ is Hilbert-Schmidt with trivial kernel and
\[
\Biggl(\cQ_\LL,\biggl(
\Bigl(\prod_{\alpha\in\Phi^+}\bigl(1-q^{2\la\rho|\alpha\ra}\bigr)^{-\frac{1}{2}}\Bigr)\pi_{\lambda,w_\circ}\bigl(|b_\rho|\bigr)^{-1}\biggr)_{\lambda\in{T}},\dd\lambda\Biggr)
\]
are the left Plancherel objects for $\hh{G_q}$.
\end{corollary}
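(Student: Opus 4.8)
The plan is to obtain Corollary \ref{cor1} by applying Proposition \ref{prop2} to the discrete quantum group $\bbGamma=\hh{G_q}$, so that $\hh{\bbGamma}=G_q$ and $\Irr{\bbGamma}=\oon{Sp}\C(G_q)=T$ (as measure spaces, with the normalized Lebesgue measure $\dd\lambda$). As observed above, $\C(G_q)$ is a separable type $\mathrm{I}$ \cst-algebra, so $\bbGamma$ is indeed a second countable type $\mathrm{I}$ discrete quantum group and Proposition \ref{prop2} is available. The whole task reduces to rewriting the Reshetikhin--Yakimov formula \eqref{eq5} in the form \eqref{eq6} and reading off the conclusions.

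First I would set $C=\prod_{\alpha\in\Phi^+}\bigl(1-q^{2\la\rho|\alpha\ra}\bigr)$ and define the candidate field of operators by $a_\lambda=C^{\half}\pi_{\lambda,w_\circ}\bigl(|b_\rho|\bigr)$ for $\lambda\in{T}$. Each $a_\lambda$ is positive and bounded, since $|b_\rho|$ is a positive element of $\C(G_q)$ and $\pi_{\lambda,w_\circ}$ is a $*$-representation; moreover, by \eqref{eq32} the operator $a_\lambda=C^{\half}\pi_{w_\circ}\bigl(|b_\rho|\bigr)$ does not depend on $\lambda$. Because $|b_\rho|^2=|b_\rho|\cdot|b_\rho|$ and $\pi_{\lambda,w_\circ}$ is multiplicative we get $a_\lambda^2=C\,\pi_{\lambda,w_\circ}\bigl(|b_\rho|^2\bigr)$, hence
\[
\Tr\bigl(\pi_{\lambda,w_\circ}(x)\,a_\lambda^2\bigr)=C\,\Tr\bigl(\pi_{\lambda,w_\circ}(x|b_\rho|^2)\bigr),
\]
and \eqref{eq5} turns into $\bh_{G_q}(x)=\Int_T\Tr\bigl(\pi_{\lambda,w_\circ}(x)\,a_\lambda^2\bigr)\dd\lambda$, which is precisely \eqref{eq6} with $\mu=\dd\lambda$ and the field $(a_\lambda)_{\lambda\in{T}}$. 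The remaining hypothesis $a_\lambda\neq0$ is exactly the observation recorded after \eqref{eq32} that $\pi_{\lambda,w_\circ}(b_\rho)\neq0$ for every $\lambda\in{T}$.

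With the hypotheses in place, Proposition \ref{prop2} applies verbatim. Its first part shows that $a_\lambda$ -- equivalently $\pi_{\lambda,w_\circ}\bigl(|b_\rho|\bigr)$ -- is Hilbert-Schmidt with trivial kernel for almost every $\lambda$; since by \eqref{eq32} this operator is the same for all $\lambda$, the property in fact holds for every $\lambda\in{T}$, giving the first claim of the corollary. Statement \eqref{prop2,4} of Proposition \ref{prop2} then identifies $\bigl(\cQ_\LL,(a_\lambda^{-1})_{\lambda\in{T}},\dd\lambda\bigr)$ as the left Plancherel objects for $\hh{G_q}$, and substituting $a_\lambda^{-1}=C^{\mhalf}\pi_{\lambda,w_\circ}\bigl(|b_\rho|\bigr)^{-1}$ produces exactly the triple in the statement, with constant $\prod_{\alpha\in\Phi^+}\bigl(1-q^{2\la\rho|\alpha\ra}\bigr)^{-\frac{1}{2}}$. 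There is no genuine obstacle here: all the analytic content is carried by Proposition \ref{prop2}, and the corollary is a direct bookkeeping of the hypotheses, the only mild point being the upgrade from ``almost all $\lambda$'' to ``all $\lambda$'' via the $\lambda$-independence in \eqref{eq32}.
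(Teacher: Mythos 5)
Your proposal is correct and follows essentially the same route as the paper: Corollary \ref{cor1} is stated there as an immediate consequence of Proposition \ref{prop2} applied to the Reshetikhin--Yakimov formula \eqref{eq5}, with the non-vanishing of $\pi_{\lambda,w_\circ}(b_\rho)$ and the $\lambda$-independence from \eqref{eq32} supplying the hypotheses, exactly as you spell out. Your bookkeeping (including the upgrade from ``almost all'' to ``all'' $\lambda$) is accurate and complete.
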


As we already noted in equation \eqref{eq32}, we have $\pi_{\lambda,w_\circ}(|b_\rho|)=\pi_{w_\circ}(|b_\rho|)$ for all $\lambda\in{T}$. Nonetheless, we will keep writing $\pi_{\lambda,w_\circ}(|b_\rho|)$ whenever we want to highlight that we see this operator as acting on $\cH_\lambda$.

\begin{proof}[Proof of Proposition \ref{prop2}]
Setting $x=\I$ in \eqref{eq6} gives $1=\Int_{\Irr\bbGamma}\Tr(a_\pi^2)\dd{\mu}(\pi)$, hence $a_\pi$ is Hilbert-Schmidt for almost all $\pi$. Then it follows by an approximation that \eqref{eq6} holds for all $x\in\C(\hh{\bbGamma})$.

Now we want to show that $\cQ_\LL$ can be defined by \eqref{eq7}. Take $x\in\C(\hh{\bbGamma})$ and let us argue that $(\pi(x)a_\pi)_{\pi\in\Irr{\bbGamma}}$ is a measurable field of Hilbert-Schmidt operators. Clearly for almost all $\pi$, the operator $\pi(x)a_\pi$ is Hilbert-Schmidt and by definition, the field of operators $(\pi(x))_{\pi\in\Irr{\bbGamma}}$ is measurable. Take measurable vector fields $(\xi_\pi)_{\pi\in\Irr{\bbGamma}}$, $(\eta_{\pi})_{\pi\in\Irr{\bbGamma}}$. The function
\[
\Irr{\bbGamma}\ni\pi\longmapsto\is{\xi_\pi\tens\overline{\eta_\pi}}{\pi(x)a_\pi}
=\Tr\bigl(\ket{\eta_\pi}\bra{\xi_\pi}\,\pi(x)a_\pi\bigr)
=\is{\xi_\pi}{\pi(x)a_\pi\eta_\pi}\in\CC
\]
is measurable and hence by \cite[Proposition 7.20]{Folland}, the field $(\pi(x)a_\pi)_{\pi\in\Irr{\bbGamma}}$ of Hilbert-Schmidt operators is measurable. Consequently, equation \eqref{eq6} allows us to define the operator $\cQ_\LL$ on $\Lambda_{\bh_{\hh{\bbGamma}}}(\C(\hh{\bbGamma}))$ via \eqref{eq7}. Since it is an isometry, it extends to the whole of $\Ltwo(\hh{\bbGamma})$.

In the next step we show that $a_\pi$ has trivial kernel for almost all $\pi$. Take $x\in\Pol(\hh{\bbGamma})$ and $y\in\C(\hh{\bbGamma})$. Then
\begin{equation}\label{eq8}
\begin{aligned}
\Int_{\Irr{\bbGamma}}\Tr\bigl(\pi(y)a_\pi^2\pi(x)\bigr)\dd{\mu}(\pi)
&=\Int_{\Irr{\bbGamma}}\Tr\bigl(\pi(xy)a_\pi^2\bigr)\dd{\mu}(\pi)=\bh_{\hh{\bbGamma}}(xy)\\
&=\bh_{\hh{\bbGamma}}\bigl(y\sigma^{\bh_{\hh{\bbGamma}}}_{-\ii}(x)\bigr)
=\Int_{\Irr{\bbGamma}}\Tr\Bigl(\pi(y)\pi\bigl(\sigma^{\bh_{\hh{\bbGamma}}}_{-\ii}(x)\bigr)a_\pi^2\Bigr)\dd{\mu}(\pi).
\end{aligned}
\end{equation}
We can interpret equation \eqref{eq8} as equality of two normal bounded functionals:
\begin{equation}\label{eq10}
{\Int_{\Irr{\bbGamma}}}^{\!\!\oplus}\Tr\bigl(\,\cdot\;a_\pi^2\pi(x)\bigr)\dd{\mu}(\pi)
\quad\text{and}\quad
{\Int_{\Irr{\bbGamma}}}^{\!\!\oplus}\Tr\Bigl(\,\cdot\;\pi\bigl(\sigma^{\bh_{\hh{\bbGamma}}}_{-\ii}(x)\bigr)a_\pi^2\Bigr)\dd{\mu}(\pi)
\end{equation}
on operators of the form ${\Int_{\Irr{\bbGamma}}}^{\!\!\!\oplus}\pi(y)\dd{\mu}(\pi)\in{\Int_{\Irr{\bbGamma}}}^{\!\!\!\oplus}\B(\cH_\pi)\dd{\mu}(\pi)$ with $y\in\C(\hh{\bbGamma})$. The density result \cite[Proposition 8.6.4]{DixmierC}
\begin{equation}\label{eq44}
\Biggl\{\,{\Int_{\Irr{\bbGamma}}}^{\!\!\oplus}\pi(z)\dd{\mu}(\pi)\,\biggr|\biggl.\,z\in\C(\hh{\bbGamma})\Biggr\}''
=\oon{Dec}\Biggl(\,{\Int_{\Irr{\bbGamma}}}^{\!\!\oplus}\cH_\pi\dd{\mu}(\pi)\Biggr)
={\Int_{\Irr{\bbGamma}}}^{\!\!\oplus}\B(\cH_\pi)\dd{\mu}(\pi)
\end{equation}
implies that functionals \eqref{eq10} are equal and consequently
\[
a_\pi^2\pi(x)=\pi\bigl(\sigma^{\bh_{\hh{\bbGamma}}}_{-\ii}(x)\bigr)a_\pi^2
\]
holds for almost all $\pi$. Fix such a $\pi$ and assume additionally that $\sigma^{\bh_{\hh{\bbGamma}}}_t(x)=\gamma^{2\ii{t}}x$, ($t\in\RR$) for some $\gamma>0$. Then $a_\pi^2\pi(x)=\pi(x)(\gamma{a_\pi})^2$ and by induction
\begin{equation}\label{eq12}
a_\pi^{2k}\pi(x)=\pi(x)(\gamma{a_\pi})^{2k},\qquad{k}\in\NN.
\end{equation}
and consequently $P(a_\pi^2)\pi(x)=\pi(x)P(\gamma^2{a_\pi}^2)$ for any polynomial $P$. Now if $\oon{Sp}(a_\pi^2)=\{\lambda_1,\lambda_2,\dotsc\}$ and $\lambda_1>\lambda_2>\dotsm$ then taking $(P_n)_{n\in\NN}$ to be a sequence of polynomials such that $P_n(0)=1$ and $\bigl|P_n(\lambda_k)\bigr|\leq\tfrac{1}{n}$ for $k\leq{n}$ and $|P_n|\leq{M}$ (for some constant $M$) on $\bigl[0,\|a_\pi^2\|\bigr]$ yields
\[
P_n(a_\pi^2)\xrightarrow[n\to\infty]{}\chi(a_\pi^2=0)\quad\text{and}\quad
P_n(\gamma^2a_\pi^2)\xrightarrow[n\to\infty]{}\chi(\gamma^2a_\pi^2=0)
\]
strongly. It follows that
\begin{equation}\label{eq9}
\begin{aligned}
\chi(a_\pi=0)\pi(x)&=\chi(a_\pi^2=0)\pi(x)=\lim_{n\to\infty}P_n(a_\pi^2)\pi(x)
\\
&=\pi(x)\lim_{n\to\infty}P_n(\gamma^2a_\pi^2)=\pi(x)\chi(\gamma^2a_\pi^2=0)=\pi(x)\chi(a_\pi=0)
\end{aligned}
\end{equation}
(the limits are in the strong topology).

The assumption $\sigma^{\bh_{\hh{\bbGamma}}}_t(x)=\gamma^{2\ii{t}}x$ for some $\gamma>0$ is satisfied by all matrix elements of irreducible representations of $\hh{\bbGamma}$ (in appropriate bases). Consequently, by linearity and continuity, equation \eqref{eq9} holds for all $x\in\C(\hh{\bbGamma})$ and almost all $\pi\in\Irr{\bbGamma}$ (recall that $\C(\hh{\bbGamma})$ is separable). It follows that $\cH_\pi(a_\pi=0)$ is an invariant subspace for $\pi$ -- since $\pi$ is irreducible and $a_\pi\neq{0}$, we have $\cH(a_\pi=0)=\{0\}$. Consequently $a_\pi$ has trivial kernel for almost all $\pi\in\Irr(\bbGamma)$.

Next we claim that $\cQ_\LL$ is surjective. If this is not the case, there is a norm $1$ vector $T={\Int_{\Irr{\bbGamma}}}^{\!\!\!\oplus}T_\pi\dd{\mu}(\pi)$ in ${\Int_{\Irr{\bbGamma}}}^{\!\!\!\oplus}\HS(\cH_\pi)\dd{\mu}(\pi)$ which is orthogonal to the image of $\cQ_\LL$. In particular
\[
0=\Int_{\Irr{\bbGamma}}\Tr\bigl(T_\pi^*\pi(x)a_\pi\bigr)\dd{\mu}(\pi),\qquad{x}\in\C(\hh{\bbGamma}).
\]
Fix $N\in\NN$ and let $\Omega_N$ be the measurable subset of those $\pi\in\Irr{\bbGamma}$ for which $\|T_\pi\|_{\HS(\cH_\pi)}^2\leq{N}$. By \eqref{eq44} and the Kaplansky density theorem, there exists a bounded sequence $(x_{n,N})_{n\in\NN}$ in $\C(\hh{\bbGamma})$ such that
\[
{\Int_{\Irr{\bbGamma}}}^{\!\!\oplus}\pi(x_{n,N})\dd{\mu}(\pi)\xrightarrow[n\to\infty]{}{\Int_{\Irr{\bbGamma}}}^{\!\!\oplus}\chi_{\Omega_N}(\pi)T_\pi\dd{\mu}(\pi)
\]
in the strong topology. Indeed, since $(T_\pi)_{\pi\in\Irr{\bbGamma}}$ is measurable as a vector field with respect to the field of Hilbert spaces $\bigl(\HS(\cH_\pi)\bigr)_{\pi\in\Irr{\bbGamma}}$, it is also measurable as a field of operators. After possibly passing to a subsequence, we obtain (\cite[A 79]{DixmierC})
\[
\pi(x_{n,N})\xrightarrow[n\to\infty]{}\chi_{\Omega_N}(\pi)T_\pi
\]
strongly for almost all $\pi\in\Irr{\bbGamma}$. Bounded convergence theorem gives
\begin{equation}\label{eq11}
0=\lim_{n\to\infty}\Int_{\Irr{\bbGamma}}\Tr\bigl(T_\pi^*\pi(x_{n,N})a_\pi\bigr)\dd{\mu}(\pi)
=\Int_{\Irr{\bbGamma}}\lim_{n\to\infty}\Tr\bigl(T_\pi^*\pi(x_{n,N})a_\pi\bigr)\dd{\mu}(\pi).
\end{equation}
Since for almost all $\pi$ the functional $\B(\cH_\pi)\ni{b}\mapsto\Tr(T_\pi^*ba_\pi)\in\CC$ is normal (and hence strongly continuous on bounded sets), we have
\[
\Tr\bigl(T_\pi^*\pi(x_{n,N})a_\pi\bigr)\xrightarrow[n\to\infty]{}
\chi_{\Omega_N}(\pi)\Tr(T_\pi^*T_\pi{a_\pi}).
\]
This convergence together with equation \eqref{eq11} show that $\Tr(T_\pi^*T_\pi{a_\pi})=0$ and consequently $T_\pi{a_\pi}=0$ for almost all $\pi\in\Omega_N$. Consequently, as $N$ was arbitrary and $a_\pi$ has trivial kernel, we obtain $T=0$ which proves surjectivity of $\cQ_\LL$. We are left to show point \eqref{prop2,4}.

First we show that $\cQ_\LL{J_{\bh_{\hh{\bbGamma}}}}\cQ_\LL^*$ is equal to the direct integral $J={\Int_{\Irr{\bbGamma}}}^{\!\!\!\oplus}J_\pi\dd{\mu}(\pi)$ of canonical conjugations on $\HS(\cH_\pi)$. Choose, as usual, orthonormal bases in $\cH_\alpha$ ($\alpha\in\Irr{\hh{\bbGamma}}$) in which operators $\uprho_\alpha$ are diagonal with eigenvalues $\uprho_{\alpha,i}$ and let $x$ be a matrix element $U^{\alpha}_{i,j}$. Then $\sigma^{\bh_{\hh{\bbGamma}}}_t(U^{\alpha}_{i,j})=(\uprho_{\alpha,i}\uprho_{\alpha_j})^{\ii{t}}U^{\alpha}_{i,j}$ ($t\in\RR$) and hence, by equation \eqref{eq12}, we have
\[
a_\pi^{2k}\pi(U^{\alpha}_{i,j})=\pi(U^{\alpha}_{i,j})\bigl(\sqrt{\uprho_{\alpha,i}\uprho_{\alpha,j}}a_\pi\bigr)^{2k},\qquad{k}\in\NN.
\]
Consequently
\[
a_\pi^{2\ii{t}}\pi(U^{\alpha}_{i,j})=\pi(U^{\alpha}_{i,j})\bigl(\sqrt{\uprho_{\alpha,i}\uprho_{\alpha,j}}a_\pi\bigr)^{2\ii{t}}
=\bigl(\uprho_{\alpha,i}\uprho_{\alpha,j}\bigr)^{\ii{t}}\pi(U^{\alpha}_{i,j})a_\pi^{2\ii{t}}
=\pi\bigl(\sigma^{\bh_{\hh{\bbGamma}}}_t(U^{\alpha}_{i,j})\bigr)a_\pi^{2\ii{t}},\qquad{t}\in\RR.
\]
By linearity and density we obtain
\[
a_\pi^{2\ii{t}}\pi(x)=\pi\bigl(\sigma^{\bh_{\hh{\bbGamma}}}_t(x)\bigr)a_\pi^{2\ii{t}},\qquad{t}\in\RR
\]
for all $x\in\C(\hh{\bbGamma})$. Take $y\in\Pol(\hh{\bbGamma})$. Using the above equation, we obtain
\begin{align*}
J\cQ_\LL\Lambda_{\bh_{\hh{\bbGamma}}}(y)
&=J{\Int_{\Irr{\bbGamma}}}^{\!\!\oplus}\pi(y)a_\pi\dd{\mu}(\pi)
={\Int_{\Irr{\bbGamma}}}^{\!\!\oplus}a_\pi\pi(y^*)\dd{\mu}(\pi)
={\Int_{\Irr{\bbGamma}}}^{\!\!\oplus}\pi\bigl(\sigma^{\bh_{\hh{\bbGamma}}}_{\mihalf}(y^*)\bigr)a_\pi\dd{\mu}(\pi)\\
&=\cQ_\LL\Lambda_{\bh_{\hh{\bbGamma}}}\bigr(\sigma^{\bh_{\hh{\bbGamma}}}_{\mihalf}(y^*)\bigr)
=\cQ_\LL\modOphalf[\bh_{\hh{\bbGamma}}]\Lambda_{\bh_{\hh{\bbGamma}}}(y^*)=\cQ_\LL{J_{\bh_{\hh{\bbGamma}}}}\Lambda_{\bh_{\hh{\bbGamma}}}(y).
\end{align*}
Hence the density of $\Lambda_{\bh_{\hh{\bbGamma}}}(\Pol(\hh{\bbGamma}))$ in $\Ltwo(\hh{\bbGamma})$ proves the claim.

To prove that $\bigl(\cQ_\LL,(a_\pi^{-1})_{\pi\in\Irr{\bbGamma}},\mu\bigr)$ are the left Plancherel objects for $\bbGamma$, we will use \cite[Theorem 3.3 (7)]{KrajczokTypeI}. The assumptions of that theorem include that almost all $\pi$ are finite dimensional. However, an inspection of the proof shows that a weaker assumption is sufficient -- that the operator $D_\pi^{-1}$, or in our case $a_\pi$, is bounded -- which we assume. We need to show that for all $\omega\in\ell^1(\bbGamma)$
\begin{subequations}\label{eq14}
\begin{align}
\cQ_\LL(\omega\tens\id)\ww^{\bbGamma}
&=\Biggl(\,{\Int_{\Irr{\bbGamma}}}^{\!\!\oplus}\pi\bigl((\omega\tens\id)\ww^{\bbGamma}\bigr)\tens\I_{\overline{\cH_\pi}}\dd{\mu}(\pi)\Biggr)\cQ_\LL,\label{eq14-1}\\
\cQ\LL(\id\tens\omega)\vv^{\bbGamma}
&=\Biggl(\,{\Int_{\Irr{\bbGamma}}}^{\!\!\oplus}\I_{\cH_\pi}\tens\pi^\cc\bigl((\omega\tens\id)\ww^{\bbGamma}\bigr)\dd{\mu}(\pi)\Biggr)\cQ\LL,\label{eq14-2}
\end{align}
\end{subequations}
where $\pi^\cc\colon\C(\hh{\bbGamma})\ni{x}\mapsto\pi(R_{\hh{\bbGamma}}(x))^{\top}\in\B(\overline{\cH_\pi})$ (cf.~\cite[Proposition 10]{SoltanWoronowicz}). Equality \eqref{eq14-1} follows readily from the definition of $\cQ_\LL$, so we only need to justify \eqref{eq14-2}. We will do this using $\cQ_\LL{J_{\bh_{\hh{\bbGamma}}}}\cQ_\LL^*=J$. Take $y\in\Pol(\hh{\bbGamma})$, a finitely supported $\omega\in\ell^1(\bbGamma)$ and recall that $\vv^{\bbGamma}
=(J_{\bh_{\hh{\bbGamma}}}\tens{J_{\bh_{\hh{\bbGamma}}}})\hh{\ww^{\bbGamma}}(J_{\bh_{\hh{\bbGamma}}}\tens{J_{\bh_{\hh{\bbGamma}}}})$. We have
\begin{align*}
\cQ_\LL\bigl((\id\tens\omega)\vv^{\bbGamma}\bigr)\Lambda_{\bh_{\hh{\bbGamma}}}(y)
&=\cQ_\LL{J_{\bh_{\hh{\bbGamma}}}}\Bigl(\bigl((\omega\comp{R_{\bbGamma}})\tens\id\bigr)\ww^{\bbGamma}\Bigr)^*J_{\bh_{\hh{\bbGamma}}}\Lambda_{\bh_{\hh{\bbGamma}}}(y)\\
&=\cQ_\LL{J_{\bh_{\hh{\bbGamma}}}}\cQ_\LL^*{\Int_{\Irr{\bbGamma}}}^{\!\!\oplus}\pi\Bigl(\bigl((\omega\comp{R_{\bbGamma}})\tens\id\bigr)\ww^{\bbGamma}\Bigr)^*a_\pi\pi(y^*)\dd{\mu}(\pi)\\
&={\Int_{\Irr{\bbGamma}}}^{\!\!\oplus}\pi(y)a_\pi\pi\Bigl(\bigl((\omega\circ{R_{\bbGamma}})\tens\id\bigr)\ww^{\bbGamma}\Bigr)\dd{\mu}(\pi)\\
&={\Int_{\Irr{\bbGamma}}}^{\!\!\oplus}\pi(y)a_\pi(\pi\comp{R_{\hh{\bbGamma}}})\bigl((\omega\tens\id)\ww^{\bbGamma}\bigr)\dd{\mu}(\pi)\\
&=\Biggl(\,{\Int_{\Irr{\bbGamma}}}^{\!\!\oplus}\I_{\cH_\pi}\tens\pi^\cc\bigl((\omega\tens\id)\ww^{\bbGamma}\bigr)\dd{\mu}(\pi)\Biggr)
\Biggl(\,{\Int_{\Irr{\bbGamma}}}^{\!\!\oplus}\pi(y)a_\pi\dd{\mu}(\pi)\Biggr)\\
&=\Biggl(\,{\Int_{\Irr{\bbGamma}}}^{\!\!\oplus}\I_{\cH_\pi}\tens\pi^\cc\bigl((\omega\tens\id)\ww^{\bbGamma}\bigr)\dd{\mu}(\pi)\Biggr)\cQ_\LL\Lambda_{\bh_{\hh{\bbGamma}}}(y).
\end{align*}
A density argument allows us to conclude that equations \eqref{eq14} hold. The last element we need to complete the proof is to show that condition (7.3) of \cite[Theorem 3.3]{KrajczokTypeI} holds, i.e.~that $\cQ_\LL$ transforms $\cZ(\Linf(\hh{\bbGamma}))$ into diagonalizable operators $\oon{Diag}\biggl(\,{\Int_{\Irr{\bbGamma}}}^{\!\!\!\oplus}\HS(\cH_\pi)\dd{\mu}(\pi)\biggr)$. This is a consequence of equations \eqref{eq14}. Indeed, from \eqref{eq14} we obtain
\[
\cQ_\LL\Linf(\hh{\bbGamma})\cQ_\LL^*={\Int_{\Irr{\bbGamma}}}^{\!\!\oplus}\B(\cH_\pi)\tens\I_{\overline{\cH_\pi}}\dd{\mu}(\pi),\quad
\cQ_\LL\Linf(\hh{\bbGamma})'\cQ_\LL^*={\Int_{\Irr{\bbGamma}}}^{\!\!\oplus}\I_{\cH_\pi}\tens\B(\overline{\cH_\pi})\dd{\mu}(\pi)
\]
and hence
\begin{align*}
\cQ_\LL\cZ\bigl(\Linf(\hh{\bbGamma})\bigr)\cQ_\LL^*&=\cQ_\LL\bigl(\Linf(\hh{\bbGamma})\cap\Linf(\hh{\bbGamma})'\bigr)\cQ_\LL^*\\
&={\Int_{\Irr{\bbGamma}}}^{\!\!\oplus}\B(\cH_\pi)\tens\I_{\overline{\cH_\pi}}\dd{\mu}(\pi)
\cap{\Int_{\Irr{\bbGamma}}}^{\!\!\oplus}\I_{\cH_\pi}\tens\B(\overline{\cH_\pi})\dd{\mu}(\pi)\\
&=\oon{Diag}\Biggl(\,{\Int_{\Irr{\bbGamma}}}^{\!\!\oplus}\HS(\cH_\pi)\dd{\mu}(\pi)\Biggr).
\qedhere
\end{align*}
\end{proof}

\subsection{General root systems}\label{sect:generalRS}\hspace*{\fill}

In this and the subsequent section, we will identify all the invariants for $G_q$ and its dual (see Remark \ref{remark1} and Theorem \ref{thm2}, Proposition \ref{prop4}, Theorem \ref{thm3}) and give a concrete description for the action of scaling automorphisms (Theorem \ref{thm1}). In particular, we show that in some cases, e.g.~for $\oon{SU}_q(3)$, there exists a non-trivial, inner, scaling automorphism which is not implemented by any group-like unitary. This shows that $\TtauInn(\oon{SU}_q(3))$ is not equal to the invariant $T(\Linf(\oon{SU}_q(3)),\Delta)$ of Vaes (\cite[Definition 3.4]{StrictlyOuter}).

We begin our analysis with formulas expressing the action of modular and scaling automorphisms on matrix coefficients of irreducible representations. As in the previous section, $G$ is a compact, simply connected, semisimple Lie group and $0<q<1$.

\begin{lemma}\label{lemma4}
Take $t\in\RR$, $\varpi\in\bP^+$ and $x\in\Pol(G_q)$ with well defined weights $\lwt(x),\rwt(x)\in\bP$. Then
\begin{enumerate}
\item\label{lemma4.1} $\uprho_\varpi=\pi_\varpi(K_{2\rho})\in\B(\cH_\varpi)$,
\item\label{lemma4.2} $\sigma^{\bh_{G_q}}_t(x)=q^{\la2\rho|\lwt(x)+\rwt(x)\ra\ii{t}}x$,
\item\label{lemma4.3} $\tau^{G_q}_t(x)=q^{\la2\rho|\lwt(x)-\rwt(x)\ra\ii{t}}x$.
\end{enumerate}
\end{lemma}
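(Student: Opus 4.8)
The plan is to reduce everything to statement \eqref{lemma4.1}: once we know $\uprho_\varpi=\pi_\varpi(K_{2\rho})$, parts \eqref{lemma4.2} and \eqref{lemma4.3} follow from the transformation rules \eqref{sigmatau}. Indeed, each weight vector $\xi$ is by definition an eigenvector of every $\pi_\varpi(K_i)$, hence of $\pi_\varpi(K_{2\rho})$: writing $2\rho=\sum_i n_i\alpha_i\in\bQ$ one computes $\pi_\varpi(K_{2\rho})\xi=q^{\la 2\rho|\wt(\xi)\ra}\xi$. Thus the chosen weight basis $\{\xi^\varpi_i\}$ already diagonalizes $\uprho_\varpi$, with eigenvalue $\uprho_{\varpi,i}=q^{\la 2\rho|\wt(\xi^\varpi_i)\ra}$, and substituting these into \eqref{sigmatau} gives $\sigma^{\bh_{G_q}}_t(U^\varpi_{i,j})=q^{\la 2\rho|\wt(\xi^\varpi_i)+\wt(\xi^\varpi_j)\ra\ii{t}}U^\varpi_{i,j}$ and likewise with a sign change for $\tau^{G_q}_t$. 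Since $\lwt(U^\varpi_{i,j})=\wt(\xi^\varpi_i)$ and $\rwt(U^\varpi_{i,j})=\wt(\xi^\varpi_j)$, this is precisely \eqref{lemma4.2}, \eqref{lemma4.3} for matrix coefficients, and the general case follows by linearity, using that $\lwt$ and $\rwt$ are the $\bP$-gradings extending these.

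The substance is therefore \eqref{lemma4.1}. I would characterize $\uprho_\varpi$ through \cite[Proposition 1.4.4]{NeshveyevTuset}, exactly as in the proof of Proposition \ref{prop1}\eqref{prop1:2}: among positive operators on $\cH_\varpi$ it is the unique one lying in $\oon{Mor}(U^\varpi,U^{\varpi\,\mathrm{cc}})$ with $\Tr(\uprho_\varpi)=\Tr(\uprho_\varpi^{-1})$. The candidate $\pi_\varpi(K_{2\rho})$ is positive since $K_{2\rho}$ is self-adjoint with eigenvalues $q^{\la 2\rho|\mu\ra}>0$. The morphism property amounts to $S^2$ of $G_q$ being implemented on $U^\varpi$ by $K_{2\rho}$, which I would establish by verifying $\hh{S}^2=\Ad(K_{2\rho})$ on the generators of $\mathcal{U}_q(\mathfrak{g})$. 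The only nontrivial check is on $E_i$: the relations give $\hh{S}^2(E_i)=K_iE_iK_i^{-1}=q^{\la\alpha_i|\alpha_i\ra}E_i$, while $K_{2\rho}E_iK_{2\rho}^{-1}=q^{\la 2\rho|\alpha_i\ra}E_i$, and these agree because the fundamental-weight normalization $2\la\varpi_j|\alpha_i\ra=\delta_{i,j}\la\alpha_i|\alpha_i\ra$ yields $\la 2\rho|\alpha_i\ra=\la\alpha_i|\alpha_i\ra$; the case of $F_i$ is analogous and $K_i$ is immediate. Dualizing this identity to $\Pol(G_q)$ via $\pi_\varpi(\omega)=(\omega\tens\id)U^\varpi$ and the compatibility of the antipode (as used in Proposition \ref{prop1}) gives $(\I\tens\pi_\varpi(K_{2\rho}))U^\varpi=U^{\varpi\,\mathrm{cc}}(\I\tens\pi_\varpi(K_{2\rho}))$, i.e.\ the required intertwining.

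For the trace normalization I would compute $\Tr\pi_\varpi(K_{2\rho})=\sum_\mu(\dim\cH_\varpi(\mu))\,q^{\la 2\rho|\mu\ra}$ on the weight basis and apply the change of variables $\mu\mapsto w_\circ\mu$: by $W$-invariance of the weight multiplicities, invariance of $\la\cdot|\cdot\ra$ under $W$, and $w_\circ(2\rho)=-2\rho$, this sum becomes $\Tr\pi_\varpi(K_{2\rho})^{-1}$. With positivity, the intertwining property, and this normalization in hand, the characterization forces $\uprho_\varpi=\pi_\varpi(K_{2\rho})$.

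I expect the main obstacle to be bookkeeping of conventions rather than genuine difficulty: one must track carefully whether the pairing of $\mathcal{U}_q(\mathfrak{g})$ with $\Pol(G_q)$ in the conventions of \cite{DeCommer,ReshetikhinYakimov} produces $\pi_\varpi(K_{2\rho})$ (rather than its inverse) as the operator in \eqref{sigmatau}, and to make sure the direction of the intertwiner and the exponent signs in \eqref{lemma4.2}--\eqref{lemma4.3} come out consistently with $0<q<1$, so that indeed $\gamma(\varpi)=q^{\la 2\rho|\varpi\ra}\le 1\le\Gamma(\varpi)=\gamma(\varpi)^{-1}$.
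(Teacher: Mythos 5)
Your proposal is correct, but on the substantive point \eqref{lemma4.1} it takes a genuinely different route from the paper. The paper disposes of \eqref{lemma4.1} in one line: it cites \cite[Proposition 2.4.10]{NeshveyevTuset}, which identifies the Woronowicz character $f_1$ of $G_q$ with the image of $K_{2\rho}$ in $\Pol(G_q)^*$ (modulo the $K_i\leftrightarrow K_i^{-1}$ convention change), and then uses $\is{\xi}{\uprho_\varpi\eta}=f_1\bigl(U^\varpi(\xi,\eta)\bigr)=\is{\xi}{\pi_\varpi(K_{2\rho})\eta}$. You instead re-derive this identification from first principles: you characterize $\uprho_\varpi$ by \cite[Proposition 1.4.4]{NeshveyevTuset} as the unique positive element of $\oon{Mor}(U^\varpi,U^{\varpi\,\mathrm{cc}})$ with $\Tr(\uprho_\varpi)=\Tr(\uprho_\varpi^{-1})$, verify $\hh{S}^2=\Ad(K_{2\rho})$ on generators (the computation $\hh{S}^2(E_i)=K_iE_iK_i^{-1}=q^{\la\alpha_i|\alpha_i\ra}E_i$ together with $\la2\rho|\alpha_i\ra=\la\alpha_i|\alpha_i\ra$, which the paper also records in the proof of Lemma \ref{lemma6}), dualize via the pairing compatibility $\la\hh{S}\omega,a\ra=\la\omega,S_{G_q}(a)\ra$ to get the intertwining relation, and settle the trace normalization by the substitution $\mu\mapsto w_\circ\mu$ using $W$-invariance of weight multiplicities and $w_\circ\rho=-\rho$. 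All of these steps are sound, and the generator-level check has the merit of resolving exactly the convention issue you flag at the end: with the paper's relations one unambiguously gets $\Ad(K_{2\rho})$ rather than $\Ad(K_{2\rho}^{-1})$, hence $\gamma(\varpi)=q^{\la2\rho|\varpi\ra}$ as expected. (Only your pointer for the dualization step is slightly off: Proposition \ref{prop1} uses antipode-preservation of a Hopf $*$-homomorphism between universal algebras, whereas what you need is the antipode axiom for the dual pairing of $\mathcal{U}_q(\mathfrak{g})$ with $\Pol(G_q)$; this is standard but should be cited as such.) For \eqref{lemma4.2} and \eqref{lemma4.3} the two arguments are essentially the same: the paper computes $f_{\pm\ii t}\star x\star f_{\ii t}$ directly, while you substitute the eigenvalues $q^{\la2\rho|\wt(\xi^\varpi_i)\ra}$ into \eqref{sigmatau}, which is just the packaged form of those convolution formulas. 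In sum, the paper's route buys brevity by quoting a known result; yours buys self-containedness, at the cost of in effect re-proving \cite[Proposition 2.4.10]{NeshveyevTuset}.
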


\begin{proof}
Take $\varpi\in\bP^+$ and $\xi,\eta\in\cH_\varpi$ with well defined weights. Clearly, in order to prove \eqref{lemma4.2} and \eqref{lemma4.3} it is enough to consider case $x=U^\varpi(\xi,\eta)$.

Let $\{f_z\,|\,z\in\CC\}$ be the Woronowicz characters of $G_q$ (\cite[Definition 1.7.1]{NeshveyevTuset}). From \cite[Proposition 2.4.10]{NeshveyevTuset} we know that $f_1$ is the image of $K_{2\rho}$ in $\Pol(G_q)^*$ (note that we changed $K_i$ to $K_i^{-1}$ to be consistent with the conventions of \cite{DeCommer}), hence we can calculate
\[
\is{\xi}{\uprho_\varpi\eta}=f_1\bigl(U^\varpi(\xi,\eta)\bigr)
=U^\varpi(\xi,\eta)(K_{2\rho})
=\is{\xi}{\pi_\varpi(K_{2\rho})\eta}.
\]
As $\xi,\eta$ with well defined weights span $\cH_\varpi$, we obtain \eqref{lemma4.1}.

Recall that $\sigma^{\bh_{G_q}}_t(x)=f_{\ii{t}}\star{x}\star{f_{\ii{t}}}$ (\cite[Page 30]{NeshveyevTuset}), so using $\uprho_\varpi^{\ii{t}}=\pi_\varpi(K_{2\rho})^{\ii{t}}$ we obtain
\begin{align*}
\sigma^{\bh_{G_q}}_t(x)
&=\sigma^{\bh_{G_q}}_t\bigl(U^\varpi(\xi,\eta)\bigr)
=\sum_{i,j=1}^{\dim{\varpi}}
f_{\ii{t}}\bigl(U^\varpi(\xi,\xi_i^\varpi)\bigr)U^\varpi(\xi_i^\varpi,\xi_j^\varpi)
f_{\ii{t}}\bigl(U^\varpi(\xi_j^\varpi,\eta)\bigr)\\
&=\sum_{i,j=1}^{\dim{\varpi}}
\is{\xi}{\uprho_\varpi^{\ii{t}}\xi_i^\varpi}\,
\is{\xi_j^\varpi}{\uprho_\varpi^{\ii{t}}\eta}\,
U^\varpi(\xi_i^\varpi,\xi_j^\varpi)\\
&=\sum_{i,j=1}^{\dim{\varpi}}
\is{\pi_\varpi(K_{2\rho})^{-\ii{t}}\xi}{\xi_i^\varpi}\,
\is{\xi_j^\varpi}{\pi_\varpi(K_{2\rho})^{\ii{t}}\eta}\,
U^\varpi(\xi_i^\varpi,\xi_j^\varpi)\\
&=\sum_{i,j=1}^{\dim{\varpi}}
\is{q^{-\la2\rho|\wt(\xi)\ra\ii{t}}\xi}{\xi_i^\varpi}\,
\is{\xi_j^\varpi}{q^{\la2\rho|\wt(\eta)\ra\ii{t}}\eta}\,
U^\varpi(\xi_i^\varpi,\xi_j^\varpi)\\
&=q^{\la2\rho|\lwt(x)+\rwt(x)\ra\ii{t}}x.
\end{align*}
Statement \eqref{lemma4.3} can be obtained in an analogous way this time using the formula $\tau^{G_q}_t(x)=f_{-\ii{t}}\star{x}\star{f_{\ii{t}}}$.
\end{proof}

As in the proof of Proposition \ref{prop5} we let $P$ denote the strictly positive, self-adjoint operator on $\Ltwo(G_q)$ which implements the scaling group of $G_q$. Using formula for $\tau^{G_q}_t$ from Lemma \ref{lemma4}, we are able to describe the action of $P^{\ii{t}}$ at the level of the direct integral ${\Int_T}^\oplus\HS(\cH_\lambda)\dd\lambda$. For this we first introduce the following:

\begin{lemma}\label{lemma5}
There is a continuous homomorphism $\RR\ni{t}\mapsto\blambda_t\in{T}$ characterized by $\la\varpi,\blambda_t\ra=q^{\la2\rho|\varpi\ra\ii{t}}$ for $\varpi\in\bP$.
\end{lemma}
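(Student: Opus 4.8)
The plan is to realise $\blambda_t$ as the point of $T$ corresponding, under Pontryagin duality, to a suitable character of the weight lattice. Recall that because $G$ is simply connected the weight lattice $\bP$ is precisely the character group $\hh{T}$ of the maximal torus, so the pairing $\la\cdot,\cdot\ra$ identifies $T$ with $\oon{Hom}(\bP,\TT)$: concretely $\lambda\mapsto\bigl(\varpi\mapsto\la\varpi,\lambda\ra\bigr)$ is an isomorphism of topological groups (the canonical identification $T\cong\hh{\hh{T}}$). Thus to produce $\blambda_t\in T$ it suffices, for each $t$, to exhibit a character of $\bP$ taking the prescribed values, and uniqueness is then automatic from nondegeneracy of the pairing.

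First I would check that for each fixed $t\in\RR$ the assignment $\varpi\mapsto q^{\la2\rho|\varpi\ra\ii{t}}$ is a well-defined character of $\bP$. It takes values in $\TT$, since the exponent equals $\ii\,t\,\la2\rho|\varpi\ra\log{q}$, which is purely imaginary because $t$, the inner product $\la2\rho|\varpi\ra$, and $\log{q}$ are all real (here $0<q<1$), so the value has modulus one. It is multiplicative in $\varpi$ because $\la2\rho|\cdot\ra$ is additive:
\[
q^{\la2\rho|\varpi+\varpi'\ra\ii{t}}=q^{\la2\rho|\varpi\ra\ii{t}}\,q^{\la2\rho|\varpi'\ra\ii{t}}.
\]
Hence this assignment lies in $\oon{Hom}(\bP,\TT)$ and, via the identification above, determines a unique $\blambda_t\in T$ satisfying $\la\varpi,\blambda_t\ra=q^{\la2\rho|\varpi\ra\ii{t}}$ for all $\varpi\in\bP$.

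It remains to verify that $t\mapsto\blambda_t$ is a continuous homomorphism. For the group law, for every $\varpi\in\bP$ one has $\la\varpi,\blambda_{s+t}\ra=q^{\la2\rho|\varpi\ra\ii(s+t)}=\la\varpi,\blambda_s\ra\la\varpi,\blambda_t\ra=\la\varpi,\blambda_s\blambda_t\ra$, and nondegeneracy of the pairing forces $\blambda_{s+t}=\blambda_s\blambda_t$. For continuity I would use that the fundamental weights $\varpi_1,\dotsc,\varpi_r$ form a $\ZZ$-basis of $\bP=\hh{T}$, so that $\lambda\mapsto\bigl(\la\varpi_1,\lambda\ra,\dotsc,\la\varpi_r,\lambda\ra\bigr)$ is a topological isomorphism $T\cong\TT^r$; under it $\blambda_t$ has coordinates $\bigl(\ee^{\ii{t}\la2\rho|\varpi_j\ra\log{q}}\bigr)_{j=1}^{r}$, each a continuous one-parameter subgroup of $\TT$, so $t\mapsto\blambda_t$ is continuous.

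The substance of the lemma is the identification $T\cong\oon{Hom}(\bP,\TT)$ together with the observation that the prescribed values are unimodular and multiplicative in $\varpi$; the only point that requires care is the appeal to simple-connectedness of $G$, which guarantees that $\bP$ is the \emph{full} character lattice of $T$ (and that the $\varpi_j$ give a $\ZZ$-basis), so that every character of $\bP$ is realised by a genuine point of $T$. Once this is in place the remaining verifications are routine.
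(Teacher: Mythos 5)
Your proposal is correct and follows essentially the same route as the paper: existence and uniqueness of $\blambda_t$ via Pontryagin duality from the character $\varpi\mapsto q^{\la2\rho|\varpi\ra\ii{t}}$ of $\bP\cong\hh{T}$, and continuity of $t\mapsto\blambda_t$ via the identifications $T\cong\TT^r$, $\bP\cong\ZZ^r$ coming from the fundamental weights. You simply spell out the routine verifications (unimodularity, additivity in $\varpi$, the homomorphism property in $t$) that the paper leaves implicit.
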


\begin{proof}
Clearly $\bP\ni\varpi\mapsto{q^{\la2\rho|\varpi\ra\ii{t}}}\in\TT$ is a continuous homomorphism, hence the existence of $\blambda_t$ follows from Pontriagin duality. To see that $t\mapsto\blambda_t$ is continuous, consider the isomorphisms $T\cong\TT^r$ and $\bP\cong\ZZ^r$.
\end{proof}

\begin{proposition}\label{prop3}
For $t\in\RR$ the operator $\cQ_\LL{P^{\ii{t}}}\cQ_\LL^*\in\B\Bigl({\Int_T}^\oplus\HS(\cH_\lambda)\dd{\lambda}\Bigr)$ acts as
\[
\cQ_\LL{P^{\ii{t}}}\cQ_\LL^*
\Biggl(
{\Int_T}^{\oplus}A_{\lambda}\dd{\lambda}
\Biggr)
={\Int_T}^\oplus\pi_{w_\circ}
\bigl(|b_\rho|\bigr)^{-2\ii{t}}
A_{\lambda\blambda_{2t}}\pi_{w_\circ}\bigl({|b_\rho|}\bigr)^{2\ii{t}}\dd{\lambda}.
\]
\end{proposition}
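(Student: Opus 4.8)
The plan is to verify the stated operator identity by testing it on a dense set of vectors in the direct integral, using that both sides are bounded. First I would check that the right-hand side genuinely defines a bounded operator: the reindexing $\lambda\mapsto\lambda\blambda_{2t}$ is a translation of the torus $T$ and so preserves the normalized Lebesgue measure $\dd\lambda$, while fiberwise conjugation by the unitaries $\pi_{w_\circ}(|b_\rho|)^{\pm2\ii t}$ (unitary by Corollary \ref{cor1}, since $\pi_{w_\circ}(|b_\rho|)$ is positive with trivial kernel) preserves the Hilbert–Schmidt norm. Throughout I would invoke \eqref{eq32} to record that the Plancherel field $a_\lambda = c\,\pi_{\lambda,w_\circ}(|b_\rho|)$, where $c=\prod_{\alpha\in\Phi^+}(1-q^{2\la\rho|\alpha\ra})^{1/2}$ (by Corollary \ref{cor1}), is in fact independent of $\lambda$, equal to $a:=c\,\pi_{w_\circ}(|b_\rho|)$; the constant $c$ will cancel everywhere.

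The dense set I would use consists of the vectors $\cQ_\LL\Lambda_{\bh_{G_q}}(x)$ for $x=U^\varpi(\xi,\eta)\in\Pol(G_q)$ with $\wt(\xi)=\mu$, $\wt(\eta)=\mu'$; by \eqref{eq7} such a vector has $\lambda$-component $A_\lambda=\pi_{\lambda,w_\circ}(x)\,a$. On the left, since $G_q$ is compact its scaling constant is trivial, so $P^{\ii t}$ implements $\tau^{G_q}_t$ via $P^{\ii t}\Lambda_{\bh_{G_q}}(y)=\Lambda_{\bh_{G_q}}(\tau^{G_q}_t(y))$; combined with Lemma \ref{lemma4}\eqref{lemma4.3} this makes $\cQ_\LL P^{\ii t}\cQ_\LL^*$ act on $\cQ_\LL\Lambda_{\bh_{G_q}}(x)$ as the scalar $q^{\la2\rho|\mu-\mu'\ra\ii t}$, i.e.\ with $\lambda$-component $q^{\la2\rho|\mu-\mu'\ra\ii t}A_\lambda$.

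The core is matching the right-hand side fiberwise. Using \eqref{eq17} and Lemma \ref{lemma5} I would rewrite the reindexed factor as $\pi_{\lambda\blambda_{2t},w_\circ}(x)=\la\mu,\lambda\blambda_{2t}\ra\pi_{w_\circ}(x)=\la\mu,\lambda\ra\,q^{2\la2\rho|\mu\ra\ii t}\pi_{w_\circ}(x)$, turning the reindexing into an explicit scalar. For the conjugation I would apply the commutation relation $a_\pi^{2\ii s}\pi(\cdot)=\pi(\sigma^{\bh_{G_q}}_s(\cdot))a_\pi^{2\ii s}$ established in the proof of Proposition \ref{prop2}, which together with Lemma \ref{lemma4}\eqref{lemma4.2} gives $\pi_{w_\circ}(|b_\rho|)^{-2\ii t}\pi_{w_\circ}(x)\pi_{w_\circ}(|b_\rho|)^{2\ii t}=q^{-\la2\rho|\mu+\mu'\ra\ii t}\pi_{w_\circ}(x)$ (here $a$ commutes with $\pi_{w_\circ}(|b_\rho|)^{2\ii t}$, being a function of the same operator). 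Assembling, the $\lambda$-component of the right-hand side becomes $\la\mu,\lambda\ra\,q^{\la2\rho|2\mu-\mu-\mu'\ra\ii t}\pi_{w_\circ}(x)\,a$.

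The step I expect to be decisive is the exponent bookkeeping: the reindexing contributes $q^{2\la2\rho|\mu\ra\ii t}$ and the conjugation contributes $q^{-\la2\rho|\mu+\mu'\ra\ii t}$, and these must combine to exactly the $\tau^{G_q}_t$-eigenvalue $q^{\la2\rho|\mu-\mu'\ra\ii t}$ produced on the left. This works precisely because $2\la2\rho|\mu\ra-\la2\rho|\mu+\mu'\ra=\la2\rho|\mu-\mu'\ra$, after which \eqref{eq17} turns $\la\mu,\lambda\ra\pi_{w_\circ}(x)\,a$ back into $A_\lambda$, so both fibers equal $q^{\la2\rho|\mu-\mu'\ra\ii t}A_\lambda$. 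Agreement on the dense set together with boundedness of both operators then completes the proof.
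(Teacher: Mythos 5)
Your proof is correct and follows essentially the same route as the paper's: both arguments test the identity on the dense set of vectors $\cQ_\LL\Lambda_{\bh_{G_q}}\bigl(U^\varpi(\xi,\eta)\bigr)$, use that $P^{\ii{t}}$ acts there by the scalar $q^{\la2\rho|\lwt(x)-\rwt(x)\ra\ii{t}}$, extract the reindexing scalar $\la\lwt(x),\blambda_{2t}\ra=q^{2\la2\rho|\lwt(x)\ra\ii{t}}$ via \eqref{eq17} and \eqref{eq32}, and close with the exponent identity $2\lwt(x)-\bigl(\lwt(x)+\rwt(x)\bigr)=\lwt(x)-\rwt(x)$. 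The only cosmetic difference is in handling the conjugation term: the paper identifies it with $\cQ_{\LL}\modOp[\bh_{G_q}]^{-\ii{t}}\Lambda_{\bh_{G_q}}(x)$ via the direct integral formula for the modular operator and then applies Lemma \ref{lemma4}\eqref{lemma4.2}, whereas you obtain the same scalar $q^{-\la2\rho|\lwt(x)+\rwt(x)\ra\ii{t}}$ fiberwise from the commutation relation established in the proof of Proposition \ref{prop2} together with the same lemma --- both rest on the identical underlying fact.
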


\begin{proof}
Take $x=U^{\varpi}(\xi,\eta)$ for $\varpi\in\bP^+$ and $\xi,\eta\in\cH_\varpi$ with well defined weights. Using Lemma \ref{lemma4} we have
\begin{equation}\label{eq24}
P^{\ii{t}}\Lambda_{\bh_{G_q}}(x)
=\Lambda_{\bh_{G_q}}\Bigl(\tau^{G_q}_t\bigl(U^{\varpi}(\xi,\eta)\bigr)\Bigr)
=q^{\la2\rho|\lwt(x)-\rwt(x)\ra\ii{t}}\Lambda_{\bh_{G_q}}(x).
\end{equation}
Meanwhile, Corollary \ref{cor1} implies
\begin{equation}\label{eq25}
\cQ_\LL\Lambda_{\bh_{G_q}}(x)
=\biggl(\prod_{\alpha\in\Phi^+}\bigl(1-q^{2\la\rho|\alpha\ra}\bigr)\biggr)^{\frac{1}{2}}
{\Int_T}^\oplus
\pi_{\lambda,w_\circ}\bigl(x|b_\rho|\bigr)\dd{\lambda}.
\end{equation}
Hence, combining \eqref{eq24} and \eqref{eq25} gives
\[
\cQ_\LL{P^{\ii{t}}}\cQ_\LL^*
\Biggl({\Int_T}^\oplus
\pi_{\lambda,w_\circ}\bigl(x|b_\rho|\bigr)\dd{\lambda}\Biggr)
=q^{\la2\rho|\lwt(x)-\rwt(x)\ra\ii{t}}{\Int_T}^\oplus
\pi_{\lambda,w_\circ}\bigl(x|b_\rho|\bigr)\dd{\lambda}.
\]
Let $\mathcal{R}_t$ be the operator from the claim, i.e.
\[
\mathcal{R}_t
\Biggl({\Int_T}^{\oplus}A_{\lambda}\dd{\lambda}\Biggr)={\Int_T}^\oplus\pi_{w_\circ}
\bigl(|b_\rho|\bigr)^{-2\ii{t}}
A_{\lambda\blambda_{2t}}\pi_{w_\circ}\bigl(|b_\rho|\bigr)^{2\ii{t}}\dd{\lambda}.
\]
Since $\cQ_\LL$ is unitary, vectors of the form ${\Int_T}^\oplus\pi_{\lambda,w_\circ}\bigl(x|b_\rho|\bigr)\dd{\lambda}$ constitute a linearly dense set in ${\Int_T}^\oplus\HS(\cH_\lambda)\dd{\lambda}$. Thus it is enough to show that
\begin{equation}\label{eq16}
\mathcal{R}_t\Biggl(
{\Int_T}^\oplus\pi_{\lambda,w_\circ}\bigl(x|b_\rho|\bigr)\dd{\lambda}\Biggr)=
q^{\la2\rho|\lwt(x)-\rwt(x)\ra\ii{t}}
{\Int_T}^\oplus\pi_{\lambda,w_\circ}\bigl(x|b_\rho|\bigr)
\dd{\lambda}.
\end{equation}

We have $\pi_{\lambda,w_\circ}(|b_\rho|)=\pi_{w_\circ}(|b_\rho|)$ for any $\lambda\in{T}$ (equality \eqref{eq32}). Using equation \eqref{eq17} and the definition of $\mathcal{R}_t$ we can calculate the left hand side of \eqref{eq16} as
\begin{align*}
\mathcal{R}_t\Biggl(
{\Int_T}^\oplus\pi_{\lambda,w_\circ}\bigl(x|b_\rho|\bigr)\dd{\lambda}\Biggr)&=
{\Int_T}^\oplus\pi_{w_\circ}\bigl(|b_\rho|\bigr)^{-2\ii{t}}
\pi_{\lambda\blambda_{2t},w_\circ}\bigl(x|b_\rho|\bigr)\pi_{w_\circ}\bigl(|b_\rho|\bigr)^{2\ii{t}}\dd{\lambda}\\
&={\Int_T}^\oplus\pi_{w_\circ}\bigl(|b_\rho|\bigr)^{-2\ii{t}}
\pi_{w_\circ}(x)\bigl\la\lwt(x),\lambda\blambda_{2t}\bigr\ra\pi_{\lambda\blambda_{2t},w_\circ}\bigl(|b_\rho|\bigr)\pi_{w_\circ}\bigl(|b_\rho|\bigr)^{2\ii{t}}\dd{\lambda}\\
&=\bigl\la\lwt(x),\blambda_{2t}\bigr\ra{\Int_T}^\oplus
\pi_{w_\circ}\bigl(|b_\rho|\bigr)^{-2\ii{t}}
\pi_{\lambda,w_\circ}\bigl(x|b_\rho|\bigr)
\pi_{w_\circ}\bigl(|b_\rho|\bigr)^{2\ii{t}}\dd{\lambda}.
\end{align*}
Using
\[
\cQ_{\LL}\modOp[\bh_{G_q}]^{\ii{t}}
\cQ_{\LL}^*=
{\Int_T}^\oplus\pi_{\lambda,w_\circ}\bigl(|b_\rho|\bigr)^{2\ii{t}}\tens\pi_{\lambda,w_\circ}\bigl(|b_\rho|^{-2\ii{t}}\bigr)^\top\dd{\lambda}
\]
(Corollary \ref{cor1}, \cite[Theorem 5.4]{modular}), Lemma \ref{lemma4} and the definition of $\blambda_{2t}$ (Lemma \ref{lemma5}) we can continue the calculation
\begin{align*}
\mathcal{R}_t\Biggl(
{\Int_T}^\oplus\pi_{\lambda,w_\circ}\bigl(x|b_\rho|\bigr)\dd{\lambda}\Biggr)
&=q^{\la2\rho|\lwt(x)\ra2\ii{t}}\cQ_{\LL}\modOp[\bh_{G_q}]^{-\ii{t}}\Lambda_{\bh_{G_q}}(x)\\
&=q^{\la2\rho|\lwt(x)\ra2\ii{t}}
q^{-\la2\rho|\lwt(x)+\rwt(x)\ra\ii{t}}
\cQ_{\LL}\Lambda_{\bh_{G_q}}(x)\\
&=q^{\la2\rho|\lwt(x)-\rwt(x)\ra\ii{t}}
{\Int_T}^\oplus\pi_{\lambda,w_\circ}\bigl(x|b_\rho|\bigr)\dd{\lambda}.
\end{align*}
This shows \eqref{eq16} and proves the claim.
\end{proof}

As a corollary of Proposition \ref{prop3} we obtain an alternative formula for the action of scaling group. Recall (\cite[Theorem 3.3]{KrajczokTypeI}) that $\cQ_{\LL}$ induces isomorphism between $\Linf(G_q)$ and ${\Int_T}^\oplus\B(\cH_\lambda)\tens\I_{\overline{\cH_\lambda}}\dd{\lambda}$.

\begin{theorem}\label{thm1}
For any $t\in\RR$ and $x\in\Linf(G_q)$ with $\cQ_{\LL}x\cQ_{\LL}^*={\Int_T}^{\oplus}x_\lambda\tens\I_{\overline{\cH_\lambda}}\dd{\lambda}$ we have
\[
\cQ_{\LL}\tau^{G_q}_t(x)\cQ_{\LL}^*={\Int_T}^\oplus
\pi_{w_\circ}\bigl(|b_\rho|\bigr)^{-2\ii{t}}x_{\lambda\blambda_{2t}}\pi_{w_\circ}\bigl(|b_\rho|\bigr)^{2\ii{t}}\tens\I_{\overline{\cH_\lambda}}\dd{\lambda}.
\]
\end{theorem}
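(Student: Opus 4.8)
The plan is to derive the formula directly from Proposition~\ref{prop3} together with the fact, recalled just before that proposition, that the scaling group is spatially implemented by $P$, so that $\tau^{G_q}_t(x)=P^{\ii{t}}xP^{-\ii{t}}$ for every $x\in\Linf(G_q)$ and $t\in\RR$. Conjugating this identity by the unitary $\cQ_{\LL}$ and abbreviating $\mathcal{R}_t=\cQ_{\LL}P^{\ii{t}}\cQ_{\LL}^*$ gives
\[
\cQ_{\LL}\tau^{G_q}_t(x)\cQ_{\LL}^*=\mathcal{R}_t\bigl(\cQ_{\LL}x\cQ_{\LL}^*\bigr)\mathcal{R}_t^*,
\]
where $\mathcal{R}_t$ is precisely the operator computed in Proposition~\ref{prop3}. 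Since $t\mapsto P^{\ii{t}}$ is a one-parameter unitary group we have $\mathcal{R}_t^*=\mathcal{R}_{-t}$, and hence $\mathcal{R}_t^*$ is also given by the formula of Proposition~\ref{prop3} with $t$ replaced by $-t$. Thus the entire proof reduces to composing three explicit operators on ${\Int_T}^\oplus\HS(\cH_\lambda)\dd{\lambda}$.

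Next I would unwind how these operators act on a field $(A_\lambda)_\lambda$. Under the identification $\HS(\cH_\lambda)=\cH_\lambda\tens\overline{\cH_\lambda}$ a decomposable operator of the form $x_\lambda\tens\I_{\overline{\cH_\lambda}}$ acts on a Hilbert--Schmidt operator $A\in\HS(\cH_\lambda)$ by the left multiplication $A\mapsto x_\lambda A$; recall that, by the remark preceding the theorem (based on \cite[Theorem~3.3]{KrajczokTypeI}), $\cQ_{\LL}x\cQ_{\LL}^*$ indeed has this decomposable shape. Writing $b=\pi_{w_\circ}\bigl(|b_\rho|\bigr)$ and applying in turn $\mathcal{R}_{-t}$, then the decomposable operator $\cQ_{\LL}x\cQ_{\LL}^*$, then $\mathcal{R}_t$, the $\lambda$-component of $\mathcal{R}_t\bigl(\cQ_{\LL}x\cQ_{\LL}^*\bigr)\mathcal{R}_{-t}$ applied to $(A_\lambda)_\lambda$ comes out as
\[
b^{-2\ii{t}}\,x_{\lambda\blambda_{2t}}\,b^{2\ii{t}}\,A_\lambda\,b^{-2\ii{t}}\,b^{2\ii{t}}
=\bigl(b^{-2\ii{t}}x_{\lambda\blambda_{2t}}b^{2\ii{t}}\bigr)A_\lambda .
\]
Here the two successive shifts of the base cancel because $\blambda_{2t}\blambda_{-2t}=\blambda_0$ is the identity of $T$ (Lemma~\ref{lemma5}), while the trailing factors $b^{-2\ii{t}}b^{2\ii{t}}$ cancel on the right. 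The result is left multiplication by $b^{-2\ii{t}}x_{\lambda\blambda_{2t}}b^{2\ii{t}}$, i.e.\ the decomposable operator $\bigl(b^{-2\ii{t}}x_{\lambda\blambda_{2t}}b^{2\ii{t}}\bigr)\tens\I_{\overline{\cH_\lambda}}$, which is exactly the asserted expression once we substitute $b=\pi_{w_\circ}\bigl(|b_\rho|\bigr)$.

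The computation is really just bookkeeping, so the only points I would single out as needing care -- and the main (minor) obstacle -- are the two cancellations. The crucial structural observation is that the left multiplication by $x_\mu$ coming from the decomposable operator and the right multiplications by $b^{\pm2\ii{t}}$ coming from the two copies of $\mathcal{R}$ act on opposite sides of $A$ and hence never interfere: the right $b$-factors contributed by $\mathcal{R}_{-t}$ and $\mathcal{R}_t$ meet and annihilate each other, while the left $b$-factors conjugate $x_{\lambda\blambda_{2t}}$. The other point is to use that $t\mapsto\blambda_t$ is a homomorphism (Lemma~\ref{lemma5}) so that the index shifts undo one another. No analytic subtleties arise beyond these, since $b^{2\ii{t}}$ is unitary and every operator in sight either acts fibrewise or shifts the base $T$ measurably.
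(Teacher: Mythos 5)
Your proposal is correct and follows essentially the same route as the paper: both write $\tau^{G_q}_t=\Ad(P^{\ii t})$, conjugate by $\cQ_{\LL}$, and evaluate the composition $\cQ_{\LL}P^{\ii t}\cQ_{\LL}^*\cdot\bigl(\cQ_{\LL}x\cQ_{\LL}^*\bigr)\cdot\cQ_{\LL}P^{-\ii t}\cQ_{\LL}^*$ on a generic field $(A_\lambda)_{\lambda}$ using Proposition \ref{prop3}, with the base shifts $\blambda_{\pm 2t}$ and the right-hand factors $\pi_{w_\circ}\bigl(|b_\rho|\bigr)^{\mp 2\ii t}$ cancelling exactly as in the paper's chain of equalities. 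The only (immaterial) difference is presentational: you package the unwinding as $\mathcal{R}_t(\cdot)\mathcal{R}_t^*$ with $\mathcal{R}_t^*=\mathcal{R}_{-t}$, whereas the paper applies the three operators successively to the vector.
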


\begin{proof}
For any ${\Int_T}^{\oplus}A_\lambda\dd{\lambda}\in{\Int_T}^\oplus\HS(\cH_\lambda)\dd{\lambda}$ we have using the formula for $P^{\ii{t}}$ from Proposition \ref{prop3}
\begin{align*}
\cQ_{\LL}\tau^{G_q}_t(x)\cQ_{\LL}^*
\Biggl({\Int_T}^{\oplus}A_\lambda\dd{\lambda}\Biggr)
&=\cQ_{\LL}P^{\ii{t}}xP^{-\ii{t}}\cQ_{\LL}^*
\Biggl({\Int_T}^{\oplus}A_\lambda\dd{\lambda}\Biggr)\\
&=\cQ_{\LL}P^{\ii{t}}\cQ_{\LL}^*\Biggl({\Int_T}^{\oplus}
x_\lambda\pi_{w_\circ}\bigl(|b_\rho|\bigr)^{2\ii{t}}A_{\lambda\blambda_{-2t}}\pi_{w_\circ}\bigl(|b_\rho|\bigr)^{-2\ii{t}}
\dd{\lambda}\Biggr)\\
&={\Int_T}^\oplus\pi_{w_\circ}\bigl(|b_\rho|\bigr)^{-2\ii{t}}
x_{\lambda\blambda_{2t}}\pi_{w_\circ}\bigl(|b_\rho|\bigr)^{2\ii{t}}
A_{\lambda\blambda_{-2t}\blambda_{2t}}
\pi_{w_\circ}\bigl(|b_\rho|\bigr)^{-2\ii{t}}
\pi_{w_\circ}\bigl(|b_\rho|\bigr)^{2\ii{t}}\dd{\lambda}\\
&={\Int_T}^\oplus\pi_{w_\circ}\bigl(|b_\rho|\bigr)^{-2\ii{t}}
x_{\lambda\blambda_{2t}}\pi_{w_\circ}\bigl(|b_\rho|\bigr)^{2\ii{t}}A_{\lambda}\dd{\lambda}
\end{align*}
which proves the claim.
\end{proof}

\begin{remark}
In \cite[Proposition 7.3]{modular} the first named author obtained a similar result in the case of $\oon{SU}_q(2)$. The formula for $P^{\ii{t}}$ is different -- it consists only of rotation, without composition with inner automorphism. This difference is a result of choosing different field of representations $(\pi_{\lambda,w_\circ})_{\lambda\in{T}}$ (c.f.~the beginning of this section and \cite[Proposition 7.1]{modular}).
\end{remark}

As a corollary of Theorem \ref{thm1} we obtain equality of invariants $\TtauAInn(G_q)$ and $\TtauInn(G_q)$.

\begin{proposition}\label{prop7}
We have $\TtauAInn(G_q)=\TtauInn(G_q)$.
\end{proposition}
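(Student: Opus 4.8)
The inclusion $\TtauInn(G_q)\subseteq\TtauAInn(G_q)$ is automatic, since every inner automorphism is approximately inner. The plan is therefore to establish the reverse inclusion $\TtauAInn(G_q)\subseteq\TtauInn(G_q)$ using the explicit description of the scaling automorphisms furnished by Theorem \ref{thm1}. The structural observation driving the argument is that, under the isomorphism $\cQ_\LL\Linf(G_q)\cQ_\LL^*={\Int_T}^\oplus\B(\cH_\lambda)\tens\I_{\overline{\cH_\lambda}}\dd\lambda$, Theorem \ref{thm1} exhibits $\tau^{G_q}_t$ as the composition of two operations: the translation $\lambda\mapsto\lambda\blambda_{2t}$ of the base space $T$, and a fibrewise conjugation by the ($\lambda$-independent) operator $\pi_{w_\circ}(|b_\rho|)^{-2\ii{t}}$.

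First I would invoke the standard fact that approximately inner automorphisms act trivially on the centre: if $\alpha=\lim_i\Ad(u_i)$ in the pointwise ultraweak topology and $z\in\cZ(\Linf(G_q))$, then $\Ad(u_i)(z)=z$ for every $i$, whence $\alpha(z)=z$. Now, Proposition \ref{prop2} (condition (7.3) there) shows that $\cQ_\LL$ carries $\cZ(\Linf(G_q))$ onto the diagonalisable operators $\oon{Diag}\bigl({\Int_T}^\oplus\HS(\cH_\lambda)\dd\lambda\bigr)\cong\Linf(T)$, and by Theorem \ref{thm1} the automorphism $\tau^{G_q}_t$ acts on this $\Linf(T)$ exactly by the translation $c\mapsto c(\,\cdot\,\blambda_{2t})$ (on scalar fibres $x_\lambda=c(\lambda)\I$ the conjugation by $\pi_{w_\circ}(|b_\rho|)^{-2\ii{t}}$ drops out). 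Since $T$ acts freely on itself, this translation is the identity of $\Linf(T)$ if and only if $\blambda_{2t}=e$. Consequently, for any $t\in\TtauAInn(G_q)$ we obtain $\blambda_{2t}=e$.

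It then remains to check that $\blambda_{2t}=e$ forces $\tau^{G_q}_t$ to be genuinely inner. With $\blambda_{2t}=e$ the formula of Theorem \ref{thm1} simplifies to
\[
\cQ_\LL\tau^{G_q}_t(x)\cQ_\LL^*={\Int_T}^\oplus\pi_{w_\circ}\bigl(|b_\rho|\bigr)^{-2\ii{t}}\,x_\lambda\,\pi_{w_\circ}\bigl(|b_\rho|\bigr)^{2\ii{t}}\tens\I_{\overline{\cH_\lambda}}\dd\lambda,
\]
so that $\tau^{G_q}_t=\Ad(u_t)$ with $u_t=\cQ_\LL^*\bigl({\Int_T}^\oplus\pi_{w_\circ}(|b_\rho|)^{-2\ii{t}}\tens\I_{\overline{\cH_\lambda}}\dd\lambda\bigr)\cQ_\LL$. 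This element lies in $\Linf(G_q)$ because the constant field $\lambda\mapsto\pi_{w_\circ}(|b_\rho|)^{-2\ii{t}}$ is (trivially) measurable and has the form (operator on $\cH_\lambda$)$\,\tens\I_{\overline{\cH_\lambda}}$, and it is unitary since, by Corollary \ref{cor1} together with \eqref{eq32}, $\pi_{w_\circ}(|b_\rho|)$ is a positive Hilbert--Schmidt operator with trivial kernel, so $\pi_{w_\circ}(|b_\rho|)^{-2\ii{t}}$ defined by Borel functional calculus is unitary. Hence $t\in\TtauInn(G_q)$, which completes the reverse inclusion and the proof.

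The only genuinely delicate point is the identification of the action of $\tau^{G_q}_t$ on the centre with the translation by $\blambda_{2t}$, and the consequent conclusion $\blambda_{2t}=e$; everything else is bookkeeping with the direct-integral decomposition. I expect the centre-fixing property of $\oon{\overline{Inn}}$ to be quotable directly, so no serious obstacle should arise once Theorem \ref{thm1} and the description of $\cZ(\Linf(G_q))$ from Proposition \ref{prop2} are in hand.
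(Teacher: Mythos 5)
Your proof is correct and follows essentially the same route as the paper's: both reduce, via Theorem \ref{thm1}, to showing that $\blambda_{2t}$ is the neutral element of $T$, and both obtain this from the fact that an approximately inner automorphism fixes the centre $\cZ(\Linf(G_q))\cong\Linf(T)$ pointwise, on which $\tau^{G_q}_t$ acts by translation. The paper makes your ``freeness'' step concrete by testing against characteristic functions: it shows $\blambda_{-2t}\Omega\div\Omega$ is null for every measurable $\Omega\subset T$, which is the same argument in explicit form.
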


\begin{proof}
Take $t\in\TtauAInn(G_q)$ and let $(u_n)_{n\in\NN}$ be a sequence of unitaries in $\Linf(G_q)$ such that $\oon{Ad}(u_n)\xrightarrow[n\to\infty]{}\tau^{G_q}_t$. Theorem \ref{thm1} shows that in order to deduce $t\in\TtauInn(G_q)$, it is enough to show that $\blambda_{2t}$ is the neutral element in $T$. Fix an arbitrary measurable subset $\Omega\subset{T}$ and consider central element
\[
x=\cQ_{\LL}^*\Biggl({\Int_T}^\oplus\chi_{\Omega}(\lambda)\I_{\cH_\lambda}\tens\I_{\overline{\cH_\lambda}}\dd{\lambda}\Biggr)\cQ_{\LL}\in\cZ(\Linf(G_q)).
\]
We have
\begin{align*}
\tau^{G_q}_t(x)
&=\cQ_{\LL}^*\Biggl({\Int_T}^\oplus\pi_{w_\circ}
\bigl(|b_\rho|\bigr)^{-2\ii{t}}
\chi_{\Omega}(\lambda\blambda_{2t})
\I_{\cH_\lambda}\pi_{w_\circ}\bigl(|b_\rho|\bigr)^{2\ii{t}}
\tens\I_{\overline{\cH_\lambda}}\dd{\lambda}\Biggr)\cQ_{\LL}\\
&=\cQ_{\LL}^*\Biggl({\Int_T}^\oplus\chi_{\blambda_{-2t}\Omega}(\lambda)\I_{\cH_\lambda}\tens\I_{\overline{\cH_\lambda}}\dd{\lambda}\Biggr)\cQ_{\LL}.
\end{align*}
On the other hand, since $x$ is central we have
\[
\tau^{G_q}_t(x)
={\mathrm{w}^*\text{-}\!\!}\lim_{n\to\infty}\Ad(u_n)(x)
={\mathrm{w}^*\text{-}\!\!}\lim_{n\to\infty}u_nxu_n^*=x
=\cQ_{\LL}^*\Biggl({\Int_T}^\oplus\chi_{\Omega}(\lambda)\I_{\cH_\lambda}\tens\I_{\overline{\cH_\lambda}}\dd{\lambda}\Biggr)\cQ_{\LL}.
\]
It follows that the symmetric difference $\blambda_{-2t}\Omega\div\Omega$ is null for all measurable $\Omega\subset{T}$, which clearly implies that $\blambda_{2t}={\blambda_{-2t}}^{-1}$ is the neutral element in $T$.
\end{proof}

In the next theorem (Theorem \ref{thm2}) we show how to calculate the remaining invariants of Section \ref{sect:defInv} using Lie-theoretic information. We need to start with two lemmas. For what follows, recall that we take the normalization in which $\la\beta|\beta\ra=2$ for short roots $\beta\in\Phi$.

\begin{lemma}\label{lemma6}
\noindent
\begin{enumerate}
\item\label{lemma6-1} $\bigl\{\la2\rho|\alpha\ra\,\bigr|\bigl.\,\alpha\in\bQ\bigr\}=2\ZZ$,
\item\label{lemma6-2} The set $\bigl\{\la2\rho|\varpi\ra\,\bigr|\bigl.\,\varpi\in{\bP}\bigr\}$ is a nontrivial subgroup of $\ZZ$.
\end{enumerate}
\end{lemma}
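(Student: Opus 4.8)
The plan is to translate both statements into questions about the standard pairing between the root lattice $\bQ$, the weight lattice $\bP$, and the Weyl vector $\rho$. First recall the two identities that underlie everything: the Weyl vector satisfies $2\tfrac{\la\rho|\alpha_i\ra}{\la\alpha_i|\alpha_i\ra}=1$ for every simple root $\alpha_i$ (this is equivalent to $\rho=\sum_{i=1}^r\varpi_i$, which is recorded in the excerpt), and the fundamental weights satisfy $2\tfrac{\la\varpi_i|\alpha_j\ra}{\la\alpha_j|\alpha_j\ra}=\delta_{i,j}$ by their very definition. I would phrase the computations in terms of the coroots $\alpha_j^\vee$, for which $\la\varpi_i,\alpha_j^\vee\ra=\delta_{i,j}$ and $\la\alpha_i,\alpha_j^\vee\ra=a_{j,i}$ (the Cartan matrix entries), since this keeps the integrality manifest.

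For part \eqref{lemma6-1}, I would first note that $\la2\rho|\alpha\ra$ is additive in $\alpha\in\bQ$, so it suffices to evaluate it on the simple roots $\alpha_i$ and check that the resulting integers generate $2\ZZ$. For a simple root we compute $\la2\rho|\alpha_i\ra=2\la\rho|\alpha_i\ra=\la\alpha_i|\alpha_i\ra$, using the defining property of $\rho$. With the normalization $\la\beta|\beta\ra=2$ for short roots, each $\la\alpha_i|\alpha_i\ra$ is an even integer (equal to $2$ for short simple roots and $2$ times the squared length ratio otherwise), so $\{\la2\rho|\alpha\ra\,\bigr|\bigl.\,\alpha\in\bQ\bigr\}\subseteq2\ZZ$. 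Equality then follows because there is always at least one short simple root, giving $\la2\rho|\alpha_i\ra=2$ as a generator of $2\ZZ$.

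For part \eqref{lemma6-2}, the same additivity reduces the computation to the fundamental weights $\varpi_1,\dotsc,\varpi_r$. Writing $2\rho=2\sum_i\varpi_i$ and using $\la\varpi_j|\alpha_i\ra=\tfrac{1}{2}\la\alpha_i|\alpha_i\ra\delta_{i,j}$, or more directly expanding $\la2\rho|\varpi_k\ra$ via the inverse Cartan matrix, one sees that each $\la2\rho|\varpi_k\ra$ is an integer, so the set is a subgroup of $\ZZ$. Nontriviality is immediate since for instance $\la2\rho|\rho\ra=\sum_{i,k}\la2\varpi_i|\varpi_k\ra>0$ as $\rho\in\bP^+$ is nonzero and $\la\cdot|\cdot\ra$ is positive definite; alternatively any single $\la2\rho|\varpi_k\ra$ is a positive integer. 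The only mildly delicate point, and the step I expect to require the most care, is verifying the integrality in part \eqref{lemma6-2}: unlike part \eqref{lemma6-1} the pairing $\la\rho|\varpi_k\ra$ need not be expressible purely through Cartan integers, so I would argue it by writing $\rho$ in the basis of simple roots (whose coefficients involve the inverse Cartan matrix and may be non-integral) and checking that the length normalizations conspire to keep $\la2\rho|\varpi_k\ra$ integral — most cleanly by invoking that $2\rho$ itself lies in the root lattice $\bQ$ for simply-connected type, so that $\la2\rho|\varpi_k\ra=\la\varpi_k|2\rho\ra$ is an integer by the defining pairing of $\bP$ against $\bQ$.
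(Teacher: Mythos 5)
Your proposal is correct and follows essentially the same route as the paper: part (1) is the identical computation $\la2\rho|\alpha_i\ra=\la\alpha_i|\alpha_i\ra\in\{2,4,6\}$ together with the existence of a short simple root, and your ``cleanest'' argument for part (2) --- that $2\rho$ lies in $\bQ$ and therefore pairs integrally with $\bP$ --- is exactly what the paper verifies concretely by writing $2\rho=\sum_{\beta\in\Phi^+}\beta$ and expanding each positive root in simple roots with coefficients in $\ZZ_+$, which simultaneously yields positivity of each $\la2\rho|\varpi_i\ra$ (your alternative nontriviality argument via $\la2\rho|\rho\ra=2\|\rho\|^2>0$ works just as well). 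The one slip is attributing $2\rho\in\bQ$ to ``simply-connected type'': this holds for every root system, since $2\rho=\sum_{\beta\in\Phi^+}\beta$ is a sum of roots, whereas simple-connectedness of $G$ only matters for identifying $\hh{T}$ with the full weight lattice $\bP$.
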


Lemma \ref{lemma6} allows us to make the following definition.

\begin{definition}\label{def2}
We define $\Upsi{\Phi}\in\NN$ to be the number determined uniquely by
\[
\bigl\{\la2\rho|\varpi\ra\,\bigr|\bigl.\,\varpi\in\bP\bigr\}
=\Upsi{\Phi}\ZZ.
\]
\end{definition}

The symbol we use for the positive integer defined above should in principle include a reference to the compact semisimple simply connected Lie group $G$. However for reasons which will become clear below we chose to use notation which refers to the root system $\Phi$ of the Lie algebra of $G$ (which contains the full information on $G$).

\begin{proof}[Proof of Lemma \ref{lemma6}]
Both sets appearing in the statement of Lemma \ref{lemma6} are subgroups of $\RR$, hence it is enough to show that for $\alpha\in\bQ$ and $\varpi\in\bP$ we have
\begin{equation}\label{eq:lemma611}
\la2\rho|\alpha\ra\in2\ZZ,\qquad\la2\rho|\varpi\ra\in\ZZ
\end{equation}
and that there is a root $\alpha\in\bQ$ satisfying $\la2\rho|\alpha\ra=2$. As $\bQ$ is the $\ZZ$-linear span of simple roots $\{\alpha_1,\dotsc,\alpha_r\}$ and $\bP$ is the $\ZZ$-linear span of the fundamental weights $\{\varpi_1,\dotsc,\varpi_r\}$, in order to show \eqref{eq:lemma611} it is enough to consider $\alpha=\alpha_i$ and $\varpi=\varpi_i$.

Ad \eqref{lemma6-1}. Write $\rho$ as $\rho=\sum\limits_{j=1}^{r}\varpi_j$, then
\begin{equation}\label{eq29}
\la2\rho|\alpha_i\ra=
2\sum_{j=1}^{r}\la\varpi_j|\alpha_i\ra
=2\sum_{j=1}^{r}\delta_{i,j}\tfrac{\la\alpha_i|\alpha_i\ra}{2}
=\la\alpha_i|\alpha_i\ra.
\end{equation}
We claim that $\la\alpha_i|\alpha_i\ra\in2\NN$. Indeed, as any root system is equal to a product of irreducible ones (\cite[Proposition 11.3]{Humphreys}), it is enough to assume that $\Phi$ is irreducible. Then for any root $\beta\in\Phi$ we have $\la\beta|\beta\ra\in\{2,4,6\}$ (see \cite[Lemma 10.4 C]{Humphreys} and its proof) and consequently $\la2\rho|\alpha_i\ra\in2\NN$ by \eqref{eq29}. Furthermore, there is $\alpha_i\in\Phi$ of length $\sqrt{2}$ which proves \eqref{lemma6-1}.

Ad \eqref{lemma6-2}.
Recall that $\rho$ can be expressed also as $\rho=\tfrac{1}{2}\sum\limits_{\beta\in\Phi^+}\beta$. Furthermore, each positive root $\beta\in\Phi^+$ can be written uniquely as $\beta=\sum\limits_{j=1}^{r}c(\beta,j)\alpha_j$ for some $c(\beta,j)\in\ZZ_+$ (\cite[Section 10.1]{Humphreys}). We have
\[
\la2\rho|\varpi_i\ra
=\sum_{\beta\in\Phi^+}\la\beta|\varpi_i\ra
=\sum_{\beta\in\Phi^+}\sum_{j=1}^{r}c(\beta,j)\la\alpha_j|\varpi_i\ra
=\sum_{\beta\in\Phi^+}\sum_{j=1}^{r}c(\beta,j)\tfrac{\la\alpha_j|\alpha_j\ra}{2}\delta_{i,j}
=\sum_{\beta\in\Phi^+}c(\beta,i)\tfrac{\la\alpha_i|\alpha_i\ra}{2}\in\NN
\]
again using $\la\alpha_i|\alpha_i\ra\in2\NN$.
\end{proof}

We can state our main result of this section using number $\Upsi{\Phi}$ introduced in Definition \ref{def2}.

\begin{theorem}\label{thm2}
For any $0<q<1$ and compact, simply connected,
semisimple Lie group $G$ we have
\[
\Ttau(G_q)=\tfrac{\pi}{\log{q}}\ZZ
\]
and
\[
\Mod(\hh{G_q})=\TtauInn(G_q)=\TtauAInn(G_q)=\tfrac{\pi}{\Upsi{\Phi}\log{q}}\ZZ.
\]
\end{theorem}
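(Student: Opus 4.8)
The plan is to handle the three displayed subgroups one at a time, in each case turning the defining condition into an elementary condition of the form $q^{c\ii{t}}=1$ by means of the explicit formulas of Lemma~\ref{lemma4} and then reading off the answer with Lemma~\ref{lemma6} and Definition~\ref{def2}. Two lattice-theoretic facts will be used repeatedly: differences of weights of a single representation generate the root lattice $\bQ$ (already the fundamental representation with highest weight $\varpi_i$ has $\varpi_i$ and $\varpi_i-\alpha_i$ among its weights, so every simple root is such a difference), and the weights of all representations generate the weight lattice $\bP$ (they include the fundamental weights $\varpi_1,\dotsc,\varpi_r$). I also use that for $0<q<1$ one has $q^{2\ii{t}}=1$ if and only if $t\in\tfrac{\pi}{\log{q}}\ZZ$, and more generally that $\bigl\{k\in\ZZ\,\bigr|\bigl.\,q^{2k\ii{t}}=1\bigr\}$ is a subgroup of $\ZZ$. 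With this, I would first compute $\Ttau(G_q)$: by Lemma~\ref{lemma4}\eqref{lemma4.3} the automorphism $\tau^{G_q}_t$ acts on a matrix coefficient with weights $\mu,\mu'$ by the scalar $q^{\la2\rho|\mu-\mu'\ra\ii{t}}$, so $\tau^{G_q}_t=\id$ is equivalent to $q^{\la2\rho|\alpha\ra\ii{t}}=1$ for every $\alpha\in\bQ$. By Lemma~\ref{lemma6}\eqref{lemma6-1} these exponents are exactly $2\ZZ$, so the condition collapses to $q^{2\ii{t}}=1$, giving $\Ttau(G_q)=\tfrac{\pi}{\log{q}}\ZZ$.

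Next I would compute $\Mod(\hh{G_q})$. The key input is the modular element of the discrete dual: on $\Linf(\hh{G_q})=\prod_\varpi\B(\cH_\varpi)$ the left and right Haar weights are the quantum-dimension-weighted traces $x\mapsto\sum_\varpi(\qdim\varpi)\Tr(\uprho_\varpi^{\mp1}x_\varpi)$, and comparing their modular automorphism groups through the relation of Remark~\ref{rem:przedProp5-2} identifies $\hh{\delta}^{\ii{t}}=\bigoplus_\varpi\uprho_\varpi^{2\ii{t}}$. Since by Lemma~\ref{lemma4}\eqref{lemma4.1} the eigenvalues of $\uprho_\varpi$ are $q^{\la2\rho|\mu\ra}$ with $\mu$ ranging over the weights of $\varpi$, the condition $\hh{\delta}^{\ii{t}}=\I$ becomes $q^{2\la2\rho|\mu\ra\ii{t}}=1$ for all weights $\mu$ of all representations. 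As these weights generate $\bP$, the exponents $\la2\rho|\mu\ra$ generate $\la2\rho|\bP\ra=\Upsi{\Phi}\ZZ$ (Definition~\ref{def2}), so the condition is equivalent to $q^{2\Upsi{\Phi}\ii{t}}=1$, i.e.\ $t\in\tfrac{\pi}{\Upsi{\Phi}\log{q}}\ZZ$.

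Finally, for $\TtauInn(G_q)$ and $\TtauAInn(G_q)$ I would combine Theorem~\ref{thm1} with Proposition~\ref{prop7}. Proposition~\ref{prop7} gives $\TtauAInn(G_q)=\TtauInn(G_q)$ and shows that membership forces $\blambda_{2t}$ to be the identity of $T$; conversely, when $\blambda_{2t}=e$ Theorem~\ref{thm1} presents $\tau^{G_q}_t$ as the fibrewise conjugation implemented by the unitary $u_t=\cQ_\LL^*\bigl({\Int_T}^{\oplus}\pi_{w_\circ}(|b_\rho|)^{-2\ii{t}}\tens\I_{\overline{\cH_\lambda}}\dd{\lambda}\bigr)\cQ_\LL\in\Linf(G_q)$, so $\tau^{G_q}_t=\Ad(u_t)$ is inner. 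Hence all three sets coincide with $\bigl\{t\,\bigr|\bigl.\,\blambda_{2t}=e\bigr\}$, and by Lemma~\ref{lemma5} together with the fact that characters separate points of $T$ this is the condition $q^{2\la2\rho|\varpi\ra\ii{t}}=1$ for all $\varpi\in\bP$ -- the very same numerical condition obtained for $\Mod(\hh{G_q})$, so the common value is $\tfrac{\pi}{\Upsi{\Phi}\log{q}}\ZZ$.

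I expect the delicate step to be the identification of the modular element $\hh{\delta}=\bigoplus_\varpi\uprho_\varpi^{2}$ of the dual, and in particular getting the power of $\uprho_\varpi$ (hence the factor $2$ in the exponent) exactly right so that the answer comes out as $\tfrac{\pi}{\Upsi{\Phi}\log{q}}\ZZ$ rather than a rescaling of it; the accompanying lattice-generation claims are standard but must be invoked carefully. Once these are settled the three computations are routine manipulations of $q^{c\ii{t}}=1$, and the coincidence $\Mod(\hh{G_q})=\TtauInn(G_q)$ is explained by the fact that both reduce to the identical condition tested on a generating set of $\bP$.
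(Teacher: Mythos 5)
Your proposal is correct and follows essentially the same route as the paper: Lemma \ref{lemma4} reduces all three computations to conditions of the form $q^{c\ii{t}}=1$, Proposition \ref{prop7} combined with Theorem \ref{thm1} identifies $\TtauInn(G_q)=\TtauAInn(G_q)$ with $\bigl\{t\,\bigr|\bigl.\,\blambda_{2t}=e\bigr\}$, and the Podle\'{s}--Woronowicz formula $\hh{\delta}=\bigoplus_{\varpi}\uprho_\varpi^2$ handles $\Mod(\hh{G_q})$, exactly as in the paper. The only deviation is that where the paper proves the stronger Lemma \ref{lemma7} (the occurring weight differences are exactly $\bQ$), you use the easier observation that they generate $\bQ$ (via $s_i\varpi_i=\varpi_i-\alpha_i$), which indeed suffices because for fixed $t$ the set of $\gamma\in\bP$ with $q^{\la2\rho|\gamma\ra\ii{t}}=1$ is a subgroup and so the condition need only be tested on generators.
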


We begin with a lemma.

\begin{lemma}\label{lemma7}
We have
\[
\bigl\{\lwt(x)-\rwt(x)\,\bigr|\bigl.\,x\in\Pol(G_q)\textnormal{ with well defined weights }\lwt(x),\rwt(x)\in{\bP}\bigr\}=\bQ.
\]
\end{lemma}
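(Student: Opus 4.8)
The plan is to prove the two inclusions separately after reformulating the set on the left — call it $D$ — as the set of all differences $\mu-\mu'$ where $\mu,\mu'$ range over the weights of one common irreducible representation $\varpi\in\bP^+$. This reformulation is immediate from the definition of $\lwt,\rwt$ on matrix coefficients: a nonzero element $x\in\Pol(G_q)$ with well-defined weights is a linear combination of elements $U^\varpi(\xi,\eta)$ with $\xi\in\cH_\varpi(\mu)$, $\eta\in\cH_\varpi(\mu')$ for fixed $\mu,\mu'$, and then $\lwt(x)-\rwt(x)=\mu-\mu'$; conversely such a combination is nonzero only if some $\varpi$ carries both weights $\mu$ and $\mu'$.

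For the inclusion $D\subseteq\bQ$ I would use the standard fact that every weight of $\cH_\varpi$ lies in the single coset $\varpi+\bQ$: the highest weight vector has weight $\varpi$ and the whole space is obtained by applying the lowering operators $F_i$, each of which shifts the weight by $-\alpha_i\in\bQ$. Hence any two weights $\mu,\mu'$ of $\cH_\varpi$ satisfy $\mu-\mu'\in\bQ$. For the reverse inclusion $\bQ\subseteq D$ the key observation is that $2\rho=\sum_{\alpha\in\Phi^+}\alpha$ lies in $\bQ$ and has strictly positive coordinates in the basis of simple roots (the coefficient of $\alpha_i$ is at least $1$). Fixing $\beta\in\bQ$, I would realise it as $\beta=0-(-\beta)$ inside the single representation $\varpi=2N\rho$ for large $N$. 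Both $0$ and $-\beta$ lie in $\bQ=\varpi+\bQ$, so the coset condition is met, and I would invoke the dominance-order description of the weights of $V(\varpi)$ (\cite[Section 21]{Humphreys}): a weight $\mu$ with dominant conjugate $\mu^+$ occurs in $V(\varpi)$ iff $\mu\in\varpi+\bQ$ and $\varpi-\mu^+$ is a nonnegative integer combination of the simple roots. For $\mu=0$ this amounts to $2N\rho$ being such a combination, which holds; for $\mu=-\beta$ it amounts to $2N\rho-(-\beta)^+$ being such a combination, and since the simple-root coordinates of $2N\rho$ tend to $+\infty$ while $(-\beta)^+$ is fixed, this holds for all large $N$. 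Choosing nonzero $\xi\in\cH_\varpi(0)$ and $\eta\in\cH_\varpi(-\beta)$, the matrix coefficient $U^\varpi(\xi,\eta)$ is nonzero (by the orthogonality relations) and has $\lwt-\rwt=\beta$, so $\beta\in D$.

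The only delicate point, and the main obstacle, is precisely the realisation step in the reverse inclusion: one must produce \emph{both} of the chosen weights inside a \emph{single} irreducible representation. This is why I pass to the large highest weight $2N\rho$ and rely on the saturation (dominance-order) description of the weight set, rather than attempting to add elements of $D$ by multiplying matrix coefficients — the product of two nonzero homogeneous elements of $\Pol(G_q)$ could a priori vanish, so closure of $D$ under addition is not automatic. Once the two elementary facts about the weights of $V(\varpi)$ are in hand, together with $2\rho\in\bQ$ having positive simple-root coordinates, both inclusions are short.
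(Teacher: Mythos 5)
Your proof is correct, and its key step is genuinely different from the paper's. For the easy inclusion both arguments amount to the same fact (all weights of an irreducible representation lie in a single coset of $\bQ$, so differences of weights lie in $\bQ$); you justify it via the highest-weight/lowering-operator structure, the paper via the saturation description of weight sets. The real divergence is in the inclusion $\bQ\subseteq D$. The paper takes the given $\mu\in\bQ$, Weyl-conjugates it to $\varpi=w\mu\in\bQ\cap\bP^+$ (citing Bump, Theorem 20.1), and then must show that $0$ is a weight of $V(\varpi)$; this forces it to prove that a dominant element of the root lattice is a \emph{nonnegative} integer combination of simple roots, which is exactly the convexity argument occupying equations \eqref{eq27}--\eqref{eq28} of the paper. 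You instead fix the auxiliary representation once and for all to be $V(2N\rho)$ with $N$ large: there the needed positivity is manifest, since $2\rho=\sum_{\alpha\in\Phi^+}\alpha$ has simple-root coordinates $\geq1$, and the condition that $-\beta$ be a weight reduces, via the dominance characterization with the dominant conjugate $(-\beta)^+$, to a coefficient inequality that holds for all large $N$. So you trade the paper's positivity lemma for a growth-in-$N$ argument; both rest on the same underlying tool (the description of the weights of $V(\varpi)$ in Humphreys, Section 21.3, which transfers to admissible $\mathcal{U}_q(\mathfrak{g})$-modules for $0<q<1$). Your remark about not multiplying matrix coefficients is also well taken: additivity of $\lwt-\rwt$ under products would require knowing products of nonzero homogeneous elements of $\Pol(G_q)$ are nonzero, a nontrivial fact that neither proof needs. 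Two small points to make fully precise in a write-up: the nonvanishing of $U^\varpi(\xi,\eta)$ for nonzero $\xi,\eta$ should be attributed to the orthogonality relations (or linear independence of matrix coefficients), and one should note that $(-\beta)^+\in\bQ$ because the Weyl group preserves the root lattice, so that $2N\rho-(-\beta)^+$ indeed has \emph{integer} coordinates.
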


\begin{proof}
We will use the result \cite[Proposition 21.3]{Humphreys}: weights of $\varpi\in\bP^+$ are precisely these $\mu\in\bP$ which satisfy the following condition: for all $w$ in the Weyl group, $\varpi-w\mu$ is equal to $0$ or can be written as a sum of positive roots.

Take $0\neq{x}=U^\varpi(\xi,\eta)$, where $\lwt(x)=\wt(\xi),\rwt(x)=\wt(\eta)$. This means that $\wt(\xi),\wt(\eta)$ are weights of $\varpi$, hence the result quoted above implies that $\lwt(x)-\rwt(x)=(\varpi-\wt(\eta))-(\varpi-\wt(\xi))$ belongs to the $\ZZ$-linear span of positive roots, i.e.~root lattice $\bQ$.

Conversely, take $\mu\in\bQ$. By \cite[Theorem 20.1]{Bump}, there exists $w\in{W}$ such that $\varpi=w\mu\in\bQ\cap\bP^+$. We claim that $\mu$ and $0$ are weights of $\varpi$. The first statement follows as $\varpi$ is a weight of representation $\varpi$ and the set of weights of $\varpi$ is invariant under the action of Weyl group.

To see that $0$ is also a weight of $\varpi$, let us write $\varpi=\sum\limits_{i=1}^{r}c_i\alpha_i$ with some $c_i\in\ZZ$. Since $\varpi$ is in the positive Weyl chamber, we have $c_i\geq0$. Indeed, let us decompose $\{1,\dotsc,r\}$ into $I\cup{J}$ in such a way that $c_i\geq0$ for $i\in{I}$ and $c_j<0$ for $j\in{J}$. We have
\begin{equation}\label{eq27}
\biggl\la\sum_{j\in{J}}c_j\alpha_j\biggr|\biggl.\varpi\biggr\ra
=\sum_{j\in{J}}c_j\la\alpha_j|\varpi\ra\leq0
\end{equation}
because $c_j<0$ and $\la\alpha_j|\varpi\ra\geq0$. On the other hand
\begin{equation}\label{eq28}
\biggl\la\sum_{j\in{J}}c_j\alpha_j\biggr|\biggl.\varpi\biggr\ra
=\biggl\|\sum_{j\in{J}}c_j\alpha_j\biggr\|^2+\sum_{j\in{J},i\in{I}}c_jc_i\la\alpha_j|\alpha_i\ra\geq0
\end{equation}
and both terms are non-negative as $\la\alpha_j|\alpha_i\ra\leq0$ by \cite[Lemma 10.1]{Humphreys}. It follows from \eqref{eq27} and \eqref{eq28} that $\sum\limits_{j\in{J}}c_j\alpha_j=0$ and $\varpi$ is a sum of positive roots as claimed. Now \cite[Proposition 21.3]{Humphreys} tells us that $0$ is a weight of $\varpi$. Consequently, we can take $\xi,\eta\in\cH_\varpi\setminus\{0\}$ with $\wt(\xi)=\mu$, $\wt(\eta)=0$ and define $x=U^\varpi(\xi,\eta)$. Then $\mu=\lwt(x)-\rwt(x)$.
\end{proof}

\begin{proof}[Proof of Theorem \ref{thm2}]
We begin by calculating $\Ttau(G_q)$. Take a real number $t$. Using Lemma \ref{lemma4} we see that $t\in\Ttau(G_q)$ if and only if $q^{\la2\rho|\lwt(x)-\rwt(x)\ra\ii{t}}=1$ for all $x\in\Pol(G_q)$ with well defined left and right weight. By Lemma \ref{lemma7} this is equivalent to $q^{\la2\rho|\alpha\ra\ii{t}}=1$ for all $\alpha\in\bQ$ which by Lemma \ref{lemma6} reduces to $q^{2k\ii{t}}=1$ for all $k\in\ZZ$. This last condition is easily seen to be equivalent to $t\in\tfrac{\pi}{\log{q}}\ZZ$.

We already proved in Proposition \ref{prop7} that $\TtauAInn(G_q)=\TtauInn(G_q)$. Next we calculate $\TtauInn(G_q)$.

Take $t\in\RR$. The expression for $\tau^{G_q}_t$ stated in Theorem \ref{thm1} shows that $t\in\TtauInn(G_q)$ if and only if $\blambda_{2t}$ is the trivial element in $T$. Definition of $\blambda_{2t}$ (Lemma \ref{lemma5}) shows that this is equivalent to $q^{\la2\rho|\varpi\ra2\ii{t}}=1$ for all $\varpi\in\bP$ and a similar argument to the one above shows that this is, in turn, equivalent to $t\in\tfrac{2\pi}{2\Upsi{\Phi}\log{q}}\ZZ$.

The last step is to calculate $\Mod(\hh{G_q})$. Recall (\cite[Theorem 3.3 and Remark on page 402]{PodlesWoronowicz}, see also \cite[Proposition 5.5]{modular}) that $\hh{\delta}=\bigoplus\limits_{\varpi\in\bP^+}\uprho_\varpi^{2}$. Consequently, by Lemma \ref{lemma4}, we have $t\in\Mod(\hh{G_q})$ if and only if
\[
\I=\uprho_\varpi^{2\ii{t}}=\pi_\varpi(K_{2\rho})^{2\ii{t}}
\]
for all $\varpi\in\bP^+$. Recall also that $\cH_\varpi$ is the direct sum of weight spaces $\cH_\varpi(\mu)$ and $\pi_\varpi(K_{2\rho})\xi=q^{\la2\rho|\mu\ra}\xi$ for $\xi\in\cH_\varpi(\mu)$. Furthermore, any $\mu\in\bP$ is a weight of some $\varpi\in\bP^+$. Consequently, we have that $t\in\Mod(\hh{G_q})$ if and only if $q^{\la2\rho|\mu\ra2\ii{t}}=1$ for all $\mu\in\bP$. In the previous paragraph we have seen that this condition is equivalent to $t\in\TtauInn(G_q)$.
\end{proof}

Since $\Linf(G_q)$ is semifinite, Theorem \ref{thm2} describes all the invariants of $G_q$ and $\widehat{G_q}$ in terms of one number $\Upsi{\Phi}$ (see Remark \ref{remark1}). This number is easily computable out of Lie-theoretic data. Nonetheless, one might want to obtain a more precise answer, e.g.~what the invariants are for $G_q=\oon{SU}_q(3)$. To provide a more concrete description of $\Upsi{\Phi}$ (and consequently of the invariants) we will decompose the root system $\Phi$ into irreducible parts and describe how to compute $\Upsi{\Phi}$ out of the corresponding numbers for irreducible components. The computation in the irreducible cases will be performed in the next section.

According to \cite[Proposition 11.3]{Humphreys}, we can write $\Phi=\Phi_1\cup\dotsm\cup\Phi_l$ in such a way that for $i\neq{j}$ the roots in $\Phi_i$ and $\Phi_j$ are pairwise orthogonal and each $\Phi_k$ is irreducible. Accordingly, the set of simple roots $\Sigma=\{\alpha_1,\dotsc,\alpha_r\}$ decomposes into $\Sigma=\Sigma_1\cup\dotsm\cup\Sigma_l$, where $\Sigma_k=\Sigma\cap\Phi_k$. It follows that the set of fundamental dominant weights for $\Phi$ is the union of sets of fundamental dominant weights for $\Phi_k$ ($1\leq{k}\leq{l}$). Let $\rho_k$ be the Weyl vector of $\Phi_k$. Using formula $\rho=\tfrac{1}{2}\sum\limits_{\alpha\in\Phi^+}\alpha$ we see that $\rho=\sum\limits_{k=1}^l\rho_k$.

\begin{proposition}\label{prop4}
Let $\Phi=\Phi_1\cup\dotsm\cup\Phi_l$ be the decomposition of $\Phi$ into irreducible parts as described above and let $\Upsi{\Phi}$, $\Upsi{\Phi_k}$ {\rm(}$1\leq{k}\leq{l}${\rm)} be the numbers associated with $\Phi$ and $\Phi_k$ {\rm(}$1\leq{k}\leq{l}${\rm)} in Definition \ref{def2}. Then
\[
\Upsi{\Phi}=\oon{gcd}\bigl(\Upsi{\Phi_1},\dotsc,\Upsi{\Phi_l}\bigr).
\]
\end{proposition}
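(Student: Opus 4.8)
The plan is to compute the subgroup $\bigl\{\la2\rho|\varpi\ra\,\bigr|\bigl.\,\varpi\in\bP\bigr\}$ of $\ZZ$ directly and to recognize it as $\oon{gcd}(\Upsi{\Phi_1},\dots,\Upsi{\Phi_l})\ZZ$. By Definition \ref{def2} this subgroup equals $\Upsi{\Phi}\ZZ$, so comparing the two descriptions yields the claim. The starting point is the orthogonal decomposition $\mathfrak{h}^*=\bigoplus_{k=1}^{l}V_k$ with $V_k=\oon{span}_\RR\Phi_k$, which is orthogonal precisely because the components $\Phi_k$ are pairwise orthogonal; under it we already know (from the discussion preceding the statement) that $\rho=\sum_{k=1}^{l}\rho_k$ with $\rho_k\in V_k$, and that the fundamental weights of $\Phi$ are the union of the fundamental weights of the $\Phi_k$.

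Next I would pin down that each fundamental weight lies in exactly one summand. If $\varpi_i$ is attached to a simple root $\alpha_i\in\Sigma_k$, then by the defining condition $2\tfrac{\la\varpi_i|\alpha_j\ra}{\la\alpha_j|\alpha_j\ra}=\delta_{i,j}$ we have $\la\varpi_i|\alpha_j\ra=0$ for every simple root $\alpha_j\neq\alpha_i$, in particular for all simple roots belonging to the other components; since these span $\bigoplus_{m\neq k}V_m$, we conclude $\varpi_i\in V_k$. Because the fundamental weights form a $\ZZ$-basis of $\bP$, this gives a direct-sum splitting of abelian groups $\bP=\bigoplus_{k=1}^{l}\bP_k$, where $\bP_k\subset V_k$ is the $\ZZ$-span of the fundamental weights sitting in $V_k$, i.e.\ the weight lattice of the irreducible root system $\Phi_k$. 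Lemma \ref{lemma6} applied to each $\Phi_k$ guarantees that $\Upsi{\Phi_k}$ is well defined.

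The key computation is then that cross terms vanish: for $\varpi^{(k)}\in\bP_k\subset V_k$ and $m\neq k$ we have $\rho_m\perp\varpi^{(k)}$, whence $\la2\rho|\varpi^{(k)}\ra=\la2\rho_k|\varpi^{(k)}\ra$. Writing a general $\varpi\in\bP$ as $\varpi=\sum_{k}\varpi^{(k)}$ therefore gives $\la2\rho|\varpi\ra=\sum_{k=1}^{l}\la2\rho_k|\varpi^{(k)}\ra$, and consequently
\[
\bigl\{\la2\rho|\varpi\ra\,\bigr|\bigl.\,\varpi\in\bP\bigr\}
=\sum_{k=1}^{l}\bigl\{\la2\rho_k|\varpi^{(k)}\ra\,\bigr|\bigl.\,\varpi^{(k)}\in\bP_k\bigr\}
=\sum_{k=1}^{l}\Upsi{\Phi_k}\ZZ.
\]
Finally I would invoke the elementary identity $n_1\ZZ+\dots+n_l\ZZ=\oon{gcd}(n_1,\dots,n_l)\ZZ$ (Bézout) to obtain $\Upsi{\Phi}\ZZ=\oon{gcd}(\Upsi{\Phi_1},\dots,\Upsi{\Phi_l})\ZZ$, which is the desired equality. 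I expect the only point that genuinely requires care to be the membership $\varpi_i\in V_k$ and the resulting splitting of $\bP$; the rest is short orthogonality bookkeeping plus the standard description of a sum of cyclic subgroups of $\ZZ$. All the Lie-theoretic input needed (orthogonality of distinct irreducible components, $\rho=\sum_k\rho_k$, fundamental weights as a $\ZZ$-basis) is available from \cite{Humphreys} and is of the same nature as what was already used in the proof of Lemma \ref{lemma6}.
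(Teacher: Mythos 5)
Your proof is correct and follows essentially the same route as the paper's: both reduce the computation of $\bigl\{\la2\rho|\varpi\ra\,\bigr|\bigl.\,\varpi\in\bP\bigr\}$ to the fundamental weights of the components, use orthogonality of the $\Phi_k$ to replace $\la2\rho|\cdot\ra$ by $\la2\rho_k|\cdot\ra$ on each piece, and finish with B\'ezout's identity. The only difference is that you spell out explicitly that each fundamental weight of $\Phi$ lies in the span of its own component (and hence that $\bP$ splits as $\bigoplus_k\bP_k$), a fact the paper absorbs into the discussion preceding the proposition.
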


\begin{proof}
The claim follows from the following calculation:
\begin{align*}
\Upsi{\Phi}\ZZ
=\bigl\{\la2\rho|\varpi\ra\,\bigr|\bigl.\,\varpi\in\bP\bigr\}
&=\oon{span}_{\ZZ}\bigcup_{k=1}^{l}
\bigl\{\la2\rho|\varpi\ra\,\bigr|\bigl.\,\varpi\text{ is a fundamental dominant weight of }\Phi_k\bigr\}\\
&=\oon{span}_{\ZZ}\bigcup_{k=1}^{l}\bigl\{\la2\rho_k|\varpi\ra\,\bigr|\bigl.\,\varpi\text{ is a fundamental dominant weight of }\Phi_k\bigr\}\\
&=\oon{span}_{\ZZ}\bigcup_{k=1}^{l}\Upsi{\Phi_k}\ZZ
\end{align*}
which is equal to $\oon{gcd}(\Upsi{\Phi_1},\dotsc,\Upsi{\Phi_l})\ZZ$ thanks to the B{\'e}zout's identity.
\end{proof}

\subsection{Irreducible cases}\hspace*{\fill}

In this section we calculate numbers $\Upsi{\Phi}$ in the case when $\Phi$ is an irreducible root system. Thanks to Proposition \ref{prop4}, this knowledge is enough to find $\Upsi{\Phi}$ for all root systems of compact, simply connected,
semisimple Lie groups. Theorem \ref{thm2} (see also Remark \ref{remark1}) gives us then modular invariants for all quantum groups $G_q$ and their duals. According to the famous classification result (see \cite{Bump, Humphreys}), if $\Phi$ is irreducible then it belongs to one of $4$ families of classical Cartan types $A_n$ ($n\geq1$), $B_n$ ($n\geq2$), $C_n$ ($n\geq3$), $D_n$ ($n\geq4$) or is one of the $5$ exceptional root systems $E_6$, $E_7$, $E_8$, $F_4$, $G_2$. Let us now state our main result of this section.

\begin{theorem}\label{thm3}
Let $0<q<1$ and let $G$ be a compact, simply connected,
simple Lie group with root system $\Phi$. If $\Phi$ is of classical type then we have the following possibilities:
\begin{itemize}
\item type $A_n${\rm:} $G=\oon{SU}(n+1)$ {\rm(}$n\geq1${\rm)}, if $n$ is odd then $\Upsi{\Phi}=1$ and $\TtauInn(G_q)=\tfrac{\pi}{\log{q}}\ZZ$, if $n$ is even then $\Upsi{\Phi}=2$ and $\TtauInn(G_q)=\tfrac{\pi}{2\log{q}}\ZZ$;
\item type $B_n${\rm:} $G=\oon{Spin}(2n+1)$ {\rm(}$n\geq2${\rm)}, if $n$ is odd then $\Upsi{\Phi}=1$ and $\TtauInn(G_q)=\tfrac{\pi}{\log{q}}\ZZ$, if $n$ is even then $\Upsi{\Phi}=2$ and $\TtauInn(G_q)=\tfrac{\pi}{2\log{q}}\ZZ$;
\item type $C_n${\rm:} $G=\oon{Sp}(2n)$ {\rm(}$n\geq3${\rm)}, then $\Upsi{\Phi}=2$ and $\TtauInn(G_q)=\tfrac{\pi}{2\log{q}}\ZZ$;
\item type $D_n${\rm:} $G=\oon{Spin}(2n)$ {\rm(}$n\geq4${\rm)}, if $n\in4\NN+\{0,1\}$ then $\Upsi{\Phi}=2$ and $\TtauInn(G_q)=\tfrac{\pi}{2\log{q}}\ZZ$, if $n\in4\NN+\{2,3\}$ then $\Upsi{\Phi}=1$ and $\TtauInn(G_q)=\tfrac{\pi}{\log{q}}\ZZ$.
\end{itemize}
In exceptional cases we have the following possibilities:
\begin{itemize}
\item type $E_6${\rm:} then $\Upsi{\Phi}=2$ and $\TtauInn(G_q)=\tfrac{\pi}{2\log{q}}\ZZ$;
\item type $E_7${\rm:} then $\Upsi{\Phi}=1$ and $\TtauInn(G_q)=\tfrac{\pi}{\log{q}}\ZZ$;
\item type $E_8${\rm:} then $\Upsi{\Phi}=2$ and $\TtauInn(G_q)=\tfrac{\pi}{2\log{q}}\ZZ$;
\item type $F_4${\rm:} then $\Upsi{\Phi}=2$ and $\TtauInn(G_q)=\tfrac{\pi}{2\log{q}}\ZZ$;
\item type $G_2${\rm:} then $\Upsi{\Phi}=2$ and $\TtauInn(G_q)=\tfrac{\pi}{2\log{q}}\ZZ$.
\end{itemize}
\end{theorem}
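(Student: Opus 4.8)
The plan is to reduce the whole statement, through Theorem \ref{thm2}, to computing the single integer $\Upsi{\Phi}$ for each irreducible type and checking it equals $1$ or $2$ as listed. First I would record that, since $\bP=\oon{span}_\ZZ\{\varpi_1,\dotsc,\varpi_r\}$ and $\varpi\mapsto\la2\rho|\varpi\ra$ is $\ZZ$-linear, Definition \ref{def2} together with B\'ezout's identity (exactly as in the proof of Proposition \ref{prop4}) yields
\[
\Upsi{\Phi}=\oon{gcd}\bigl(\la2\rho|\varpi_1\ra,\dotsc,\la2\rho|\varpi_r\ra\bigr).
\]
Next, the proof of Lemma \ref{lemma6} exhibits a simple short root $\alpha_i$ with $\la2\rho|\alpha_i\ra=\la\alpha_i|\alpha_i\ra=2$; as $\alpha_i\in\bQ\subset\bP$ this gives $2\in\Upsi{\Phi}\ZZ$, hence $\Upsi{\Phi}\mid2$. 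Thus $\Upsi{\Phi}\in\{1,2\}$, with $\Upsi{\Phi}=1$ exactly when some $\la2\rho|\varpi_i\ra$ is odd. Moreover the computation in the proof of Lemma \ref{lemma6} gives $\la2\rho|\varpi_i\ra=\tfrac{\la\alpha_i|\alpha_i\ra}{2}\,m_i$, where $m_i$ is the coefficient of $\alpha_i$ in $2\rho=\sum_{\beta\in\Phi^+}\beta$; since $\tfrac{\la\alpha_i|\alpha_i\ra}{2}\in\{1,2,3\}$ is even only for the long roots of types $B$, $C$, $F$, an odd pairing can be produced solely by short simple roots (and only when the corresponding $m_i$ is odd).

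I would then note that $\bar\phi\colon\bP/\bQ\to\ZZ/2\ZZ$, $\varpi\mapsto\la2\rho|\varpi\ra\bmod2$, is well defined because $\la2\rho|\alpha\ra\in2\ZZ$ for $\alpha\in\bQ$ by Lemma \ref{lemma6}\eqref{lemma6-1}, and that $\Upsi{\Phi}=2$ iff $\bar\phi=0$. This settles several types for free: when $\bP=\bQ$ (types $E_8$, $F_4$, $G_2$) the domain of $\bar\phi$ is trivial, and when $|\bP/\bQ|$ is odd (type $E_6$, where $\bP/\bQ\cong\ZZ/3\ZZ$, and type $A_n$ with $n$ even, where $\bP/\bQ\cong\ZZ/(n+1)\ZZ$) any homomorphism into $\ZZ/2\ZZ$ vanishes; in all these cases $\Upsi{\Phi}=2$, as asserted.

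For the remaining types I would pass to the standard Euclidean models of \cite{Humphreys,Bump}, writing $2\rho=\sum_{\beta\in\Phi^+}\beta$ and the $\varpi_i$ in coordinates and reading off parities. For $A_n$ one finds $\la2\rho|\varpi_i\ra=i(n+1-i)$, which at $i=1$ equals $n$ and so is odd exactly when $n$ is odd; for $C_n$ every coordinate of $2\rho$ is even, whence all $\la2\rho|\varpi_i\ra$ are even and $\Upsi{\Phi}=2$; for $B_n$ only the short simple root $\alpha_n$ can contribute, and its coefficient in $2\rho$ is $n^2$, so $\Upsi{\Phi}=1$ iff $n$ is odd; for $D_n$ the two half-spin weights give $\la2\rho|\varpi_{n-1}\ra=\la2\rho|\varpi_n\ra=\tfrac{n(n-1)}{2}$, which is odd precisely when $n\equiv2,3\pmod4$. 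These match the stated values.

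The main obstacle is type $E_7$, where $\bP/\bQ\cong\ZZ/2\ZZ$ has even order, so the cheap arguments above do not apply and there is no uniform coordinate shortcut as in the classical series. Here I would invoke the explicit root data for $E_7$ from \cite{Humphreys,Bump} to verify directly that $\rho\notin\bQ$ — equivalently that some coefficient $m_i$ of $2\rho$ is odd, i.e.\ $\bar\phi\neq0$ — forcing $\Upsi{\Phi}=1$. A secondary point demanding care throughout the non-simply-laced cases is the bookkeeping of the length factors $\tfrac{\la\alpha_i|\alpha_i\ra}{2}$; the fact that long roots in $B$, $C$, $F$ carry the even factor $2$ and hence never yield an odd pairing is exactly what keeps the analysis manageable.
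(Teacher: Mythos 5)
Your proposal is correct, but it takes a genuinely different route from the paper's proof. The paper works uniformly through the identity $\la2\rho|\varpi_i\ra=\la\alpha_i|\alpha_i\ra\sum_j c_{i,j}$ (equation \eqref{eq31}) and the explicit inverse Cartan matrices of \cite{WeiZou}, computing all the integers $\la2\rho|\varpi_i\ra$ in every type and then exhibiting, case by case, explicit integer combinations of fundamental weights whose pairing with $2\rho$ equals $1$ or $2$. You instead pin down $\Upsi{\Phi}\in\{1,2\}$ a priori (from Lemma \ref{lemma6}\eqref{lemma6-1}, since $2\ZZ=\bigl\{\la2\rho|\alpha\ra\,\bigr|\bigl.\,\alpha\in\bQ\bigr\}\subset\Upsi{\Phi}\ZZ$), and then observe that the parity map $\bar\phi\colon\bP/\bQ\to\ZZ/2\ZZ$, $\varpi\mapsto\la2\rho|\varpi\ra\bmod 2$, is a well-defined group homomorphism with $\Upsi{\Phi}=2$ if and only if $\bar\phi=0$. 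This disposes of $E_8$, $F_4$, $G_2$ (trivial fundamental group) and of $E_6$ and $A_n$ with $n$ even (fundamental group of odd order) with no computation at all, and reduces the remaining types to parity checks of the coefficients $m_i$ in $2\rho=\sum_i m_i\alpha_i$ via $\la2\rho|\varpi_i\ra=\tfrac{\la\alpha_i|\alpha_i\ra}{2}m_i$, which in the non-simply-laced types further localizes to the short simple roots. Your coordinate values ($i(n+1-i)$ for $A_n$, $n^2$ at the short node of $B_n$, evenness of all coordinates of $2\rho$ for $C_n$, $\tfrac{n(n-1)}{2}$ at the half-spin nodes of $D_n$) agree with the paper's. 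What your approach buys is conceptual economy: the answer is governed by a parity character of the fundamental group, equivalently (in the simply-laced cases) by whether $\rho\in\bQ$. What the paper's approach buys is an explicit list of all pairings $\la2\rho|\varpi_i\ra$, obtained by one uniform method.

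Two points should be tightened. For $D_n$ with $n\in4\NN+\{0,1\}$ you must also confirm that the weights $\varpi_i$ with $i\leq n-2$ pair evenly with $2\rho$, since the fundamental-group shortcut is unavailable there; this is immediate in your coordinate framework ($2\rho=\sum_i 2(n-i)e_i$ has even coordinates, so only the half-integral spin weights can produce odd pairings), but it needs to be said, as otherwise $\Upsi{\Phi}=2$ is not established in that subcase. For $E_7$ your argument correctly reduces everything to the single fact $\rho\notin\bQ$ (equivalently, some $m_i$ is odd), but that fact still requires the explicit data you defer to; the paper settles it by computing $\la2\rho|\varpi_1\ra=34$ and $\la2\rho|\varpi_2\ra=49$, the latter being odd.
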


\begin{proof}
Let us start with a general situation, i.e.~$G$ is a compact, simply connected, semisimple Lie group with root system $\Phi$. Let $A=\bigl[a_{i,j}\bigr]_{i,j=1}^{r}=\Bigl[2\tfrac{\la\alpha_i|\alpha_j\ra}{\la\alpha_i|\alpha_i\ra}\Bigr]_{i,j=1}^{r}$ be the Cartan matrix of $\Phi$ and $A^{-1}=\bigl[c_{i,j}\bigr]_{i,j=1}^{r}$ its inverse. Using the defining relation $\la\alpha_i|\varpi_j\ra=\delta_{i,j}\tfrac{\la\alpha_i|\alpha_i\ra}{2}$ we find that
\[
\alpha_i=\sum_{j=1}^{r}2\tfrac{\la\alpha_j|\alpha_i\ra}{\la\alpha_j|\alpha_j\ra}\varpi_j
=\sum_{j=1}^{r}a_{j,i}\varpi_j,
\]
and hence
\begin{equation}\label{eq30}
\sum_{i=1}^{r}c_{i,k}\alpha_i
=\sum_{i,j=1}^{r}c_{i,k}a_{j,i}\varpi_j
=\sum_{i,j=1}^{r}\bigl[A^{-1}\bigr]_{i,k}\bigl[A\bigr]_{j,i}\varpi_j
=\varpi_k.
\end{equation}
Take $1\leq{i}\leq{r}$. Using \eqref{eq30} and the fact that $\rho=\sum\limits_{j=1}^{r}\varpi_j$ we can express $\la2\rho|\varpi_i\ra$ as follows:
\begin{equation}\label{eq31}
\la2\rho|\varpi_i\ra=2\sum_{j=1}^{r}\la\varpi_j|\varpi_i\ra
=2\sum_{j,k=1}^{r}c_{k,j}\la\alpha_k|\varpi_i\ra
=\la\alpha_i|\alpha_i\ra\sum_{j=1}^{r}c_{i,j}.
\end{equation}
We see that in order to find the number $\Upsi{\Phi}$, we need a description of the inverse Cartan matrix $A^{-1}=\bigl[c_{i,j}\bigr]_{i,j=1}^{r}$. We take this description from \cite{WeiZou} (see also \cite[Table 2]{OnishchikVinberg}, note that we have to take the transpose of the matrices given in \cite{WeiZou}, as we use a different convention for Cartan matrices). We will now consider each of the cases listed in Theorem \ref{thm3} separately. Note that $r=n$.

\subsubsection*{Type $A_n$ {\rm(}$n\geq1${\rm)}}\hspace*{\fill}

We have $c_{i,j}=\min\{i,j\}-\tfrac{ij}{n+1}$ ($1\leq{i,j}\leq{n}$) and all roots are short (of length squared $\la\alpha_i|\alpha_i\ra=2$). Next, thanks to \eqref{eq31}
\begin{align*}
\la2\rho|\varpi_i\ra
&=2\biggl(\sum_{j=1}^{n}\min\{i,j\}-\sum_{j=1}^{n}\tfrac{ij}{n+1}\biggr)
=2\biggl(\sum_{j=1}^{i}j+\sum_{j=i+1}^{n}i-\tfrac{i}{n+1}\sum_{j=1}^{n}j\biggr)\\
&=2\bigl(\tfrac{i(i+1)}{2}+(n-i)i-\tfrac{i}{n+1}\tfrac{n(n+1)}{2}\bigr)
=(n+1-i)i
\end{align*}
for $1\leq{i}\leq{n}$. Consequently $\la2\rho|\varpi_i\ra\in\ZZ$ in general and $\la2\rho|\varpi_i\ra\in2\ZZ$ if $n$ is even.

Assume that $n=2k+1$ is odd. If $k=0$ then $n=1$ and $\la2\rho|\varpi_1\ra=1$. Otherwise, assuming $k>0$, we have
\[
\la2\rho|\varpi_n-2k\varpi_1+k\varpi_2\ra
=(n+1-n)n-2k(n+1-1)+2k(n+1-2)=n-2k=1
\]
and we see that $1\in\Upsi{\Phi}\ZZ$. Hence $\Upsi{\Phi}=1$.

If $n=2k$ is even then
\[
\la2\rho|2\varpi_1-\varpi_2\ra=2(n+1-1)-2(n+1-2)=2
\]
and $2\in\Upsi{\Phi}\ZZ$. Thus $\Upsi{\Phi}=2$.

\subsubsection*{Type $B_n$ {\rm(}$n\geq2${\rm)}}\hspace*{\fill}

We have $c_{i,j}=\min\{i,j\}$ ($1\leq{i}\leq{n}$, $1\leq{j}<n$) and $c_{i,n}=\tfrac{i}{2}$ ($1\leq{i}\leq{n}$). Squared lengths of simple roots are $\la\alpha_i|\alpha_i\ra=4$ ($1\leq{i}<n$) and $\la\alpha_n|\alpha_n\ra=2$ (notice the use of transposition and rescaling compared to \cite{WeiZou, Humphreys}). For $1\leq{i}<n$ we have
\[
\la2\rho|\varpi_i\ra
=4\biggl(\sum_{j=1}^{n-1}\min\{i,j\}+\tfrac{i}{2}\biggr)
=4\biggl(\sum_{j=1}^{i}j+\sum_{j=i+1}^{n-1}i+\tfrac{i}{2}\biggr)
=4\bigl(\tfrac{i(i+1)}{2}+(n-1-i)i+\tfrac{i}{2}\bigr)=2i(2n-i),
\]
while for $i=n$
\[
\la2\rho|\varpi_n\ra
=2\biggl(\sum_{j=1}^{n-1}\min\{n,j\}+\tfrac{n}{2}\biggr)
=2\bigl(\tfrac{(n-1)n}{2}+\tfrac{n}{2}\bigr)=n^2.
\]
Consequently, $\la2\rho|\varpi_i\ra\in\ZZ$ in general and $\la2\rho|\varpi_i\ra\in2\ZZ$ when $n$ is even.

Assume that $n=2k$ is even. If $k=1$ then $n=2$ and $\la2\rho|\varpi_1-\varpi_2\ra=2(4-1)-2^2=2$, hence $2\in\Upsi{\Phi}\ZZ$ and $\Upsi{\Phi}=2$. Assume now that $k\geq2$. We have
\begin{align*}
\la2\rho|(6k-4)(2\varpi_1-\varpi_2)-\varpi_3\ra
&=(6k-4)\bigl(4(2n-1)-4(2n-2)\bigr)-6(2n-3)\\
&=4(6k-4)-6(4k-3)=2,
\end{align*}
so $2\in\Upsi{\Phi}\ZZ$, and again $\Upsi{\Phi}=2$.

Consider now the case when $n=2k+1\geq{3}$ is odd. We have
\begin{align*}
\la2\rho|-(k^2+k)(2\varpi_1-\varpi_2)+\varpi_n\ra
&=-(k^2+k)\bigl(4(2n-1)-4(2n-2)\bigr)+n^2\\
&=-4(k^2+k)+(2k+1)^2=1,
\end{align*}
so that $1\in\Upsi{\Phi}\ZZ$ and thus $\Upsi{\Phi}=1$.

\subsubsection*{Type $C_n$ {\rm(}$n\geq3${\rm)}}\hspace*{\fill}

The Cartan matrix of $C_n$ is the transpose of the Cartan matrix of $B_n$. Hence $c_{i,j}=\min\{i,j\}$ ($1\leq{i}<n$, $1\leq{j}\leq{n}$) and $c_{n,j}=\tfrac{j}{2}$ ($1\leq{j}\leq{n}$). Furthermore, we can construct the root system $C_n$ as the dual of $B_n$ and after rescaling we have $\la\alpha_i|\alpha_i\ra=2$ ($1\leq{i}<n$) and $\la\alpha_n|\alpha_n\ra=4$. Using \eqref{eq31} we calculate
\[
\la2\rho|\varpi_i\ra
=2\sum_{j=1}^{n}\min\{i,j\}
=2\biggl(\sum_{j=1}^{i}j+\sum_{j=i+1}^{n}i\biggr)
=2\bigl(\tfrac{i(i+1)}{2}+(n-i)i\bigr)=(2n+1-i)i
\]
when $1\leq{i}<n$ and
\[
\la2\rho|\varpi_n\ra=4\sum_{j=1}^{n}\tfrac{j}{2}=n(n+1)
\]
when $i=n$. Thus we see that $\la2\rho|\varpi_i\ra\in2\ZZ$ for all $1\leq{i}\leq{n}$. Since
\[
\la2\rho|-\varpi_{n-1}+\varpi_n\ra
=-\bigl(2n+1-(n-1)\bigr)(n-1)+n(n+1)
=-(n+2)(n-1)+n(n+1)=2,
\]
we have $\Upsi{\Phi}\ZZ=2\ZZ$ and so $\Upsi{\Phi}=2$.

\subsubsection*{Type $D_n$ {\rm(}$n\geq4${\rm)}}\hspace*{\fill}

The Cartan matrix of $D_n$ is symmetric, hence so is its inverse. It is given by
\[
c_{i,j}=
\begin{cases}
i&1\leq{i}\leq{j}\leq{n-2}\\
\tfrac{i}{2}&1\leq{i}\leq{n-2},\:j\in\{n-1,n\}\\
\frac{n-2}{4}&i=n-1,j=n\\
\frac{n}{4}&i=j\in\{n-1,n\}
\end{cases}
\]
for $1\leq{i}\leq{j}\leq{n}$. Furthermore $\la\alpha_i|\alpha_i\ra=2$ for all $1\leq{i}\leq{n}$. Using equation \eqref{eq31}, we calculate for $1\leq{i}\leq{n-2}$
\begin{align*}
\la2\rho|\varpi_i\ra
&=2\sum_{j=1}^{n}c_{i,j}
=2\biggl(\sum_{j=1}^{i}c_{j,i}+\sum_{j=i+1}^{n-2}c_{i,j}+c_{i,n-1}+c_{i,n}\biggr)\\
&=2\biggl(\sum_{j=1}^{i}j+\sum_{j=i+1}^{n-2}i+i\biggr)
=2\bigl(\tfrac{i(i+1)}{2}+(n-2-i)i+i\bigr)=(2n-i-1)i.
\end{align*}
Next
\begin{align*}
\la2\rho|\varpi_{n-1}\ra
&=2\biggl(\sum_{j=1}^{n-2}c_{j,n-1}+c_{n-1,n-1}+c_{n-1,n}\biggr)\\
&=2\biggl(\sum_{j=1}^{n-2}\tfrac{j}{2}+\tfrac{n}{4}+\tfrac{n-2}{4}\biggr)
=\tfrac{(n-2)(n-1)+2n-2}{2}=\tfrac{n}{2}(n-1)
\end{align*}
and
\[
\la2\rho|\varpi_n\ra
=2\biggl(\sum_{j=1}^{n-2}c_{j,n}+c_{n-1,n}+c_{n,n}\biggr)
=2\biggl(\sum_{j=1}^{n-2}\tfrac{j}{2}+
\tfrac{n-2}{4}+\tfrac{n}{4}\biggr)=\tfrac{n}{2}(n-1).
\]
It follows that $\la2\rho|\varpi_i\ra\in\ZZ$ holds for all $n$.

Assume that $n=4k+a$ for some $k\in\NN$ and $a\in\{0,1\}$. Then $(2n-i-1)i=(8k+2a-i-1)i\in2\ZZ$ and also $\tfrac{n}{2}(n-1)=\tfrac{(4k+a)(4k+a-1)}{2}\in2\ZZ$. Furthermore
\[
\la2\rho|2\varpi_1-\varpi_2\ra=2(2n-2)-2(2n-3)=2
\]
which yields $2\in\Upsi{\Phi}\ZZ$ and $\Upsi{\Phi}=2$.

On the other hand, assume that $n=4k+a$ for $k\in\NN$ and $a\in\{2,3\}$. As before
\[
\la2\rho|2\varpi_1-\varpi_2\ra=2(2n-2)-2(2n-3)=2,
\]
and hence $2\in\Upsi{\Phi}\ZZ$. Furthermore,
\[
\la2\rho|\varpi_n\ra=\tfrac{n(n-1)}{2}=\tfrac{(4k+a)(4k+a-1)}{2}
=8k^2+2k(2a-1)+\tfrac{a(a-1)}{2}\in\Upsi{\Phi}\ZZ
\]
is odd, since $a\in\{2,3\}$. As $\Upsi{\Phi}\ZZ$ is a subgroup of $\ZZ$, we conclude that $\Upsi{\Phi}\ZZ=\ZZ$ and $\Upsi{\Phi}=1$.

\subsubsection*{Type $E_6$}\hspace*{\fill}

We have $\la\alpha_i|\alpha_i\ra=2$ for all $1\leq{i}\leq6$. Since the Cartan matrix is symmetric, we can take it directly from \cite{WeiZou} (these two properties apply also to $E_7$ and $E_8$). By direct calculation we find generators $\la2\rho|\varpi_i\ra=2\sum\limits_{j=1}^{6}c_{i,j}$ ($1\leq{i}\leq{6}$) of $\Upsi{\Phi}\ZZ$:
\[
\bigl(\la2\rho|\varpi_1\ra,\dotsc,\la2\rho|\varpi_6\ra\bigr)=(16,22,30,42,30,16).
\]
Consequently, $\Upsi{\Phi}\ZZ\subset2\ZZ$ and since
\[
\la2\rho|2\varpi_2-\varpi_4\ra=44-42=2,
\]
we conclude that $\Upsi{\Phi}\ZZ=2\ZZ$ giving $\Upsi{\Phi}=2$.

\subsubsection*{Type $E_7$}\hspace*{\fill}

We find that $\la2\rho|\varpi_1\ra=34$ and $\la2\rho|\varpi_2\ra=49$, therefore
\[
\la2\rho|-36\varpi_1+25\varpi_2\ra=-36\cdot34+25\cdot49=1.
\]
Since also $c_{i,j}\in\tfrac{1}{2}\ZZ$ for $1\leq{i,j}\leq{7}$, we obtain $\Upsi{\Phi}\ZZ=\ZZ$, i.e.~$\Upsi{\Phi}=1$.

\subsubsection*{Type $E_8$}\hspace*{\fill}

Since $\la\alpha_i|\alpha_i\ra=2$ and $c_{i,j}\in\ZZ$ for all $1\leq{i,j}\leq{8}$, we have $\la2\rho|\varpi_i\ra\in2\ZZ$ and consequently $\Upsi{\Phi}\ZZ\subset2\ZZ$. Furthermore $\la2\rho|\varpi_1\ra=92$ and $\la2\rho|\varpi_3\ra=182$. Hence
\[
\la2\rho|2\varpi_1-\varpi_3\ra=184-182=2
\]
and $\Upsi{\Phi}\ZZ=2\ZZ$, yielding $\Upsi{\Phi}=2$.

\subsubsection*{Type $F_4$}\hspace*{\fill}

After rescaling (if necessary) the squares of lengths of simple roots are given by $\la\alpha_1|\alpha_1\ra=\la\alpha_2|\alpha_2\ra=4$ and $\la\alpha_3|\alpha_3\ra=\la\alpha_4|\alpha_4\ra=2$. The Cartan matrix of $F_4$ is not symmetric, hence we need to take the transpose of the matrix given in \cite{WeiZou}, namely
\[
A^{-1}=
\begin{bmatrix}2&3&2&1\\
3&6&4&2\\
4&8&6&3\\
2&4&3&2
\end{bmatrix}
\]
It is clear that $\la2\rho|\varpi_i\ra\in2\ZZ$ for $1\leq{i}\leq4$. Using \eqref{eq31} we find that $\la2\rho|\varpi_3\ra=42$ and $\la2\rho|\varpi_4\ra=22$,
and consequently
\[
\la2\rho|-\varpi_3+2\varpi_4\ra=2.
\]
Thus $\Upsi{\Phi}\ZZ=2\ZZ$ and $\Upsi{\Phi}=2$.

\subsubsection*{Type $G_2$}\hspace*{\fill}

In this case $\la\alpha_1|\alpha_1\ra=2$ and $\la\alpha_2|\alpha_2\ra=6$. We again have to take the transpose of the (inverse) Cartan matrix, i.e.~$A^{-1}=
\bigl[\begin{smallmatrix}2&3\\1&2\end{smallmatrix}\bigr]$. We have $\la2\rho|\varpi_1\ra=10$, $\la2\rho|\varpi_2\ra=18$, hence
\[
\la2\rho|2\varpi_1-\varpi_2\ra=2.
\]
and $\Upsi{\Phi}\ZZ=2\ZZ$, i.e.~$\Upsi{\Phi}=2$.
\end{proof}

\subsection{The implementing unitaries}\label{sect:implemUnit}\hspace*{\fill}

Let $0<q<1$ and $G$ be a compact, simply connected, semisimple Lie group with the corresponding number $\Upsi{\Phi}\in\NN$ introduced in Definition \ref{def2} (defined by the root system $\Phi$ of $G$). Proposition \ref{prop4} and Theorem \ref{thm3} show that we must have $\Upsi{\Phi}\in\{1,2\}$. In this section we will be interested in the case $\Upsi{\Phi}=2$. Theorem \ref{thm2} shows in this case that $\TtauInn(G_q)$ is a proper subgroup of $\Ttau(G_q)$, more precisely $\tau^{G_q}_{\frac{\pi}{2\log{q}}}\in\oon{Aut}(\Linf(G_q))$ is inner (i.e.~implemented by some unitary in $\Linf(G_q)$) and non-trivial. The next proposition shows that it cannot however be implemented by a unitary in $\C(G_q)$.

\begin{proposition}\label{prop8}
Assume that $\Upsi{\Phi}=2$ and put $\bt=\tfrac{\pi}{2\log{q}}$, so that $\tau^{G_q}_{\bt}=\Ad(v)$ for a unitary element $v\in\Linf(G_q)$. Then $v\not\in\C(G_q)$. In particular, $\bigl.\tau^{G_q}_{\bt}\bigr|_{\C(G_q)}$ is not inner.
\end{proposition}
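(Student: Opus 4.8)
The plan is to show that the restriction $\tau^{G_q}_{\bt}\big|_{\C(G_q)}$ acts \emph{non-trivially} on the spectrum of $\C(G_q)$, which is $T\times{W}$. Since an inner automorphism $\Ad(u)$ of a C$^*$-algebra fixes every point of its spectrum — for each irreducible $\pi$ one has $\pi\comp\Ad(u)=\Ad(\pi(u))\comp\pi\cong\pi$ — this will show that no unitary of $\C(G_q)$ implements $\tau^{G_q}_{\bt}$. As the $\Linf$-unitary $v$ would be such a unitary were it to lie in $\C(G_q)$, we get simultaneously $v\not\in\C(G_q)$ and that $\tau^{G_q}_{\bt}\big|_{\C(G_q)}$ is not inner.

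First I would realise $\tau^{G_q}_{\bt}$ as a bi-translation. For $\mu\in{T}$ let $L_\mu=(\pi_\mu\tens\id)\Delta_{G_q}$ and $R_\mu=(\id\tens\pi_\mu)\Delta_{G_q}$ be the left and right translation automorphisms of $\C(G_q)$; on a matrix coefficient $x$ with well defined weights one has $L_\mu(x)=\la\lwt(x),\mu\ra{x}$ and $R_\mu(x)=\la\rwt(x),\mu\ra{x}$. Comparing this with Lemma \ref{lemma4}\eqref{lemma4.3} and the definition of $\blambda_{\bt}$ (Lemma \ref{lemma5}) gives $\tau^{G_q}_{\bt}=L_{\blambda_{\bt}}\comp{R_{\blambda_{\bt}^{-1}}}$ on $\Pol(G_q)$, hence on $\C(G_q)$. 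The left translation acts on the labels by the clean rule $\pi_{\lambda,w}\comp{L_\mu}=\pi_{\mu\lambda,w}$, immediate from coassociativity and $\pi_\mu\star\pi_\lambda=\pi_{\mu\lambda}$.

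The technical heart is the commutation of a torus character with a Weyl cell: I claim $\pi_{\lambda,w}\comp{R_\mu}\cong\pi_{\lambda\,w(\mu),w}$, where $w(\mu)$ is the Weyl action on $T$. I would prove the single–reflection case $\pi_{s_i}\star\pi_\mu\cong\pi_{s_i(\mu)}\star\pi_{s_i}$ by reducing through $\rho_i\colon\C(G_q)\to\C(\oon{SU}_{q_i}(2))$ to a rank-one computation: writing $D_c$ for the gauge unitary $e_n\mapsto{c^n}e_n$ on $\ell^2(\ZZ_+)$, one checks on the generators $\alpha,\gamma$ that conjugation by $D_c$ multiplies $\theta_{q_i}$ of a homogeneous element by a power of $c$ determined by its $\oon{SU}_{q_i}(2)$-weights, and that a suitable choice of $c$ (depending on $\mu$) turns the right torus character into the $s_i$-reflected left torus character. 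Combining this with $\pi_{\lambda,w}\comp{L_\mu}=\pi_{\mu\lambda,w}$ yields
\[
\pi_{\lambda,w}\comp\tau^{G_q}_{\bt}\cong\pi_{\blambda_{\bt}w(\blambda_{\bt})^{-1}\lambda,\;w},\qquad\lambda\in{T},\ w\in{W}.
\]
I expect the main obstacle to be a fully rigorous proof of this commutation relation for arbitrary $G_q$ (as opposed to the rank-one model), since it requires tracking how the full torus $T$ interacts with the $\oon{SU}_{q_i}(2)$-cell under $\rho_i$; this is implicit in the Korogodski–Soibelman description of the spectrum and can be assembled from it.

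It then remains to exhibit a moved point. Fixing a \emph{short} simple root $\alpha_i$ (one always exists, with $\la\alpha_i|\alpha_i\ra=2$ in our normalisation) and using $s_i\varpi_i=\varpi_i-\alpha_i$ together with \eqref{eq29}, I compute
\[
\bigl\la\varpi_i,\blambda_{\bt}s_i(\blambda_{\bt})^{-1}\bigr\ra
=q^{\la2\rho|\varpi_i-s_i\varpi_i\ra\ii\bt}
=q^{\la2\rho|\alpha_i\ra\ii\bt}
=q^{\la\alpha_i|\alpha_i\ra\ii\bt}
=q^{2\ii\bt}
=\ee^{\ii\pi}=-1\neq1,
\]
so that $\blambda_{\bt}s_i(\blambda_{\bt})^{-1}$ is not the neutral element of $T$. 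Consequently $\pi_{\lambda,s_i}\comp\tau^{G_q}_{\bt}\cong\pi_{\blambda_{\bt}s_i(\blambda_{\bt})^{-1}\lambda,\,s_i}$ is inequivalent to $\pi_{\lambda,s_i}$ (distinct points of $T\times{W}$ yield inequivalent representations), i.e.\ $\tau^{G_q}_{\bt}$ moves the point $(\lambda,s_i)$ and therefore is not inner in $\C(G_q)$. As a consistency check, for $w=w_\circ$ one has $w_\circ\rho=-\rho$, whence $\blambda_{\bt}w_\circ(\blambda_{\bt})^{-1}$ is trivial exactly when $\Upsi{\Phi}=2$; this matches Theorem \ref{thm1}, which shows that on the generic leaf $\tau^{G_q}_{\bt}$ is spatially implemented and fixes every point — so the obstruction genuinely lives on the lower-dimensional cells, invisible to the Plancherel (direct integral) picture.
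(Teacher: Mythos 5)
Your proof is correct, and it takes a genuinely different route from the paper's. The paper stays inside the Plancherel picture: using Theorem \ref{thm1} and formula \eqref{eq33} it writes $v$ explicitly (up to a central unitary) as a direct integral built from $\pi_{w_\circ}(|b_\rho|)^{-2\ii\bt}$, and then separates $v$ from $\C(G_q)$ by the asymptotics of diagonal matrix elements: for $a\in\C(G_q)$ these lie in $\Linf(T)\tens\bigl(\CC\I+\mathrm{c}_0(\ZZ_+^{\ell(w_\circ)})\bigr)$, whereas for $v$ they oscillate. You instead argue on the spectrum: inner automorphisms of a unital \cst-algebra fix every point of the spectrum, while $\tau^{G_q}_{\bt}=L_{\blambda_{\bt}}\comp{R_{\blambda_{\bt}^{-1}}}$ (which does follow from Lemma \ref{lemma4} and Lemma \ref{lemma5}) acts on $T\times{W}$ by $(\lambda,w)\mapsto(\blambda_{\bt}w(\blambda_{\bt})^{-1}\lambda,w)$, and your evaluation $\la\varpi_i,\blambda_{\bt}s_i(\blambda_{\bt})^{-1}\ra=q^{\la2\rho|\alpha_i\ra\ii\bt}=-1$ for a short simple root $\alpha_i$ (via \eqref{eq29}) exhibits moved points on the cell $w=s_i$. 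This buys a softer proof bypassing $b_\rho$, \eqref{eq33} and Theorem \ref{thm1} entirely, and your closing consistency check ($\blambda_{\bt}w_\circ(\blambda_{\bt})^{-1}=\blambda_{2\bt}=e$ exactly when $\Upsi{\Phi}=2$) is genuinely clarifying: the obstruction lives on the Plancherel-null cells, which is precisely why it is invisible in $\Linf(G_q)$.

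The one step you must still nail down is the commutation rule $\pi_{s_i}\star\pi_\mu\cong\pi_{s_i(\mu)}\star\pi_{s_i}$, and your gauge-unitary sketch is missing one ingredient. Conjugation by $D_c\colon{e_n}\mapsto{c^n}e_n$ multiplies each nonzero $\pi_{s_i}(x)$ (for $x$ a matrix element with well defined weights) by $c^{-\la\lwt(x)+\rwt(x)|\alpha_i^{\vee}\ra/2}$, a quantity depending only on the $\oon{SU}_{q_i}(2)$-weights, whereas the discrepancy between $\pi_{s_i}\star\pi_\mu$ and $\pi_{s_i(\mu)}\star\pi_{s_i}$ is the character $\la\rwt(x)-s_i\lwt(x),\mu\ra$, which a priori depends on the full weights of $x$ and not only on their $\alpha_i$-components. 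What saves the argument is that $\pi_{s_i}$ annihilates every matrix element with $\lwt(x)-\rwt(x)\notin\ZZ\alpha_i$, because $\pi_\varpi(E_i)$, $\pi_\varpi(F_i)$, $\pi_\varpi(K_i^{\pm1})$ shift weights only along $\alpha_i$. On the surviving elements, writing $\rwt(x)=\lwt(x)-k\alpha_i$, the discrepancy equals $\la\alpha_i,\mu\ra^{\la\lwt(x)+\rwt(x)|\alpha_i^{\vee}\ra/2}$ with an integer exponent, i.e.\ exactly a gauge factor, and $c=\la\alpha_i,\mu\ra$ does the job with no square roots needed. Alternatively you may simply invoke \cite[Proposition 6.2.6]{KorogodskiSoibelman} --- the very result the paper cites in Section \ref{sect:prel} for the equivalence of the parametrizations $\pi_\lambda\star\pi_w$ and $\pi_w\star\pi_{\lambda'}$ --- after which the induction along a reduced word for $w$ and the rest of your argument go through verbatim.
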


Before we prove Proposition \ref{prop8}, we need to recall a formula expressing operators $\pi_{\lambda,w_\circ}(b_\varpi)$ in simpler terms. As we mentioned in Section \ref{sect:prel}, one can calculate that in the case of $G_q=\oon{SU}_q(2)$ we have $b_N=(-\gamma)^N$ for all $N\in\Irr{\oon{SU}_q(2)}=\ZZ_+$. In order to keep track of the deformation parameter, we will denote this operator by $\sideset{_{\!\!q}}{_N}{\oon{\mathnormal{b}}}\in\Linf(\oon{SU}_q(2))$.

\begin{lemma}[{\cite{DeCommer}}]
Take $\varpi\in\bP^+$ and write $w_\circ=s_{i_1}\dotsm{s_{i_{\ell(w_\circ)}}}$. Then
\begin{equation}\label{eq33}
\pi_{\lambda,w_\circ}(b_\varpi)
=\la\varpi,\lambda\ra\,
\theta_{q_{i_1}}\bigl(
\sideset
{_{\!q_{i_1}\!\!}}
{_{\la\beta_1^{\vee}|\varpi\ra}}
{\oon{\mathnormal{b}}}
\bigr)\tens\dotsm\tens
\theta_{q_{i_{\ell(w_\circ)}}}\bigl(
\sideset
{_{\!q_{i_{\ell(w_\circ)}}\!\!}}
{_{\la\beta_{\ell(w_\circ)}^{\vee}|\varpi\ra}}
{\oon{\mathnormal{b}}}
\bigr)\in\B\bigl(\ell^2(\ZZ_+)^{\tens\ell(w_\circ)}\bigr)
\end{equation}
for all $\lambda\in{T}$, where
\begin{align*}
\beta_1^\vee&=\alpha_{i_1}^{\vee},\\
\beta_2^\vee&=s_{i_1}\alpha_{i_2}^{\vee},\\
&\;\;\vdots\\
\beta_{\ell(w_\circ)}^{\vee}&=s_{i_1}\dotsc{s_{i_{\ell(w_\circ)-1}}}\alpha_{i_{\ell(w_\circ)}}^{\vee}
\end{align*}
are elements of the root system $\Phi$.
\end{lemma}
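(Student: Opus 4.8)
The plan is to reduce the formula \eqref{eq33} to a purely $\oon{SU}_q(2)$ computation by exploiting the factorization $\pi_{\lambda,w_\circ}=\pi_\lambda\star\pi_{w_\circ}$ together with the Soibelman tensor-product structure $\pi_{w_\circ}=\pi_{s_{i_1}}\star\dotsm\star\pi_{s_{i_{\ell(w_\circ)}}}$. First I would peel off the character $\pi_\lambda$. Since $b_\varpi=U^\varpi(\xi_\varpi,\eta_{w_\circ\varpi})$ with $\xi_\varpi$ of highest weight, we have $\lwt(b_\varpi)=\varpi$, so \eqref{eq17} gives $\pi_{\lambda,w_\circ}(b_\varpi)=\la\varpi,\lambda\ra\,\pi_{w_\circ}(b_\varpi)$, which already produces the scalar $\la\varpi,\lambda\ra$ appearing in \eqref{eq33}. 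It therefore remains to prove the word-ordered factorization of $\pi_{w_\circ}(b_\varpi)$.

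Before attacking $\pi_{w_\circ}(b_\varpi)$ directly, I would note that both sides of the desired identity are multiplicative in $\varpi$: with De Commer's normalization the elements $b_\varpi$ satisfy $b_\varpi b_{\varpi'}=b_{\varpi+\varpi'}$ (Cartan multiplication of the highest-to-lowest weight coefficients, \cite[Section 5]{DeCommer}), while on the right $b_Nb_M=(-\gamma)^{N+M}=b_{N+M}$ in $\oon{SU}_q(2)$ and the exponents $\la\beta_k^\vee|\varpi\ra$ are additive in $\varpi$. Since $\pi_{w_\circ}$ is a homomorphism, it suffices to establish \eqref{eq33} for $\varpi$ ranging over the fundamental weights, which generate $\bP^+$ as a monoid. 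This reduction is optional but trims the combinatorics.

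The core of the argument is an induction on $\ell(w_\circ)$, carried out by removing the first reflection. Writing $w_\circ=s_{i_1}w'$ with $w'=s_{i_2}\dotsm s_{i_{\ell(w_\circ)}}$ reduced, I would compute $\pi_{w_\circ}(b_\varpi)=(\pi_{s_{i_1}}\tens\pi_{w'})\Delta_{G_q}(b_\varpi)$ using $\Delta_{G_q}(b_\varpi)=\sum_j U^\varpi(\xi_\varpi,\xi_j^\varpi)\tens U^\varpi(\xi_j^\varpi,\eta_{w_\circ\varpi})$. The decisive point is that $\pi_{s_{i_1}}=\theta_{q_{i_1}}\comp\rho_{i_1}$ factors through the rank-one subgroup attached to the simple root $\alpha_{i_1}$: on the first leg only the $\alpha_{i_1}$-weight string through $\xi_\varpi$ survives, and $\theta_{q_{i_1}}$ selects the extremal coefficient of that string, yielding exactly $\theta_{q_{i_1}}\bigl(\sideset{_{\!q_{i_1}\!\!}}{_{\la\alpha_{i_1}^\vee|\varpi\ra}}{\oon{\mathnormal{b}}}\bigr)$ with $\la\alpha_{i_1}^\vee|\varpi\ra=\la\beta_1^\vee|\varpi\ra$. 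The surviving second leg is again an extremal highest-to-lowest coefficient, now for the $s_{i_1}$-shifted weight, so the inductive hypothesis applies to $\pi_{w'}$ and produces the tensor factors indexed by $s_{i_2}\dotsm s_{i_{k-1}}\alpha_{i_k}^\vee$ paired against $s_{i_1}\varpi$. Finally, Weyl-invariance of $\la\cdot|\cdot\ra$ rewrites $\bigl\la s_{i_2}\dotsm s_{i_{k-1}}\alpha_{i_k}^\vee\bigm|s_{i_1}\varpi\bigr\ra=\bigl\la s_{i_1}\dotsm s_{i_{k-1}}\alpha_{i_k}^\vee\bigm|\varpi\bigr\ra=\la\beta_k^\vee|\varpi\ra$, matching \eqref{eq33}. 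The base case $\ell(w_\circ)=1$, i.e.~$G=\oon{SU}(2)$, is immediate from $b_N=(-\gamma)^N$ and the defining formula for $\theta_q$.

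The main obstacle is the rank-one restriction step: one must verify precisely that the coproduct of the extremal coefficient $b_\varpi$, pushed through $\rho_{i_1}$ into $\C(\oon{SU}_{q_{i_1}}(2))$ and evaluated via $\theta_{q_{i_1}}$, splits as the top-of-string coefficient tensored with an extremal coefficient for $s_{i_1}\varpi$. This requires the explicit $\mathcal{U}_{q_{i_1}}(\mathfrak{sl}(2,\CC))$-module structure of $\cH_\varpi$ (the $\alpha_{i_1}$-strings through the weight vectors $\xi_j^\varpi$) together with the compatibility of $\rho_{i_1}$ with the embedding $\mathcal{U}_{q_{i_1}}(\mathfrak{sl}(2,\CC))\to\mathcal{U}_q(\mathfrak{g})$; identifying which $\xi_j^\varpi$ contribute and tracking the shift $\varpi\mapsto s_{i_1}\varpi$ through the recursion is exactly the bookkeeping extracted from \cite[Section 5]{DeCommer}, and is where care is needed.
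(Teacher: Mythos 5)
Your reduction of the character part is correct: since $\lwt(b_\varpi)=\varpi$, equation \eqref{eq17} gives $\pi_{\lambda,w_\circ}(b_\varpi)=\la\varpi,\lambda\ra\,\pi_{w_\circ}(b_\varpi)$, which is exactly how the paper passes from $\pi_{w_\circ}$ to $\pi_{\lambda,w_\circ}$; for the factorization of $\pi_{w_\circ}(b_\varpi)$ itself the paper gives no proof but cites \cite[Page 203]{DeCommer}, sketching an alternative derivation via the antipode identity $S(b_\varpi)=(-1)^{\sum_{k}a_k}q^{2\la\rho|\varpi\ra}b_{-w_\circ\varpi}$ and the argument of \cite[Proposition 4.3]{ReshetikhinYakimov} applied to $S(b_\varpi)^*=U^\varpi(\eta_{w_\circ\varpi},\xi_\varpi)$. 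Your left-peeling induction along the reduced word is thus a genuinely different, Soibelman-style route, and several of its pieces are sound: the multiplicativity reduction to fundamental weights, the observation that on the first leg only components in the $\alpha_{i_1}$-string through $\xi_\varpi$ survive (because $E_{i_1}\xi_\varpi=0$, so $\xi_\varpi$ generates an irreducible $\mathcal{U}_{q_{i_1}}(\mathfrak{sl}(2,\CC))$-string), and the Weyl-invariance rewriting $\bigl\la s_{i_2}\dotsm s_{i_{k-1}}\alpha_{i_k}^\vee\bigm|s_{i_1}\varpi\bigr\ra=\la\beta_k^\vee|\varpi\ra$. But the decisive collapse of the sum $\sum_j\pi_{s_{i_1}}\bigl(U^\varpi(\xi_\varpi,\xi_j^\varpi)\bigr)\tens\pi_{w'}\bigl(U^\varpi(\xi_j^\varpi,\eta_{w_\circ\varpi})\bigr)$ is misattributed. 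It is false that ``$\theta_{q_{i_1}}$ selects the extremal coefficient of the string'': $\theta_q$ does not annihilate non-extremal matrix coefficients of $\oon{SU}_q(2)$-representations — already $\theta_q(\alpha)\neq0$ in spin $\tfrac{1}{2}$, and in a spin-$N/2$ string the coefficient from the top vector to the $k$-th vector, a multiple of an ordered monomial in $\alpha$ and $\gamma$, has nonzero image under $\theta_q$ for every $0\leq{k}\leq{N}$. So all $N+1$ string terms survive on the first leg, and the collapse to the single term $k=N$ must come from the second leg: one needs the Soibelman-type support lemma that $\pi_{w'}\bigl(U^\varpi(\xi,\eta_{w_\circ\varpi})\bigr)=0$ whenever $\wt(\xi)=\varpi-k\alpha_{i_1}$ with $k<\la\alpha_{i_1}^\vee|\varpi\ra$, i.e.~whenever $\wt(\xi)\neq{w'}(w_\circ\varpi)=s_{i_1}\varpi$. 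This is precisely where reducedness of $w_\circ=s_{i_1}w'$ enters, and your proposal neither states nor proves it; without it the induction does not close.

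A second, related gap: the inductive hypothesis you invoke is not the statement being proved. After one step the second leg is $U^\varpi\bigl(\xi_{s_{i_1}\varpi},\eta_{w_\circ\varpi}\bigr)$, and $s_{i_1}\varpi\notin\bP^+$, so there is no element ``$b_{s_{i_1}\varpi}$'' and the lemma itself cannot be applied to $\pi_{w'}$. You must strengthen the induction to coefficients $U^\varpi(\xi_\mu,\eta_{w^{-1}\mu})$ between extremal weight vectors, for a reduced word $w$ and $\mu$ in the Weyl orbit of $\varpi$ satisfying the appropriate $w$-dominance condition (all pairings of $\mu$ with the coroots attached to the word being nonnegative); here $\mu=s_{i_1}\varpi$ and $w=w'$ satisfy this exactly because the word is reduced, and indeed $w'^{-1}(s_{i_1}\varpi)=w_\circ\varpi$. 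The generalized statement and the support lemma then have to be proved by simultaneous induction. These two missing ingredients are the actual content of \cite[Proposition 4.3]{ReshetikhinYakimov} and \cite[Section 5]{DeCommer}; with them supplied, your argument becomes a complete proof, running in the opposite direction (highest-to-lowest weight, peeling reflections from the left) to the route the paper sketches through $S(b_\varpi)^*$ and the unitary antipode.
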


In fact, the elements $\beta_k=\tfrac{\la\beta_k|\beta_k\ra}{2}\beta_k^{\vee}$, ($1\leq{k}\leq\ell(w_\circ)$) are the positive roots (see \cite[Proposition 20.10]{Bump}). The formula for $\pi_{w_\circ}(b_\varpi)$ is given on \cite[Page 203]{DeCommer} and then \eqref{eq17} gives an easy modification to the general case. One way of deriving equation \eqref{eq33} is to first establish $S(b_\varpi)=(-1)^{\sum_{k=1}^{r}a_k}q^{2\la\rho|\varpi\ra}b_{-w_\circ\varpi}$, where $\varpi=\sum\limits_{k=1}^{r}a_k\varpi_k$. Next one can run an argument analogous to the one leading to \cite[Proposition 4.3]{ReshetikhinYakimov}, but for the elements $S(b_\varpi)^*=U^{\varpi}(\eta_{w_\circ\varpi},\xi_\varpi)$. The final step giving \eqref{eq33} is done by tweaking the obtained expression using the unitary antipode and the fact that $w_\circ^{-1}=w_\circ$.

\begin{proof}[Proof of Proposition \ref{prop8}]
Since $\la\varpi,\blambda_{2\bt}\ra=q^{\la2\rho|\varpi\ra2\ii\bt}=1$ for $\varpi\in\bP$, we see that $\blambda_{2\bt}$ is the trivial element in $T$ and consequently Theorem \ref{thm1} gives
\[
\tau^{G_q}_{\bt}=\Ad\Biggl(\cQ_{\LL}^*\Biggl({\Int_T}^\oplus\pi_{w_\circ}
\bigl(|b_\rho|\bigr)^{-2\ii\bt}\tens\I_{\overline{\cH_{\lambda}}}\dd{\lambda}\Biggr)\cQ_{\LL}\Biggr).
\]
It follows that we can find a unitary $x\in\cZ(\Linf(G_q))$ for which
\begin{equation}\label{eq34}
v=x\:\!\cQ_{\LL}^*
\Biggl({\Int_T}^\oplus\pi_{w_\circ}\bigl(|b_\rho|)^{-2\ii\bt}\tens\I_{\overline{\cH_{\lambda}}}\dd{\lambda}\Biggr)\cQ_{\LL}
=\cQ_{\LL}^*
\Biggl({\Int_T}^{\oplus}x_\lambda\pi_{w_\circ}\bigl(|b_\rho|\bigr)^{-2\ii\bt}\tens\I_{\overline{\cH_{\lambda}}}\dd{\lambda}\Biggr)\cQ_{\LL},
\end{equation}
where $\cQ_{\LL}x\cQ_{\LL}^*={\Int_T}^{\oplus}x_\lambda\I_{\HS(\cH_\lambda)}\dd{\lambda}$. Define a bounded linear map $\Psi\colon\Linf(G_q)\to\Linf(T\times\ZZ_+^{\ell(w_\circ)})$ via
\[
\Psi(a)\bigl(\lambda,k_1,\dotsc,k_{\ell(w_\circ)}\bigr)=
\is{e_{k_1,\dotsc,k_{\ell(w_\circ)}}}
{a_\lambda{e_{k_1,\dotsc,k_{\ell(w_\circ)}}}},\qquad\lambda\in{T},\:k_1,\dotsc,k_{\ell(w_\circ)}\in\ZZ_+,
\]
where $\cQ_{\LL}a\cQ_{\LL}^*={\Int_T}^{\oplus}a_\lambda\tens\I_{\overline{\cH_\lambda}}\dd{\lambda}$ and $e_{k_1,\dotsc,k_{\ell(w_\circ)}}=e_{k_1}\tens\dotsm\tens{e_{k_{\ell(w_\circ)}}}$ ($k_1,\dotsc,k_{\ell(w_\circ)}\in\ZZ_+$) is the standard orthonormal basis in $\cH_\lambda=\ell^2(\ZZ_+)^{\tens\ell(w_\circ)}$. We claim that
\begin{equation}\label{eq35}
\Psi(v)\not\in\Linf(T)\tens\Bigl(\CC\I+\mathrm{c}_{0}\bigl(\ZZ_+^{\times\ell(w_\circ)}\bigr)\Bigr).
\end{equation}
Using \eqref{eq34} and \eqref{eq33} we can calculate
\begin{align*}
\Psi(v)\bigl(\lambda,k_1&,\dotsc,k_{\ell(w_\circ)}\bigr)
=x_\lambda\is{e_{k_1,\dotsc,k_{\ell(w_\circ)}}}
{\pi_{w_\circ}\bigl(|b_\rho|\bigr)^{-2\ii\bt}
e_{k_1,\dotsc,k_{\ell(w_\circ)}}}\\
&=x_\lambda
\is{
e_{k_1}
}{
\theta_{q_{i_1}}\bigl(|
\sideset
{_{\!q_{i_1}\!\!}}
{_{\la\beta_1^{\vee}|\rho\ra}}
{\oon{\mathnormal{b}}}|
\bigr)^{-2\ii\bt}e_{k_1}
}\dotsm\is{e_{k_{\ell(w_\circ)}}}
{\theta_{q_{i_{\ell(w_\circ)}}}\bigl(|
\sideset
{_{\!q_{i_{\ell(w_\circ)}}\!\!}}
{_{\la\beta_{\ell(w_\circ)}^{\vee}|\rho\ra}}
{\oon{\mathnormal{b}}}
|\bigr)^{-2\ii\bt}e_{k_{\ell(w_\circ)}}}\\
&=x_\lambda
q_{i_1}^{-2\ii{k_1}\la\beta_1^{\vee}|\rho\ra\bt}\dotsm
q_{i_{\ell(w_\circ)}}^{-2\ii{k_{\ell(w_\circ)}}\la\beta_{\ell(w_\circ)}^{\vee}|\rho\ra\bt}
\end{align*}
for almost all $\lambda\in{T}$ and $k_1,\dotsc,k_{\ell(w_\circ)}\in\ZZ_+$. Since $q_i=q^{\frac{\la\alpha_i|\alpha_i\ra}{2}}$ and $\beta_i^{\vee}=\tfrac{2}{\la\alpha_i|\alpha_i\ra}\beta_i$ for $1\leq{i}\leq{r}$, we obtain
\begin{align*}
\Psi(v)\bigl(\lambda,k_1,\dotsc,k_{\ell(w_\circ)}\bigr)
&=x_\lambda
q^{-2\ii{k_1}\la\beta_1|\rho\ra\bt}\dotsm
q^{-2\ii{k_{\ell(w_\circ)}}\la\beta_{\ell(w_\circ)}|\rho\ra\bt}\\
&=x_\lambda
\ee^{-\pi\ii{k_1}\la\beta_1|\rho\ra}\dotsm
\ee^{-\pi\ii{k_{\ell(w_\circ)}}\la\beta_{\ell(w_\circ)}|\rho\ra}.
\end{align*}
Assume that \eqref{eq35} does not hold. Then writing $x_\bullet$ for the function $T\ni\lambda\mapsto{x_\lambda}\in\CC$, we also have $(x_\bullet^*\tens\I)\Psi(v)\in\Linf(T)\tens(\CC\I+\mathrm{c}_0(\ZZ_+^{\ell(w_\circ)}))$ and integrating over the first variable
\begin{equation}\label{eq36}
\Biggl(\Int_T\cdot\:\!\dd{\lambda}\tens\id\Biggr)
\bigl((x_\bullet^*\tens\I)\Psi(v)\bigr)
\in\CC\I+\mathrm{c}_0\bigl(\ZZ_+^{\ell(w_\circ)}\bigr).
\end{equation}
In Lemma \ref{lemma6} we proved that $\bigl\{\la2\rho|\alpha\ra\,\bigr|\bigl.\,\alpha\in\bQ\bigr\}=2\ZZ$. Now, as $\bigl\{\beta_1,\dotsc,\beta_{\ell(w_\circ)}\bigr\}$ is the set of positive roots, we can find $n_1,\dotsc,n_{\ell(w_\circ)}\in\ZZ$ such that $\la{n_1}\beta_1+\dotsm+n_{\ell(w_\circ)}\beta_{\ell(w_\circ)}|\rho\ra=1$. Finally for $p\in\NN$
\begin{align*}
\Biggl(\Int_T\cdot\:\!\dd{\lambda}\tens\id\Biggr)
\bigl((x_\bullet^*\tens\I)\Psi(v)\bigr)
\bigl(p|n_1|,\dotsc,p|n_{\ell(w_\circ)}|\bigr)
&=\ee^{-\pi\ii{p}|n_1|\la\beta_1|\rho\ra}\dotsm
\ee^{-\pi\ii{p}|n_{\ell(w_\circ)}|\la\beta_{\ell(w_\circ)}|\rho\ra}\\
&=\ee^{-\pi\ii{p}\la{n_1}\beta_1+\dotsm+n_{\ell(w_\circ)}\beta_{\ell(w_\circ)}|\rho\ra}\\&=\ee^{-\pi\ii{p}}
=\begin{cases}
1&p\in2\NN\\
-1&p\in2\ZZ_++1
\end{cases}.
\end{align*}
This contradicts \eqref{eq36} and hence proves validity of \eqref{eq35}.

On the other hand, we will now show that if $a\in\C(G_q)$, then $\Psi(a)\in\Linf(T)\tens(\CC\I+\mathrm{c}_0(\ZZ_+^{\ell(w_\circ)}))$. Because of \eqref{eq35}, this will imply $v\not\in\C(G_q)$.

Since $\Psi$ is continuous and $\Linf(T)\tens(\CC\I+\mathrm{c}_0(\ZZ_+^{\ell(w_\circ)}))$ is norm-closed in $\Linf(T\times\ZZ_+^{\ell(w_\circ)})$, it is enough to consider $a\in\Pol(G_q)$. Then
\[
a_\lambda=\pi_{\lambda,w_\circ}(a)
=(\pi_\lambda\tens\theta_{q_{i_1}}\comp\rho_{i_1}\tens
\dotsm\tens\theta_{q_{i_{\ell(w_\circ)}}}\comp
\rho_{i_{\ell(w_\circ)}})\Delta_{G_q}^{(\ell(w_\circ))}(a)
\]
for $\lambda\in{T}$ (see \cite[Lemma 3]{KrajczokSoltanDyskArxiv}, 
for the the notation cf.~Section \ref{sect:prel}). Since also $\rho_i(\Pol(G_q))\subset\Pol(\oon{SU}_{q_i}(2))$, it is enough to show
\[
\bigl(\is{e_k}{\theta_q(a)e_k}\bigr)_{k\in\ZZ_+}\in\CC\I+\mathrm{c}_0(\ZZ_+)
\]
for any $a\in\Pol(\oon{SU}_q(2))$. By \cite[Theorem 1.2]{su2} this claim follows from
\[
\is{e_k}{\theta_q\bigl(\alpha^r\gamma^s{\gamma^*}^t\bigr)e_k}=
\delta_{r,0}q^{k(s+t)}
=\is{e_k}{\theta_q\bigl({\alpha^*}^r\gamma^s{\gamma^*}^t\bigr)e_k}
\]
for $k,r,s,t\in\ZZ_+$.
\end{proof}

\begin{remark}
The results established above show that we can find a compact quantum group $G_q$ and time $\bt=\tfrac{\pi}{2\log{q}}$ such that the scaling automorphism $\tau^{G_q}_\bt$ is non-trivial, inner and not implemented by any unitary in $\C(G_q)$. In particular it is not implemented by a group-like unitary. Using products and bicrossed products one can construct a compact quantum group where these properties hold at a given finite sequence of times, which are independent over $\QQ$ (c.f.~constructions in \cite{faktory}).
\end{remark}

\section{Other examples}\label{sect:otherEx}

\subsection{The quantum \texorpdfstring{$\oon{E}(2)$}{E(2)} group}\label{sect:E2}\hspace*{\fill}

The quantum $\oon{E}(2)$ (also referred to as the quantum group $\oon{E}_q(2)$) was originally defined in \cite{slw_E2}. The crucial development of determining the Haar measure of $\GG$ was accomplished in \cite{baaj92}. Finally the most comprehensive text dealing with this quantum group is the PhD thesis \cite{Jacobs}. Throughout this section we will write $\GG$ for $\oon{E}_q(2)$.

In order to describe this quantum group we first fix a real number $q\in\left]0,1\right[$. Next we consider two operators $v$ and $n$ on $\ell^2(\ZZ)\tens\ell^2(\ZZ)$ defined as
\[
v=s^*\tens\I,\qquad{n}=Q\tens{s},
\]
where $s$ is the shift defined on the standard basis of $\ell^2(\ZZ)$ as $e_k\mapsto{e_{k+1}}$ ($k\in\ZZ$) and $Q$ is the positive self-adjoint operator such that $Qe_k=q^ke_k$ for all $k$. Noting that $n$ is normal with $\oon{Sp}(n)=\bigl\{z\in\CC\,\bigr|\bigl.\,|z|\in{q}^\ZZ\bigr\}\cup\{0\}$ we define $\C_0(\GG)$ as the closure in $\B(\ell^2(\ZZ)\tens\ell^2(\ZZ))$ of the set of sums of the form
\[
\sum_{k\in\ZZ}v^kf_k(n)
\]
where $f_k\in\C_0(\oon{Sp}(n))$ and $f_k=0$ for almost all $k\in\ZZ$. One easily shows that $v\in\oon{M}(\C_0(\GG))$ and that $n$ is affiliated with $\C_0(\GG)$ (\cite{slw_unbo}). Moreover $\C_0(\GG)$ is generated by $v$ and $n$ in the sense of \cite{slw_gen}.

The comultiplication $\Delta$ describing the quantum group structure of $\GG$ is defined in \cite[Theorem 1.2]{slw_E2}. It is a \emph{morphism} of \cst-algebras from $\C_0(\GG)$ to $\C_0(\GG)\tens\C_0(\GG)$ (see \cite{slw_unbo}) determined by its values on $v$ and $n$ which are
\[
\Delta(v)=v\tens{v},\qquad\Delta(n)=v\tens{n}\;\dot{+}\;n\tens{v^*}
\]
with $\dot{+}$ denoting the closure of the (closable) operator $v\tens{n}+n\tens{v^*}$.

The Haar measure $\bh$ of $\oon{E}_q(2)$ (which is both left and right invariant) was first described in \cite{baaj92}. The GNS representation corresponding to $\bh$ acts on the Hilbert space $\ell^2(\ZZ)\tens\ell^2(\ZZ)\tens\ell^2(\ZZ)$ and maps $v$ to $\I\tens{s^*}\tens{s^*}$ and $n$ to $s\tens{Q}\tens\I$. Since the unitary $X\colon{e_k}\tens{e_l}\mapsto{e_k}\tens{e_{l-k}}$ conjugates $s^*\tens{s^*}$ onto $s^*\tens\I$ while $X(Q\tens\I)X^*=Q\tens\I$, we find that $\Linf(\GG)$ is isomorphic to $\Linf(\TT)\vtens\mathsf{N}$, where $\mathsf{N}$ is the von Neumann algebra on $\ell^2(\ZZ)$ generated by $s$ and bounded measurable functions of $Q$. Since the latter is easily seen to contain all compact operators on $\ell^2(\ZZ)$, we obtain
\begin{equation}\label{eq:LE2iso}
\Linf(\GG)\cong\Linf(\TT)\vtens\B\bigl(\ell^2(\ZZ)\bigr)
\end{equation}
Baaj also proves $\modOp[\bh]=\I\tens\I\tens{Q^2}$. These two facts immediately show that
\[
\TsigInn(\GG)=\RR,\qquad
\Tsig(\GG)=\tfrac{\pi}{\log{q}}\ZZ,
\]
and consequently also $\TsigAInn(\GG)=\RR$.

The scaling group of $\GG$ is described in detail in \cite{Jacobs}. We have $\tau^{\GG}_t(v)=v$ for all $t\in\RR$ while $\tau^{\GG}_t(n)=q^{-2\ii{t}}n$. It follows from the uniqueness of the polar decomposition that $\tau^{\GG}_t$ fixes $|n|$ and hence multiplies the phase factor of $n$ by $q^{-2\ii{t}}$. In the isomorphism \eqref{eq:LE2iso} this phase factor is easily seen to be the unitary generator of the center $\Linf(\TT)\tens\I$ of $\Linf(\GG)\cong\Linf(\TT)\vtens\B(\ell^2(\ZZ))$ which implies that the non-trivial scaling automorphisms are not inner nor approximately inner, i.e.
\[
\TtauInn(\GG)=\TtauAInn(\GG)=\Ttau(\GG)=\tfrac{\pi}{\log{q}}\ZZ.
\]
It is known that $\GG$ is unimodular and hence $\Mod(\GG)=\RR$.

The values of invariants for the dual quantum group $\hh{\oon{E}_q(2)}$ can be calculated in a similar fashion. However we can use the many results contained in \cite{Jacobs} to make some shortcuts. Denoting $\hh{\oon{E}_q(2)}$ by $\hh{\GG}$ we first of all have $\Ttau(\hh{\GG})=\Ttau(\GG)=\tfrac{\pi}{\log{q}}\ZZ$. Next we note that by \cite[Lemma 2.7.31]{Jacobs} the modular group for the left Haar measure of $\hh{\GG}$ coincides with $\tau^{\hh{\GG}}$ (this also follows from a more general fact, see \cite[Remarks before Lemma 6.3]{modular}). Hence $\Ttau(\hh{\GG})=\Tsig(\hh{\GG})=\tfrac{\pi}{\log{q}}\ZZ$.

Furthermore \cite[Proposition 2.8.4]{Jacobs} gives a direct description of the modular element of $\hh{\GG}$, which immediately implies $\Mod(\hh{\GG})=\tfrac{\pi}{\log{q}}\ZZ$.

The invariants left to determine are $\TtauInn(\hh{\GG})$, $\TtauAInn(\hh{\GG})$, $\TsigInn(\hh{\GG})$, and $\TsigAInn(\hh{\GG})$. For this let us recall the presentation of the \cst-algebra $\C_0(\hh{\GG})$ proposed in \cite{Jacobs} which is slightly different from the one originally introduced in \cite{slw_E2}. Namely in the GNS representation of the left Haar measure, the \cst-algebra $\C_0(\hh{\GG})$ is the \cst-algebra of operators on $\ell^2(\ZZ)\tens\ell^2(\ZZ)\tens\ell^2(\ZZ)$ generated by $a,a^{-1}$ and $b$, where $a=Q^{\mhalf}\tens{Q}\tens\I$ and $b=Q^{\half}\tens{s}\tens\I$ (cf.~\cite[Proposition 2.7.33]{Jacobs} or \cite[Proposition 5.2]{baaj92}). Furthermore, as shown in \cite[Proposition 2.6.50]{Jacobs}, we have
\[
\tau^{\hh{\GG}}_t(a)=a\quad\text{and}\quad\tau^{\hh{\GG}}_t(b)=q^{-2\ii{t}}b
\]
for all $t$ which clearly shows that all these automorphisms are inner, implemented e.g.~by $\{a^{-2\ii{t}}\}_{t\in\RR}$. It follows that
\[
\TtauInn(\hh{\GG})=\TtauAInn(\hh{\GG})=\TsigInn(\hh{\GG})
=\TsigAInn(\hh{\GG})=\RR.
\]

\subsection{The quantum ``\texorpdfstring{$az+b$}{az+b}'' group}\hspace*{\fill}

There are three classes of quantum deformations of the classical ``$az+b$'' group described in \cite{azb,nazb}. They are defined by the choice of the deformation parameter
\begin{enumerate}
\item\label{case:1} $q=\ee^{\frac{2\pi\ii}{N}}$ with $N\in\{6,8,\dotsc\}$,
\item\label{case:2} $q\in\left]0,1\right[$,
\item\label{case:3} $q=\ee^{\frac{1}{\rho}}$ with $\oon{Re}{\rho}<0$, $\oon{Im}{\rho}=\tfrac{N}{2\pi}$ with $N\in\{\pm2,\pm4,\dotsc\}$.
\end{enumerate}
Reserving now the symbol $\GG$ for the quantum ``$az+b$'' group we have that the \cst-algebra $\C_0(\GG)$ is generated in the sense of \cite{slw_gen} by two normal unbounded elements $a$ and $b$ affiliated with $\C_0(\GG)$ such that
\[
ab=q^2ba,\qquad
ab^*=b^*a
\]
and $\oon{Sp}(a),\oon{Sp}(b)\subset\overline{\Gamma_q}$, where $\Gamma_q$ is a multiplicative subgroup of $\CC\setminus\{0\}$ generated by $q$ and $\bigl\{q^{\ii{t}}\,\bigr|\bigl.t\in\RR\bigr\}$ (for details see \cite{azb,nazb}). Note for future reference that in case \eqref{case:2}, we have
\[
\Gamma_q=\bigl\{z\in\CC\,\bigr|\bigl.\,|z|\in{q^\ZZ}\bigr\}.
\]
The comultiplication on $\C_0(\GG)$ is determined by its values on the two generators:
\[
\Delta(a)=a\tens{a},\quad\Delta(b)=a\tens{b}\;\dot{+}\;b\tens\I,
\]
where, as in Section \ref{sect:E2}, the symbol $\dot{+}$ indicates the closure of the sum of unbounded elements. The Haar measure of $\GG$ was determined in \cite{VanDaeleTheHaarMeasure} in case \eqref{case:1} and in \cite{SLWHaarMeasure} in all three cases.

It is known that in all cases the locally compact quantum group $\GG$ is anti-isomorphic to its dual $\hh{\GG}$ (i.e.~isomorphic to $\hh{\GG}^{\text{\tiny{\rm{op}}}}$). It follows that
\[
T^\tau_{\bullet\!}(\GG)=T^\tau_{\bullet\!}(\hh{\GG})
\]
for all choices of $\bullet$. Moreover, it was shown in \cite[Section 7]{modular} that $\Linf(\GG)$ (and hence also $\Linf(\hh{\GG})$) is a factor of type $\mathrm{I}_\infty$, so
\[
\TsigInn(\GG)=\TsigAInn(\GG)=\TsigInn(\hh{\GG})=\TsigAInn(\hh{\GG})=\RR.
\]

It is known that the scaling automorphisms of $\GG$ are all inner and implemented by $\{|a|^{\ii{t}}\}_{t\in\RR}$ (this is explicitly proved in \cite{nazb} for case \eqref{case:3} and in \cite{SLWHaarMeasure} for cases \eqref{case:1} and \eqref{case:2}). Hence
\[
\TtauInn(\GG)=\TtauAInn(\GG)=\TtauInn(\hh{\GG})=\TtauAInn(\hh{\GG})=\RR,
\]
which by \cite[Corollary 2.9.32]{Sakai} also follows from the fact that $\Linf(\GG)$ is a type $\mathrm{I}_\infty$ factor. Furthermore, we have $\tau_t^\GG(a)=a$ and $\tau_t^\GG(b)=q^{2\ii{t}}b$ for all $t$ which immediately implies that $\Ttau(\GG)=\{0\}$ in cases \eqref{case:1} and \eqref{case:3}, while
\[
\Ttau(\GG)=\tfrac{\pi}{\log{q}}\ZZ
\]
in case \eqref{case:2}. Thus, by \eqref{eq20} and \eqref{eq23} in Proposition \ref{prop5}, we obtain
\[
\Tsig(\GG)=\{0\}\quad\text{and}\quad
\Mod(\GG)=\{0\}
\]
in cases \eqref{case:1} and \eqref{case:3}. The same holds for $\hh{\GG}$ (cf.~Remark \ref{rem:przedProp5}\eqref{rem:przedProp5-2}).

The remaining invariants are $\Tsig(\GG)$ and $\Mod(\GG)$ in case \eqref{case:2} (which are the same for $\hh{\GG}$). For the first one we note that by the discussion preceding \cite[Proposition 7.7]{modular} we have $\sigma^\psi_t(x)=|b|^{2\ii{t}}x|b|^{-2\ii{t}}$ for all $x\in\Linf(\GG)$ and it follows that $\Tsig(\GG)=\{0\}$ in cases \eqref{case:1} and \eqref{case:3}, while $\Tsig(\GG)=\tfrac{\pi}{\log{q}}\ZZ$ in case \eqref{case:2}.

Now that we know that $\Tsig(\GG)=\Ttau(\GG)$ (and that $\GG$ is anti-self-dual), equation \eqref{eq20} implies that $\Tsig(\GG)\subset\Mod(\GG)$, while \eqref{eq23} gives $\Mod(\GG)\subset\tfrac{1}{2}\Tsig(\GG)$. In other words
\[
\tfrac{\pi}{\log{q}}\ZZ\subset\Mod(\GG)\subset\tfrac{\pi}{2\log{q}}\ZZ
\]
(and consequently $\Mod(\GG)$ is either of the two groups above). To see that it is $\tfrac{\pi}{\log{q}}\ZZ$ let us take advantage of the many results in \cite{modular}.

More precisely by \cite[Proposition 5.5]{modular} and the explicit determination of the Plancherel objects for $\GG$ in \cite[Section 7]{modular} we have that $t\in\Mod(\GG)$ if and only if
\begin{equation}\label{eq:delit1}
|qa^{-1}b|^{-2\ii{t}}|b|^{2\ii{t}}=\I
\end{equation}
(for this we also need the fact that the scaling constant is trivial in case \eqref{case:2}, cf.~\cite{SLWHaarMeasure,modular}). In order to determine for which $t$ \eqref{eq:delit1} holds we use the fact that the GNS representation of $\Linf(\GG)$ is equivalent to the representation on $\Ltwo(\Gamma_q)$ with (the images of) $a$ and $b$ determined by
\[
(b\psi)(\gamma)=\gamma\psi(\gamma),\qquad\psi\in\Ltwo(\Gamma_q),\:\gamma\in\Gamma_q
\]
and denoting the polar decomposition of $a$ by $(\oon{Phase}a)|a|$ we have
\[
\left.
\begin{array}{r@{\:=\:}l}
\bigl((\oon{Phase}a)\psi\bigr)(\gamma)&\psi(q\gamma)\\[3pt]
\bigl(|a|^{\ii{t}}\psi\bigr)(\gamma)&\psi(q^{\ii{t}}\gamma)
\end{array}\right.,
\qquad\psi\in\Ltwo(\Gamma_q),\:\gamma\in\Gamma_q,\:t\in\RR.
\]
Now let $\{g_{k,l}\}_{k,l\in\ZZ}$ be the orthonormal basis of $\Ltwo(\Gamma_q)$ defined by
\[
g_{k,l}(\gamma)=\begin{cases}
\left(\frac{\gamma}{|\gamma|}\right)^k&|\gamma|=q^l\\
0&|\gamma|\neq{q^l}
\end{cases}.
\]
Then one can check by direct computation that $|qa^{-1}b|g_{k,l}=q^{l-k}g_{k,l}$ and $|b|g_{k,l}=q^lg_{k,l}$ for all $k,l$ and consequently
\[
|qa^{-1}b|^{-2\ii{t}}|b|^{2\ii{t}}g_{k,l}
=q^{-2\ii{t}(l-k)+2\ii{t}l}g_{k,l}=q^{2\ii{t}k}g_{k,l},\qquad{k,l}\in\ZZ,
\]
so that \eqref{eq:delit1} holds if and only if $t\in\tfrac{\pi}{\log{q}}\ZZ$.

\subsection{The quantum group \texorpdfstring{$\oon{U}_F^+$}{UF+}}\label{sect:UFplus}\hspace*{\fill}

Let $F\in\oon{GL}(N,\CC)$ ($N\geq{2}$) be an arbitrary invertible matrix and $\oon{U}_F^+$ the corresponding free unitary quantum group whose Haar measure $\bh_{\oon{U}_F^+}$ and scaling automorphisms $\tau^{\oon{U}_F^+}_t$ will throughout this section be denoted by $\bh$ and $\tau_t$. We will also see $\Linf(\oon{U}_F^+)$ as a subspace of $\Ltwo(\oon{U}_F^+)$ via the GNS map of $\bh$. As in \cite{banica} (and Section \ref{sect:UqU}) we identify $\Irr{\oon{U}_F^+}$ with the free product monoid $\ZZ_+\star\ZZ_+$ with generators $\alpha$ and $\beta$ corresponding to the defining representation and its conjugate respectively.

We want to calculate $T^\tau_\bullet$ invariants of $\oon{U}_F^+$. First, we establish an analogue of \cite[Proposition 32]{DeCommerFreslonYamashita} for the scaling group. We follow closely \cite[Appendix]{DeCommerFreslonYamashita}.

As in Section \ref{sect:UqU}, for $x\in\ZZ_+\star\ZZ_+$ choose a representative $u^x$ which acts on $\cH_x$ and define a state $\omega_x$ on $\B(\cH_x)$ by
\[
\omega_x(T)=\tfrac{1}{\qdim{x}}\Tr(T\uprho_x^{-1}),\qquad{T}\in\B(\cH_x)
\]
which satisfies \cite[Proof of Theorem 1.4.3]{NeshveyevTuset}
\[
(\bh\tens\id)\bigl({u^x}^*(\I\tens{T})u^x\bigr)
=\omega_x(T)\I,\qquad{T}\in\B(\cH_x).
\]
Next, let $\cK_x$ be the GNS Hilbert space defined by $\omega_x$, i.e.~$\cK_x$ is equal to $\B(\cH_x)$ as a vector space, with inner product $\is{T}{T'}=\omega_x(T^*T')$ ($T,T'\in\cK_x$). Since
\begin{equation}\label{eq28-drugie}
(\sigma^{\bh}_t\tens\id)u^x
=(\I\tens\uprho_x^{\ii{t}})u^x(\I\tens\uprho_x^{\ii{t}}),\qquad{t}\in\RR,\:x\in\ZZ_+\star\ZZ_+
\end{equation}
(\cite[Page 30]{NeshveyevTuset}), we obtain
\begin{equation}\label{eq1-drugie}
(\bh\tens\omega_x)\bigl(u^x(a\tens\I){u^x}^*\bigr)=\bh(a),\qquad{a}\in\Linf(\oon{U}_F^+).
\end{equation}
Indeed,
\begin{align*}
(\bh\tens\omega_x)\bigl(u^x(a\tens\I){u^x}^*\bigr)
&=\Bigl(\bh\tens\tfrac{\Tr}{\qdim{x}}\Bigr)
\bigl(u^x(a\tens\I){u^x}^*(\I\tens\uprho_x^{-1})\bigr)\\
&=\Bigr(\bh\tens\tfrac{\Tr}{\qdim{x}}\Bigr)
\bigl((a\tens\I){u^x}^*(\I\tens\uprho_x^{-1})(\sigma^{\bh}_{-\ii}\tens\id)(u^x)\bigr)\\
&=\Bigl(\bh\tens\tfrac{\Tr}{\qdim{x}}\Bigr)
\bigl((a\tens\I){u^x}^*u^x(\I\tens\uprho_x)\bigr)
=\bh(a).
\end{align*}
For $x\in\ZZ_+\star\ZZ_+$ and $t\in\RR$ define $V_x^{\tau,t}\colon\Ltwo(\oon{U}_F^+)\to\Ltwo(\oon{U}_F^+)\tens\cK_x$ as the isometric extension of
\[
\Linf(\oon{U}_F^+)\ni{a}\longmapsto
(\tau_t\tens\id)(u^x)(a\tens\I){u^x}^*.
\]
To see that this map is isometric take $a,b\in\Linf(\oon{U}_F^+)\subset\Ltwo(\oon{U}_F^+)$. Then using \eqref{eq1-drugie} we obtain
\begin{align*}
\is{V^{\tau,t}_x(a)}{V^{\tau,t}_x(b)}
&=(\bh\tens\omega_x)\bigl(u^x(a^*\tens\I)(\tau_t\tens\id)({u^x}^*u^x)
(b\tens\I){u^x}^*\bigr)\\
&=(\bh\tens\omega_x)\bigl(u^x(a^*b\tens\I){u^x}^*\bigr)=\bh(a^*b)=\is{a}{b}.
\end{align*}

The following proposition is (together with its proof) an analogue of \cite[Proposition 32]{DeCommerFreslonYamashita} with the modular group replaced by the scaling group.

\begin{proposition}\label{prop:Vaes33}
For $a\in\Linf(\oon{U}_F^+)$ and $t\in\RR$ we have
\[
\bigl\|a-\bh(a)\I\bigr\|_2\leq{14}\max\Bigl\{
\bigl\|a\tens\I-V^{\tau,t}_{\alpha\beta}(a)\bigr\|,
\bigl\|a\tens\I-V^{\tau,t}_{\alpha^2\beta}(a)\bigr\|,
\bigl\|a\tens\I-V^{\tau,t}_{\beta\alpha}(a)\bigr\|\Bigr\}.
\]
\end{proposition}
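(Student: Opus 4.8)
The plan is to follow the argument of \cite[Appendix, Proposition 32]{DeCommerFreslonYamashita} essentially verbatim, replacing the modular group by the scaling group throughout; the only genuinely new point is to check that the unimodular phases produced by $\tau_t$ do not spoil the estimate, uniformly in $t$. First I would decompose $\Ltwo(\oon{U}_F^+)=\bigoplus_{y\in\ZZ_+\star\ZZ_+}H_y$ into its Peter--Weyl (isotypical) components, where $H_y$ is the closed span of the matrix coefficients of $u^y$ and $H_e=\CC\I$, and write $a=\sum_y a_y$ accordingly. Since $\bh(a)\I$ is exactly the component $a_e$, the left-hand side is $\|a-\bh(a)\I\|_2^2=\sum_{y\neq e}\|a_y\|_2^2$. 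Because each $V^{\tau,t}_x$ is isometric and $\|\I\|_{\cK_x}^2=\omega_x(\I)=\tfrac{1}{\qdim{x}}\Tr(\uprho_x^{-1})=1$, for every $x$ one has
\[
\bigl\|a\tens\I-V^{\tau,t}_x(a)\bigr\|^2=2\|a\|_2^2-2\oon{Re}\is{a\tens\I}{V^{\tau,t}_x(a)},
\]
so the statement reduces to a lower bound for $\sum_{y\neq e}\|a_y\|_2^2$ in terms of the three overlap defects $\|a\|_2^2-\oon{Re}\is{a\tens\I}{V^{\tau,t}_x(a)}$.

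Next I would compute these overlaps. Expanding $u^x=\sum_{p,q}u^x_{pq}\tens e_{pq}$ in a basis diagonalising $\uprho_x$ and using $\tau_t(u^x_{pq})=\uprho_{x,p}^{\ii t}\uprho_{x,q}^{-\ii t}u^x_{pq}$ (equation \eqref{sigmatau}) together with the definition of $\omega_x$ gives
\[
\is{a\tens\I}{V^{\tau,t}_x(a)}=\tfrac{1}{\qdim{x}}\sum_{p,q}\uprho_{x,p}^{\ii t-1}\uprho_{x,q}^{-\ii t}\,\bh\bigl(a^*u^x_{pq}\,a\,(u^x_{pq})^*\bigr).
\]
The products $u^x_{pq}\,a_y\,(u^x_{pq})^*$ lie in the span of the components occurring in $u^x\tp u^y\tp u^{\overline{x}}$, so by orthogonality of the $H_y$ the Haar functional selects precisely the terms returning to the starting component; via the fusion rules \eqref{eq41} and the invariance identity \eqref{eq1-drugie} this exhibits $a\mapsto\oon{Re}\is{a\tens\I}{V^{\tau,t}_x(\cdot)}$ as a ``random walk'' form on the free-monoid fusion graph, acting on each $H_y$ through explicit coefficients built from quantum dimensions and perturbed only by the unimodular phases $\uprho_{x,p}^{\ii t}\uprho_{x,q}^{-\ii t}$. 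The crucial observation is that these phases are unimodular and enter only through the fusion coefficients, so the estimates, which compare norms of mutually orthogonal components, are insensitive to them; this is exactly what makes the final bound uniform in $t$ and identical to the modular case.

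The heart of the matter, and the main obstacle, is the combinatorial estimate showing that the three words $\alpha\beta$, $\alpha^2\beta$ and $\beta\alpha$ suffice to control every nontrivial component simultaneously, rather than one component at a time. Here I would exploit the tree-like (free-monoid) structure of $\Irr{\oon{U}_F^+}$: by examining which $u^{ab}$ occur in $u^x\tp u^y\tp u^{\overline{x}}$ for these three choices of $x$, one shows that for each $y\neq e$ at least one of the corresponding defect forms relates $a_y$ to components indexed by strictly shorter words, which permits a ``gradient along the tree'' telescoping of any $y$ down to the root $e$ and bounds $\|a_y\|_2$ by a geometric combination of the three defects. Assembling these bounds over all $y\neq e$, summing the resulting geometric series and passing from the sum over the three words to their maximum is precisely where the numerical constant $14$ is produced. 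I would track the constants through the telescoping exactly as in \cite[Appendix]{DeCommerFreslonYamashita}, the sole modification being the bookkeeping of the scaling phases, which by the previous paragraph affect none of the magnitudes involved.
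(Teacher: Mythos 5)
Your overall plan---follow \cite[Proposition 32]{DeCommerFreslonYamashita} with the modular group replaced by the scaling group, the only new point being that the phases produced by $\tau_t$ are harmless---is indeed the paper's intent, and your preparatory steps are correct: the isometry of $V^{\tau,t}_x$, the identity $\bigl\|a\tens\I-V^{\tau,t}_x(a)\bigr\|^2=2\|a\|_2^2-2\oon{Re}\is{a\tens\I}{V^{\tau,t}_x(a)}$, and your explicit overlap formula all check out. The problem is that your reconstruction of the core of Vaes's argument is not that argument, and the mechanism you describe would fail. The proof does not work with the full Peter--Weyl decomposition $a=\sum_y a_y$, and it contains no telescoping toward shorter words and no geometric series. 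It uses the coarse decomposition $\Ltwo(\oon{U}_F^+)=\CC\I\oplus L_\alpha\oplus L_\beta$ into prefix sectors, where $L_\gamma$ is the closed span of coefficients of irreducibles whose word begins with $\gamma$ (exactly the decomposition this paper sets up for its $(n+1)$-fold analogue, Proposition \ref{prop3-drugie}). Writing $a=\bh(a)\I+b_\alpha+b_\beta$, the fusion rules give three inclusions: $V^{\tau,t}_{\beta\alpha}$ maps the $\alpha$-sector into $L_{\beta\alpha}\tens\cK_{\beta\alpha}$ (a subsector of $L_\beta$), while $V^{\tau,t}_{\alpha\beta}$ and $V^{\tau,t}_{\alpha^2\beta}$ map the $\beta$-sector into $L_{\alpha\beta}\tens\cK_{\alpha\beta}$ and $L_{\alpha^2}\tens\cK_{\alpha^2\beta}$, whose targets are \emph{mutually orthogonal} subsectors of $L_\alpha$. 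Isometry plus these inclusions give, with $\eps$ the maximum of the three defects, $\|b_\beta\|_2^2\geq\|b_\alpha\|_2^2-O(\eps)$ and---using the two words simultaneously---$\|b_\alpha\|_2^2\geq2\|b_\beta\|_2^2-O(\eps)$; combining yields $\|b_\alpha\|_2^2\geq2\|b_\alpha\|_2^2-O(\eps)$, forcing $\|b_\alpha\|_2^2+\|b_\beta\|_2^2=O(\eps)$. This two-against-one doubling is the whole reason for the choice of the words $\alpha\beta$, $\alpha^2\beta$, $\beta\alpha$. Nothing contracts, so there is no series to sum, and the constant $14$ does not come from passing between a sum over three words and their maximum: specializing the constant $\tfrac{2(7-4D_n)}{2(1-D_n)^2-1}$ of Proposition \ref{prop3-drugie} to the isometric case $D_n=0$ gives $14=8+6$, the sum of the two quadratic bounds on $\|b_\alpha\|_2^2$ and $\|b_\beta\|_2^2$ that come out of the doubling argument.

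Two concrete reasons your mechanism cannot be repaired into the stated bound. First, the combinatorial claim at its heart is false: the shifts move components toward \emph{longer} words carrying the prefix $x$, never toward shorter ones. For $y=\alpha$ one computes $\beta\alpha\tp\alpha\tp\overline{\beta\alpha}=\beta\alpha^2\beta\alpha\oplus\beta\alpha^2$, $\alpha\beta\tp\alpha\tp\overline{\alpha\beta}=\alpha\beta\alpha^2\beta\oplus\alpha^2\beta$ and $\alpha^2\beta\tp\alpha\tp\overline{\alpha^2\beta}=\alpha^2\beta\alpha^2\beta^2\oplus\alpha^3\beta^2$, so none of the three defect forms couples $a_\alpha$ to any strictly shorter word, and no descent to the root $e$ is available. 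Second, any iteration that pushes mass to ever longer prefixed positions and then invokes $\ell^2$-summability must use the defect once per generation, so the error accumulates with the number of iterations and one cannot obtain a bound that is linear in $\eps$ with a universal constant; the linear bound $14\,\eps$ requires using the defect only a bounded number of times, which is precisely what the sector decomposition and the orthogonality of the two images of $L_\beta$ achieve. Without that idea the proof does not close.
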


Before continuing let us note that the operator $\uprho_\alpha$ can be expressed in terms of $F$ (with an appropriate identification of $\cH_\beta$ with $\CC^N$ on which $F$ acts). Indeed, if $\Tr(F^*F)=\Tr\bigl((F^*F)^{-1}\bigr)$ we have
\[
\uprho_\alpha=(F^*F)^\top
\]
(\cite[Example 1.4.2]{NeshveyevTuset})
while in the general case
\[
\uprho_\alpha=\lambda(F^*F)^\top,
\]
where
\[
\lambda=\sqrt{\tfrac{\Tr((F^*F)^{-1})}{\Tr(F^*F)}}.
\]

For each $x\in\Irr{\oon{U}_F^+}$ choose a basis of the Hilbert space $\cH_x$ which diagonalizes $\uprho_x$ and let the corresponding rank one operators be denoted by $e^x_{i,j}\in\B(\cH_x)$.

\begin{theorem}\label{thm:claim}
Assume $\oon{U}_F^+$ is not of Kac type. We have
\begin{equation}\label{eq:claim}
\TtauAInn(\oon{U}_F^+)
=\TtauInn(\oon{U}_F^+)
=\Ttau(\oon{U}_F^+)
=\bigcap_{\Lambda\in\oon{Sp}(F^*F\tens(F^*F)^{-1})\setminus\{1\}}\,\tfrac{2\pi}{\log{\Lambda}}\ZZ.
\end{equation}
\end{theorem}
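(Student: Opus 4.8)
The plan is to prove all three equalities by combining the trivial inclusions $\Ttau(\oon{U}_F^+)\subseteq\TtauInn(\oon{U}_F^+)\subseteq\TtauAInn(\oon{U}_F^+)$ (valid for every locally compact quantum group) with two genuine facts: the explicit computation of $\Ttau(\oon{U}_F^+)$ as the right-hand side of \eqref{eq:claim}, and the reverse inclusion $\TtauAInn(\oon{U}_F^+)\subseteq\Ttau(\oon{U}_F^+)$.

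First I would compute $\Ttau(\oon{U}_F^+)$. Fixing for each $x\in\Irr{\oon{U}_F^+}$ the basis of $\cH_x$ diagonalising $\uprho_x$, formula \eqref{sigmatau} gives $\tau_t(u^x_{i,j})=(\uprho_{x,i}\uprho_{x,j}^{-1})^{\ii t}u^x_{i,j}$, so $t\in\Ttau(\oon{U}_F^+)$ exactly when $(\uprho_{x,i}/\uprho_{x,j})^{\ii t}=1$ for every $x$ and every pair $i,j$ with $u^x_{i,j}\neq0$. Since every irreducible representation sits inside a tensor product of copies of $\alpha$ and $\beta=\overline{\alpha}$ (fusion rules \eqref{eq41}), $\uprho$ is multiplicative for tensor products, and $\oon{Sp}(\uprho_\beta)=\oon{Sp}(\uprho_\alpha)^{-1}$, every such ratio lies in the subgroup of $\RR_{>0}$ generated by the eigenvalue-ratios of $\uprho_\alpha$, while conversely each of those ratios already occurs among the (non-vanishing) entries of the fundamental representation. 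Because $\uprho_\alpha=\lambda(F^*F)^\top$, the scalar $\lambda$ cancels and these ratios are precisely the numbers $\Lambda\in\oon{Sp}(F^*F\tens(F^*F)^{-1})$. As $\Lambda^{\ii t}=1$ is equivalent to $t\in\tfrac{2\pi}{\log\Lambda}\ZZ$ for $\Lambda\neq1$, this gives $\Ttau(\oon{U}_F^+)=\bigcap_{\Lambda\neq1}\tfrac{2\pi}{\log\Lambda}\ZZ$.

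The heart of the matter is $\TtauAInn(\oon{U}_F^+)\subseteq\Ttau(\oon{U}_F^+)$, for which I would invoke Proposition \ref{prop:Vaes33}. The decisive elementary observation is that for any unitary $w\in\Linf(\oon{U}_F^+)$ one has the exact identity $(\Ad(w)\tens\id)(u^x)(w\tens\I)(u^x)^*=w\tens\I$ (using unitarity of $u^x$), so that the isometry obtained from $\Ad(w)$ in place of $\tau_t$ fixes $w\tens\I$; in particular, if $\tau_t=\Ad(v)$ is genuinely inner then $V^{\tau,t}_x(v)=v\tens\I$ and Proposition \ref{prop:Vaes33} immediately forces $v$ to be scalar. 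For the approximate case take $t\in\TtauAInn(\oon{U}_F^+)$ and unitaries $(w_n)$ with $\Ad(w_n)\to\tau_t$ pointwise $\sigma$-strongly${}^{*}$ (permissible since $\Linf(\oon{U}_F^+)$ has separable predual). Applying Proposition \ref{prop:Vaes33} to $a=w_n$, the defect $\|w_n\tens\I-V^{\tau,t}_x(w_n)\|$ is governed entirely by $((\tau_t-\Ad(w_n))\tens\id)(u^x)$, and expanding the inner product in the diagonalising bases (with the aid of the invariance \eqref{eq1-drugie}) reduces it to the scalars $\bh\!\bigl(w_n^{*}\,(\tau_t-\Ad(w_n))(u^x_{i,j})\,w_n\,(u^x_{i,j})^{*}\bigr)$. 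Rewriting $w_n^{*}(\tau_t-\Ad(w_n))(u^x_{i,j})w_n=\Ad(w_n^{-1})\bigl(\tau_t(u^x_{i,j})\bigr)-u^x_{i,j}$ and using that $\Ad(w_n^{-1})\to\tau_{-t}$ sends the \emph{fixed} element $\tau_t(u^x_{i,j})$ to $u^x_{i,j}$ $\sigma$-strongly${}^{*}$, each such scalar tends to $0$; the sums being finite, $\|w_n\tens\I-V^{\tau,t}_x(w_n)\|\to0$ for $x\in\{\alpha\beta,\alpha^2\beta,\beta\alpha\}$.

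Proposition \ref{prop:Vaes33} then yields $\|w_n-\bh(w_n)\I\|_2\to0$. Since $\|w_n\|_2=1$ this forces $|\bh(w_n)|\to1$, so along a subsequence $w_n\to c\,\I$ $\sigma$-strongly with $|c|=1$; joint $\sigma$-strong${}^{*}$ continuity of multiplication on bounded sets gives $\Ad(w_n)(a)\to a$ for every $a$, whence $\tau_t=\id$ and $t\in\Ttau(\oon{U}_F^+)$, completing the argument. The main obstacle is precisely the step where the test element $a=w_n$ moves with $n$: naive operator-norm estimates for the defect are useless, because $((\tau_t-\Ad(w_n))\tens\id)(u^x)$ does not tend to $0$ in norm. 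The resolution is the algebraic cancellation displayed above, which replaces the defect by quantities converging $\sigma$-weakly, so that normality of $\bh$ can be applied to push them to zero.
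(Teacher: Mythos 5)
Your proof runs on the same engine as the paper's, namely Proposition \ref{prop:Vaes33}, and your identification of $\Ttau(\oon{U}_F^+)$ from the action of $\tau_t$ on matrix elements of $u^\alpha$ is the same as the paper's. Where you genuinely diverge is in how the defects $\bigl\|w_n\tens\I-V^{\tau,t}_x(w_n)\bigr\|$ are controlled: the paper fixes a matrix element with $\uprho_{\alpha,k}^{\ii t}\uprho_{\alpha,l}^{-\ii t}\neq1$, chooses an explicit $\eps$, and derives the contradiction $c\leq c/2$ through careful norm estimates, whereas you use the exact identity $(\Ad(w)\tens\id)(u^x)(w\tens\I)(u^x)^*=w\tens\I$ to rewrite the defect as a finite sum of scalars $\bh\bigl([\Ad(w_n^*)(\tau_t(u^x_{i,j}))-u^x_{i,j}]\,(u^x_{i,j})^*\bigr)$ and pass to the limit. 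That reduction is correct (it matches the expansion via \eqref{eq1-drugie} and the isometry of $V^{\tau,t}_x$), and your one-line disposal of the strictly inner case ($V^{\tau,t}_x(v)=v\tens\I$ forces $v$ scalar) is cleaner than anything in the paper. Two caveats remain, one minor and one substantive. The minor one: the claim $\Ad(w_n^*)\to\tau_{-t}$ is exactly continuity of inversion on $\oon{Aut}\bigl(\Linf(\oon{U}_F^+)\bigr)$ in the topology defining $\oon{\overline{Inn}}$; this is true (via the standard implementation, cf.~\cite{Takesaki2}) but is not a formal consequence of $\Ad(w_n)\to\tau_t$ and must be invoked explicitly.

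The substantive gap is the final step. From $\|w_n-\bh(w_n)\I\|_2\to0$ and boundedness you get $w_n-\bh(w_n)\I\to0$ $\sigma$-strongly, but \emph{not} $\sigma$-strongly${}^*$: precisely because $\oon{U}_F^+$ is not of Kac type, $\bh$ is not a trace, so $\|a\|_2$ gives no control whatsoever on $\|a^*\|_2$. Consequently "joint $\sigma$-strong${}^*$ continuity of multiplication" cannot be applied to $w_naw_n^*$; every attempt to estimate $\omega\bigl((w_n-c_n)aw_n^*\bigr)$ or the companion cross term by Cauchy--Schwarz produces the quantity $\omega\bigl((w_n-c_n)(w_n-c_n)^*\bigr)$, which sits on the uncontrolled side. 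The repair is short and, tellingly, lands you on the paper's proof: apply your defect argument to $w_n^*$ with parameter $-t$ instead (this is what the paper does, working with $v_n^*$ and $V^{\tau,-t}_x$ throughout). The resulting scalars involve $\Ad(w_n)\bigl(\tau_{-t}(u^x_{i,j})\bigr)$, so their convergence follows directly from $\Ad(w_n)\to\tau_t$ -- no inversion continuity needed -- and you obtain $\|w_n^*-\bh(w_n^*)\I\|_2\to0$. With that quantity in hand, either combine it with your original estimate to get genuine $\sigma$-strong${}^*$ convergence $w_n\to c\I$, or avoid operator-topology limits altogether and test against the single normal functional $\bh\bigl(\,\cdot\,(u^\alpha_{i,j})^*\bigr)$, replacing $w_n$ by $\bh(w_n)\I$ one factor at a time and using the KMS property of $\bh$ (i.e.~$\sigma^{\bh}_{-\ii}$ applied to matrix elements) so that all error terms are bounded by the controlled $2$-norm; this recovers $\bh(\tau_t(u^\alpha_{i,j})(u^\alpha_{i,j})^*)=\bh(u^\alpha_{i,j}(u^\alpha_{i,j})^*)\neq0$ and hence $\tau_t=\id$. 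As written, however, the proof is incomplete at exactly the point where the paper invests its technical work, so the gap, while easily repaired, is not a triviality.
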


Note that the equality $\TtauInn(\oon{U}_F^+)
=\Ttau(\oon{U}_F^+)$ shows in particular that Conjecture \ref{conj:main} holds for $\oon{U}_F^+$ and all invertible matrices $F$.

\begin{proof}[Proof of Theorem \ref{thm:claim}]
Take $t\in\TtauAInn(\oon{U}_F^+)$ and let $(v_n)_{n\in\NN}$ be a sequence of unitaries in $\Linf(\oon{U}_F^+)$ such that $\Ad(v_n)\xrightarrow[n\to\infty]{}\tau_t$. This means that for any $\omega\in\Lone(\oon{U}_F^+)$ we have
\begin{equation}\label{eq23-drugie}
\bigl\|\omega\comp\Ad(v_n)-\omega\comp\tau_t\bigr\|\xrightarrow[n\to\infty]{}0.
\end{equation}
Assume by contradiction that $t\not\in\Ttau(\oon{U}_F^+)$. Then there exists $k,l\in\{1,\dotsc,\dim{\alpha}\}$ with
\[
\tau_t(u^{\alpha}_{k,l})=\uprho_{\alpha,k}^{\ii{t}}\uprho_{\alpha,l}^{-\ii{t}}u^{\alpha}_{k,l}
\]
and $\uprho_{\alpha,k}^{\ii{t}}\uprho_{\alpha,l}^{-\ii{t}}\neq{1}$.

Set $c=|1-\uprho_{\alpha,k}^{\ii{t}}\uprho_{\alpha,l}^{-\ii{t}}|>0$ and define $\eps>0$ via
\[
\eps=\tfrac{1}{4}\min\biggl\{1,c^2
\tfrac{\uprho_{\alpha,l}^{2}}{(\qdim{\alpha})^2}
\Bigl(1+(3+\|\uprho_\alpha\|^2)28
\max\bigl\{(\dim{\alpha\beta})^2,(\dim{\alpha^2\beta})^2,(\dim{\beta\alpha})^2\bigr\}
\Bigr)^{-2}
\biggr\}.
\]
Next, from \eqref{eq23-drugie} we obtain $n\in\NN$ such that
\begin{equation}\label{eq27-drugie}
\bigl\|\bh\bigl(\cdot\,{u^x_{i,j}}^*)\comp\Ad(v_n)-\bh(\cdot\,{u^x_{i,j}}^*)\comp\tau_t\bigr\|\leq\eps
\end{equation}
for $x\in\{\alpha\beta,\alpha^2\beta,\beta\alpha\}$ and $i,j\in\{1,\dotsc,\dim{x}\}$. Using Proposition \ref{prop:Vaes33} and the fact that $V^{\tau,t}_x$ is isometric we have
\begin{align*}
\bigl\|v_n^*-\bh(v_n^*)\I\bigr\|_2^2
&\leq14^2\max_{x\in\{\alpha\beta,\alpha^2\beta,\beta\alpha\}}\bigl\|v_n^*\tens\I-V^{\tau,-t}_x(v_n^*)\bigr\|^2_2\\
&=14^2\max_{x\in\{\alpha\beta,\alpha^2\beta,\beta\alpha\}}\bigl(2-2\oon{Re}\is{v_n^*\tens\I}{V_x^{\tau,-t}(v_n^*)}\bigr)\\
&=14^2\max_{x\in\{\alpha\beta,\alpha^2\beta,\beta\alpha\}}\Bigl(2-2\oon{Re}(\bh\tens\omega_{x})
\bigl((v_n\tens\I)(\tau_{-t}\tens\id)(u^x)(v_n^*\tens\I){u^x}^*\bigr)\Bigr)\\
&=14^2\max_{x\in\{\alpha\beta,\alpha^2\beta,\beta\alpha\}}\biggl(2-2\sum_{i,j,r,s=1}^{\dim{x}}\oon{Re}
\Bigl(\bh\bigl(\Ad(v_n)\bigl(\tau_{-t}(u^x_{i,j})\bigr){u^x_{r,s}}^*\bigr)\omega_x(e^{x}_{i,j}e^x_{s,r})\Bigr)
\biggr)\\
&=14^2\max_{x\in\{\alpha\beta,\alpha^2\beta,\beta\alpha\}}\biggl(2-2\sum_{i,j,r,s=1}^{\dim{x}}
\oon{Re}\Bigl(
\bh\bigl(\tau_t(\tau_{-t}(u^x_{i,j})){u^x_{r,s}}^*\bigr)\,
\omega_x(e^x_{i,j}e^x_{s,r})\Bigr)\\
&\qquad
-2\sum_{i,j,r,s=1}^{\dim{x}}
\oon{Re}\Bigl(\bh\bigl(\Ad(v_n)(\tau_{-t}(u^x_{i,j})){u^x_{r,s}}^*\bigr)-
\bh\bigl(\tau_t(\tau_{-t}(u^x_{i,j})){u^x_{r,s}}^*\bigr)
\omega_x(e^{x}_{i,j}e^x_{s,r})\Bigr)\biggr).
\end{align*}
The first term is
\begin{align*}
2-2\sum_{i,j,r,s=1}^{\dim{x}}\oon{Re}\bigl(\bh(u^x_{i,j}{u^x_{r,s}}^*)\omega_x(e^x_{i,j}e^x_{s,r})\bigr)=
2-2\oon{Re}(\bh\tens\omega_x)(u^x{u^x}^*)=0
\end{align*}
for each $x$, while the second can be bounded by
\begin{align*}
2\cdot14^2&\max_{x\in\{\alpha\beta,\alpha^2\beta,\beta\alpha\}}
\sum_{i,j,r,s=1}^{\dim{x}}
\Bigl|
\bh\Bigl(\Ad(v_n)\bigl(\tau_{-t}(u^x_{i,j})\bigr){u^x_{r,s}}^*\Bigr)-
\bh\Bigl(\tau_t\bigl(\tau_{-t}(u^x_{i,j})\bigr){u^x_{r,s}}^*\Bigr)
\omega_x(e^{x}_{i,j}e^x_{s,r})
\Bigr|\\
&\leq2\cdot14^2
\max_{x\in\{\alpha\beta,\alpha^2\beta,\beta\alpha\}}
\sum_{i,j,r,s=1}^{\dim{x}}
\bigl\|\bh(\cdot\,{u^x_{r,s}}^*)\comp\Ad(v_n)-
\bh(\cdot\,{u^x_{r,s}}^*)\comp\tau_t\bigr\|\bigl\|\tau_{-t}(u^x_{i,j})\bigr\|\bigl|\omega_x(e^x_{i,j}e^x_{s,r})\bigr|\\
&\leq2\cdot14^2\biggl(\max_{x\in\{\alpha\beta,\alpha^2\beta,\beta\alpha\}}(\dim{x})^4\biggr)\eps,
\end{align*}
and consequently
\begin{equation}\label{eq26-drugie}
\begin{aligned}
\bigl\|v_n^*-\bh(v_n^*)\I\bigr\|_2
&\leq\sqrt{2}\cdot14\biggl(\max_{x\in\{\alpha\beta,\alpha^2\beta,\beta\alpha\}}(\dim{x})^2\biggr)\sqrt{\eps}\\
&\leq28\biggl(\max_{x\in\{\alpha\beta,\alpha^2\beta,\beta\alpha\}}(\dim{x})^2\biggr)\sqrt{\eps}.
\end{aligned}
\end{equation}
It follows that
\begin{equation}\label{eq24-drugie}
1=\|v_n^*\|_2\leq\bigl\|v_n^*-\bh(v_n^*)\I\bigr\|_2+\bigl|\bh(v_n^*)\bigr|
\leq28\biggl(\max_{x\in\{\alpha\beta,\alpha^2\beta,\beta\alpha\}}(\dim{x})^2\biggr)\sqrt{\eps}+\bigl|\bh(v_n^*)\bigr|.
\end{equation}
Observe that
\begin{align*}
\bigl|\bh(v_nu^{\alpha}_{k,l}&v_n^*{u^\alpha_{k,l}}^*)-\bh(u^\alpha_{k,l}{u^\alpha_{k,l}}^*)\bigr|
\leq\Bigl|\bh\Bigl(\bigl(v_n-\bh(v_n)\I\bigr)u^\alpha_{k,l}v_n^*{u^\alpha_{k,l}}^*\Bigr)\Bigl|
+\Bigl|\bh\Bigl(u^\alpha_{k,l}\bigl(\bh(v_n)v_n^*-\I\bigr){u^\alpha_{k,l}}^*\Bigr)\Bigr|\\
&\le\bigl\|v_n^*-\bh(v_n^*)\I\bigr\|_2
+\Bigl|\bh\Bigl(u^\alpha_{k,l}\bigl(\bh(v_n)v_n^*-|\bh(v_n^*)|^2\I\bigr){u^\alpha_{k,l}}^*\Bigr)\Bigr|
+\Bigl|\bigl|\bh(v_n^*)\bigr|^2-1\Bigr|\bigl|\bh(u^\alpha_{k,l}{u^\alpha_{k,l}}^*)\bigr|\\
&\leq\bigl\|v_n^*-\bh(v_n^*)\I\bigr\|_2+\bigl|\bh(v_n)\bigr|
\Bigl|\bh\Bigl(u^\alpha_{k,l}\bigl(v_n^*-\bh(v_n^*)\I\bigr){u^\alpha_{k,l}}^*\Bigr)\Bigr|
+\Bigl|\bigl|\bh(v_n^*)\bigr|^2-1\Bigr|\\
&\leq\Bigl(1+\bigl\|\sigma^{\bh}_{-\ii}(u^\alpha_{k,l})\bigr\|\Bigr)\bigl\|v_n^*-\bh(v_n^*)\I\bigr\|_2
+\Bigl|\bigl|\bh(v_n^*)\bigr|^2-1\Bigr|\\
&\leq\bigl(1+\|\uprho_\alpha\|^2\bigr)\bigl\|v_n^*-\bh(v_n^*)\I\bigr\|_2+2\Bigl|\bigl|\bh(v_n^*)\bigr|-1\Bigr|,
\end{align*}
and using \eqref{eq26-drugie}, \eqref{eq24-drugie}
\begin{equation}\label{eq25-drugie}
\bigl|\bh(v_nu^\alpha_{k,l}v_n^*{u^\alpha_{k,l}}^*)-\bh(u^{\alpha}_{k,l}{u^\alpha_{k,l}}^*)\bigr|
\leq\bigl(3+\|\uprho_\alpha\|^2\bigr)28
\biggl(\max_{x\in\{\alpha\beta,\alpha^2\beta,\beta\alpha\}}(\dim{x})^2\biggr)\sqrt{\eps}.
\end{equation}
Finally, using \eqref{eq27-drugie} and \eqref{eq25-drugie}, we have
\begin{align*}
\eps&\geq\Bigl|\bh(\cdot\,{u^\alpha_{k,l}}^*)\comp\bigl(\Ad(v_n)-\tau_t\bigr)(u^\alpha_{k,l})\Bigr|
=\Bigl|\bh(v_nu^\alpha_{k,l}v_n^*{u^\alpha_{k,l}}^*)-\bh\bigl(\tau_t(u^\alpha_{k,l}){u^\alpha_{k,l}}^*\bigr)\Bigr|\\
&\geq\bigl|\bh(u^\alpha_{k,l}{u^\alpha_{k,l}}^*)-\uprho_{\alpha,k}^{\ii{t}}\uprho_{\alpha,l}^{-\ii{t}}\bh(u^\alpha_{k,l}{u^\alpha_{k,l}}^*)\bigr|
-\bigl|\bh(v_nu^\alpha_{k,l}v_n^*{u^\alpha_{k,l}}^*)-\bh(u^\alpha_{k,l}{u^\alpha_{k,l}}^*)\bigr|\\
&\geq{c}\,\tfrac{\uprho_{\alpha,l}}{\qdim{\alpha}}-\bigl(3+\|\uprho_\alpha\|^2\bigr)28
\biggl(\max_{x\in\{\alpha\beta,\alpha^2\beta,\beta\alpha\}}(\dim{x})^2\biggr)\sqrt{\eps},
\end{align*}
and using definition of $\eps$ we arrive at
\begin{align*}
c&\leq\tfrac{\qdim{\alpha}}{\uprho_{\alpha,l}}\Bigl(\eps+\bigl(3+\|\uprho_\alpha\|^2\bigr)28
\bigl(\,\max_{x\in\{\alpha\beta,\alpha^2\beta,\beta\alpha\}}(\dim{x})^2\bigr)\sqrt{\eps}\Bigr)\\
&\leq\tfrac{\qdim{\alpha}}{\uprho_{\alpha,l}}\Bigl(1+\bigl(3+\|\uprho_\alpha\|^2\bigr)28
\bigl(\,\max_{x\in\{\alpha\beta,\alpha^2\beta,\beta\alpha\}}(\dim{x})^2\bigr)\Bigr)\sqrt{\eps}\leq\tfrac{c}{2}.
\end{align*}
Thus we can conclude that $t\in\Ttau(\oon{U}_F^+)$.

It is now left to identify this group. Since matrix elements of $u^\alpha$ generate $\Linf(\oon{U}_F^+)$, a number $t\in\RR$ belongs to $\Ttau(\oon{U}_F^+)$ if and only if $u^\alpha_{i,j}=\tau_t(u^\alpha_{i,j})=\bigl(\tfrac{\uprho_{\alpha,i}}{\uprho_{\alpha,j}}\bigr)^{\ii{t}}u^\alpha_{i,j}$ for all $i,j\in\{1,\dotsc,N\}$, i.e.~if and only if $\bigl(\tfrac{\uprho_{\alpha,i}}{\uprho_{\alpha,j}}\bigr)^{\ii{t}}=1$ for all $i,j\in\{1,\dotsc,N\}$ and this condition is equivalent to
\[
\forall\:\Lambda\in\oon{Sp}(\uprho_\alpha\tens\uprho_\alpha^{-1})\setminus\{1\}\quad{t}\in\tfrac{2\pi}{\log{\Lambda}}\ZZ.
\]
(Note that if $\oon{U}_F^+$ is of Kac type then this condition is vacuous and $\Ttau(\oon{U}_F^+)=\RR$.) Finally recall that $\uprho_\alpha=\sqrt{\tfrac{\Tr((F^*F)^{-1})}{\Tr(F^*F)}}(F^*F)^\top$. The claim \eqref{eq:claim} follows from
\[
\oon{Sp}(\uprho_\alpha\tens\uprho_\alpha^{-1})=
\oon{Sp}\Bigl((F^*F)^\top\tens\bigl((F^*F)^\top\bigr)^{-1}\Bigr)=
\oon{Sp}\bigl(F^*F\tens(F^*F)^{-1}\bigr).\qedhere
\]
\end{proof}

\begin{remark}
Equality $\TtauInn(\oon{U}_F^+)=\TtauAInn(\oon{U}_F^+)$ follows also from the fact that $\Linf(\oon{U}_F^+)$ is full (\cite[Theorem 33]{DeCommerFreslonYamashita}).
\end{remark}

Clearly $\Mod(\oon{U}_F^+)=\RR$, but $\hh{\oon{U}_F^+}$ is not always unimodular. In order to determine $\Mod(\hh{\oon{U}_F^+})$ let us recall that the modular element of a discrete quantum group was already described in \cite[Theorem 3.3 and Remark on page 402]{PodlesWoronowicz} (cf.~the proof of Theorem \ref{thm2} in Section \ref{sect:generalRS}). Namely, if $\bbGamma$ is any discrete quantum group with
\[
\ell^{\infty}(\bbGamma)=\prod_{x\in\Irr{\hh{\bbGamma}}}\B(\cH^x)
\]
then the modular element of $\bbGamma$ is the element affiliated with $\ell^{\infty}(\bbGamma)$ defined as $\bigoplus\limits_{x\in\Irr{\hh{\bbGamma}}}\uprho_x^2$. It follows that $\Mod(\bbGamma)$ is the set of $t\in\RR$ such that $\uprho_x^{2\ii{t}}=\I$ for all $x\in\Irr{\hh{\bbGamma}}$. Clearly this is the same as the set of those $t\in\RR$ for which $\uprho_V^{2\ii{t}}=\I$ for all finite dimensional representations $V$ of $\hh{\bbGamma}$. If $\hh{\bbGamma}$ happens to be a matrix quantum group, i.e.~all representations are subrepresentations of tensor products of the fundamental representation $U$ and its conjugate $\overline{U}$, we find that $\Mod(\bbGamma)$ is the set of those $t\in\RR$ which satisfy $\uprho_V^{\ii{t}}=\I$ for all $V$ which are finite tensor products of $U$ and $\overline{U}$.

Returning to $\oon{U}_F^+$, let $V$ be a finite tensor product of $u^\alpha$ and $u^\beta$. Then $\uprho_V$ is the corresponding tensor product of $\uprho_\alpha$ and $\uprho_\beta=\uprho_{\overline{\alpha}}$ (the latter being the transpose of $\uprho_\alpha^{-1}$, cf.~\cite[Proposition 1.4.7]{NeshveyevTuset}). Thus $\uprho_V^{2\ii{t}}=\I$ if and only if $\uprho_\alpha^{2\ii{t}}=\I$.

Thus, if $\oon{U}_F^+$ is not of Kac type then
\begin{equation}\label{eq48}
\Mod(\hh{\oon{U}_F^+})=\bigcap_{\Lambda\in\oon{Sp}(F^*F)\setminus\{\lambda^{-1}\}}\tfrac{\pi}{\log{\lambda}+\log{\Lambda}}\ZZ
\end{equation}
where $\lambda=\sqrt{\tfrac{\Tr((F^*F)^{-1})}{\Tr(F^*F)}}$. Obviously if $\oon{U}_F^+$ is of Kac type then $\Mod(\hh{\oon{U}_F^+})=\RR$.

In view of Remark \ref{remark1} and the fact that $\Linf(\oon{U}_F^+)$ is full, which means that $\oon{Inn}(\Linf(\oon{U}_F^+))=\oon{\overline{Inn}}(\Linf(\oon{U}_F^+))$, the one remaining invariant from our list is $\TsigInn(\oon{U}_F^+)=T(\Linf(\oon{U}_F^+))$.

In order to compute this invariant, we recall again the main result of \cite[Appendix]{DeCommerFreslonYamashita} which says that for any $F\in\oon{GL}(N,\CC)$ the von Neumann algebra is $\Linf(\oon{U}_F^+)$ is a full factor and its invariant $\Sd$ (\cite[Definition 1.2]{ConnesIII1}) is the multiplicative subgroup of $\RR_{>0}$ generated by the eigenvalues of $\uprho_\alpha\tens\uprho_\alpha$.

Now the fact that $\Linf(\oon{U}_F^+)$ is full lets us conclude by \cite[Corollary 4.11]{ConnesIII1} that $S(\Linf(\oon{U}_F^+))$ is the closure of $\Sd(\Linf(\oon{U}_F^+))$. In particular it cannot be the set $\{0,1\}$ and it follows that $\Linf(\oon{U}_F^+)$ is not a factor of type $\mathrm{III}_0$. Furthermore it is not a factor of type $\mathrm{II}_\infty$ because then $S(\Linf(\oon{U}_F^+))$ would be $\{1\}$, but this would mean that $\uprho_\alpha=\I$ and $\oon{U}_F^+$ is then of Kac type, which makes $\Linf(\oon{U}_F^+)$ a type $\mathrm{II}_1$ factor.

Consequently $\Linf(\oon{U}_F^+)$ is either a factor of type $\mathrm{II}_1$ or of type $\mathrm{III}_\mu$ with $\mu\in\left]0,1\right]$. The first case happens when $\uprho_\alpha=\I$, then $T(\Linf(\oon{U}_F^+))=\RR$. In all the latter cases the Connes invariant $T(\Linf(\oon{U}_F^+))$ is determined by the invariant $S(\Linf(\oon{U}_F^+))$ (\cite[Corollary 3.12, Proposition 3.7]{Connes}). More precisely, in case the subgroup of $\RR_{>0}$ generated by the eigenvalues of $\uprho_\alpha\tens\uprho_\alpha$ is dense (case $\mathrm{III}_1$) we have $T(\Linf(\oon{U}_F^+))=\{0\}$. On the other hand, if this subgroup is non-trivial, but not dense, then it is of the form $\{\mu^n\,|\,n\in\ZZ\}$ for some $\mu\in\left]0,1\right[$ and $\Linf(\oon{U}_F^+)$ is a type $\mathrm{III}_\mu$ factor with Connes' invariant $S(\Linf(\oon{U}_F^+))=\{\mu^n\,|\,n\in\ZZ\}$. Consequently $T(\Linf(\oon{U}_F^+))=\tfrac{2\pi
}{\log{\mu}}\ZZ$.

We have calculated all the modular invariants of $\oon{U}_F^+$ and its dual in terms of eigenvalues of $F^*F$. We finish this section with two results: the first proposition tells that $\Mod(\hh{\oon{U}_F^+})$ is almost determined by the type of $\Linf(\oon{U}_F^+)$, next we describe some interesting examples.

\begin{proposition}\label{prop9}
\noindent\begin{enumerate}
\item If $\Linf(\oon{U}_F^+)$ is a factor of type $\mathrm{II}_1$ then $\oon{Mod}(\hh{\oon{U}_F^+})=\RR$.
\item If $\Linf(\oon{U}_F^+)$ is a factor of type $\mathrm{III}_\mu$ for some $\mu\in\left]0,1\right[$, then either $\Mod(\hh{\oon{U}_F^+})=\tfrac{\pi}{\log{\mu}}\ZZ$ or $\Mod(\hh{\oon{U}_F^+})=\tfrac{2\pi}{\log{\mu}}\ZZ$.
\item If $\Linf(\oon{U}_F^+)$ is a factor of type $\mathrm{III}_1$, then $\Mod(\hh{\oon{U}_F^+})=\{0\}$.
\end{enumerate}
\end{proposition}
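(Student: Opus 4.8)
The plan is to reduce the whole statement to the structure of a single additive subgroup of $\RR$ and to exploit an elementary algebraic relation between the eigenvalues of $\uprho_\alpha$ and those of $\uprho_\alpha\tens\uprho_\alpha$. First I would recall from the discussion preceding \eqref{eq48} that $\Mod(\hh{\oon{U}_F^+})=\bigl\{t\in\RR\,\bigr|\bigl.\,\uprho_\alpha^{2\ii{t}}=\I\bigr\}$. Let $H\subset\RR_{>0}$ be the multiplicative subgroup generated by $\oon{Sp}(\uprho_\alpha)$ and put $L=\log{H}$, the additive subgroup of $\RR$ generated by $\bigl\{\log\rho\,\bigr|\bigl.\,\rho\in\oon{Sp}(\uprho_\alpha)\bigr\}$. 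Since $\uprho_\alpha^{2\ii{t}}=\I$ is equivalent to $t\log\rho\in\pi\ZZ$ for every eigenvalue $\rho$ of $\uprho_\alpha$, and since this extends from the generators to all of $L$ by additivity, we obtain the reformulation
\[
\Mod(\hh{\oon{U}_F^+})=\bigl\{t\in\RR\,\bigr|\bigl.\,tL\subset\pi\ZZ\bigr\}.
\]
On the other hand, by the computation of the $\Sd$ invariant recalled above, $\Sd(\Linf(\oon{U}_F^+))$ is the multiplicative subgroup generated by $\oon{Sp}(\uprho_\alpha\tens\uprho_\alpha)=\bigl\{\rho\rho'\,\bigr|\bigl.\,\rho,\rho'\in\oon{Sp}(\uprho_\alpha)\bigr\}$; write $L'=\log\Sd(\Linf(\oon{U}_F^+))$.

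The crux is the chain $2L\subseteq{L'}\subseteq{L}$, equivalently $L'\subseteq{L}\subseteq\tfrac{1}{2}L'$. The inclusion $L'\subseteq{L}$ holds because every product $\rho\rho'$ lies in $H$, while $2L\subseteq{L'}$ holds because $\rho^2=\rho\cdot\rho\in\Sd(\Linf(\oon{U}_F^+))$, so $g^2\in\Sd(\Linf(\oon{U}_F^+))$ for every $g\in{H}$, i.e.~$2\log{g}\in{L'}$. This sandwich is exactly what will force the two alternatives in statement (2) once $L'$ is known to be a lattice.

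Finally I would run the case analysis dictated by the type of $\Linf(\oon{U}_F^+)$. For (1), type $\mathrm{II}_1$ means $\uprho_\alpha=\I$, hence $L=\{0\}$, the condition $tL\subset\pi\ZZ$ is vacuous, and $\Mod(\hh{\oon{U}_F^+})=\RR$ (consistent with the fact that such $\oon{U}_F^+$ is of Kac type). For (3), type $\mathrm{III}_1$ means $\Sd(\Linf(\oon{U}_F^+))$ is dense, so $L'$, and a fortiori the larger group $L$, is dense in $\RR$; then for any $t\neq0$ the set $tL$ is dense and cannot lie inside the discrete set $\pi\ZZ$, giving $\Mod(\hh{\oon{U}_F^+})=\{0\}$. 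For (2), type $\mathrm{III}_\mu$ with $\mu\in\left]0,1\right[$ means $\Sd(\Linf(\oon{U}_F^+))=\bigl\{\mu^n\,\bigr|\bigl.\,n\in\ZZ\bigr\}$, so $L'=c\ZZ$ with $c=-\log\mu>0$; the sandwich then gives $c\ZZ\subseteq{L}\subseteq\tfrac{c}{2}\ZZ$, and since $\tfrac{c}{2}\ZZ/c\ZZ\cong\ZZ/2\ZZ$ the only intermediate subgroups are $L=c\ZZ$ and $L=\tfrac{c}{2}\ZZ$. In the first case $\bigl\{t\,\bigr|\bigl.\,tc\in\pi\ZZ\bigr\}=\tfrac{\pi}{c}\ZZ=\tfrac{\pi}{\log\mu}\ZZ$, and in the second $\bigl\{t\,\bigr|\bigl.\,t\tfrac{c}{2}\in\pi\ZZ\bigr\}=\tfrac{2\pi}{c}\ZZ=\tfrac{2\pi}{\log\mu}\ZZ$, which are precisely the two claimed possibilities. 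The only genuinely non-formal ingredient here is the sandwich $2L\subseteq{L'}\subseteq{L}$ and the observation that it pins $L$ down to one of two lattices; the remainder is routine bookkeeping with subgroups of $\RR$.
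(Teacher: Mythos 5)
Your proof is correct. It processes the same raw data as the paper — the identification $\Mod(\hh{\oon{U}_F^+})=\bigl\{t\in\RR\,\bigr|\bigl.\,\uprho_\alpha^{2\ii{t}}=\I\bigr\}$ and the description of $\Sd(\Linf(\oon{U}_F^+))$ as the multiplicative group generated by $\oon{Sp}(\uprho_\alpha\tens\uprho_\alpha)$ — but the pivotal step in case (2) is organized around a different key lemma. The paper writes $\oon{Sp}(\uprho_\alpha)=\{\mu^{k_1/2},\dotsc,\mu^{k_N/2}\}$ with $k_i\in\ZZ$, reduces $\Mod(\hh{\oon{U}_F^+})$ to $\tfrac{2\pi}{g\log\mu}\ZZ$ with $g=\oon{gcd}$ of the nonzero $|k_i|$, and then proves $g\in\{1,2\}$ by an explicit parity argument (writing $g=2^mg'$ with $g'$ odd and deriving contradictions from the requirement that the pairwise sums $\tfrac{k_i+k_j}{2}$ generate $\ZZ$). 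You instead isolate the abstract sandwich $2L\subseteq{L'}\subseteq{L}$ between the logarithmic lattices of the two relevant groups, and then invoke the index-two observation that the only subgroups between $c\ZZ$ and $\tfrac{c}{2}\ZZ$ are those two; this is exactly the content of the paper's $g\in\{1,2\}$, but packaged so that no integer arithmetic is needed. Your treatment of case (3) is also more direct: you get $\Mod(\hh{\oon{U}_F^+})=\{0\}$ immediately from density of $L$ (a dense set rescaled by $t\neq0$ cannot sit inside $\pi\ZZ$), whereas the paper argues by contradiction via closedness of $\Mod$ and the classification of closed proper subgroups of $\RR$ as cyclic. What your approach buys is uniformity — all three cases are read off from the single reformulation $\Mod(\hh{\oon{U}_F^+})=\bigl\{t\,\bigr|\bigl.\,tL\subset\pi\ZZ\bigr\}$ plus the sandwich — at the cost of slightly more abstraction; the paper's version keeps everything in terms of concrete exponents $k_i$, which meshes directly with its formula \eqref{eq48} and with the explicit examples computed immediately afterwards.
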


\begin{proof}
The first case is trivial, as then $\oon{U}_F^+$ is of Kac type. Assume that $\Linf(\oon{U}_F^+)$ is a factor of type $\mathrm{III}_\mu$ for some $\mu\in\left]0,1\right[$. According to the discussion above, this means that $\oon{Sp}(\uprho_\alpha\tens\uprho_\alpha)\subset\{\mu^n\,|\,n\in\ZZ\}$, so in particular $\oon{Sp}(\uprho_\alpha)=\oon{Sp}(\lambda{F^*F})=\bigl\{\mu^{k_{1}\:\!\!/2},\dotsc,\mu^{k_{N}\:\!\!/2}\bigr\}$ for certain $k_1,\dotsc,k_N\in\ZZ$ and, furthermore, the set $\bigl\{\mu^{(k_i+k_j)\:\!\!/2}\,\bigr|\bigl.\,1\leq{i,j}\leq{N}\bigr\}$ generates $\{\mu^n\,|\,n\in\ZZ\}$. Write $g=\oon{gcd}\bigl(\{|k_1|,\dotsc,|k_N|\}\setminus\{0\}\bigr)$. Then using \eqref{eq48} we have $\Mod(\hh{\oon{U}_F^+})=\bigcap\limits_{i=1,\:k_i\neq{0}}^{N}\tfrac{2\pi}{k_i\log{\mu}}\ZZ=\tfrac{2\pi}{g\log{\mu}}\ZZ$, and hence we need to show that $g\in\{1,2\}$. Write $k_i=gk_i'$ ($1\leq{i}\leq{N}$) and $g=2^mg'$, where $m\in\ZZ_+$ and $g'\in\NN$ is not divisible by $2$. Observe that for $1\leq{i,j}\leq{N}$ we have $\tfrac{k_i+k_j}{2}=2^{m-1}g'(k_i'+k_j')\in\ZZ$. Since $g'$ is not divisible by $2$, it must be the case that $2^{m-1}(k'_i+k'_j)\in\ZZ$. If $g'\neq{1}$ then $\bigl\{\mu^{(k_i+k_j)\:\!\!/2}\,\bigr|\bigl.\,1\leq{i,j}\leq{N}\bigr\}\subset\bigl\{\mu^{g'n}\,\bigr|\bigl.\,n\in\ZZ\bigr\}$, which is a proper subgroup of $\{\mu^n\,|\,n\in\ZZ\}$. This is a contradiction. It follows that $g'=1$ and $g=2^m$. If $m\geq{2}$ then similarly $\bigl\{\mu^{(k_i+k_j)\:\!\!/2}\,\bigr|\bigl.\,1\leq{i,j}\leq{N}\bigr\}\subset\bigl\{\mu^{2^{m-1}n}\,\bigr|\bigl.\,n\in\ZZ\bigr\}$ and consequently $g\in\{1,2\}$ as claimed.

Assume now that $\Linf(\oon{U}_F^+)$ is a type $\mathrm{III}_1$ factor, i.e.~$\oon{Sp}(\uprho_\alpha\tens\uprho_\alpha)$ generates a dense subgroup of $\RR_{>0}$. Assume by contradiction that $\Mod(\hh{\oon{U}_F^+})\neq\{0\}$. Then, since $\Mod(\hh{\oon{U}_F^+})$ is closed and not equal to $\RR$ (this would mean that $\oon{U}_F^+$ is of Kac type), it is a cyclic subgroup of $\RR$, say $\Mod(\hh{\oon{U}_F^+})=c\ZZ$ for some $c>0$. Let $\oon{Sp}(\uprho_\alpha)=\oon{Sp}(\lambda{F^*F})=\{x_1,\dotsc,x_N\}$. It follows from \eqref{eq48} that for any $1\leq{i}\leq{N}$ such that $x_i\neq{1}$ we have $c=\tfrac{\pi{k_i}}{\log{x_i}}$ for some $k_i\in\ZZ$, or equivalently, $x_i=\ee^{\pi{k_i/c}}$. If $x_i=1$ then we also have $x_i=\ee^{\pi{k_i/c}}$ with $k_i=0$. Consequently for any $1\leq{i,j}\leq{N}$
\[
x_ix_j=
\ee^{\pi(k_i+k_j)/c}\in\bigl\{\ee^{\pi{n/c}}\,\bigr|\bigl.\,n\in\ZZ\bigr\}
\]
and $\oon{Sp}(\uprho_\alpha\tens\uprho_\alpha)$ does not generate a dense subgroup. This contradiction shows that we must have $\Mod(\hh{\oon{U}_F^+})=\{0\}$.
\end{proof}

We finish this section with examples which in particular show that all the possibilities of Proposition \ref{prop9} can occur.

\begin{example}
\noindent\begin{enumerate}
\item\label{ex:diffF1} Fix natural $n\geq{2}$ and let $\mu\in\left]0,1\right[$ be a (dependent on $n$) solution of $\mu^{n}+\mu^{3n+1}+\mu^{-3n-2}=\mu^{-n}+\mu^{-3n-1}+\mu^{3n+2}$. Next, define $F=\oon{diag}(\mu^{n/2},\mu^{(3n+1)/2},\mu^{-(3n+2)/2})$ and consider the associated unitary quantum group $\oon{U}_F^+$. By our choice of $\mu$ and $F$ we have $\lambda=1$ and $\oon{Sp}(\uprho_\alpha)=\{\mu^n,\mu^{3n+1},\mu^{-3n-2}\}$. From the results discussed above we see that $\Linf(\oon{U}_F^+)$ is a factor of type $\mathrm{III}_\mu$, hence $T(\Linf(\oon{U}_F^+))=\tfrac{2\pi}{\log{\mu}}\ZZ$ and since $\oon{gcd}(2n,6n+2,6n+4)=2$, the reasoning in the proof of Proposition \ref{prop9} gives $\Mod(\oon{U}_F^+)=\tfrac{\pi}{\log{\mu}}\ZZ$. Finally Theorem \ref{thm:claim} gives $\TtauInn(\oon{U}_F^+)=\tfrac{2\pi}{(2n+1)\log{\mu}}\ZZ$.
\item Fix a natural $n\in\NN$ and let, as in the previous example, $\mu\in\left]0,1\right[$ be a (dependent on $n$) solution of the equation $\mu^{n+1/2}+\mu^{3n+5/2}+\mu^{-(3n+7/2)}=\mu^{-(n+1/2)}+\mu^{-(3n+5/2)}+\mu^{3n+7/2}$. Next, set $F=\oon{diag}(\mu^{(n+1/2)/2},\mu^{(3n+5/2)/2},\mu^{-(3n+7/2)/2})$. As in example \eqref{ex:diffF1}, we have $\lambda=1$ and $\oon{Sp}(\uprho_\alpha)=\{\mu^{n+1/2},\mu^{3n+5/2},\mu^{-(3n+7/2)}\}$. Consequently $\Linf(\oon{U}_F^+)$ is a factor of type $\mathrm{III}_\mu$ and $T(\Linf(\oon{U}_F^+))=\tfrac{2\pi}{\log{\mu}}\ZZ$. Furthermore $\oon{gcd}(2n+1,6n+5,6n+7)=1$, and hence the proof of Proposition \ref{prop9} gives $\Mod(\hh{\oon{U}_F^+})=\tfrac{2\pi}{\log{\mu}}\ZZ$. Theorem \ref{thm:claim} shows $\TtauInn(\oon{U}_F^+)=\tfrac{2\pi}{(2n+2)\log{\mu}}\ZZ$.
\item Take $x>1$ which is not an algebraic number and let $\kappa=\sqrt{\tfrac{2+1/x}{2+x}}\in\left]0,1\right[$, so that $2\kappa+\kappa{x}=\tfrac{2}{\kappa}+\tfrac{1}{\kappa{x}}$. Next, define $F=\oon{diag}(\sqrt{\kappa},\sqrt{\kappa},\sqrt{\kappa{x}})$ and consider the associated quantum group $\oon{U}_F^+$. We have $\lambda=1$ and $\oon{Sp}(\uprho_\alpha)=\{\kappa,\kappa{x}\}$. Consequently, the multiplicative group generated by $\oon{Sp}(\uprho_\alpha\tens\uprho_\alpha)$ is the group generated by $\kappa^2,x$ which is easily seen to be dense in $\RR_{>0}$ as $x$ is assumed to be a transcendental number (c.f.~\cite[Example 5.27]{faktory}). Consequently $\Linf(\oon{U}_F^+)$ is a factor of type $\mathrm{III}_1$ and Proposition \ref{prop9} gives $\Mod(\hh{\oon{U}_F^+})=\{0\}$. Since $\oon{Sp}(\uprho_\alpha\tens\uprho_\alpha^{-1})=\{1,x,x^{-1}\}$, Theorem \ref{thm:claim} gives $\TtauInn(\oon{U}_F^+)=\tfrac{2\pi}{\log{x}}\ZZ$.
\end{enumerate}
\end{example}

\section{i.c.c.-type conditions}\label{sect:icc}

Given a locally compact quantum group $\GG$ and $n\in\NN$ we denote by $\Delta_\GG^{(n)}$ the map
\[
(\underbrace{\id\tens\dotsm\tens\id}_{n-1}\tens\Delta_\GG)\comp\dotsm\comp(\id\tens\Delta_\GG)\comp\Delta_\GG\colon\Linf(\GG)\longrightarrow\underbrace{\Linf(\GG)\vtens\dotsm\vtens\Linf(\GG)}_{n+1}.
\]
(cf.~\cite[p.~5]{sweedler}, \cite[p.~57]{abe} as well as Section \ref{sect:prel}). In particular $\Delta_\GG^{(1)}=\Delta_\GG$.

\begin{proposition}\label{prop:firstICC}
Let $\GG$ be a locally compact quantum group and assume that
\[
\Delta_\GG^{(n)}\bigl(\Linf(\GG)\bigr)'\cap\underbrace{\Linf(\GG)\vtens\dotsm\vtens\Linf(\GG)}_{n+1}=\CC\I
\]
for some $n\in\NN$. Then $\Linf(\GG)$ is a factor.
\end{proposition}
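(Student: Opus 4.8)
The plan is to prove that the center $\cZ(\Linf(\GG))=\Linf(\GG)\cap\Linf(\GG)'$ is trivial by pushing central elements forward through $\Delta_\GG^{(n)}$ and then invoking the hypothesis. The structural facts I would use are that $\Delta_\GG^{(n)}$ is a normal, unital $*$-homomorphism from $\Linf(\GG)$ into the $(n+1)$-fold tensor product $\Linf(\GG)\vtens\dotsm\vtens\Linf(\GG)$ and — crucially — that it is injective.

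First I would take an arbitrary $z\in\cZ(\Linf(\GG))$ and analyse $\Delta_\GG^{(n)}(z)$ from two sides. On the one hand, $\Delta_\GG^{(n)}(z)$ obviously lies in $\Linf(\GG)\vtens\dotsm\vtens\Linf(\GG)$. On the other hand, since $z$ commutes with every $x\in\Linf(\GG)$ and $\Delta_\GG^{(n)}$ is a homomorphism, for all $x$ we have $\Delta_\GG^{(n)}(z)\Delta_\GG^{(n)}(x)=\Delta_\GG^{(n)}(zx)=\Delta_\GG^{(n)}(xz)=\Delta_\GG^{(n)}(x)\Delta_\GG^{(n)}(z)$, so that $\Delta_\GG^{(n)}(z)\in\Delta_\GG^{(n)}(\Linf(\GG))'$. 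Combining the two observations, $\Delta_\GG^{(n)}(z)$ belongs to the relative commutant $\Delta_\GG^{(n)}(\Linf(\GG))'\cap(\Linf(\GG)\vtens\dotsm\vtens\Linf(\GG))$, which by the standing assumption equals $\CC\I$. Hence $\Delta_\GG^{(n)}(z)=\lambda\I$ for some $\lambda\in\CC$.

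To conclude I would use injectivity together with unitality: since $\Delta_\GG^{(n)}(\I)=\I$, the equality $\Delta_\GG^{(n)}(z)=\lambda\I=\Delta_\GG^{(n)}(\lambda\I)$ forces $z=\lambda\I$, so $\cZ(\Linf(\GG))=\CC\I$ and $\Linf(\GG)$ is a factor. The only step that needs genuine justification, and thus the main (if mild) obstacle, is the injectivity of $\Delta_\GG^{(n)}$. I would obtain this from the standard expression $\Delta_\GG(x)=(\ww^\GG)^*(\I\tens x)\ww^\GG$: conjugation by the multiplicative unitary $\ww^\GG$ is injective and $x\mapsto\I\tens x$ is injective, so $\Delta_\GG$ is an injective normal $*$-homomorphism. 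Injectivity is preserved under tensoring with identity maps — each building block $\id\tens\dotsm\tens\Delta_\GG\tens\dotsm\tens\id$ is injective on the corresponding von Neumann tensor product — and under composition, so $\Delta_\GG^{(n)}$, being precisely such a composition, is injective as required.
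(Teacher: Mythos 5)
Your proof is correct, but it takes a genuinely different route from the paper's, which is essentially a one-line argument. The paper tests the hypothesis not on $\Delta_\GG^{(n)}(z)$ but on $z\tens\I^{\tens{n}}$: for $z\in\cZ(\Linf(\GG))$ this operator commutes with every elementary tensor, hence with all of $\Linf(\GG)\vtens\dotsm\vtens\Linf(\GG)$ and in particular with the range of $\Delta_\GG^{(n)}$; the hypothesis then forces $z\tens\I^{\tens{n}}\in\CC\I^{\tens(n+1)}$, whence $z\in\CC\I$. That argument uses no property of $\Delta_\GG^{(n)}$ whatsoever beyond the fact that its range lies in the $(n+1)$-fold tensor product --- neither multiplicativity nor injectivity --- and it would work verbatim with the range of $\Delta_\GG^{(n)}$ replaced by an arbitrary subset. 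Your version makes the commutation step equally trivial but shifts all the real work into the injectivity of $\Delta_\GG^{(n)}$, i.e.\ into genuine quantum group structure. Your justification of that injectivity is sound: $\Delta_\GG$ is injective by the implementation $\Delta_\GG(x)=(\ww^\GG)^*(\I\tens{x})\ww^\GG$, and each building block $\id^{\tens{k}}\tens\Delta_\GG$ is likewise conjugation by $\I^{\tens{k}}\tens\ww^\GG$ composed with the injective normal map placing $\Linf(\GG)^{\vtens(k+1)}$ into the appropriate legs, so the composition $\Delta_\GG^{(n)}$ is injective. One caveat: the blanket principle that tensoring a normal injective $*$-homomorphism with $\id$ preserves injectivity of von Neumann tensor products is true but not free (an injective normal $*$-homomorphism is an isomorphism onto its image, and one then tensors the inverse); since the unitary implementation proves exactly the case you need with no extra cost, it is better to phrase the argument that way than to quote the general principle as obvious. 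In sum, both proofs are valid: the paper's buys maximal economy and independence from any property of the coproduct, while yours additionally records the standard and reusable fact that $\Delta_\GG^{(n)}$ is injective.
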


\begin{proof}
Take $z\in\cZ(\Linf(\GG))$. Then $z\tens\I^{\tens{n}}$ commutes with the range of $\Delta_\GG^{(n)}$ and hence $z\tens\I^{\tens{n}}\in\CC\I^{\tens(n+1)}$ which implies $z\in\CC\I$.
\end{proof}

For a discrete group $\Gamma$ the group von Neumann algebra $\oon{\mathnormal{L}}(\Gamma)$ of $\Gamma$ is the von Neumann algebra of operators on $\ell^2(\Gamma)$ generated by the operators of the left regular representation $\{\lambda_t\,|\,t\in\Gamma\}$. The dual $\hh{\Gamma}$ of $\Gamma$ is by definition the compact quantum group $\GG$ such that $\Linf(\GG)=\oon{\mathnormal{L}}(\Gamma)$ with the unique comultiplication mapping $\lambda_t$ to $\lambda_t\tens\lambda_t$ ($t\in\Gamma$).

\begin{proposition}
Let $\Gamma$ be a discrete group. Then the following are equivalent:
\begin{enumerate}
\item\label{prop:classICC1} $\Gamma$ is i.c.c.,
\item\label{prop:classICC2} $\oon{\mathnormal{L}}(\Gamma)$ is a factor,
\item\label{prop:classICC3} $\Delta_{\hh{\Gamma}}^{(n)}\bigl(\oon{\mathnormal{L}}(\Gamma)\bigr)'\cap\underbrace{\oon{\mathnormal{L}}(\Gamma)\vtens\dotsm\vtens\oon{\mathnormal{L}}(\Gamma)}_{n+1}=\CC\I$ for some $n\in\NN$,
\item\label{prop:classICC4} $\Delta_{\hh{\Gamma}}^{(n)}\bigl(\oon{\mathnormal{L}}(\Gamma)\bigr)'\cap\underbrace{\oon{\mathnormal{L}}(\Gamma)\vtens\dotsm\vtens\oon{\mathnormal{L}}(\Gamma)}_{n+1}=\CC\I$ for all $n\in\NN$.
\end{enumerate}
\end{proposition}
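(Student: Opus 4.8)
The plan is to establish the four equivalences through the cycle $\eqref{prop:classICC4}\Rightarrow\eqref{prop:classICC3}\Rightarrow\eqref{prop:classICC2}\Rightarrow\eqref{prop:classICC1}\Rightarrow\eqref{prop:classICC4}$, three arrows of which are essentially immediate. The implication $\eqref{prop:classICC4}\Rightarrow\eqref{prop:classICC3}$ is trivial; $\eqref{prop:classICC3}\Rightarrow\eqref{prop:classICC2}$ is exactly Proposition \ref{prop:firstICC} applied to $\GG=\hh{\Gamma}$ (so that $\Linf(\GG)=\oon{\mathnormal{L}}(\Gamma)$), since condition \eqref{prop:classICC3} is precisely the hypothesis of that proposition for some $n$; and $\eqref{prop:classICC2}\Rightarrow\eqref{prop:classICC1}$ is the classical theorem of Murray and von Neumann that $\oon{\mathnormal{L}}(\Gamma)$ is a factor exactly when $\Gamma$ is i.c.c. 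Thus the only substantial work lies in the implication $\eqref{prop:classICC1}\Rightarrow\eqref{prop:classICC4}$.

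For that implication I would pass to the Fourier description of the $(n+1)$-fold tensor power. Identifying the von Neumann tensor power $\oon{\mathnormal{L}}(\Gamma)\vtens\dotsm\vtens\oon{\mathnormal{L}}(\Gamma)$ ($n+1$ factors) with $\oon{\mathnormal{L}}(\Gamma^{n+1})$, and writing $\delta\colon\Gamma\to\Gamma^{n+1}$ for the diagonal embedding $t\mapsto(t,\dotsc,t)$, we have $\Delta_{\hh{\Gamma}}^{(n)}(\lambda_t)=\lambda_{\delta(t)}$. Because $\Delta_{\hh{\Gamma}}^{(n)}$ is a normal $*$-homomorphism, its image is generated as a von Neumann algebra by the unitaries $\lambda_{\delta(t)}$, so membership in the relative commutant is equivalent to commuting with every $\lambda_{\delta(t)}$. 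Any element $x$ of the finite von Neumann algebra $\oon{\mathnormal{L}}(\Gamma^{n+1})$ admits an $\Ltwo$ Fourier expansion $x=\sum_{g\in\Gamma^{n+1}}c_g\lambda_g$ with $\sum_g|c_g|^2<+\infty$, and the relation $\lambda_{\delta(t)}x\lambda_{\delta(t)}^*=x$ translates into $c_{\delta(t)^{-1}g\delta(t)}=c_g$ for all $t\in\Gamma$ and $g\in\Gamma^{n+1}$. Hence the coefficient function $c$ is constant along the orbits of the diagonal conjugation action of $\Gamma$ on $\Gamma^{n+1}$.

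The heart of the argument is then an orbit-counting step: if $\Gamma$ is i.c.c., every orbit of this action other than the fixed point $\{(e,\dotsc,e)\}$ is infinite. Indeed, for $g=(g_1,\dotsc,g_{n+1})\neq(e,\dotsc,e)$ some coordinate $g_i$ is non-trivial, and the $i$-th coordinate projection sends the orbit of $g$ onto the full conjugacy class of $g_i$, which is infinite by the i.c.c.\ hypothesis; since the projection is a function, an infinite image forces an infinite orbit. Combining this with the square-summability of $(c_g)_g$ (each infinite orbit $O$ contributes $|c_O|^2\cdot|O|$ to $\sum_g|c_g|^2$) forces $c_g=0$ away from the fixed point, so that $x\in\CC\lambda_{(e,\dotsc,e)}=\CC\I$. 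As this argument is valid for every $n\in\NN$, we obtain \eqref{prop:classICC4}.

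The routine points that still require care are the identification of the tensor power with $\oon{\mathnormal{L}}(\Gamma^{n+1})$ together with the compatibility $\Delta_{\hh{\Gamma}}^{(n)}(\lambda_t)=\lambda_{\delta(t)}$ (a direct induction from $\Delta_{\hh{\Gamma}}(\lambda_t)=\lambda_t\tens\lambda_t$), and the reduction from the generators to the whole image via normality. The only genuine obstacle is the orbit-counting step; once the reduction to diagonal conjugation orbits is in place it is short, and the i.c.c.\ hypothesis enters exactly here and nowhere else.
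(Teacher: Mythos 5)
Your proposal is correct and takes essentially the same route as the paper: the three easy implications are handled identically (Proposition \ref{prop:firstICC} for \eqref{prop:classICC3}$\Rightarrow$\eqref{prop:classICC2}, the classical factor criterion for \eqref{prop:classICC1}$\Leftrightarrow$\eqref{prop:classICC2}), and your orbit-counting proof of \eqref{prop:classICC1}$\Rightarrow$\eqref{prop:classICC4} is the paper's argument rephrased. Where you say the diagonal-conjugation orbit of a nontrivial tuple surjects onto an infinite conjugacy class and is therefore infinite, the paper says there are infinitely many tuples $(a_1,\dotsc,a_{n+1})$ diagonally conjugate to the fixed tuple $(s_1,\dotsc,s_{n+1})$ with $s_k\neq e$; both arguments then reach the same contradiction with square-summability of the $\Ltwo$ Fourier coefficients.
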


\begin{proof}
Equivalence of \eqref{prop:classICC1} and \eqref{prop:classICC2} is well-known, \eqref{prop:classICC4} $\Rightarrow$ \eqref{prop:classICC3} is clear, while \eqref{prop:classICC3} $\Rightarrow$ \eqref{prop:classICC2} is contained in Proposition \ref{prop:firstICC}.

For \eqref{prop:classICC1} $\Rightarrow$ \eqref{prop:classICC4} take $x\in\underbrace{\oon{\mathnormal{L}}(\Gamma)\vtens\dotsm\vtens\oon{\mathnormal{L}}(\Gamma)}_{n+1}$ and write $x$ as the $\|\cdot\|_2$-convergent series
\[
x=\sum\limits_{a_1,\dotsc,a_{n+1}\in\Gamma}x_{a_1,\dotsc,a_{n+1}}\lambda_{a_1}\tens\dotsm\tens\lambda_{a_{n+1}}.
\]
If $x$ commutes with $\Delta_{\hh{\Gamma}}^{(n)}(\lambda_c)$ for all $c$ then
\begin{align*}
\sum_{a_1,\dotsc,a_{n+1}\in\Gamma}x_{a_1,\dotsc,a_{n+1}}\lambda_{a_1}\tens\dotsm&\tens\lambda_{a_{n+1}}=x=(\lambda_c\tens\dotsm\tens\lambda_c)x(\lambda_c\tens\dotsm\tens\lambda_c)^{-1}\\
&=\sum_{a_1,\dotsc,a_{n+1}\in\Gamma}x_{a_1,\dotsc,a_{n+1}}\lambda_{c{a_1}c^{-1}}\tens\dotsm\tens\lambda_{c{a_{n+1}}c^{-1}}\\
&=\sum_{a_1,\dotsc,a_{n+1}\in\Gamma}x_{c^{-1}a_1c,\dotsc,c^{-1}a_{n+1}c}\lambda_{a_1}\tens\dotsm\tens\lambda_{a_{n+1}}.
\end{align*}
Now if additionally $x\not\in\CC\I$ then there exists an $(n+1)$-tuple $(s_1,\dotsc,s_{n+1})\neq(e,\dotsc,e)$ with $x_{s_1,\dotsc,s_{n+1}}\neq{0}$. Let $k$ be such that $s_k\neq{e}$. Then the conjugacy class of $s_k$ is infinite. Consequently there are infinitely many $(n+1)$-tuples $(a_1,\dotsc,a_{n+1})$ such that $(c^{-1}a_1c,\dotsc,c^{-1}a_{n+1}c)=(s_1,\dotsc,s_{n+1})$ for some $c$ (namely take $a_k$ from the conjugacy class of $s_k$, so that $a_k=cs_kc^{-1}$ for some $c$, and put $a_l=cs_lc^{-1}$ for the remaining $l$). But this leads to the contradiction
\[
+\infty>\|x\|_2^2=\sum_{p_1,\dotsc,p_{n+1}\in\Gamma}|x_{p_1,\dotsc,p_{n+1}}|^2=+\infty.
\]
It follows that $x$ must be proportional to $\I$.
\end{proof}

\begin{definition}
Let $\bbGamma$ be a discrete quantum group. We say that $\bbGamma$ is $n$-i.c.c.~if
\[
\Delta_{\hh{\bbGamma}}^{(n)}\bigl(\Linf(\hh{\bbGamma})\bigr)'\cap\underbrace{\Linf(\hh{\bbGamma})\vtens\dotsm\vtens\Linf(\hh{\bbGamma})}_{n+1}=\CC\I.
\]
\end{definition}

We immediately see that for a classical discrete group $\Gamma$ we have that $\Gamma$ is an i.c.c.~group if and only if $\Gamma$ is $1$-i.c.c., which happens if and only if $\Gamma$ is $n$-i.c.c.~for all $n\in\NN$.

\begin{remark}
Let $\bbGamma$ be a discrete quantum group. If $y\in\Delta_{\hh{\bbGamma}}^{(n-1)}\bigl(\Linf(\hh{\bbGamma})\bigr)'\cap\underbrace{\Linf(\hh{\bbGamma})\vtens\dotsm\vtens\Linf(\hh{\bbGamma})}_{n}$ for some $n\geq{2}$ then $\bigl(\Delta_{\hh\bbGamma}\tens\id^{\tens(n-1)}\bigr)y$ belongs to $\Delta_{\hh{\bbGamma}}^{(n)}\bigl(\Linf(\hh{\bbGamma})\bigr)'\cap\underbrace{\Linf(\hh{\bbGamma})\vtens\dotsm\vtens\Linf(\hh{\bbGamma})}_{n+1}
$ which shows that if $\bbGamma$ is $n$-i.c.c.~then it is also $m$-i.c.c.~for all $m<n$.
\end{remark}

\begin{theorem}
Assume that $\GG$ is a second countable compact quantum group such that $\hh{\GG}$ is $1$-i.c.c.~and $\TtauInn(\GG)=\RR$. Then $\GG$ is of Kac type.
\end{theorem}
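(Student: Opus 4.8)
The plan is to exploit the interplay between the scaling group and the comultiplication together with the $1$-i.c.c.\ hypothesis. Since $\GG$ is second countable, $\Linf(\GG)$ has separable predual, so the assumption $\TtauInn(\GG)=\RR$ combined with Kallman's theorem (\cite[Theorem 0.1]{Kallman}) yields a strongly continuous one-parameter group $\{v_t\}_{t\in\RR}$ of unitaries in $\Linf(\GG)$ with $\tau^\GG_t=\Ad(v_t)$ and $v_0=\I$. The goal is to show that each $v_t$ is \emph{almost group-like}, deduce that it is central for small $t$, and conclude that $\Ttau(\GG)=\RR$, which for a compact quantum group is equivalent to being of Kac type.

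First I would use the standard intertwining relation $\Delta_\GG\comp\tau^\GG_t=(\tau^\GG_t\tens\tau^\GG_t)\comp\Delta_\GG$. Substituting $\tau^\GG_t=\Ad(v_t)$ and expanding $\Delta_\GG(v_txv_t^*)=\Delta_\GG(v_t)\Delta_\GG(x)\Delta_\GG(v_t)^*$ gives, for every $x\in\Linf(\GG)$,
\[
\Delta_\GG(v_t)\,\Delta_\GG(x)\,\Delta_\GG(v_t)^*=(v_t\tens v_t)\,\Delta_\GG(x)\,(v_t\tens v_t)^*.
\]
Hence the unitary $W_t=(v_t^*\tens v_t^*)\Delta_\GG(v_t)\in\Linf(\GG)\vtens\Linf(\GG)$ commutes with $\Delta_\GG(x)$ for all $x$, i.e.\ $W_t\in\Delta_\GG\bigl(\Linf(\GG)\bigr)'\cap\bigl(\Linf(\GG)\vtens\Linf(\GG)\bigr)$. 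Since $\hh{\GG}$ is $1$-i.c.c.\ (note $\hh{\hh{\GG}}=\GG$ and $\Delta_\GG^{(1)}=\Delta_\GG$), this relative commutant equals $\CC\I$, so $W_t=c_t\I$ for some $c_t\in\TT$; equivalently $\Delta_\GG(v_t)=c_t\,(v_t\tens v_t)$.

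Next I would apply the left-invariance of the Haar state, $(\bh_\GG\tens\id)\comp\Delta_\GG=\bh_\GG(\cdot)\I$, to this identity. On the one hand $(\bh_\GG\tens\id)\Delta_\GG(v_t)=\bh_\GG(v_t)\I$, and on the other $(\bh_\GG\tens\id)(c_t\,v_t\tens v_t)=c_t\,\bh_\GG(v_t)\,v_t$, whence $\bh_\GG(v_t)\I=c_t\,\bh_\GG(v_t)\,v_t$. The map $t\mapsto v_t$ is weakly continuous with $v_0=\I$, so $t\mapsto\bh_\GG(v_t)$ is continuous with value $1$ at $t=0$; consequently $\bh_\GG(v_t)\neq0$ for all $t$ in some interval $\left]-\delta,\delta\right[$. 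For such $t$ the displayed equation forces $v_t=c_t^{-1}\I\in\cZ(\Linf(\GG))$, and therefore $\tau^\GG_t=\Ad(v_t)=\id$.

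Finally, $\Ttau(\GG)=\bigl\{t\,\bigr|\bigl.\,\tau^\GG_t=\id\bigr\}$ is a subgroup of $\RR$ that, by the previous step, contains the interval $\left]-\delta,\delta\right[$; a subgroup of $\RR$ containing a neighbourhood of $0$ is all of $\RR$, so $\Ttau(\GG)=\RR$. Since for a compact quantum group triviality of the scaling group is equivalent to being of Kac type (by \eqref{sigmatau}, $\tau^\GG=\id$ forces each $\uprho_\alpha$ to be a positive scalar, and the normalization $\Tr\uprho_\alpha=\Tr\uprho_\alpha^{-1}$ then gives $\uprho_\alpha=\I$), we conclude that $\GG$ is of Kac type. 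The delicate point is not the final computation but securing a genuinely \emph{continuous} implementing one-parameter group: second countability is used essentially here, via Kallman's theorem, because it is the weak continuity of $t\mapsto v_t$ near $0$ that lets us localize and run the Haar-state argument; one must also take care to apply the $1$-i.c.c.\ condition under the correct identification $\hh{\hh{\GG}}=\GG$ and to verify that $W_t$ indeed lies in $\Linf(\GG)\vtens\Linf(\GG)$.
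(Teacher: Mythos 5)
Your proof is correct and follows essentially the same route as the paper's: Kallman's theorem produces the implementing one-parameter group of unitaries, the $1$-i.c.c.\ hypothesis (under the identification $\hh{\hh{\GG}}=\GG$) forces $(v_t^*\tens v_t^*)\Delta_\GG(v_t)$ to be a scalar, and invariance of the Haar state then makes $v_t$ scalar for $t$ near $0$, after which the group property of $\Ttau(\GG)$ finishes the argument. The only difference is cosmetic: the paper first normalizes the scalar away (it shows $t\mapsto z_t$ is a continuous homomorphism, writes $z_t=\lambda^{\ii t}$, and rescales via Stone's theorem to get an exactly group-like one-parameter group before applying invariance through a mollifying integral), whereas you keep the constant $c_t$ and apply $(\bh_\GG\tens\id)$ directly, which works just as well and is slightly shorter.
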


\begin{proof}
Write $\tau^{\GG}_t=\Ad(b_0^{\ii{t}})$ for a strictly positive, self-adjoint operator $b_0$ affiliated with $\Linf(\GG)$ (the existence of $b_0$ follows from \cite[Theorem 0.1]{Kallman} and Stone's theorem, cf.~e.g.~the proof of Theorem \ref{thm:Un}). Using $\Delta_{\GG}\comp\tau^{\GG}_t=(\tau^\GG_t\tens\tau^\GG_t)\comp\Delta_{\GG}$ we obtain
\[
(b_0^{-\ii{t}}\tens{b_0^{-\ii{t}}})\Delta_{\GG}(b_0^{\ii{t}})
\in\Delta_{\GG}\bigl(\Linf(\GG)\bigr)'\cap\Linf(\GG)\vtens\Linf(\GG)=\CC\I,
\]
so defining $z_t\in\TT$ by
\begin{equation}\label{eq:defzt}
(b_0^{-\ii{t}}\tens{b_0^{-\ii{t}}})\Delta_{\GG}(b_0^{\ii{t}})=z_t\I
\end{equation}
we obtain a continuous map $\RR\to\TT$. This map is a homomorphism, since
\begin{align*}
z_{t+s}=(b_0^{-\ii(t+s)}\tens{b_0^{-\ii(t+s)}})\Delta_\GG(b_0^{\ii(t+s)})
&=(b_0^{-\ii{t}}\tens{b_0^{-\ii{t}}})\bigl((b_0^{-\ii{s}}\tens{b_0^{-\ii{s}}})
\Delta_\GG(b_0^{\ii{s}})\bigr)\Delta_\GG(b_0^{\ii{t}})\\
&=(b_0^{-\ii{t}}\tens{b_0^{-\ii{t}}})\Delta_\GG(b_0^{\ii{t}})
\bigl((b_0^{-\ii{s}}\tens{b_0^{-\ii{s}}})
\Delta_\GG(b_0^{\ii{s}})\bigr)=z_tz_s.
\end{align*}
It follows that we can write $z_t=\lambda^{\ii{t}}$ for some $\lambda>0$. Define $b=\lambda{b_0}$. Then we still have $\tau_t=\Ad(b^{\ii{t}})$ and by \eqref{eq:defzt}
\[
\Delta_\GG(b^{\ii{t}})=\lambda^{\ii{t}}\Delta_\GG(b_0^{\ii{t}})=\lambda^{\ii{t}}z_t(b_0^{\ii{t}}\tens{b_0^{\ii{t}}})
=(\lambda^{\ii{t}}b_0^{\ii{t}}\tens\lambda^{\ii{t}}b_0^{\ii{t}})=(b^{\ii{t}}\tens{b^{\ii{t}}}).
\]

Fix $t\in\RR$ and for $n\in\NN$ define a bounded measurable function $f_n\colon\left]0,+\infty\right[\to\CC$ by $f_n(x)=\Int_{t-\frac{1}{n}}^{t+\frac{1}{n}}x^{\ii{s}}\dd{s}$. Then
\[
f_n(b)=\Int_{t-\frac{1}{n}}^{t+\frac{1}{n}}b^{\ii{s}}\dd{s}
\]
and
\[
\Delta_{\GG}\bigl(f_n(b)\bigr)
=\Int_{t-\frac{1}{n}}^{t+\frac{1}{n}}\Delta_{\GG}(b^{\ii{s}})\dd{s}
=\Int_{t-\frac{1}{n}}^{t+\frac{1}{n}}(b^{\ii{s}}\tens{b^{\ii{s}}})\dd{s}.
\]
Applying $\bh_\GG\tens\id$ and using right invariance of $\bh_\GG$ gives
\[
\Biggl(\,\Int_{t-\frac{1}{n}}^{t+\frac{1}{n}}\bh_\GG(b^{\ii{s}})\dd{s}\Biggr)\I
=\bh_\GG\bigl(f_n(b)\bigr)\I=(\bh_\GG\tens\id)\Delta_{\GG}\bigl(f_n(b)\bigr)
=\Int_{t-\frac{1}{n}}^{t+\frac{1}{n}}\bh_\GG(b^{\ii{s}})b^{\ii{s}}\dd{s}.
\]
Now multiplying the above by $2n$ and passing to the limit $n\to\infty$ gives
\[
\bh_\GG(b^{\ii{t}})\I=\bh_\GG(b^{\ii{t}})b^{\ii{t}},\qquad{t}\in\RR.
\]
Since $\bh_\GG(b^{\ii0})=\bh_\GG(\I)=1$ and $t\mapsto\bh_\GG(b^{\ii{t}})$ is continuous, there exists $\eps>0$ such that $\bh_\GG(b^{\ii{t}})\neq{0}$ for $|t|<\eps$. Consequently $b^{\ii{t}}=\I$ for such $t$. But then for any $m\in\NN$ we have $b^{\ii{m}t}=(b^{\ii{t}})^m=\I^m=\I$, and hence $b=\I$ and $\GG$ is of Kac type.
\end{proof}

\subsection{Example: \texorpdfstring{$\oon{U}_F^+$}{UF+}}\label{sect:exUFpl}\hspace*{\fill}

In this section we show that for an appropriate choice of the invertible matrix $F$ (this is made more precise in Theorem \ref{thm2-drugie} below) the quantum group $\hh{\oon{U}_F^+}$ satisfies the $n$-i.c.c.~condition. This, in particular, gives an alternative proof that Conjecture \ref{conj:main} holds for $\oon{U}_F^+$, but is meant more as an illustration of the results of Section \ref{sect:icc}. As in Section \ref{sect:UFplus} we adapt the ideas and proofs contained in \cite[Appendix]{DeCommerFreslonYamashita}. We retain the notation introduced in Section \ref{sect:UFplus}. To further ease the notation, we also denote $\Delta_{\oon{U}_F^+}$ simply by $\Delta$ and identify $a\in\Linf(\oon{U}_F^+)$ with its image in $\Ltwo(\oon{U}_F^+)$.

Fix $n\geq{1}$, $x\in\ZZ_+\star\ZZ_+$ and define a bounded map $V^{\Delta,n}_x\colon\Ltwo(\oon{U}_F^+)^{\tens(n+1)}\to\Ltwo(\oon{U}_F^+)^{\tens(n+1)}\tens\cK_x$ by setting
\[
V^{\Delta,n}_x(a)=(\Delta^{(n)}\tens\id)(u^x)(a\tens\I)(\Delta^{(n)}\tens\id)(u^x)^*,\qquad{a}\in\Linf(\oon{U}_F^+)^{\vtens(n+1)}
\]
on a dense set. We will shortly show that $V_x^{\Delta,n}$ as defined above is indeed bounded, hence extends uniquely to the whole space $\Ltwo(\oon{U}_F^+)^{\tens(n+1)}$.

It seems that map $V_x^{\Delta,n}$, unlike $V_x^{\tau,t}$ from Section \ref{sect:UFplus}, is not isometric -- it is however close to being isometric when $\uprho_x$ is close to $\I$, which we will now quantify. Take any $a,b\in\Linf(\oon{U}_F^+)^{\vtens(n+1)}$. Then using equation \eqref{eq28-drugie} and employing the leg-numbering notation $[\,\cdot\,]_{k\,l}$ we have
\begin{align*}
&\!\!\!\!\is{V^{\Delta,n}_x(a)}{V^{\Delta,n}_x(b)}=
(\bh^{\tens(n+1)}\tens\omega_x)
\bigl(
(\Delta^{(n)}\tens\id)(u^x)(a^*b\tens\I)(\Delta^{(n)}\tens\id)({u^x}^*)
\bigr)\\
&=\Bigl(\bh^{\tens(n+1)}\tens\tfrac{\Tr}{\qdim{x}}\Bigr)
\bigl(
(\Delta^{(n)}\tens\id)(u^x)(a^*b\tens\I)(\Delta^{(n)}\tens\id)({u^x}^*)
(\I^{\tens(n+1)}\tens\uprho_x^{-1})\bigr)\\
&=\Bigl(\bh^{\tens(n+1)}\tens\tfrac{\Tr}{\qdim{x}}\Bigr)
\bigl((a^*b\tens\I)(\Delta^{(n)}\tens\id)({u^x}^*)
(\I^{\tens(n+1)}\tens\uprho_x^{-1})
(\sigma^{\bh^{\tens(n+1)}}_{-\ii}\tens\id)(\Delta^{(n)}\tens\id)(u^x)\bigr)\\
&=\Bigl(\bh^{\tens(n+1)}\tens\tfrac{\Tr}{\qdim{x}}\Bigr)
\Bigl((a^*b\tens\I)
\bigl[{u^x}^*\bigr]_{n+1\,n+2}\dotsm\bigl[{u^x}^*\bigr]_{1\,n+2}
(\I^{\tens(n+1)}\tens\uprho_x^{-1})\\
&\qquad
\bigl[(\sigma^{\bh}_{-\ii}\tens\id)(u^x)\bigr]_{1\,n+2}
\dotsm
\bigl[(\sigma^{\bh}_{-\ii}\tens\id)(u^x)\bigr]_{n+1\,n+2}\Bigr)\\
&=\Bigl(\bh^{\tens(n+1)}\tens\tfrac{\Tr}{\qdim{x}}\Bigr)
\Bigl((a^*b\tens\I)
\bigl[{u^x}^*\bigr]_{n+1\,n+2}\dotsm\bigl[{u^x}^*\bigr]_{1\,n+2}
(\I^{\tens(n+1)}\tens\uprho_x^{-1})\\
&\qquad
(\I^{\tens(n+1)}\tens\uprho_x)
[u^x]_{1\,n+2}
\bigl((\I^{\tens(n+1)}\tens\uprho_x^2)
[u^x]_{2\,n+2}\bigr)
\dotsm
\bigl((\I^{\tens(n+1)}\tens\uprho_x^2)
[u^x]_{n+1\,n+2}\bigr)
(\I^{\tens(n+1)}\tens\uprho_x)\Bigr)\\
&=(\bh^{\tens(n+1)}\tens\omega_x)
\Bigl((a^*b\tens\I)\bigl[{u^x}^*\bigr]_{n+1\,n+2}\dotsm
\bigl[{u^x}^*\bigr]_{2\,n+2}
(\I^{\tens(n+1)}\tens\uprho_x^2)\\
&\qquad
\bigl([u^x]_{2\,n+2}
(\I^{\tens(n+1)}\tens\uprho_x^2)\bigr)
\dotsm
\bigl([u^x]_{n+1\,n+2}
(\I^{\tens(n+1)}\tens\uprho_x^2)\bigr)\Bigr).
\end{align*}
Consequently, writing $J$ for $J_{\bh^{\tens\,(n+1)}\tens\,\omega_x}$ we obtain
\begin{align*}
&\!\!\!\!
\bigl|\is{V^{\Delta,n}_x(a)}{V^{\Delta,n}_x(b)}-\is{a}{b}\bigr|\\
&=\Bigl|(\bh^{\tens(n+1)}\tens\omega_x)
\Bigl(
(a^*b\tens\I)\Bigl(
\bigl[{u^x}^*\bigr]_{n+1\,n+2}\dotsm
\bigl[{u^x}^*\bigr]_{2\,n+2}
(\I^{\tens(n+1)}\tens\uprho_x^2)\\
&\qquad\qquad\quad
\bigl([u^x]_{2\,n+2}
(\I^{\tens(n+1)}\tens\uprho_x^2)\bigr)
\dotsm
\bigl([u^x]_{n+1\,n+2}
(\I^{\tens(n+1)}\tens\uprho_x^2)\bigr)-\I^{\tens(n+1)}\tens\I
\Bigr)
\Bigr)\Bigr|\\
&=\Bigl|\Bigl\langle{a}\tens\I\Big|
(b\tens\I)\Bigl(
\bigl[{u^x}^*\bigr]_{n+1\,n+2}\dotsm
\bigl[{u^x}^*\bigr]_{2\,n+2}
(\I^{\tens(n+1)}\tens\uprho_x^2)\\
&\qquad\qquad\quad
\bigl([u^x]_{2\,n+2}
(\I^{\tens(n+1)}\tens\uprho_x^2)\bigr)
\dotsm
\bigl([u^x]_{n+1\,n+2}
(\I^{\tens(n+1)}\tens\uprho_x^2)\bigr)-\I^{\tens(n+1)}\tens\I
\Bigr)
\Bigr\rangle\Bigr|\\
&=\Bigl|\Bigl\langle{a}\tens\I\Big|
J\sigma^{\bh^{\tens(n+1)}\tens\omega_x}_{\ihalf}\Bigl(
\bigl[{u^x}^*\bigr]_{n+1\,n+2}\dotsm
\bigl[{u^x}^*\bigr]_{2\,n+2}
(\I^{\tens(n+1)}\tens\uprho_x^2)\\
&\qquad\qquad\quad
\bigl([u^x]_{2\,n+2}
(\I^{\tens(n+1)}\tens\uprho_x^2)\bigr)
\dotsm
\bigl([u^x]_{n+1\,n+2}
(\I^{\tens(n+1)}\tens\uprho_x^2)\bigr)-\I^{\tens(n+1)}\tens\I
\Bigr)^*J(b\tens\I)
\Bigr\rangle\Bigr|\\
&\leq\|a\tens\I\|_2
\Bigl\|\sigma_{\ihalf}^{\bh^{\tens(n+1)}\tens\omega_x}
\Bigl(
\bigl[{u^x}^*\bigr]_{n+1\,n+2}\dotsm
\bigl[{u^x}^*\bigr]_{2\,n+2}
(\I^{\tens(n+1)}\tens\uprho_x^2)\\
&\qquad\qquad\quad
\bigl([u^x]_{2\,n+2}
(\I^{\tens(n+1)}\tens\uprho_x^2)\bigr)
\dotsm
\bigl([u^x]_{n+1\,n+2}
(\I^{\tens(n+1)}\tens\uprho_x^2)\bigr)-\I^{\tens(n+1)}\tens\I
\Bigr)
\Bigr\|\|b\tens\I\|_2\\
&=\|a\|_2
\Bigl\|\sigma_{\ihalf}^{\bh^{\tens(n+1)}\tens\omega_x}
\Bigl(
\bigl[{u^x}^*\bigr]_{n+1\,n+2}\dotsm
\bigl[{u^x}^*\bigr]_{2\,n+2}
(\I^{\tens(n+1)}\tens\uprho_x^2)\\
&\qquad\qquad\quad
\bigl([u^x]_{2\,n+2}
(\I^{\tens(n+1)}\tens\uprho_x^2)\bigr)
\dotsm
\bigl([u^x]_{n+1\,n+2}
(\I^{\tens(n+1)}\tens\uprho_x^2)\bigr)-\I^{\tens(n+1)}\tens\I
\Bigr)
\Bigr\|\|b\|_2.
\end{align*}
This shows that $V^{\Delta,n}_x$ extends to a bounded map. Since $\sigma_t^{\bh^{\tens(n+1)}\tens\omega_x}=(\sigma^\bh_t)^{\tens(n+1)}\tens\sigma^{\omega_x}_t$ and
\[
\sigma^{\omega_x}_t(T)=\uprho_x^{-\ii{t}}T\uprho_x^{\ii{t}},\qquad{T}\in\B(\cH_x),\:t\in\RR,
\]
we can continue the calculation:
\begin{align*}
&\!\!\!\!\!\!\!\!\!\!\!\!\bigl|\is{V^{\Delta,n}_x(a)}{V^{\Delta,n}_x(b)}-\is{a}{b}\bigr|\\
&\leq\|a\|_2\|b\|_2
\Bigl\|
(\I^{\tens(n+1)}\tens\uprho_x^{\half})\\
&\qquad\Bigl(
\bigl[(\sigma^{\bh}_{-\ihalf}\tens\id)(u^x)\bigr]_{n+1\,n+2}^*\dotsm
\bigl[(\sigma^{\bh}_{-\ihalf}\tens\id)(u^x)\bigr]_{2\,n+2}^*
(\I^{\tens(n+1)}\tens\uprho_x^2)\\
&\qquad\:\:\,
\bigl[(\sigma^{\bh}_{\ihalf}\tens\id)(u^x)
(\I\tens\uprho_x^2)\bigr]_{2\,n+2}
\dotsm
\bigl[(\sigma^{\bh}_{\ihalf}\tens\id)(u^x)
(\I\tens\uprho_x^2)\bigr]_{n+1\,n+2}\\
&\qquad\qquad\qquad
\qquad\qquad\qquad
\qquad\qquad\qquad
-\I^{\tens(n+1)}\tens\I
\Bigr)(\I^{\tens(n+1)}\tens\uprho_x^{-\half})
\Bigr\|\\
&\leq\|a\|_2\|b\|_2
\Bigl\|
(\I^{\tens(n+1)}\tens\uprho_x^{\half})\\
&\qquad
\Bigl(
\bigl[(\I\tens\uprho_x^{\half})
{u^x}^*
(\I\tens\uprho_x^{\half})\bigr]_{n+1\,n+2}
\dotsm
\bigl[(\I\tens\uprho_x^{\half})
{u^x}^*
(\I\tens\uprho_x^{\half})\bigr]_{2\,n+2}
(\I^{\tens(n+1)}\tens\uprho_x^2)\\
&
\qquad\:\:\,
\bigl[
(\I\tens\uprho_x^{-\half})u^x
(\I\tens\uprho_x^{\threehalf})\bigr]_{2\,n+2}
\dotsm
\bigl[(\I\tens\uprho_x^{-\half})u^x
(\I\tens\uprho_x^{\threehalf})\bigr]_{n+1\,n+2}\\
&\qquad\qquad\qquad
\qquad\qquad\qquad
\qquad\qquad\qquad
-\I^{\tens(n+1)}\tens\I
\Bigr)(\I^{\tens(n+1)}\tens\uprho_x^{-\half})
\Bigr\|\\
&=\|a\|_2\|b\|_2\\
&\qquad\Bigl\|
\Bigl((\I^{\tens(n+1)}\tens\uprho_x)
\bigl[{u^x}^*\bigr]_{n+1\,n+2}\Bigr)\dotsm
\Bigl((\I^{\tens(n+1)}\tens\uprho_x)
\bigl[{u^x}^*\bigr]_{2\,n+2}\Bigr)
(\I^{\tens(n+1)}\tens\uprho_x^2)\\
&\qquad\qquad\qquad
\bigl([u^x]_{2\,n+2}(\I^{\tens(n+1)}\tens\uprho_x)\bigr)
\dotsm
\bigl([u^x]_{n+1\,n+2}
(\I^{\tens(n+1)}\tens\uprho_x)\bigr)
-\I^{\tens(n+1)}\tens\I
\Bigr\|.
\end{align*}
The last norm can be estimated as follows
\begin{align*}
&\!\!\!\!\!\!\!\!\Bigl\|
\Bigl((\I^{\tens(n+1)}\tens\uprho_x)
\bigl[{u^x}^*\bigr]_{n+1\,n+2}\Bigr)\dotsm
\Bigl((\I^{\tens(n+1)}\tens\uprho_x)
\bigl[{u^x}^*\bigr]_{2\,n+2}\Bigr)
(\I^{\tens(n+1)}\tens\uprho_x^2)\\
&\qquad\qquad\qquad\qquad
\bigl([u^x]_{2\,n+2}(\I^{\tens(n+1)}\tens\uprho_x)\bigr)
\dotsm
\bigl([u^x]_{n+1\,n+2}
(\I^{\tens(n+1)}\tens\uprho_x)\bigr)
-\I^{\tens(n+1)}\tens\I
\Bigr\|\\
&\leq\|\uprho_x^2-\I\|\|\uprho_x\|^{2n}+
\Bigl\|\bigl((\I^{\tens(n+1)}\tens\uprho_x)
\bigl[{u^x}^*\bigr]_{n+1\,n+2}\bigr)\dotsm
\bigl((\I^{\tens(n+1)}\tens\uprho_x)
\bigl[{u^x}^*\bigr]_{3\,n+2}\bigr)
(\I^{\tens(n+1)}\tens\uprho_x^2)\\
&\qquad\qquad\qquad\qquad
\bigl([u^x]_{3\,n+2}(\I^{\tens(n+1)}\tens\uprho_x)\bigr)
\dotsm
\bigl([u^x]_{n+1\,n+2}
(\I^{\tens(n+1)}\tens\uprho_x)\bigr)
-\I^{\tens(n+1)}\tens\I
\Bigr\|
\end{align*}
and proceeding in this way we obtain
\begin{align*}
&\!\!\!\!\!\!\!\!\Bigl\|
\Bigl((\I^{\tens(n+1)}\tens\uprho_x)
\bigl[{u^x}^*\bigr]_{n+1\,n+2}\Bigr)\dotsm
\Bigl((\I^{\tens(n+1)}\tens\uprho_x)
\bigl[{u^x}^*\bigr]_{2\,n+2}\Bigr)
(\I^{\tens(n+1)}\tens\uprho_x^2)\\
&\qquad\qquad\qquad\qquad
\bigl([u^x]_{2\,n+2}(\I^{\tens(n+1)}\tens\uprho_x)\bigr)
\dotsm
\bigl([u^x]_{n+1\,n+2}
(\I^{\tens(n+1)}\tens\uprho_x)\bigr)
-\I^{\tens(n+1)}\tens\I
\Bigr\|\\
&\leq
\|\uprho_x^2-\I\|\|\uprho_x\|^{2n}+
\dotsm+
\|\uprho_x^2-\I\|\|\uprho_x\|^2+
\bigl\|\I^{\tens(n+1)}\tens\uprho_x^2-\I^{\tens(n+1)}\tens\I\bigr\|=
\|\uprho_x^2-\I\|\tfrac{\|\uprho_x\|^{2(n+1)}-1}{\|\uprho_x\|^2-1}
\end{align*}
(where the last equality holds when $\uprho_x\neq\I$). This gives us the following bound
\begin{equation}\label{eq30-drugie}
\Bigl|\is{V^{\Delta,n}_x(a)}{V^{\Delta,n}_x(b)}-\is{a}{b}\Bigr|
\leq\|a\|_2\|b\|_2\|\uprho_x^2-\I\|\tfrac{\|\uprho_x\|^{2(n+1)}-1}{\|\uprho_x\|^2-1}
\end{equation}
for $a,b\in\Linf(\oon{U}_F^+)^{\vtens(n+1)}$, provided $\uprho_x\neq\I$. It will be useful in the proof of the next proposition based on \cite[Proposition 32]{DeCommerFreslonYamashita}. The proof is similar as in \cite{DeCommerFreslonYamashita}, but somewhat complicated by the facts that the operators $V^{\Delta,n}_x$ ($x\in\ZZ_+\star\ZZ_+$) are not isometric and we work in $(n+1)$-fold tensor product space.

Let us introduce
\[
D_{x,n}=
\begin{cases}
\|\uprho_x^2-\I\|\tfrac{\|\uprho_x\|^{2(n+1)}-1}{\|\uprho_x\|^2-1}&\uprho_x\neq\I,\\
0&\uprho_x=\I
\end{cases},
\qquad{x}\in\ZZ_+\star\ZZ_+
\]
and put
\begin{equation}\label{eq:defD}
D_n=\max\bigl\{
D_{\alpha\beta,n},
D_{\beta\alpha,n},
D_{\alpha^2\beta,n}
\bigr\}.
\end{equation}

It follows from \eqref{eq30-drugie} that $\|V^{\Delta,n}_x\|\leq(1+D_{x,n})^{\half}$. Next, for $x\in\ZZ_+\star\ZZ_+\setminus\{\eps\}$ introduce subspaces of $\Ltwo(\oon{U}_F^+)$
\begin{align*}
C_x&=\oon{span}
\bigl\{(\id\tens\omega)u^x\,\bigr|\bigl.\,\omega\in\B(\cH_x)_*\bigr\},\\
L_x&=\oon{\overline{span}}\bigcup_{z\in\ZZ_+\star\ZZ_+}C_{xz},\qquad{L_\eps}=\CC\I.
\end{align*}

Given an operator $a\in\Linf(\oon{U}_F^+)^{\vtens(n+1)}\,(n\in\NN)$ we will decompose it as follows
\[
a=\sum_{x_1,\dotsc,x_{n+1}\in\{\eps,\alpha,\beta\}}a_{x_1,\dotsc,x_{n+1}}
\]
with respect to the decomposition
\[
\Ltwo(\oon{U}_F^+)^{\tens(n+1)}=\bigoplus_{x_1,\dotsc,x_{n+1}\in\{\eps,\alpha,\beta\}}L_{x_1}\tens\dotsm\tens{L_{x_{n+1}}}.
\]

\begin{proposition}\label{prop3-drugie}
Take $n\in\NN$ and $a\in\Linf(\oon{U}_F^+)^{\vtens(n+1)}$. If $D_n<1-\tfrac{1}{\sqrt{2}}$, then for any $1\leq{k}\leq{n}$ we have
\begin{align*}
\biggl\|a-&\sum_{x_i\in\{\eps,\alpha,\beta\},\:i\neq{k}}
a_{x_1,\dotsc,x_{k-1},\eps,x_{k+1},\dotsc,x_{n+1}}
\biggr\|_2\\
&\leq
\tfrac{2(7-4D_n)}{2(1-D_n)^2-1}
\Bigl(D_n\|a\|_2+(1+D_n)^{\half}
\max_{x\in\{\alpha\beta,\beta\alpha,\alpha^2\beta\}}
\bigl\|a\tens\I-V^{\Delta,n}_{x}(a)\bigr\|_2
\Bigr).
\end{align*}
\end{proposition}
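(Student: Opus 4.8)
The plan is to follow the strategy of the appendix to \cite{DeCommerFreslonYamashita} (whose single-leg, scaling-group incarnation is Proposition \ref{prop:Vaes33}), now carried out for the $(n+1)$-fold tensor product and the iterated coproduct $\Delta^{(n)}$, with the non-isometry of $V^{\Delta,n}_x$ tracked through the defect $D_n$. The device that linearises the argument is the unital completely positive ``convolution'' operator
\[
E^{(n)}_x=\bigl(\id^{\tens(n+1)}\tens\omega_x\bigr)\comp{V^{\Delta,n}_x}\colon\Linf(\oon{U}_F^+)^{\vtens(n+1)}\longrightarrow\Linf(\oon{U}_F^+)^{\vtens(n+1)},\qquad{x}\in\ZZ_+\star\ZZ_+.
\]
Since $\omega_x(\I)=1$, for $a,b\in\Linf(\oon{U}_F^+)^{\vtens(n+1)}$ one has $\is{b\tens\I}{V^{\Delta,n}_x(a)}=\is{b}{E^{(n)}_x(a)}$ and $\is{b}{a}=\is{b\tens\I}{a\tens\I}$, whence by duality
\[
\bigl\|a-E^{(n)}_x(a)\bigr\|_2=\sup_{\|b\|_2\leq1}\bigl|\is{b\tens\I}{a\tens\I-V^{\Delta,n}_x(a)}\bigr|\leq\bigl\|a\tens\I-V^{\Delta,n}_x(a)\bigr\|_2.
\]
Thus it suffices to bound $\|a''\|_2$ by a multiple of $\max_x\|a-E^{(n)}_x(a)\|_2$ plus a $D_n\|a\|_2$ error, where $a=a'+a''$ is the splitting into the $k$-th-leg-trivial part $a'=\sum_{x_i,\,i\neq{k}}a_{x_1,\dotsc,x_{k-1},\eps,x_{k+1},\dotsc,x_{n+1}}$ and the remainder $a''$.

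The combinatorial heart is a spectral-gap statement for $E^{(n)}_x$ along the $k$-th leg. Using $(\Delta^{(n)}\tens\id)(u^x)=[u^x]_{1\,n+2}\dotsm[u^x]_{n+1\,n+2}$, the operator $V^{\Delta,n}_x$ is the simultaneous conjugation of all legs by the single corepresentation $u^x$ through the shared auxiliary space $\cH_x$, so that, after applying $\omega_x$, the action of $E^{(n)}_x$ on the $L_{x_1}\tens\dotsm\tens{L_{x_{n+1}}}$-grading of the $k$-th leg is governed by the single-leg convolution $c\mapsto(\id\tens\omega_x)\bigl(u^x(c\tens\I){u^x}^*\bigr)$. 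Invoking the fusion rules \eqref{eq41} and the combinatorial lemma behind \cite[Proposition 32]{DeCommerFreslonYamashita}, I would establish two facts: first, on the $k$-th-leg-trivial part $E^{(n)}_x$ deviates from the identity only by a non-isometry effect, $\|a'-E^{(n)}_x(a')\|_2\leq{c}D_n\|a'\|_2$ (the leg-$k$ conjugation would fix the scalar exactly were it not coupled to $\cH_x$); secondly, for each nontrivial value $x_k\in\{\alpha,\beta\}$ there is a word $x\in\{\alpha\beta,\beta\alpha,\alpha^2\beta\}$ whose single-leg convolution contracts $L_{x_k}$ by a factor bounded away from $1$, the three words being chosen precisely so as to detect both possible starting letters in leg $k$.

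Combining these inputs gives, for the word $x$ detecting the dominant part of $a''$,
\[
\bigl\|a-E^{(n)}_x(a)\bigr\|_2\geq\bigl\|a''-E^{(n)}_x(a'')\bigr\|_2-\bigl\|a'-E^{(n)}_x(a')\bigr\|_2\geq(\text{gap})\,\|a''\|_2-cD_n\|a\|_2,
\]
where the gap survives the near-isometry losses quantified by \eqref{eq30-drugie} and by $\|V^{\Delta,n}_x\|\leq(1+D_n)^{\half}$. Taking the maximum over the three words and solving the resulting linear inequality for $\|a''\|_2$ produces the coefficient $\tfrac{2(7-4D_n)}{2(1-D_n)^2-1}$; the hypothesis $D_n<1-\tfrac{1}{\sqrt2}$ is used exactly to make the denominator $2(1-D_n)^2-1$ positive, i.e.\ to ensure that the effective contraction gap is not entirely consumed by the non-isometry of $V^{\Delta,n}_x$.

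The main obstacle I anticipate is not conceptual but lies in decoupling the $k$-th-leg analysis from the remaining legs: because $V^{\Delta,n}_x$ conjugates all legs through the common space $\cH_x$, the operator $E^{(n)}_x$ is not a tensor product of single-leg convolutions, and the leg-$k$ grading is only preserved and contracted up to errors that must be shown to be \emph{linear} in $D_n$ (a naive passage through the quadratic identity for $\|a\tens\I-V^{\Delta,n}_x(a)\|_2^2$ would only yield $\sqrt{D_n}$). Establishing this linear control, and then pinning down the exact constant $\tfrac{2(7-4D_n)}{2(1-D_n)^2-1}$ through the compounded triangle inequalities and the sharp numerology of \eqref{eq30-drugie}, is where the care is required, and is the reason the statement is restricted to $1\leq{k}\leq{n}$, reflecting the asymmetric role of the final leg in $\Delta^{(n)}$.
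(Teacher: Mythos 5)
There are two genuine gaps, and they sit exactly at the two places where you deferred the work.

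The first is your decoupling claim~(i): that on the leg-$k$-trivial part $a'$ one has $\|a'-E^{(n)}_x(a')\|_2\leq{c}D_n\|a'\|_2$. This is false, and the failure has nothing to do with the non-isometry defect $D_n$: the conjugation $V^{\Delta,n}_x$ acts on \emph{all} $n+1$ legs through the shared auxiliary leg, so $E^{(n)}_x=(\id^{\tens(n+1)}\tens\omega_x)\comp{V^{\Delta,n}_x}$ genuinely convolves the legs other than $k$. Concretely, take $F$ unitary (so $D_n=0$), $n=1$, $k=1$, and $a'=\I\tens{c}$ with $c\in{C_\alpha}$. A direct computation (using unitarity of $\overline{u^x}$, valid in the Kac case) gives $E^{(1)}_x(\I\tens{c})=\I\tens{E^{(0)}_x(c)}$, where $E^{(0)}_x(c)=(\id\tens\omega_x)\bigl(u^x(c\tens\I){u^x}^*\bigr)$ is the one-leg adjoint convolution. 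For $x=\beta\alpha$ one has $\overline{\beta\alpha}=\beta\alpha$, and the fusion rules \eqref{eq41} give $\beta\alpha\tp\alpha\tp\overline{\beta\alpha}=\beta\alpha^2\beta\alpha\oplus\beta\alpha^2$, so $E^{(0)}_{\beta\alpha}(C_\alpha)\subset{L_{\beta\alpha}}\subset{L_\beta}$, which is orthogonal to $C_\alpha$; hence $\|a'-E^{(1)}_{\beta\alpha}(a')\|_2\geq\|a'\|_2$, not $0$. Worse for your triangle-inequality step: $V^{\Delta,n}_{\beta\alpha}(b_\eps)$ has leg-$k$ components inside $L_\beta$ (because $\beta\alpha\tp\overline{\beta\alpha}$ contains $\beta\alpha$), i.e.\ the error produced by the leg-$k$-trivial part lands exactly where your ``gap'' term lives and can cancel against the displaced image of $b_\alpha$. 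The paper's proof never makes any claim of this kind: it only pairs $a\tens\I-V^{\Delta,n}_x(a)$ (whose norm is the hypothesis) against the vectors $V^{\Delta,n}_x(b_\alpha)$, $V^{\Delta,n}_x(b_\beta)$, and kills all cross terms using the exact orthogonality coming from the displacement property \eqref{eq32-drugie} together with the polarized near-isometry \eqref{eq30-drugie}; no statement that $V_x$ or $E_x$ is close to the identity on any subspace is used, or true.

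The second gap is structural. Even granting the displacement facts, a scheme with ``one contracting word per starting letter'' cannot close: it yields at best the pair $\|b_\beta\|_2^2\geq(1-D_n)\|b_\alpha\|_2^2-\mathrm{err}$ (from $\beta\alpha$) and $\|b_\alpha\|_2^2\geq(1-D_n)\|b_\beta\|_2^2-\mathrm{err}$ (from $\alpha\beta$), whose combination gives $\|b_\alpha\|_2^2\geq(1-D_n)^2\|b_\alpha\|_2^2-\mathrm{err}$ --- vacuous, since $(1-D_n)^2\leq1$ (in the Kac case it reads $0\geq-\mathrm{err}$), so both $\|b_\alpha\|_2$ and $\|b_\beta\|_2$ may stay large. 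The indispensable ingredient is the doubling trick: the words $\alpha\beta$ and $\alpha^2\beta$ \emph{both} act on the same part $b_\beta$, and by \eqref{eq32-drugie} send it into the mutually orthogonal subspaces $L_{\alpha\beta}$ and $L_{\alpha^2}$ of the leg-$k$ space $L_\alpha$ (this is the role of the operators $\cV^{\Delta,n}_{\alpha\beta,12}$ and $\cV^{\Delta,n}_{\alpha^2\beta,13}$ in the paper), upgrading the second inequality to $\|b_\alpha\|_2^2\geq2(1-D_n)\|b_\beta\|_2^2-\mathrm{err}$. Only this factor $2$ produces the coefficient $2(1-D_n)^2>1$ that lets one absorb $\|b_\alpha\|_2^2$ and solve the linear inequality; that is the actual content of the hypothesis $D_n<1-\tfrac{1}{\sqrt{2}}$ and of the denominator $2(1-D_n)^2-1$. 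Your reading of the three words as ``detecting both possible starting letters'' misses this point: two of them deliberately detect the same letter $\beta$. (A minor remark: your rationale for the restriction $1\leq{k}\leq{n}$ is also unfounded --- the displacement property \eqref{eq32-drugie} is symmetric in the legs, and the paper in fact invokes the proposition for $k$ running over all $n+1$ legs in the proof of Theorem \ref{thm2-drugie}.)
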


\begin{proof}
Without loss of generality assume that $\|a\|_2=1$ and set
\[
M=\max_{x\in\{\alpha\beta,\beta\alpha,\alpha^2\beta\}}
\bigl\|a\tens\I-V^{\Delta,n}_{x}(a)\bigr\|_2.
\]

Observe that for $x,y\in\ZZ_+\star\ZZ_+$
\begin{align*}
V^{\Delta,n}_x\bigl(
\Ltwo(\oon{U}_F^+)^{\tens(k-1)}&\tens{C_y}\tens\Ltwo(\oon{U}_F^+)^{\tens(n+1-k)}\bigr)\\
&\subset
\oon{\overline{span}}\bigcup_{z\subset{x}\tp{y}\tp\overline{x}}
\Ltwo(\oon{U}_F^+)^{\tens(k-1)}\tens{C_z}\tens\Ltwo(\oon{U}_F^+)^{\tens(n+1-k)}
\tens\cK_x
\end{align*}
and consequently
\begin{equation}\label{eq32-drugie}
\begin{aligned}
V^{\Delta,n}_{\beta\alpha}\bigl(
\Ltwo(\oon{U}_F^+)^{\tens(k-1)}\tens{L_\alpha}\tens\Ltwo(\oon{U}_F^+)^{\tens(n+1-k)}\bigr)
&\subset
\Ltwo(\oon{U}_F^+)^{\tens(k-1)}\tens{L_{\beta\alpha}}\tens\Ltwo(\oon{U}_F^+)^{\tens(n+1-k)}\tens\cK_{\beta\alpha},\\
V^{\Delta,n}_{\alpha\beta}\bigl(
\Ltwo(\oon{U}_F^+)^{\tens(k-1)}\tens{L_\beta}\tens\Ltwo(\oon{U}_F^+)^{\tens(n+1-k)}\bigr)
&\subset
\Ltwo(\oon{U}_F^+)^{\tens(k-1)}\tens{L_{\alpha\beta}}\tens\Ltwo(\oon{U}_F^+)^{\tens(n+1-k)}\tens\cK_{\alpha\beta},\\
V^{\Delta,n}_{\alpha^2\beta}\bigl(
\Ltwo(\oon{U}_F^+)^{\tens(k-1)}\tens{L_\beta}\tens\Ltwo(\oon{U}_F^+)^{\tens(n+1-k)}\bigr)
&\subset
\Ltwo(\oon{U}_F^+)^{\tens(k-1)}\tens{L_{\alpha^2}}\tens\Ltwo(\oon{U}_F^+)^{\tens(n+1-k)}
\tens\cK_{\alpha^2\beta}.
\end{aligned}
\end{equation}

Indeed, if $w=\alpha{y}$ for some $y\in\ZZ_+\star\ZZ_+$, then using fusion rules of $\oon{U}_F^+$ (equation \eqref{eq41}) we have $\beta\alpha\tp{w}\tp\overline{\beta\alpha}=
\beta\alpha^2y\tp\beta\alpha$ and any $z\subset\beta\alpha\tp{w}\tp\overline{\beta\alpha}$ has to begin with $\beta\alpha$. This shows the first inclusion in \eqref{eq32-drugie}, the remaining ones can be argued analogously.

To ease the notation, for $x\in\{\eps,\alpha,\beta\}$ let us introduce
\[
b_x=\sum_{x_i\in\{\eps,\alpha,\beta\},\:i\neq{k}}a_{x_1,\dotsc,x_{k-1},x,x_{k+1},\dotsc,x_{n+1}}\in\Ltwo(\oon{U}_F^+)^{\tens(k-1)}\tens{L_x}\tens\Ltwo(\oon{U}_F^+)^{\tens(n+1-k)},
\]
so that $a=b_\eps+b_\alpha+b_\beta$. Using \eqref{eq32-drugie} and \eqref{eq30-drugie}, we obtain the estimate
\begin{subequations}
\begin{equation}\label{eq34-drugie}
\begin{aligned}
&\!\!\!\!\bigl|\bigl\langle{b_\beta}\tens\I-V^{\Delta,n}_{\beta\alpha}(b_\alpha)\big|V^{\Delta,n}_{\beta\alpha}(b_\alpha)\bigr\rangle\bigr|\\
&\leq
\bigl|\bigl\langle{a}\tens\I-V^{\Delta,n}_{\beta\alpha}(a)\big|V^{\Delta,n}_{\beta\alpha}(b_\alpha)\bigr\rangle\bigr|
+\bigl|\bigl\langle(b_\eps+b_\alpha)\tens\I-V^{\Delta,n}_{\beta\alpha}(b_\eps+b_\beta)\big|V^{\Delta,n}_{\beta\alpha}(b_\alpha)\bigr\rangle\bigr|\\
&=\bigl|\bigl\langle{a}\tens\I-V^{\Delta,n}_{\beta\alpha}(a)\big|V^{\Delta,n}_{\beta\alpha}(b_\alpha)\bigr\rangle\bigr|
+\bigl|\bigl\langle{V^{\Delta,n}_{\beta\alpha}}(b_\eps+b_\beta)\big|V^{\Delta,n}_{\beta\alpha}(b_\alpha)\bigr\rangle\bigr|\\
&\leq\bigl\|a\tens\I-V^{\Delta,n}_{\beta\alpha}(a)\bigr\|_2
\bigl\|V^{\Delta,n}_{\beta\alpha}(b_\alpha)\bigr\|
+D_{\beta\alpha,n}\|b_\eps+b_\beta\|_2\|b_\alpha\|_2\\
&\leq{M}(1+D_{\beta\alpha,n})^{\half}\|b_\alpha\|_2
+D_{\beta\alpha,n}\|b_\eps+b_\beta\|_2\|b_\alpha\|_2\\
&\leq{M}(1+D_n)^{\half}\|a-b_\eps\|_2+D_n\|a-b_\eps\|_2
\end{aligned}
\end{equation}
and similarly to \eqref{eq34-drugie} we derive
\begin{equation}\label{eq34-trzecie}
\bigl|\bigl\langle
b_\alpha\tens\I-V^{\Delta,n}_{\alpha\beta}(b_\beta)\big|
V^{\Delta,n}_{\alpha\beta}(b_\beta)\bigr\rangle\bigr|
\leq{M}(1+D_n)^{\half}\|a-b_\eps\|_2+D_n\|a-b_\eps\|_2
\end{equation}
as well as
\begin{equation}\label{eq34-czwarte}
\bigl|\bigl\langle{b_\alpha}\tens\I-V^{\Delta,n}_{\alpha^2\beta}(b_\beta)\big|
V^{\Delta,n}_{\alpha^2\beta}(b_\beta)\bigr\rangle\bigr|
\leq{M}(1+D_n)^{\half}\|a-b_\eps\|_2+D_n\|a-b_\eps\|_2.
\end{equation}
\end{subequations}

Using \eqref{eq34-drugie} and again \eqref{eq30-drugie} we arrive at
\begin{equation}\label{eq33-drugie}
\begin{aligned}
\|b_\beta\|_2^2
&=\bigl\|b_\beta\tens\I-V^{\Delta,n}_{\beta\alpha}(b_\alpha)
+V^{\Delta,n}_{\beta\alpha}(b_\alpha)\bigr\|^2_2\\
&\geq
\bigl\|b_\beta\tens\I-V^{\Delta,n}_{\beta\alpha}(b_\alpha)\bigr\|^2
+\bigl\|V^{\Delta,n}_{\beta\alpha}(b_\alpha)\bigr\|^2_2
-2\bigl|\bigl\langle{b_\beta}\tens\I-V^{\Delta,n}_{\beta\alpha}(b_\alpha)\big|
V^{\Delta,n}_{\beta\alpha}(b_\alpha)\bigr\rangle\bigr|\\
&\geq(1-D_n)\|b_\alpha\|_2^2
-2\bigl|\bigl\langle{b_\beta}\tens\I-
V^{\Delta,n}_{\beta\alpha}(b_\alpha)\big|
V^{\Delta,n}_{\beta\alpha}(b_\alpha)\bigr\rangle\bigr|\\
&\geq(1-D_n)\|b_\alpha\|_2^2
-2\bigl(M(1+D_n)^{\half}+D_n\bigr)\|a-b_\eps\|_2.
\end{aligned}
\end{equation}

Define bounded operators
\begin{align*}
\cV^{\Delta,n}_{\alpha\beta,12}&\colon\Ltwo(\oon{U}_F^+)^{\tens(n+1)}\ni\xi\longmapsto\bigl(V^{\Delta,n}_{\alpha\beta}\xi\bigr)_{12}
\in\Ltwo(\oon{U}_F^+)^{\tens(n+1)}\tens\cK_{\alpha\beta}\tens\cK_{\alpha^2\beta},\\
\cV^{\Delta,n}_{\alpha^2\beta,13}&\colon\Ltwo(\oon{U}_F^+)^{\tens(n+1)}\ni\xi\longmapsto\bigl(V^{\Delta,n}_{\alpha^2\beta}\xi\bigr)_{13}
\in\Ltwo(\oon{U}_F^+)^{\tens(n+1)}\tens\cK_{\alpha\beta}\tens\cK_{\alpha^2\beta}.
\end{align*}

It follows from \eqref{eq32-drugie} that the images of $\Ltwo(\oon{U}_F^+)^{\tens(k-1)}\tens{L_\beta}\tens\Ltwo(\oon{U}_F^+)^{\tens(n+1-k)}$ under $\cV^{\Delta,n}_{\alpha\beta,12}$ and $\cV^{\Delta,n}_{\alpha^2\beta,13}$ are orthogonal. Using \eqref{eq34-trzecie} we obtain
\begin{align*}
&\quad\;
\bigl|\bigl\langle{b_\alpha}\tens\I\tens\I
-\cV^{\Delta,n}_{\alpha\beta,12}(b_\beta)-\cV^{\Delta,n}_{\alpha^2\beta,13}(b_\beta)
\big|\cV^{\Delta,n}_{\alpha\beta,12}(b_\beta)\bigr|
=\bigl|\bigl\langle
b_\alpha\tens\I\tens\I
-\cV^{\Delta,n}_{\alpha\beta,12}(b_\beta)\big|
\cV^{\Delta,n}_{\alpha\beta,12}(b_\beta)\bigr\rangle\bigr|\\
&=\bigl|\bigl\langle{b_\alpha}\tens\I
-V^{\Delta,n}_{\alpha\beta}(b_\beta)\big|
V^{\Delta,n}_{\alpha\beta}(b_\beta)\bigr\rangle\bigr|\leq\bigl(M(1+D_n)^{\half}+D_n\bigr)\|a-b_\eps\|_2.
\end{align*}
Analogously, using \eqref{eq34-czwarte} we get
\begin{align*}
\bigl|\bigl\langle{b_\alpha}\tens\I\tens\I
-\cV^{\Delta,n}_{\alpha\beta,12}(b_\beta)-
\cV^{\Delta,n}_{\alpha^2\beta,13}(b_\beta)\big|
\cV^{\Delta,n}_{\alpha^2\beta,13}(b_\beta)\bigr\rangle\bigr|
&=\bigl|\bigl\langle{b_\alpha}\tens\I
-V^{\Delta,n}_{\alpha^2\beta}(b_\beta)\big|
V^{\Delta,n}_{\alpha^2\beta}(b_\beta)\bigr\rangle\bigr|\\
&\leq\bigl(M(1+D_n)^{\half}+D_n\bigr)\|a-b_\eps\|_2.
\end{align*}
Thus
\begin{equation}\label{eq37-drugie}
\begin{aligned}
\|b_\alpha\|_2^2
&=\bigl\|\bigl(b_\alpha\tens\I\tens\I
-\cV^{\Delta,n}_{\alpha\beta,12}(b_\beta)-\cV^{\Delta,n}_{\alpha^2\beta,13}(b_\beta)\bigr)
+\cV^{\Delta,n}_{\alpha\beta,12}(b_\beta)+\cV^{\Delta,n}_{\alpha^2\beta,13}(b_\beta)\bigr\|_2^2\\
&\geq\bigl\|b_\alpha\tens\I\tens\I
-\cV^{\Delta,n}_{\alpha\beta,12}(b_\beta)-\cV^{\Delta,n}_{\alpha^2\beta,13}(b_\beta)\bigr\|_2^2
+2(1-D_n)\|b_\beta\|^2_2\\
&\qquad\qquad\qquad\qquad\qquad\qquad\qquad\qquad\qquad-4\bigl(M(1+D_n)^{\half}+D_n\bigr)\|a-b_\eps\|_2\\
&\geq2(1-D_n)\|b_\beta\|^2_2-4\bigl(M(1+D_n)^{\half}+D_n\bigr)\|a-b_\eps\|_2.
\end{aligned}
\end{equation}
Now we combine inequalities \eqref{eq33-drugie} and \eqref{eq37-drugie}:
\begin{align*}
\|b_\alpha\|^2_2
&\geq2(1-D_n)\Bigl((1-D_n)\|b_\alpha\|_2^2
-2\bigl(M(1+D_n)^{\half}+D_n\bigr)\|a-b_\eps\|_2\Bigr)
-4\bigl(M(1+D_n)^{\half}+D_n\bigr)\|a-b_\eps\|_2\\
&=2(1-D_n)^2\|b_\alpha\|_2^2-4(2-D_n)\bigl(M(1+D_n)^{\half}+D_n\bigr)\|a-b_\eps\|_2
\end{align*}
and as a consequence obtain
\begin{equation}\label{eq38}
\|b_\alpha\|_2^2\leq
\tfrac{4(2-D_n)}{2(1-D_n)^2-1}\bigl(M(1+D_n)^{\half}+D_n\bigr)\|a-b_\eps\|_2.
\end{equation}
Similarly
\begin{align*}
\|b_\beta\|_2^2&\geq(1-D_n)\Bigl(2(1-D_n)\|b_\beta\|_2^2
-4\bigl(M(1+D_n)^{\half}+D_n\bigr)\|a-b_\eps\|_2\Bigr)
-2\bigl(M(1+D_n)^{\half}+D_n\bigr)\|a-b_\eps\|_2\\
&=2(1-D_n)^2\|b_{\beta}\|_2^2
-\bigl(4(1-D_n)+2\bigr)\bigl(M(1+D_n)^{\half}+D_n\bigr)\|a-b_\eps\|_2
\end{align*}
leads to
\begin{equation}\label{eq39}
\|b_\beta\|_2^2\leq\tfrac{4(1-D_n)+2}{2(1-D_n)^2-1}
\bigl(M(1+D_n)^{\half}+D_n\bigr)\|a-b_\eps\|_2.
\end{equation}

Inequalities \eqref{eq38}, \eqref{eq39} combine to
\begin{align*}
\|a-b_\eps\|^2_2&=\|b_\alpha+b_\beta\|_2^2=
\|b_\alpha\|_2^2+\|b_\beta\|_2^2\\
&\leq
\Bigl(\tfrac{4(2-D_n)}{2(1-D_n)^2-1}+\tfrac{4(1-D_n)+2}{2(1-D_n)^2-1}\Bigr)
\bigl(M(1+D_n)^{\half}+D_n\bigr)\|a-b_\eps\|_2\\
&=\tfrac{2(7-4D_n)}{2(1-D_n)^2-1}\bigl(M(1+D_n)^{\half}+D_n\bigr)
\|a-b_\eps\|_2
\end{align*}
which ends the proof.
\end{proof}

As a corollary we obtain

\begin{theorem}\label{thm2-drugie}
Take $n\in\NN$ and assume that the constant $D_n$ defined by \eqref{eq:defD} satisfies $D_n<1-\tfrac{1}{\sqrt{2}}$ and $\tfrac{2(7-4D_n)D_n}{2(1-D_n)^2-1}<\tfrac{1}{\sqrt{n+1}}$. Then $\hh{\oon{U}_F^+}$ is $n$-i.c.c., i.e.~$\Delta^{(n)}(\Linf(\oon{U}_F^+))'\cap\Linf(\oon{U}_F^+)^{\vtens(n+1)}=\CC\I^{\tens(n+1)}$.
\end{theorem}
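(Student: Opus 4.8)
The plan is to show that the relative commutant $\mathcal{C}:=\Delta^{(n)}\bigl(\Linf(\oon{U}_F^+)\bigr)'\cap\Linf(\oon{U}_F^+)^{\vtens(n+1)}$ collapses to the scalars by feeding the commutation condition into the near-isometry estimate of Proposition \ref{prop3-drugie} and then combining that estimate over \emph{all} $n+1$ tensor legs.

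First I would translate membership in $\mathcal{C}$ into the vanishing of the quantity controlling the right-hand side of Proposition \ref{prop3-drugie}. If $a\in\mathcal{C}$ then $a\tens\I$ commutes with every matrix element $\Delta^{(n)}(u^x_{i,j})$, hence with the unitary $(\Delta^{(n)}\tens\id)(u^x)$; since that element is unitary, conjugating $a\tens\I$ by it has no effect, i.e.\ $V^{\Delta,n}_x(a)=a\tens\I$ for every $x$. Thus $\max_{x\in\{\alpha\beta,\beta\alpha,\alpha^2\beta\}}\bigl\|a\tens\I-V^{\Delta,n}_x(a)\bigr\|_2=0$, so Proposition \ref{prop3-drugie} (with $M=0$) yields, for each leg $k$, the bound $\|a-E_k(a)\|_2\le c\,\|a\|_2$ with $c=\tfrac{2(7-4D_n)D_n}{2(1-D_n)^2-1}$, where $E_k$ is the orthogonal projection of $\Ltwo(\oon{U}_F^+)^{\tens(n+1)}$ onto the subspace whose $k$-th leg lies in $L_\eps=\CC\I$. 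Here I would remark that, although Proposition \ref{prop3-drugie} is stated for $1\le k\le n$, its proof treats the legs symmetrically: the operator $(\Delta^{(n)}\tens\id)(u^x)$ is the product $[u^x]_{1\,n+2}\dotsm[u^x]_{n+1\,n+2}$, and the fusion inclusions \eqref{eq32-drugie} only concern the content of a single leg, so the same estimate holds verbatim for $k=n+1$.

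Next I would use that $E_1,\dotsc,E_{n+1}$ are mutually commuting orthogonal projections, each acting on a distinct tensor factor. Writing $P=E_1\dotsm E_{n+1}$ for their product, which is the projection onto $\CC\I^{\tens(n+1)}$, and using $\I-P=\bigvee_k(\I-E_k)\le\sum_k(\I-E_k)$, I obtain for every $a\in\mathcal{C}$
\[
\|(\I-P)a\|_2^2\le\sum_{k=1}^{n+1}\|(\I-E_k)a\|_2^2\le(n+1)\,c^2\,\|a\|_2^2 .
\]
Since $Pa=\bh^{\tens(n+1)}(a)\,\I^{\tens(n+1)}$, I replace $a$ by $a_0=a-\bh^{\tens(n+1)}(a)\I^{\tens(n+1)}\in\mathcal{C}$, which satisfies $Pa_0=0$, and conclude $\|a_0\|_2\le\sqrt{n+1}\,c\,\|a_0\|_2$. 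The hypothesis $c=\tfrac{2(7-4D_n)D_n}{2(1-D_n)^2-1}<\tfrac{1}{\sqrt{n+1}}$ then forces $a_0=0$, so $a=\bh^{\tens(n+1)}(a)\I^{\tens(n+1)}\in\CC\I^{\tens(n+1)}$ by faithfulness of $\bh^{\tens(n+1)}$. As the reverse inclusion is trivial, this gives $\mathcal{C}=\CC\I^{\tens(n+1)}$, i.e.\ $\hh{\oon{U}_F^+}$ is $n$-i.c.c.

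The delicate point is the per-leg estimate for the last leg $k=n+1$. A factor-type shortcut handling only legs $1,\dotsc,n$ (slicing the first $n$ legs with $\bh^{\tens n}\tens\id$ and invoking fullness of $\Linf(\oon{U}_F^+)$) does not close the argument, because such a conditional expectation does not map $\mathcal{C}$ into itself and $(\bh^{\tens n}\tens\id)(a)$ need not be central; this is exactly why one cannot reduce to the last leg by hand and must instead obtain the estimate for all $n+1$ legs directly. It is the symmetric treatment of the legs, producing the union bound with constant $\sqrt{n+1}$ that matches the assumed bound on $D_n$, that makes everything fit together, so the step requiring care is checking that the proof of Proposition \ref{prop3-drugie} applies unchanged to the $(n+1)$-st leg.
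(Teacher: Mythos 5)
Your proposal is correct and follows essentially the same route as the paper's proof: commutation with $\Delta^{(n)}(\Linf(\oon{U}_F^+))$ forces $V^{\Delta,n}_x(a)=a\tens\I$ (so $M=0$ in Proposition \ref{prop3-drugie}), the per-leg estimate is applied to the centered element on all $n+1$ legs, and your union bound $\|(\I-P)a_0\|_2^2\leq\sum_{k}\|(\I-E_k)a_0\|_2^2\leq(n+1)c^2\|a_0\|_2^2$ is exactly the paper's counting argument (each component $a_{x_1,\dotsc,x_{n+1}}$ with some $x_k\neq\eps$ appears among the $b_{k,\eps}$ at most $n$ times) written in complementary form. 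Your explicit verification that Proposition \ref{prop3-drugie} applies to the last leg $k=n+1$ is a welcome touch of care, since the paper states that proposition only for $1\leq k\leq n$ yet applies it for $1\leq k\leq n+1$ without comment.
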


The conditions $D_n<1-\tfrac{1}{\sqrt{2}}$, $\tfrac{2(7-4D_n)D_n}{2(1-D_n)^2-1}<\tfrac{1}{\sqrt{n+1}}$ can be translated to conditions on matrix $F$. Furthermore, $D_n=D_{\alpha^2\beta,n}$, see Corollary \ref{cor2}.

\begin{proof}[Proof of Theorem \ref{thm2-drugie}]
Take a non-zero $c\in\Delta^{(n)}(\Linf(\oon{U}_F^+))'\cap\Linf(\oon{U}_F^+)^{\vtens(n+1)}$ and assume that $c\not\in\CC\I^{\tens(n+1)}$. Setting
\[
a=\frac{c-\bh^{\tens(n+1)}(c)\,\I^{\tens(n+1)}}
{\|c-\bh^{\tens(n+1)}(c)\,\I^{\tens(n+1)}\|_2}
\]
we obtain $a\in\Delta^{(n)}(\Linf(\oon{U}_F^+))'\cap\Linf(\oon{U}_F^+)^{\vtens(n+1)}$, $\|a\|_2=1$ and $\bh^{\tens(n+1)}(a)=0$. Write as in the proof of Proposition \ref{prop3-drugie}
\begin{equation}\label{eq42}
a=\sum_{x_1,\dotsc,x_{n+1}\in\{\eps,\alpha,\beta\}}a_{x_1,\dotsc,x_{n+1}}.
\end{equation}
and
\[
b_{k,x}=
\sum_{x_i\in\{\eps,\alpha,\beta\},\:i\neq{k}}a_{x_1,\dotsc,x_{k-1},x,x_{k+1},\dotsc,x_{n+1}},\qquad{k}\in\{1,\dotsc,n+1\},\;x\in\{\eps,\alpha,\beta\}.
\]
Since $a$ belongs to $\Delta^{(n)}(\Linf(\oon{U}_F^+))'$ and $\|a\|_2=1$, Proposition \ref{prop3-drugie} gives us
\[
\|b_{k,\alpha}\|_2^2+\|b_{k,\beta}\|_2^2=\|b_{k,\alpha}+b_{k,\beta}\|_2^2\leq
\bigl(\tfrac{2(7-4D_n)}{2(1-D_n)^2-1}D_n\bigr)^2
\]
for $1\leq{k}\leq{n+1}$, hence as $1=\|a\|_2^2=\sum_{x\in\{\eps,\alpha,\beta\}}\|b_{k,x}\|_2^2$ we obtain
\[
\|b_{k,\eps}\|_2^2\geq{1}-\Bigl(\tfrac{2(7-4D_n)}{2(1-D_n)^2-1}D_n\Bigr)^2
\]
and consequently
\begin{equation}\label{eq43}
\sum_{k=1}^{n+1}\|b_{k,\eps}\|_2^2\geq(n+1)\biggl(
1-\Bigl(\tfrac{2(7-4D_n)}{2(1-D_n)^2-1}D_n\Bigr)^2\biggr).
\end{equation}
On the other hand, since the decomposition in \eqref{eq42} is orthogonal
\[
\begin{aligned}
\sum_{k=1}^{n+1}\|b_{k,\eps}\|_2^2&
=\sum_{k=1}^{n+1}
\biggl\|\sum_{x_i\in\{\eps,\alpha,\beta\},\:i\neq{k}}a_{x_1,\dotsc,x_{k-1},\eps,x_{k+1},\dotsc,x_{n+1}}\biggr\|_2^2\\
&=\sum_{k=1}^{n+1}
\sum_{x_i\in\{\eps,\alpha,\beta\},\:i\neq{k}}
\bigl\|a_{x_1,\dotsc,x_{k-1},\eps,x_{k+1},\dotsc,x_{n+1}}\bigr\|_2^2\\
&\leq
n\sum_{x_1,\dotsc,x_{n+1}\in\{\eps,\alpha,\beta\}}\|a_{x_1,\dotsc,x_{n+1}}\|_2^2=n.
\end{aligned}
\]
The reason why in the above inequality we have number $n$, and not $n+1$ is that every term $\|a_{x_1,\dotsc,x_{n+1}}\|_2^2$ appears at most $n$ times, unless $(x_1,\dotsc,x_{n+1})=(\eps,\dotsc,\eps)$, but $a_{\eps,\dotsc,\eps}=0$. Together with inequality \eqref{eq43} and assumption $\tfrac{2(7-4D_n)D_n}{2(1-D_n)^2-1}<\tfrac{1}{\sqrt{n+1}}$ it gives a contradiction because
\[
(n+1)\biggl(1-
\Bigl(\tfrac{2(7-4D_n)}{2(1-D_n)^2-1}D_n\Bigr)^2\biggr)\leq
\sum_{k=1}^{n+1}\|b_{k,\eps}\|_2^2\leq{n}
\]
implies $\tfrac{1}{\sqrt{n+1}}\leq\tfrac{2(7-4D_n)}{2(1-D_n)^2-1}D_n$.
\end{proof}

In the last result of this section we obtain a sufficient condition under which $\widehat{\oon{U}_F^+}$ is $n$-i.c.c. We note that one can easily obtain stronger (although less transparent) conditions using \eqref{eq46}.

\begin{lemma}\label{lemma8}
For $n\in\NN$ we have $D_n=D_{\alpha^2\beta,n}$.
\end{lemma}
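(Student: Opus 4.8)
The plan is to reduce the whole statement to a comparison of the three numbers $\Gamma(\alpha\beta)$, $\Gamma(\beta\alpha)$ and $\Gamma(\alpha^2\beta)$, and to show that $D_{\alpha^2\beta,n}$ dominates the other two entries in \eqref{eq:defD}. If $\oon{U}_F^+$ is of Kac type then $\uprho_x=\I$ for every $x$, so $D_{x,n}=0$ throughout and the equality $D_n=D_{\alpha^2\beta,n}$ is trivial. Hence I would assume $\oon{U}_F^+$ is not of Kac type and set $\Gamma=\Gamma(\alpha)=\|\uprho_\alpha\|$ and $\gamma=\gamma(\alpha)$; from $\uprho_\alpha\neq\I$ together with $\Tr\uprho_\alpha=\Tr(\uprho_\alpha^{-1})$ one gets $\gamma<1<\Gamma$. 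Since $\uprho_\beta=\uprho_{\overline\alpha}$ has spectrum $\oon{Sp}(\uprho_\alpha)^{-1}$, we have $\Gamma(\beta)=\gamma^{-1}$ and $\gamma(\beta)=\Gamma^{-1}$.

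First I would record the relevant $\Gamma$'s. As $\alpha\beta=w^2_\alpha$ and $\beta\alpha=w^2_\beta$, equation \eqref{dimdeltan} gives at once
\[
\Gamma(\alpha\beta)=\Gamma(\beta\alpha)=\Gamma\gamma^{-1},\qquad
\gamma(\alpha\beta)=\gamma(\beta\alpha)=\gamma\Gamma^{-1}=\Gamma(\alpha\beta)^{-1}.
\]
For $\alpha^2\beta$ I would read off from the fusion rules \eqref{eq41} (with $x=\alpha$, $y=\alpha\beta$) that the only admissible factorisation is $a=\alpha$, $c=\eps$, $b=\alpha\beta$, so that $u^\alpha\tp{u^{\alpha\beta}}=u^{\alpha^2\beta}$ is irreducible. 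Consequently $\uprho_{\alpha^2\beta}$ is unitarily equivalent to $\uprho_\alpha\tens\uprho_{\alpha\beta}$, and multiplicativity of $\uprho$ under tensor products yields
\[
\Gamma(\alpha^2\beta)=\Gamma\cdot\Gamma(\alpha\beta)=\Gamma^2\gamma^{-1}.
\]

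The second step is a lower bound versus an exact value. For a finite-dimensional representation $\Gamma(x)=\|\uprho_x\|\in\oon{Sp}(\uprho_x)$, so $\Gamma(x)^2-1\in\oon{Sp}(\uprho_x^2-\I)$ and hence $\|\uprho_x^2-\I\|\geq\Gamma(x)^2-1$. Inserting this into the definition of $D_{\alpha^2\beta,n}$ (valid because $\Gamma(\alpha^2\beta)>1$ forces $\uprho_{\alpha^2\beta}\neq\I$) gives
\[
D_{\alpha^2\beta,n}=\|\uprho_{\alpha^2\beta}^2-\I\|\,\tfrac{\Gamma(\alpha^2\beta)^{2(n+1)}-1}{\Gamma(\alpha^2\beta)^2-1}\geq\Gamma(\alpha^2\beta)^{2(n+1)}-1=(\Gamma^2\gamma^{-1})^{2(n+1)}-1.
\]
For $\alpha\beta$ and $\beta\alpha$ one has $\gamma(\cdot)=\Gamma(\cdot)^{-1}$, so $\|\uprho_{\alpha\beta}^2-\I\|=\max\{\Gamma(\alpha\beta)^2-1,\,1-\Gamma(\alpha\beta)^{-2}\}=\Gamma(\alpha\beta)^2-1$ by AM--GM, and the telescoping factor cancels exactly:
\[
D_{\alpha\beta,n}=D_{\beta\alpha,n}=\Gamma(\alpha\beta)^{2(n+1)}-1=(\Gamma\gamma^{-1})^{2(n+1)}-1.
\]
Since $\Gamma>1$ gives $\Gamma^2\gamma^{-1}>\Gamma\gamma^{-1}>1$ and $t\mapsto t^{2(n+1)}-1$ is increasing on $(1,\infty)$, we conclude $D_{\alpha^2\beta,n}\geq D_{\alpha\beta,n}=D_{\beta\alpha,n}$, whence $D_n=D_{\alpha^2\beta,n}$ by \eqref{eq:defD}.

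I expect the only genuinely delicate point to be the irreducibility of $u^\alpha\tp{u^{\alpha\beta}}$, since that is what simultaneously pins down the exact value $\Gamma(\alpha^2\beta)=\Gamma^2\gamma^{-1}$; the remaining estimates are the routine bookkeeping displayed above. The cleanest way to secure it is the direct reading of \eqref{eq41}, which makes the single-summand decomposition automatic and thereby avoids having to argue separately that the top $\uprho$-eigenvalue survives into the relevant irreducible component.
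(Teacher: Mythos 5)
Your proof is correct, and it reaches the conclusion by a route that differs in mechanism, though not in its basic inputs, from the paper's. Both arguments rest on the fusion rules \eqref{eq41} --- in particular on the single-summand decomposition $u^\alpha\tp{u^{\alpha\beta}}=u^{\alpha^2\beta}$, hence $\uprho_{\alpha^2\beta}\simeq\uprho_\alpha\tens\uprho_{\alpha\beta}$ --- and on multiplicativity of $\uprho$ under tensor products. The difference is this: the paper (in the joint proof of Lemma \ref{lemma8} and Corollary \ref{cor2}) computes none of the three constants; it gets $D_{\alpha\beta,n}=D_{\beta\alpha,n}$ because $\alpha\tp\beta=\alpha\beta\oplus\eps$ and $\beta\tp\alpha=\beta\alpha\oplus\eps$ force $\uprho_{\alpha\beta}$ and $\uprho_{\beta\alpha}$ to have the same spectrum, and it gets $D_{\alpha^2\beta,n}\geq{D_{\alpha\beta,n}}$ by dominating each factor of $D_{x,n}$ separately ($\|\uprho_{\alpha^2\beta}\|\geq\|\uprho_{\alpha\beta}\|$ and $\|\uprho_{\alpha^2\beta}^2-\I\|\geq\|\uprho_{\alpha\beta}^2-\I\|$, using only that $\min\oon{Sp}(\uprho_\alpha)\leq{1}\leq\max\oon{Sp}(\uprho_\alpha)$), combined with monotonicity of $D_{x,n}$ in both factors. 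You instead invoke \eqref{dimdeltan}, whose consequence $\gamma(\alpha\beta)=\Gamma(\alpha\beta)^{-1}=\gamma(\beta\alpha)$ (with $\Gamma(\alpha\beta)=\Gamma(\beta\alpha)=\Gamma\gamma^{-1}$) lets you evaluate $D_{\alpha\beta,n}=D_{\beta\alpha,n}=\Gamma(\alpha\beta)^{2(n+1)}-1$ in closed form, and you then beat this with the crude lower bound $D_{\alpha^2\beta,n}\geq\Gamma(\alpha^2\beta)^{2(n+1)}-1$, where $\Gamma(\alpha^2\beta)=\Gamma^2\gamma^{-1}>\Gamma\gamma^{-1}$. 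All your individual steps check out: the reduction to the non-Kac case, $\gamma<1<\Gamma$ from $\Tr(\uprho_\alpha)=\Tr(\uprho_\alpha^{-1})$, the fusion computation showing $u^\alpha\tp{u^{\alpha\beta}}$ is irreducible, and the elementary spectral estimates. What each approach buys: yours produces explicit values for two of the three constants, the kind of quantitative information that feeds into Corollary \ref{cor2} (cf.~\eqref{eq46}); the paper's is softer, needing no information about the bottom of $\oon{Sp}(\uprho_{\alpha\beta})$ and hence no appeal to \eqref{dimdeltan} at all.
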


\begin{corollary}\label{cor2}
Take $n\in\NN$ and write $\mathfrak{c}=\max\bigl\{\|\lambda{F^*F}-\I\|,\|(\lambda{F^*F})^{-1}-\I\|\bigr\}$,
where $\lambda=\sqrt{\tfrac{\Tr((F^*F)^{-1})}{\Tr(F^*F)}}$. If
\[
\sqrt{n}(n+1)\mathfrak{c}(2+\mathfrak{c})(1+\mathfrak{c})^{4+6n}<\tfrac{1}{72},
\]
then $\widehat{\oon{U}_F^+}$ is $n$-i.c.c. In particular $\widehat{\oon{U}_N^+}$ is $n$-i.c.c.~for any $N\geq{2}$.
\end{corollary}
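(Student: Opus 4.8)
The plan is to reduce Corollary \ref{cor2} to a clean application of Theorem \ref{thm2-drugie} together with Lemma \ref{lemma8}. By Lemma \ref{lemma8} we have $D_n=D_{\alpha^2\beta,n}$, so the two hypotheses of Theorem \ref{thm2-drugie}, namely $D_n<1-\tfrac{1}{\sqrt{2}}$ and $\tfrac{2(7-4D_n)D_n}{2(1-D_n)^2-1}<\tfrac{1}{\sqrt{n+1}}$, will follow once I bound $D_{\alpha^2\beta,n}$ in terms of the single quantity $\mathfrak{c}=\max\{\|\lambda F^*F-\I\|,\|(\lambda F^*F)^{-1}-\I\|\}$. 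Recall $\uprho_\alpha=\lambda(F^*F)^\top$, so $\|\uprho_\alpha-\I\|=\|\lambda F^*F-\I\|\le\mathfrak c$ and $\|\uprho_\alpha^{-1}-\I\|\le\mathfrak c$; in particular $\|\uprho_\alpha\|\le1+\mathfrak c$. Since $\uprho_\beta=\uprho_{\overline\alpha}$ is (a transpose of) $\uprho_\alpha^{-1}$, the same bounds hold for $\uprho_\beta$, and for a tensor-product word $x=\alpha^2\beta$ one has $\uprho_x=\uprho_\alpha\tens\uprho_\alpha\tens\uprho_\beta$, whence $\|\uprho_x\|\le(1+\mathfrak c)^3$ and $\|\uprho_x^2-\I\|\le\|\uprho_x-\I\|\,\|\uprho_x+\I\|$.

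First I would estimate $\|\uprho_x^2-\I\|$. Writing $\uprho_x^2-\I=(\uprho_x-\I)(\uprho_x+\I)$ and using that $\|\uprho_x-\I\|\le(1+\mathfrak c)^3-1\le\mathfrak c\cdot 3(1+\mathfrak c)^2$ (or more crudely $\le\mathfrak c(2+\mathfrak c)(1+\mathfrak c)$ type bounds obtained by telescoping across the three tensor legs), together with $\|\uprho_x+\I\|\le(1+\mathfrak c)^3+1\le 2(1+\mathfrak c)^3$, gives a bound of the shape $\|\uprho_x^2-\I\|\le C\,\mathfrak c(2+\mathfrak c)(1+\mathfrak c)^{k}$ for an explicit small power $k$. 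Next, from the definition
\[
D_{x,n}=\|\uprho_x^2-\I\|\,\tfrac{\|\uprho_x\|^{2(n+1)}-1}{\|\uprho_x\|^2-1},
\]
I would bound the geometric-sum factor by $(n+1)\|\uprho_x\|^{2n}$, using that $\tfrac{t^{n+1}-1}{t-1}=\sum_{j=0}^{n}t^j\le(n+1)t^n$ for $t=\|\uprho_x\|^2\ge1$. With $\|\uprho_x\|^{2n}\le(1+\mathfrak c)^{6n}$ this yields an estimate
\[
D_n=D_{\alpha^2\beta,n}\le (n+1)\,\mathfrak c(2+\mathfrak c)(1+\mathfrak c)^{4+6n}
\]
after absorbing the small constant powers into the exponent $4+6n$.

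With this master inequality in hand, the hypothesis $\sqrt n(n+1)\mathfrak c(2+\mathfrak c)(1+\mathfrak c)^{4+6n}<\tfrac1{72}$ immediately forces $D_n$ to be very small: in particular $D_n<\tfrac1{72}<1-\tfrac1{\sqrt2}$, which is the first condition of Theorem \ref{thm2-drugie}. For the second condition I would observe that when $D_n$ is this small, the rational prefactor $\tfrac{2(7-4D_n)}{2(1-D_n)^2-1}$ is bounded by an absolute constant (one checks $\tfrac{2(7-4D_n)}{2(1-D_n)^2-1}\le 72$ on the range $0\le D_n<1-\tfrac1{\sqrt2}$, with room to spare), so that
\[
\tfrac{2(7-4D_n)D_n}{2(1-D_n)^2-1}\le 72\,D_n\le 72(n+1)\mathfrak c(2+\mathfrak c)(1+\mathfrak c)^{4+6n}<\tfrac{\sqrt n(n+1)}{(n+1)\sqrt{n+1}}\cdot\tfrac{1}{\sqrt n}\le\tfrac1{\sqrt{n+1}},
\]
where the assumption is used to pass the $72$ across. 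Thus both hypotheses of Theorem \ref{thm2-drugie} hold and $\hh{\oon{U}_F^+}$ is $n$-i.c.c. The final assertion about $\oon{U}_N^+$ follows because for $F=\I_N$ we have $F^*F=\I$, hence $\lambda=1$, $\uprho_\alpha=\I$, and $\mathfrak c=0$, so every $D_n=0$ and the inequality holds trivially for all $n$ and all $N\ge2$.

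The main obstacle I anticipate is purely bookkeeping: pinning down the correct constant and exponent in the bound on $D_{\alpha^2\beta,n}$ so that the threshold $\tfrac1{72}$ in the statement is exactly what the estimate $72\,D_n$ and the geometric factor $(n+1)$ demand. The genuine mathematical content — that $D_n=D_{\alpha^2\beta,n}$ and that the two conditions of Theorem \ref{thm2-drugie} are the right ones — is already supplied by Lemma \ref{lemma8} and Theorem \ref{thm2-drugie}; the work here is only to verify that the chosen numerical constant $\tfrac1{72}$ makes the chain of inequalities close, and to confirm the elementary bound $\tfrac{2(7-4D_n)}{2(1-D_n)^2-1}\le72$ on the relevant interval.
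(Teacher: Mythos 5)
Your overall strategy---reduce to Theorem \ref{thm2-drugie} via Lemma \ref{lemma8} and bound $D_n=D_{\alpha^2\beta,n}$ in terms of $\mathfrak{c}$---is exactly the paper's, but the quantitative execution contains three genuine errors, and with the constant $\tfrac{1}{72}$ fixed in the statement your chain of inequalities does not close. First, the ``master inequality'' $D_n\leq(n+1)\mathfrak{c}(2+\mathfrak{c})(1+\mathfrak{c})^{4+6n}$ is unjustified: any estimate of $\|\uprho_{\alpha^2\beta}^2-\I\|$ produces a numerical factor (the paper gets $3\mathfrak{c}(2+\mathfrak{c})(1+\mathfrak{c})^4$; your telescoping gives roughly $6\mathfrak{c}(1+\mathfrak{c})^5$), and such a constant cannot be ``absorbed into the exponent $4+6n$'' because $(1+\mathfrak{c})^k\to1$ as $\mathfrak{c}\to0$; the honest bound is $D_n\leq3(n+1)\mathfrak{c}(2+\mathfrak{c})(1+\mathfrak{c})^{4+6n}$. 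Second, the claim that $\tfrac{2(7-4D)}{2(1-D)^2-1}\leq72$ for all $0\leq{D}<1-\tfrac{1}{\sqrt{2}}$ is false: the denominator vanishes as $D\to1-\tfrac{1}{\sqrt{2}}$, so the function is unbounded on that interval (already at $D=0.25$ its value is $96$); it is bounded only on the smaller interval where $D_n$ actually lives. Third---and this kills the argument even if one grants the first two claims---your final display is circular: the middle term $\tfrac{\sqrt{n}(n+1)}{(n+1)\sqrt{n+1}}\cdot\tfrac{1}{\sqrt{n}}$ equals exactly $\tfrac{1}{\sqrt{n+1}}$, whereas the hypothesis only yields $72(n+1)\mathfrak{c}(2+\mathfrak{c})(1+\mathfrak{c})^{4+6n}<\tfrac{1}{\sqrt{n}}$, and $\tfrac{1}{\sqrt{n}}>\tfrac{1}{\sqrt{n+1}}$; so the inequality you need is asserted, not derived (with the correct factor $3$ you would only get $72D_n<\tfrac{3}{\sqrt{n}}$, which is far worse). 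A minor further point: $\uprho_{\alpha^2\beta}$ is not equal to $\uprho_\alpha\tens\uprho_\alpha\tens\uprho_\beta$; since $\alpha\tp\alpha\tp\beta=\alpha^2\beta\oplus\alpha$ one only has $\uprho_\alpha\tens\uprho_\alpha\tens\uprho_\beta\simeq\uprho_{\alpha^2\beta}\oplus\uprho_\alpha$, though this still gives your upper bounds because $\oon{Sp}(\uprho_{\alpha^2\beta})\subset\oon{Sp}(\uprho_\alpha\tens\uprho_\alpha\tens\uprho_\beta)$.

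The paper closes the argument differently, and the constant $\tfrac{1}{72}$ is calibrated to its route: keeping the factor $3$, the hypothesis gives $D_n<\tfrac{3}{72\sqrt{n}}=\tfrac{1}{24\sqrt{n}}\leq\tfrac{1}{24}<1-\tfrac{1}{\sqrt{2}}$; then, instead of a uniform bound on the rational prefactor, the second condition of Theorem \ref{thm2-drugie} is rewritten as the quadratic inequality $2(1+4\sqrt{n+1})D_n^2-2(2+7\sqrt{n+1})D_n+1>0$, which holds whenever $0<D_n$ lies below the smaller root, and that root is shown to be at least $\tfrac{1}{24\sqrt{n}}$ (the case $D_n=0$, i.e.~$\uprho_\alpha=\I$, being trivial). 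Your linearization idea can be repaired, but only with tighter constants than you used: from $D_n<\tfrac{1}{24}$ one gets $\tfrac{2(7-4D_n)}{2(1-D_n)^2-1}\leq\tfrac{14}{2(23/24)^2-1}<16.75$, hence $\tfrac{2(7-4D_n)D_n}{2(1-D_n)^2-1}<\tfrac{16.75}{24\sqrt{n}}<\tfrac{0.7}{\sqrt{n}}$, and $0.7\sqrt{n+1}\leq\sqrt{n}$ does hold for all $n\geq1$ (barely: at $n=1$ it reads $0.99\leq1$). So the statement is salvageable by your method, but the specific constants you chose ($72$ as the prefactor bound, and no factor $3$) leave no room, and as written the proof has a real gap.
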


\begin{proof}[Proof of Lemma \ref{lemma8} and Corollary \ref{cor2}]

Recall that by definition
\[
D_{n}=\max\bigl\{D_{\alpha\beta,n},D_{\beta\alpha,n},
D_{\alpha^2\beta,n}\bigr\},
\]
where
\[
D_{x,n}=\begin{cases}\|\uprho_x^2-\I\|\tfrac{
\|\uprho_x\|^{2(n+1)}-1}{\|\uprho_x\|^2-1}&\uprho_x\neq\I\\
0&\uprho_x=\I
\end{cases},\qquad{x}\in\ZZ_+\star\ZZ_+.
\]

The fusion rules of $\oon{U}_F^+$ (equation \eqref{eq41}) show $\alpha\tp\beta=\alpha\beta\oplus\eps$ and $\beta\tp\alpha=\beta\alpha\oplus\eps$, hence using \cite[Theorem 1.4.9]{NeshveyevTuset} we obtain
\[
\|\uprho_{\alpha\beta}\|=\|\uprho_{\beta\alpha}\|,\quad
\|\uprho_{\alpha\beta}^2-\I\|=
\|\uprho_{\beta\alpha}^2-\I\|.
\]
Next, $\alpha\tp\alpha\beta=\alpha^2\beta$, hence writing $\simeq$ for unitary equivalence, we have $\uprho_\alpha\tens\uprho_{\alpha\beta}\simeq\uprho_{\alpha^2\beta}$ and
\[
\|\uprho_{\alpha^2\beta}\|=\|\uprho_\alpha\|\|\uprho_{\alpha\beta}\|\geq\|\uprho_{\alpha\beta}\|
\]
and
\[
\|\uprho_{\alpha^2\beta}^2-\I\|
=\max\bigl\{|\mu|\,\bigr|\bigl.\,\mu\in\oon{Sp}(\uprho_{\alpha^2\beta}^2-\I)\bigr\}
=\max\bigl\{|\mu-1|\,\bigr|\bigl.\,\mu\in\oon{Sp}(\uprho_\alpha^2\tens\uprho_{\alpha\beta}^2)\bigr\}
\geq\|\uprho_{\alpha\beta}^2-\I\|
\]
(note that $\max\oon{Sp}(\uprho_\alpha)\geq{1},\min\oon{Sp}(\uprho_\alpha)\leq{1}$). Consequently
\[
D_n=D_{\alpha^2\beta,n}.
\]

To use Theorem \ref{thm2-drugie}, we need to check $D_n<1-\tfrac{1}{\sqrt{2}}$ and $\tfrac{2(7-4D_n)D_n}{2(1-D_n)^2-1}<\tfrac{1}{\sqrt{n+1}}$. Clearly these conditions are met if $\uprho_\alpha=\I$ (as then $D_n=0$), hence from now on assume otherwise.

As mentioned after Proposition \ref{prop:Vaes33}, by \cite[Examples 1.4.2]{NeshveyevTuset} we have $\uprho_\alpha=\lambda(F^*F)^\top$, hence by \cite[Proposition 1.4.7]{NeshveyevTuset}, $\uprho_\beta=\lambda^{-1}(F^*F)^{-1}$. Since $\alpha\tp\alpha\tp\beta=\alpha^2\beta\oplus\alpha$, we have $\uprho_\alpha\tens\uprho_\alpha\tens\uprho_\beta\simeq\uprho_{\alpha^2\beta}\oplus\uprho_\alpha$ and
\begin{equation}\label{eq46}
\|\uprho_{\alpha^2\beta}\|=\lambda\|F^*F\|^2\|(F^*F)^{-1}\|,\quad
\|\uprho_{\alpha^2\beta}^2-\I\|=
\|\lambda^2(F^*F)^2\tens(F^*F)^2\tens(F^*F)^{-2}-\I\tens\I\tens\I\|.
\end{equation}
Writing $\mathfrak{c}=\max\bigl\{\|\lambda{F^*F}-\I\|,\bigl\|(\lambda{F^*F})^{-1}-\I\bigr\|\bigr\}$ we have $\max\bigl\{\|\lambda{F^*F}\|,\bigl\|(\lambda{F^*F})^{-1}\bigr\|\bigr\}\leq{1}+\mathfrak{c}$ and $\|\uprho_{\alpha^2\beta}\|\leq(1+\mathfrak{c})^3$. Next
\begin{align*}
\bigl\|\uprho_{\alpha^2\beta}^2-\I\bigr\|
&=\bigl\|(\lambda{F^*F})^2-\I\bigr\|\bigl\|(F^*F)^2\bigr\|\bigl\|(F^*F)^{-2}\bigr\|+
\|(F^*F)^2\tens(F^*F)^{-2}-\I\tens\I\|\\
&\leq\bigl\|(\lambda{F^*F})^2-\I\bigr\|\|F^*F\|^2\bigl\|(F^*F)^{-1}\bigr\|^2+
\bigl\|(\lambda{F^*F})^2-\I\bigr\|\bigl\|(\lambda{F^*F})^{-1}\bigr\|^2+
\bigl\|(\lambda{F^*F})^{-2}-\I\bigr\|\\
&=
\bigl\|(\lambda{F^*F})^2-\I\bigr\|\bigl\|(\lambda{F^*F})^{-1}\bigr\|^2\bigl(
\bigl\|\lambda{F^*F}\bigr\|^2+1\bigr)+
\bigl\|(\lambda{F^*F})^{-2}-\I\bigr\|.
\end{align*}
Hence, since $\bigl\|(\lambda{F^*F})^2-\I\bigr\|=\bigl\|(\lambda{F^*F}+\I)(\lambda{F^*F}-\I)\bigr\|\leq(2+\mathfrak{c})\mathfrak{c}$ (and similarly $\bigl\|(\lambda{F^*F})^{-2}-\I\bigr\|\leq(2+\mathfrak{c})\mathfrak{c}$), we have
\[
\|\uprho_{\alpha^2\beta}^2-\I\|\leq
(2+\mathfrak{c})\mathfrak{c}(1+\mathfrak{c})^2\bigl((1+\mathfrak{c})^2+1\bigr)+
(2+\mathfrak{c})\mathfrak{c}=\mathfrak{c}(2+\mathfrak{c})\bigl((1+\mathfrak{c})^4+(1+\mathfrak{c})^2+1\bigr)\leq
3\mathfrak{c}(2+\mathfrak{c})(1+\mathfrak{c})^4.
\]
It follows that
\begin{align*}
D_n&=
\|\uprho_{\alpha^2\beta}^2-\I\|
\frac{\|\uprho_{\alpha^2\beta}\|^{2(n+1)}-1}
{\|\uprho_{\alpha^2\beta}\|^2-1}\leq
3\mathfrak{c}(2+\mathfrak{c})(1+\mathfrak{c})^4
\tfrac{(1+\mathfrak{c})^{6(n+1)}-1}{(1+\mathfrak{c})^6-1}\\
&=3\mathfrak{c}(2+\mathfrak{c})(1+\mathfrak{c})^4\sum_{k=0}^{n}(1+\mathfrak{c})^{6k}
\leq3(n+1)\mathfrak{c}(2+\mathfrak{c})(1+\mathfrak{c})^{4+6n}<1-\tfrac{1}{\sqrt{2}}
\end{align*}
by our assumption. Next we need to check that $\tfrac{2(7-4D_n)D_n}{2(1-D_n)^2-1}<\tfrac{1}{\sqrt{n+1}}$, equivalently
\[
2(1+4\sqrt{n+1})D_n^2-2(2+7\sqrt{n+1})D_n+1>0.
\]
An elementary argument shows that this inequality is satisfied when
\begin{equation}\label{eq47}
0<D_n<\tfrac{2+7\sqrt{n+1}-\sqrt{49{n}+20\sqrt{n+1}+51}}{2(1+4\sqrt{n+1})}.
\end{equation}
Since
\[
\tfrac{2+7\sqrt{n+1}-\sqrt{49n+20\sqrt{n+1}+51}}{2(1+4\sqrt{n+1})}
=\tfrac{1}{7\sqrt{n}}\,\bigl(\tfrac{2}{7\sqrt{n}}+\sqrt{1+1/n}
+\sqrt{1+\tfrac{20}{49\sqrt{n}}\sqrt{1+1/n}+\tfrac{51}{49n}}
\bigr)^{-1}\geq\tfrac{1}{24\sqrt{n}}
\]
and $D_n\leq3(n+1)\mathfrak{c}(2+\mathfrak{c})(1+\mathfrak{c})^{4+6n}<\tfrac{1}{24\sqrt{n}}$ by assumption, inequality \eqref{eq47} holds and the claim follows.
\end{proof}


\begin{thebibliography}{10}

\bibitem{abe}
Eiichi Abe.
\newblock {\em Hopf algebras}, volume~74 of {\em Cambridge Tracts in
  Mathematics}.
\newblock Cambridge University Press, Cambridge-New York, 1980.
\newblock Translated from the Japanese by Hisae Kinoshita and Hiroko Tanaka.

\bibitem{baaj92}
Saad Baaj.
\newblock Repr\'{e}sentation r\'{e}guli\`ere du groupe quantique {$E_\mu(2)$}
  de {W}oronowicz.
\newblock {\em C. R. Acad. Sci. Paris S\'{e}r. I Math.}, 314(13):1021--1026,
  1992.

\bibitem{banica}
Teodor Banica.
\newblock Le groupe quantique compact libre {${\rm U}(n)$}.
\newblock {\em Comm. Math. Phys.}, 190(1):143--172, 1997.

\bibitem{Bump}
Daniel Bump.
\newblock {\em Lie groups}, volume 225 of {\em Graduate Texts in Mathematics}.
\newblock Springer, New York, second edition, 2013.

\bibitem{CaspersKoelink}
Martijn Caspers and Erik Koelink.
\newblock Modular properties of matrix coefficients of corepresentations of a
  locally compact quantum group.
\newblock {\em J. Lie Theory}, 21(4):905--928, 2011.

\bibitem{typeI}
Alexandru Chirvasitu, Jacek Krajczok, and Piotr~M. So{\l}tan.
\newblock Compact quantum group structures on type-{I} {$\rm C^*$}-algebras.
\newblock {\em J. Noncommut. Geom.}, 17(3):1129--1143, 2023.

\bibitem{ConnesIII1}
Alain Connes.
\newblock Almost periodic states and factors of type {${\rm III}_{1}$}.
\newblock {\em J. Functional Analysis}, 16:415--445, 1974.

\bibitem{Connes}
Alain Connes.
\newblock The {T}omita-{T}akesaki theory and classification of type-{III}
  factors.
\newblock In {\em {$C\sp*$}-algebras and their applications to statistical
  mechanics and quantum field theory ({P}roc. {I}nternat. {S}chool of {P}hysics
  ``{E}nrico {F}ermi'', {C}ourse {LX}, {V}arenna, 1973)}, pages 29--46, 1976.

\bibitem{DeCommer}
Kenny De~Commer.
\newblock {$I$}-factorial quantum torsors and {H}eisenberg algebras of
  quantized universal enveloping type.
\newblock {\em J. Funct. Anal.}, 274(1):152--221, 2018.

\bibitem{DeCommerFreslonYamashita}
Kenny De~Commer, Amaury Freslon, and Makoto Yamashita.
\newblock C{CAP} for universal discrete quantum groups.
\newblock {\em Comm. Math. Phys.}, 331(2):677--701, 2014.
\newblock With an appendix by Stefaan Vaes.

\bibitem{DixmierC}
Jaques Dixmier.
\newblock {\em {$C\sp*$}-algebras}.
\newblock North-Holland Publishing Co., Amsterdam-New York-Oxford, 1977.

\bibitem{DixmiervNA}
Jaques Dixmier.
\newblock {\em Von {N}eumann algebras}, volume~27 of {\em North-Holland
  Mathematical Library}.
\newblock North-Holland Publishing Co., Amsterdam-New York, 1981.

\bibitem{Folland}
Gerald~B. Folland.
\newblock {\em A course in abstract harmonic analysis}.
\newblock Textbooks in Mathematics. CRC Press, Boca Raton, FL, second edition,
  2016.

\bibitem{Humphreys}
James~E. Humphreys.
\newblock {\em Introduction to {L}ie algebras and representation theory},
  volume~9 of {\em Graduate Texts in Mathematics}.
\newblock Springer-Verlag, New York-Berlin, 1978.
\newblock Second printing, revised.

\bibitem{Jacobs}
Arnoud Jacobs.
\newblock {\em {T}he quantum {$E(2)$} group}.
\newblock 2005.
\newblock Thesis (Ph.D.)--Katholieke Universiteit Leuven.

\bibitem{Kallman}
Robert~R. Kallman.
\newblock A generalization of free action.
\newblock {\em Duke Math. J.}, 36:781--789, 1969.

\bibitem{KlimykSchmudgen}
Anatoli Klimyk and Konrad Schm\"{u}dgen.
\newblock {\em Quantum groups and their representations}.
\newblock Texts and Monographs in Physics. Springer-Verlag, Berlin, 1997.

\bibitem{KorogodskiSoibelman}
Leonid~I. Korogodski and Yan~S. Soibelman.
\newblock {\em Algebras of functions on quantum groups. {P}art {I}}, volume~56
  of {\em Mathematical Surveys and Monographs}.
\newblock American Mathematical Society, Providence, RI, 1998.

\bibitem{symmetry}
Jacek Krajczok.
\newblock Symmetry of eigenvalues of operators associated with representations
  of compact quantum groups.
\newblock {\em Colloq. Math.}, 156(2):267--272, 2019.

\bibitem{KrajczokTypeI}
Jacek Krajczok.
\newblock Type {I} locally compact quantum groups: integral characters and
  coamenability.
\newblock {\em Dissertationes Math.}, 561:151, 2021.

\bibitem{modular}
Jacek Krajczok.
\newblock Modular properties of type {I} locally compact quantum groups.
\newblock {\em J. Operator Theory}, 87(2):319--354, 2022.

\bibitem{KrajczokSoltanDyskArxiv}
Jacek {Krajczok} and Piotr~M. {So{\l}tan}.
\newblock {The quantum disk is not a quantum group}.
\newblock {\em arXiv e-prints}, page arXiv:2005.02967, 2020.

\bibitem{faktory}
Jacek {Krajczok} and Piotr~M. {So{\l}tan}.
\newblock {Examples of compact quantum groups with
  $\operatorname{L}^{\!\infty}(\mathbb{G})$ a factor}.
\newblock {\em arXiv e-prints}, page arXiv:2203.10976, 2022.

\bibitem{qdisk}
Jacek Krajczok and Piotr~M. So{\l}tan.
\newblock The quantum disk is not a quantum group.
\newblock {\em J. Topol. Anal.}, 15(2):401--411, 2023.

\bibitem{KrajczokWasilewski}
Jacek Krajczok and Mateusz Wasilewski.
\newblock On the von {N}eumann algebra of class functions on a compact quantum
  group.
\newblock {\em J. Funct. Anal.}, 283(5):Paper No. 109549, 29, 2022.

\bibitem{KustermansVaes}
Johan Kustermans and Stefaan Vaes.
\newblock Locally compact quantum groups.
\newblock {\em Ann. Sci. \'Ecole Norm. Sup. (4)}, 33(6):837--934, 2000.

\bibitem{KustermansVaesVNA}
Johan Kustermans and Stefaan Vaes.
\newblock Locally compact quantum groups in the von {N}eumann algebraic
  setting.
\newblock {\em Math. Scand.}, 92(1):68--92, 2003.

\bibitem{NeshveyevTuset}
Sergey Neshveyev and Lars Tuset.
\newblock {\em Compact quantum groups and their representation categories},
  volume~20 of {\em Cours Sp\'{e}cialis\'{e}s [Specialized Courses]}.
\newblock Soci\'{e}t\'{e} Math\'{e}matique de France, Paris, 2013.

\bibitem{OnishchikVinberg}
Arkadij~L. Onishchik and Ernest.~B. Vinberg.
\newblock {\em Lie groups and algebraic groups}.
\newblock Springer Series in Soviet Mathematics. Springer-Verlag, Berlin, 1990.
\newblock Translated from the Russian and with a preface by D. A. Leites.

\bibitem{PodlesWoronowicz}
Piotr Podle\'{s} and Stanis{\l}aw~L. Woronowicz.
\newblock Quantum deformation of {L}orentz group.
\newblock {\em Comm. Math. Phys.}, 130(2):381--431, 1990.

\bibitem{ReshetikhinYakimov}
Nicolai Reshetikhin and Milen Yakimov.
\newblock Quantum invariant measures.
\newblock {\em Comm. Math. Phys.}, 224(2):399--426, 2001.

\bibitem{Sakai}
Sh\^{o}ichir\^{o} Sakai.
\newblock {\em {$C^*$}-algebras and {$W^*$}-algebras}.
\newblock Classics in Mathematics. Springer-Verlag, Berlin, 1998.
\newblock Reprint of the 1971 edition.

\bibitem{nazb}
Piotr~M. So{\l}tan.
\newblock New quantum ``{$az+b$}'' groups.
\newblock {\em Rev. Math. Phys.}, 17(3):313--364, 2005.

\bibitem{primer}
Piotr~M. So{\l}tan.
\newblock {\em A primer on {H}ilbert space operators}.
\newblock Compact Textbooks in Mathematics. Birk\-h\"{a}u\-ser/Sprin\-ger,
  Cham, 2018.

\bibitem{SoltanWoronowicz}
Piotr~M. So{\l}tan and Stanis{\l}aw~L. Woronowicz.
\newblock From multiplicative unitaries to quantum groups. {II}.
\newblock {\em J. Funct. Anal.}, 252(1):42--67, 2007.

\bibitem{sweedler}
Moss~E. Sweedler.
\newblock {\em Hopf algebras}.
\newblock Mathematics Lecture Note Series. W. A. Benjamin, Inc., New York,
  1969.

\bibitem{Takesaki2}
Masamichi Takesaki.
\newblock {\em Theory of operator algebras. {II}}, volume 125 of {\em
  Encyclopaedia of Mathematical Sciences}.
\newblock Springer-Verlag, Berlin, 2003.
\newblock Operator Algebras and Non-commutative Geometry, 6.

\bibitem{Timmermann}
Thomas Timmermann.
\newblock {\em An invitation to quantum groups and duality}.
\newblock EMS Textbooks in Mathematics. European Mathematical Society (EMS),
  Z\"{u}rich, 2008.
\newblock From Hopf algebras to multiplicative unitaries and beyond.

\bibitem{Tuset}
Lars Tuset.
\newblock {\em Analysis and quantum groups}.
\newblock Springer, Cham, [2022] \copyright 2022.

\bibitem{VaesRN}
Stefaan Vaes.
\newblock A {R}adon-{N}ikodym theorem for von {N}eumann algebras.
\newblock {\em J. Operator Theory}, 46(3, suppl.):477--489, 2001.

\bibitem{StrictlyOuter}
Stefaan Vaes.
\newblock Strictly outer actions of groups and quantum groups.
\newblock {\em J. Reine Angew. Math.}, 578:147--184, 2005.

\bibitem{VanDaeleTheHaarMeasure}
Alfons {Van Daele}.
\newblock {The Haar measure on some locally compact quantum groups}.
\newblock {\em arXiv Mathematics e-prints}, page math/0109004, September 2001.

\bibitem{VanDaeleQGvN}
Alfons Van~Daele.
\newblock Locally compact quantum groups. {A} von {N}eumann algebra approach.
\newblock {\em SIGMA Symmetry Integrability Geom. Methods Appl.}, 10:Paper 082,
  41, 2014.

\bibitem{WangVanDaele}
Alfons Van~Daele and Shuzhou Wang.
\newblock Universal quantum groups.
\newblock {\em Internat. J. Math.}, 7(2):255--263, 1996.

\bibitem{WeiZou}
Yangjiang Wei and Yi~Ming Zou.
\newblock Inverses of {C}artan matrices of {L}ie algebras and {L}ie
  superalgebras.
\newblock {\em Linear Algebra Appl.}, 521:283--298, 2017.

\bibitem{su2}
Stanis{\l}aw~L. Woronowicz.
\newblock Twisted {${\rm SU}(2)$} group. {A}n example of a noncommutative
  differential calculus.
\newblock {\em Publ. Res. Inst. Math. Sci.}, 23(1):117--181, 1987.

\bibitem{slw_E2}
Stanis{\l}aw~L. Woronowicz.
\newblock Quantum {$E(2)$} group and its {P}ontryagin dual.
\newblock {\em Lett. Math. Phys.}, 23(4):251--263, 1991.

\bibitem{slw_unbo}
Stanis{\l}aw~L. Woronowicz.
\newblock Unbounded elements affiliated with {$C^*$}-algebras and noncompact
  quantum groups.
\newblock {\em Comm. Math. Phys.}, 136(2):399--432, 1991.

\bibitem{slw_gen}
Stanis{\l}aw~L. Woronowicz.
\newblock {$C^*$}-algebras generated by unbounded elements.
\newblock {\em Rev. Math. Phys.}, 7(3):481--521, 1995.

\bibitem{azb}
Stanis{\l}aw~L. Woronowicz.
\newblock Quantum ``{$az+b$}'' group on complex plane.
\newblock {\em Internat. J. Math.}, 12(4):461--503, 2001.

\bibitem{SLWHaarMeasure}
Stanis{\l}aw~L. Woronowicz.
\newblock {H}aar weight on some quantum groups.
\newblock Number 173, pages 763--772. 2002.
\newblock Physical and Mathematical Aspects of Symmetries: Proceedings of the
  24th International Colloquium on Group Theoretical Methods in Physics, Paris,
  15-20 July 2002.

\end{thebibliography}
\end{document}